\documentclass[12pt]{article} 
\usepackage{amsmath}
\usepackage{amssymb}
\usepackage[utf8]{inputenc}
\usepackage[T1]{fontenc}
\usepackage{amsthm}
\usepackage[top=1in, bottom=1in, left=1in, right=1in]{geometry}
\usepackage{xcolor}
\usepackage{hyperref}
\usepackage{comment}
\usepackage{tikz-cd}
\usepackage{enumerate}

\DeclareUnicodeCharacter{0301}{\'{}}

\usepackage{mathtools}
\mathtoolsset{showonlyrefs}

\newtheorem{theorem}{Theorem}[section]
\newtheorem{definition}[theorem]{Definition}

\newtheorem{lemma}[theorem]{Lemma}

\newtheorem{proposition}[theorem]{Proposition}
\newtheorem{corollary}[theorem]{Corollary}
\newtheorem{remark}[theorem]{Remark}

\newcommand{\PP}{{\mathbb P}} \newcommand{\RR}{\mathbb{R}}
\newcommand{\QQ}{\mathbb{Q}} \newcommand{\CC}{{\mathbb C}}

\newcommand{\cX}{\mathcal{X}}

\newcommand{\cE}{\mathcal{E}}

\newcommand{\cH}{\mathcal{H}}

\newcommand{\cJ}{\mathcal{J}}

\newcommand{\cL}{\mathcal{L}}

\newcommand{\cR}{\mathcal{R}}

\newcommand{\Ric}{\operatorname{Ric}}
\newcommand{\vol}{\operatorname{vol}}
\newcommand{\ord}{\mathrm{ord}}

\newcommand{\ddc}{\mathrm{dd^c}}

\makeatletter
\renewcommand\tableofcontents{%
  \section*{\contentsname
    \@mkboth{\MakeUppercase\contentsname}{\MakeUppercase\contentsname}}%
  \vspace{0.5em}% space after "Contents"
  \@starttoc{toc}%
}
\renewcommand\l@section[2]{%
  \vspace{0.5em}% extra space between section lines
  \@dottedtocline{1}{0em}{2.0em}{\normalsize\bfseries #1}{\normalsize\bfseries #2}}
\makeatother
\title{Understanding entropy using Aubin--Yau deformations}
\author{Tam\'as Darvas and Kewei Zhang}
\date{}

\begin{document}

\maketitle

\begin{abstract}
We investigate three types of entropy that frequently appear in the research of K\"ahler geometry: the classical K\"ahler entropy, the radial entropy and the non-Archimedean entropy. We give a unifying interpretation for these entropies by showing that they can all be expressed as the derivative of the Monge--Amp\`ere energy along certain Aubin--Yau type deformations. 
Such considerations also lead to a novel algebraic criterion for the existence of constant scalar curvature K\"ahler metrics.
\end{abstract}

\setcounter{tocdepth}{1}

\tableofcontents

\section{Introduction}

In geometric analysis, entropy has emerged as a fundamental concept that permeates a remarkably broad spectrum of problems. Its influence extends from metric geometry, where entropy measures the asymptotic growth of volumes and encodes curvature information of spaces \cite{Oh14}, to the study of geometric partial differential equations, where entropy functionals provide essential monotonicity formulas and compactness tools, as exemplified by Perelman's work on the Ricci flow \cite{Pe02} and also in more recent works such as \cite{GPSS24,GT25}. 

In the study of canonical metrics, entropy also plays a central role: it appears as the most delicate term in the Mabuchi K energy, whose critical points are precisely the constant scalar curvature Kähler (cscK) metrics. It also lies at the heart of the variational approach to canonical metrics and the Yau--Tian--Donaldson conjecture \cite{BBJ18,Li20-cscK,BJ25preprint,DZ25,MPWN}. Moreover, entropy has become indispensable in the algebro-geometric theory of K-stability, providing analytic insights that bridge differential geometry and algebraic geometry through the non-Archimedean framework \cite{BHJ17,Li21,Pic24,T26}. The purpose of this paper is to explain a useful way of viewing entropy: it is the variation of the Monge-Ampère energy along solutions of Aubin-Yau type problems. To this end, we will mainly focus on three types of entropy that frequently appear in the literature on K\"ahler geometry.

\subsection{K\"ahler entropy}
Let $(X,\omega)$ be a compact K\"ahler manifold of dimension $n$. Let
$$
\cH:=\{u\in C^{\infty}(X,\RR):\omega_u:=\omega+\ddc u>0\}
$$
be the space of smooth K\"ahler potentials and $\cE^1$ be its metric completion with respect to the $d_1$ metric \cite{Dar15}. Here $\ddc=i\partial\bar\partial/2\pi$.

Set $V:=\int_X\omega^n$. The standard K\"ahler entropy of $u\in\cE^1$ is defined as
\begin{equation}
    \label{eq: def_Ent}
    \mathrm{Ent}(u):=\frac{1}{V}\int_X\log\frac{\omega^n_u}{\omega^n}\omega^n_u
\end{equation}
when $\omega^n_u$ is absolutely continuous with respect to $\omega^n$, and $\mathrm{Ent}(u):=+\infty$ otherwise. 

This entropy is the most nonlinear part of the Mabuchi functional, and it is
also the term that is most difficult to control by approximation.  
In \cite{DZ25}, motivated by the study of cscK metrics, we proposed a way to regularize the entropy using a family of Aubin--Yau type equations, which we now describe.

For $u\in\mathcal E^1$ and $\beta>0$, following the terminology of \cite{DZ25}, let
$u^\beta\in\mathcal E^1\cap C^0(X)$ denote the unique solution of the Aubin--Yau equation
\begin{equation}\label{eq: intro_AY}
\omega_{u^\beta}^n=e^{\beta(u^\beta-u)}\omega^n.
\end{equation}
As pointed out in \cite{DZ25}, there is intriguing asymptotic behavior as $\beta\to\infty$. In view of this, following a suggestion of Berman \cite{Ber19}, $u^\beta$ is called in \cite{DZ25} the transcendental quantization of $u$ at level $\beta$.

As will soon become clear, however, shifting the asymptotic analysis from $\infty$ to $0$ has certain advantages. Therefore, in this work it will often be more convenient to use the parameter $\varepsilon:=1/\beta>0$ and adopt the somewhat redundant but useful convention
\[
u_\varepsilon:=u^{1/\varepsilon}=u^\beta.
\]
We then have
\begin{equation}\label{eq: def_u_e}
\omega_{u_\varepsilon}^n=e^{\frac{1}{\varepsilon}(u_\varepsilon-u)}\omega^n.
\end{equation}
In particular,
\begin{equation}\label{eq: ent_ue_int}
\mathrm{Ent}(u_\varepsilon)
=\frac{1}{\varepsilon V}\int_X(u_\varepsilon-u)\omega^n_{u_\varepsilon}.
\end{equation}

Let
\begin{equation}
    \label{eq: def_E}
    I(u):=\frac{1}{(n+1)V}\int_X \sum_{i=0}^n u\omega^i_u\wedge\omega^{n-i}
\end{equation}
denote the Monge--Amp\`ere energy of $u\in\cE^1$.
As shown in \cite[Proposition 3.6]{DZ25} (cf. also \cite{BBGZ}), the solution $u_\varepsilon$ of \eqref{eq: def_u_e}, is the unique normalized maximizer of Ding type functional $F^\beta(\cdot,u)$:
\begin{equation}
    \label{eq:sup_Ding}
    \sup_{v\in\cE^1\cap C^0(X)}F^\beta(v,u) :=  \sup_{v\in\cE^1\cap C^0(X)}\left(-\log\int_X e^{\frac{1}{\varepsilon}(v-u)}\frac{\omega^n}{V}+\frac{1}{\varepsilon}(I(v)-I(u))\right)
\end{equation}
that satisfies the normalization condition
$
\int_X e^{\frac{1}{\varepsilon}(v-u)}\frac{\omega^n}{V}=1.
$

Now, for any fixed $u\in\cE^1$, let us consider the following function:
\begin{equation*}
f_u(\varepsilon):=
    \begin{cases}
        I(u_\varepsilon),\ \varepsilon>0,\\
        I(u),\ \varepsilon=0.
        
    \end{cases}
\end{equation*}
Our first main result shows that $\mathrm{Ent}(\cdot)$ is the derivative of $I(\cdot)$ along the Aubin--Yau path \eqref{eq: def_u_e}.

\begin{theorem}[K\"ahler Aubin--Yau deformations]
    \label{thm:main_f'}
    The function $[0,+\infty)\ni\varepsilon\mapsto f_u(\varepsilon)$ is concave, non-decreasing and belongs to  $C^1([0,\infty))$. Moreover,
    $$
    f_{u}'(\varepsilon)=\mathrm{Ent}(u_\varepsilon)\text{ for any $\varepsilon>0$ and }
    f'_{u,+}(0)=\mathrm{Ent}(u).
    $$
\end{theorem}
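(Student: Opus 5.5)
The plan is to use the variational characterization \eqref{eq:sup_Ding} to write $f_u$ as a Legendre-type transform, which gives concavity and the derivative formula at once. Put $\beta=1/\varepsilon$. Since $u_\varepsilon$ is normalized, i.e. $\int_X e^{(u_\varepsilon-u)/\varepsilon}\omega^n/V=1$, we get
\[
\sup_v F^{1/\varepsilon}(v,u)=\tfrac{1}{\varepsilon}\big(I(u_\varepsilon)-I(u)\big).
\]
I will use the reparametrization
\[
g(\varepsilon):=\sup_{v\in\cE^1\cap C^0}\Big(\varepsilon\cdot\big(-\log\textstyle\int_X e^{(v-u)/\varepsilon}\tfrac{\omega^n}{V}\big)+I(v)\Big)=I(u_\varepsilon).
\]
Changing variables $v=u+\varepsilon w$ does not help directly. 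Instead I would use the Gibbs variational principle
\[
-\varepsilon\log\int e^{(v-u)/\varepsilon}\tfrac{\omega^n}{V}=\inf_{\mu}\Big(\varepsilon\,\mathrm{Ent}(\mu)-\int(v-u)\,d\mu\Big),
\]
where the infimum is over probability measures $\mu$ and $\mathrm{Ent}(\mu)$ is the relative entropy with respect to $\omega^n/V$. This gives
\[
f_u(\varepsilon)=\sup_v\inf_\mu\Big(\varepsilon\,\mathrm{Ent}(\mu)+\int(u-v)\,d\mu+I(v)\Big).
\]
Swapping sup and inf, the inner sup over $v$ of $I(v)-\int v\,d\mu$ is the Legendre transform $I^*(\mu)$, which equals minus the energy $E^*(\mu)$. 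So I expect
\[
f_u(\varepsilon)=\inf_\mu\Big(\varepsilon\,\mathrm{Ent}(\mu)+\int u\,d\mu+I^*(\mu)\Big).
\]
Justifying the swap is a Berman--Boucksom--Guedj--Zeriahi type duality. To avoid relying on it, I would argue more concretely. For fixed $\mu$ the expression $\Phi_\mu(\varepsilon):=\varepsilon\,\mathrm{Ent}(\mu)+\int u\,d\mu+I^*(\mu)$ is affine in $\varepsilon$, and the Gibbs formula alone gives $f_u\le\Phi_\mu$ for every $\mu$. For $\mu_\varepsilon:=\omega^n_{u_\varepsilon}/V$ equality holds at $\varepsilon$. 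To see this, note that $u_\varepsilon$ attains $I^*(\mu_\varepsilon)$ (it is the maximizer of $I(v)-\int v\,d\mu_\varepsilon$), and $\mu_\varepsilon$ attains the Gibbs infimum for $v=u_\varepsilon$. Hence $f_u$ is an infimum of affine functions attained at each point. This gives concavity on $(0,\infty)$ together with the supporting line
\[
f_u(\delta)\le f_u(\varepsilon)+(\delta-\varepsilon)\,\mathrm{Ent}(u_\varepsilon),\qquad \delta>0 .
\]

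Monotonicity follows because $\mathrm{Ent}\ge0$ (Jensen), so every affine minorant is non-decreasing. Swapping the roles of $\delta$ and $\varepsilon$ gives
\[
\mathrm{Ent}(u_\delta)(\varepsilon-\delta)\ge f_u(\varepsilon)-f_u(\delta)\ge \mathrm{Ent}(u_\varepsilon)(\varepsilon-\delta).
\]
Differentiability at $\varepsilon>0$ with $f'_u=\mathrm{Ent}(u_\varepsilon)$, and $C^1$ on $(0,\infty)$, then reduce to continuity of $\varepsilon\mapsto\mathrm{Ent}(u_\varepsilon)$. For this I would show $u_\delta\to u_\varepsilon$ uniformly as $\delta\to\varepsilon$, using the comparison principle for \eqref{eq: def_u_e}. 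Since $\mathrm{Ent}(u_\varepsilon)=\frac{1}{\varepsilon V}\int(u_\varepsilon-u)\,\omega^n_{u_\varepsilon}$, continuity then follows. Note that $u_\varepsilon-u$ is bounded above but possibly unbounded below; the integrand $(u_\varepsilon-u)e^{(u_\varepsilon-u)/\varepsilon}$ is nonetheless bounded, so dominated convergence applies.

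At $\varepsilon=0$, the inequality $f_u(0)=I(u)\le\Phi_\mu(0)$ holds for every $\mu$ by the definition of $I^*$ (take $v=u$). Taking $\mu=\omega^n_u/V$ gives equality, again because $u$ attains $I^*(\omega_u^n/V)$, which needs $u\in\cE^1$. So
\[
f_u(\varepsilon)\le I(u)+\varepsilon\,\mathrm{Ent}(u),\qquad\text{hence}\qquad \limsup_{\varepsilon\to0^+}\frac{f_u(\varepsilon)-f_u(0)}{\varepsilon}\le \mathrm{Ent}(u).
\]
Continuity at $0$, i.e. $I(u_\varepsilon)\to I(u)$, should follow from $u_\varepsilon\to u$ in $d_1$ as $\varepsilon\to0$, presumably available from \cite{DZ25}.

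The main obstacle is the matching lower bound and the right-continuity $\mathrm{Ent}(u_\varepsilon)\to\mathrm{Ent}(u)$, including the case $\mathrm{Ent}(u)=+\infty$. Concavity gives
\[
\frac{f_u(\varepsilon)-f_u(0)}{\varepsilon}\ge \mathrm{Ent}(u_\varepsilon),
\]
so it suffices to prove $\liminf_{\varepsilon\to0}\mathrm{Ent}(u_\varepsilon)\ge\mathrm{Ent}(u)$. This follows from lower semicontinuity of entropy under weak convergence $\omega^n_{u_\varepsilon}\to\omega^n_u$, which is a consequence of $d_1$-convergence. Combined with the upper bound this yields $f'_{u,+}(0)=\mathrm{Ent}(u)$. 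It also yields $\mathrm{Ent}(u_\varepsilon)\to\mathrm{Ent}(u)$, since $\mathrm{Ent}(u_\varepsilon)$ is monotone in $\varepsilon$ (derivative of a concave function) and bounded above by $\mathrm{Ent}(u)$. This gives $C^1$ up to $0$, understood in the extended sense if $\mathrm{Ent}(u)=\infty$.
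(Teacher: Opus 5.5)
Your proof is correct in outline, and it takes a genuinely different route from the paper. The paper first treats smooth $u_i\in\cH$ with $u_i\searrow u$. It differentiates \eqref{eq: def_u_e} in $\varepsilon$ to get $f_i'=\mathrm{Ent}(u_{i,\varepsilon})$, and computes $f_i''\le0$ explicitly from the linearized equation. It then lets $i\to\infty$ using $u_{i,\varepsilon}\searrow u_\varepsilon$, dominated convergence for $\mathrm{Ent}(u_{i,\varepsilon})\to\mathrm{Ent}(u_\varepsilon)$, and a difference-quotient sandwich. The endpoint facts (Lemmas \ref{lem: Ent_ue_conv_Ent_u}--\ref{lem:lim_Ent_e=Ent}) are imported from \cite{DZ25}.

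You instead work directly with $u\in\cE^1$ and obtain the supporting line $f_u(\delta)\le f_u(\varepsilon)+(\delta-\varepsilon)\mathrm{Ent}(u_\varepsilon)$ for all $\delta\ge0$, $\varepsilon>0$. You need neither the sup--inf swap nor the maximizer property. Four facts suffice:
\begin{itemize}
\item the normalization $\int_Xe^{(u_\delta-u)/\delta}\omega^n/V=1$;
\item the Gibbs inequality $\int_X(u_\delta-u)\,d\mu_\varepsilon\le\delta\,\mathrm{Ent}(u_\varepsilon)$;
\item the identity $\int_X(u_\varepsilon-u)\,d\mu_\varepsilon=\varepsilon\,\mathrm{Ent}(u_\varepsilon)$;
\item the energy concavity $I(u_\delta)-I(u_\varepsilon)\le\int_X(u_\delta-u_\varepsilon)\,d\mu_\varepsilon$.
\end{itemize}
This gives at once concavity on $[0,\infty)$, monotonicity of $f_u$ and of $\varepsilon\mapsto\mathrm{Ent}(u_\varepsilon)$, and Lemma \ref{lem: Ent>Ent_e>Ent_e}. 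Your lower semicontinuity argument then recovers Lemmas \ref{lem: Ent_ue_conv_Ent_u} and \ref{lem:lim_Ent_e=Ent} from $d_1$-convergence. This is the K\"ahler counterpart of how the paper proves concavity of $F_k$ in the non-Archimedean section, via energy concavity plus orthogonality. It avoids smoothing altogether; the paper's computation buys an explicit second-variation formula in exchange. Both routes still need continuity of $\varepsilon\mapsto\mathrm{Ent}(u_\varepsilon)$ on $(0,\infty)$ to turn supergradients into derivatives.

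Your justification of that continuity step is wrong. Since $u_\varepsilon$ is bounded and $u\le\sup_Xu$, the difference $u_\varepsilon-u$ is bounded \emph{below}, and unbounded \emph{above} wherever $u$ is unbounded below. So $(u_\delta-u)e^{(u_\delta-u)/\delta}$ is not bounded, and you need an honest dominating function. Take a uniform bound $\|u_\delta\|_{L^\infty}\le C$ for $\delta\in[\varepsilon/2,2\varepsilon]$, with $C\ge\sup_Xu$. Then $|u_\delta-u|e^{(u_\delta-u)/\delta}\le(C+|u|)e^{2(C-u)/\varepsilon}$. This is $\omega^n$-integrable because $e^{-pu}\in L^1$ for all $p$ \cite{Ze01}, as $u\in\cE^1$ has zero Lelong numbers; this is what the paper does.

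Similarly, the comparison principle by itself only yields one-sided bounds. For instance, $u_\delta\ge u_\varepsilon-|\inf_X(u_\varepsilon-u)|(\delta/\varepsilon-1)$ for $\delta>\varepsilon$. The naive argument for the reverse inequality breaks down for the same reason: $u_\varepsilon-u$ is unbounded above. Two-sided uniform convergence $u_\delta\to u_\varepsilon$ therefore needs a Ko\l odziej-type stability estimate, as in \cite[Remark 3.2]{GLZ18} or \cite[Lemma 3.5]{DZ25}. Alternatively, you can bypass sup-norm convergence with the $d_1$-based estimate \eqref{eq: ent_continuous}.

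Minor slip: the $\Phi_\mu$ are affine \emph{majorants} of $f_u$, not minorants.
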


In fact Lemma \ref{lem: Ent_Holder} implies that $f$ is also in $C^{1,1/4^n}(0,\infty)$.
Thus the entropy of $u$ is recovered as the initial velocity of the energy of
its Aubin--Yau deformations.  

The proof of Theorem \ref{thm:main_f'} is rather straightforward, which builds on the analysis of the equation \eqref{eq: def_u_e} in \cite[\S 3]{DZ25}. We first prove the result for smooth $u\in\cH$ and then treat general $u\in\cE^1$ by approximation. Moreover, from this proof it follows easily that the entropy $\mathrm{Ent}(u)$ for any $u\in\cE^1$ can be approximated by a smooth sequence $v_i\in\cH$ such that $v_i\xrightarrow{d_1}u$ and $\mathrm{Ent}(v_i)\to \mathrm{Ent}(u)$, reproducing \cite[Theorem 3.2]{BDL17}.

\subsection{Radial entropy}

We next pass from potentials to geodesic rays.  Let $\mathcal R^1$ denote the
space of finite energy geodesic rays in $\cE^1$ emanating from the origin, and, for a ray
$\{u_t\}\in\mathcal R^1$, set
\[
 I\{u_t\}:=\lim_{t\to\infty}\frac{I(u_t)}{t},\qquad
 \mathrm{Ent}\{u_t\}:=\lim_{t\to\infty}\frac{\mathrm{Ent}(u_t)}{t}
\]
whenever the latter entropy slope is finite, and $\infty$ otherwise. We refer to \cite{DL19} for a detailed study of the space $\mathcal R^1$.

In \cite[\S 6]{DZ25}, given any $\beta>0$ and $\{u_t\}\in\cR^1$, we introduced a radial
Ding type functional $F^\beta(\{\cdot\},\{u_t\})$: 
$$
F^\beta(\{v_t\},\{u_t\}):=L^\beta(\{v_t\},\{u_t\})+\beta(I\{v_t\}-I\{u_t\}),\ \{v_t\}\in\cR^1,
$$
where
$$
L^\beta(\{v_t\},\{u_t\}):=\liminf_{t\to\infty}\left(\frac{-1}{t}\log\int_Xe^{\beta(v_t-u_t)}\frac{\omega^n}{V}\right)
$$
We will construct for each ray $\{u_t\}$ and $\beta>0$ a distinguished radial Aubin--Yau deformation
\[
 \{P(u)_t^\beta\}\in\mathcal R^1.
\]
We refer to \eqref{eq: P_beta_u_def} for the precise definition, where $\{P(u)_t^\beta\}$ is described as the supremum/envelope of finite energy subgeodesic rays $\{v_t\}$ satisfying the normalization
\[
L^\beta(\{v_t\},\{u_t\})\geq 0.
\]
Most importantly, Theorem \ref{thm: maximizer_main} shows that $\{P(u)_t^\beta\}$ is in fact a bounded geodesic ray and is the unique maximizer $\{v_t\}$ of $F^\beta(\{\cdot\},\{u_t\})$ satisfying
\[
L^\beta(\{v_t\},\{u_t\})=0.
\]

Thus, by analogy with \eqref{eq:sup_Ding}, the family $\{P(u)_t^\beta\}$ can be viewed as the solution of a certain radial Aubin--Yau type equation, even though we cannot write down an explicit equation satisfied by $\{P(u)_t^\beta\}$!

As in previous discussions, we will interchangeably use the parameter $\varepsilon=1/\beta$ and write
\[
 P_\varepsilon(u)_t:=P(u)_t^{1/\varepsilon}
 \quad\text{and}\quad P_0(u)_t:=u_t.
\]
The radial analogue of Theorem \ref{thm:main_f'} is the
following.

\begin{theorem}[Radial Aubin--Yau deformation]\label{thm:intro_radial}
Let $\{u_t\}\in\mathcal R^1$ with $\mathrm{Ent}\{u_t\}<\infty$. Define
\begin{equation} \label{eq: H_u_def}
 H_u(\varepsilon):=
 \begin{cases}
 I\{P_\varepsilon(u)_t\},&\varepsilon>0,\\
 I\{u_t\},&\varepsilon=0.
 \end{cases}
\end{equation}
Then $H_u$ is concave, non-decreasing and belongs to $C^{1}([0,+\infty)) \cap C^{1,1/4^n}((0,+\infty))$.
Moreover,
\[
 H_u'(\varepsilon)=\mathrm{Ent}\{P_\varepsilon(u)_t\}\text{ for }\varepsilon>0\text{ and }
 H_{u,+}'(0)=\mathrm{Ent}\{u_t\}.
\]
\end{theorem}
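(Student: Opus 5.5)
We mimic the proof of Theorem \ref{thm:main_f'}, replacing the pointwise Aubin--Yau solutions by the radial envelopes $\{P_\varepsilon(u)_t\}$, and we use the variational characterization of Theorem \ref{thm: maximizer_main} throughout. Write $v^\varepsilon:=\{P_\varepsilon(u)_t\}$ and $\beta=1/\varepsilon$.

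\textbf{Step 1: a variational formula for $H_u$.} By Theorem \ref{thm: maximizer_main}, $v^\varepsilon$ maximizes $F^{1/\varepsilon}(\cdot,\{u_t\})$ among rays with $L^{1/\varepsilon}\ge 0$, and it satisfies $L^{1/\varepsilon}(v^\varepsilon,\{u_t\})=0$. Hence
\[
\varepsilon\sup F^{1/\varepsilon}(\cdot,\{u_t\})=H_u(\varepsilon)-I\{u_t\}.
\]
Equivalently,
\[
H_u(\varepsilon)=\sup\bigl\{I\{v_t\}: \{v_t\}\in\mathcal R^1,\ L^{1/\varepsilon}(\{v_t\},\{u_t\})\ge 0\bigr\}.
\]
This holds because $P_\varepsilon(u)$ is the envelope of such rays and $I$ is monotone.

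\textbf{Step 2: monotonicity and concavity.} By H\"older's inequality, $\beta\mapsto \log\int e^{\beta(v_t-u_t)}\omega^n/V$ is convex and vanishes at $\beta=0$. So $\beta^{-1}\log\int e^{\beta(v_t-u_t)}\omega^n/V$ is nondecreasing in $\beta$. Consequently the constraint set grows as $\varepsilon$ grows, and $H_u$ is non-decreasing.

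For concavity, take admissible rays $v^0$ at level $\varepsilon_0$ and $v^1$ at level $\varepsilon_1$, and let $\varepsilon_\lambda=(1-\lambda)\varepsilon_0+\lambda\varepsilon_1$. The convex combination $(1-\lambda)v^0+\lambda v^1$ is a subgeodesic ray. H\"older's inequality with exponents adapted to $\varepsilon_\lambda$ gives
\[
\int e^{\frac{1}{\varepsilon_\lambda}((1-\lambda)(v^0_t-u_t)+\lambda(v^1_t-u_t))}\le \Bigl(\int e^{(v^0_t-u_t)/\varepsilon_0}\Bigr)^{(1-\lambda)\varepsilon_0/\varepsilon_\lambda}\Bigl(\int e^{(v^1_t-u_t)/\varepsilon_1}\Bigr)^{\lambda\varepsilon_1/\varepsilon_\lambda}.
\]
Taking $\liminf$ shows the combination is admissible, provided one controls the $\liminf$ of a sum; this is fine because each term is $\le o(t)$ after passing to the geodesic envelope. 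Linearity of $I\{\cdot\}$ on $\mathcal R^1$ under convex combinations fails, but concavity of $I$ gives $I\ge$ the convex combination. Projecting to the geodesic ray does not decrease $I$. This yields concavity, including at $\varepsilon=0$ once we show $H_u(0^+)=I\{u_t\}$.

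\textbf{Step 3: the derivative.} For $\varepsilon>0$, we aim for the two-sided estimate
\[
(\varepsilon'-\varepsilon)\,\mathrm{Ent}\{v^{\varepsilon'}\}\le H_u(\varepsilon')-H_u(\varepsilon)\le(\varepsilon'-\varepsilon)\,\mathrm{Ent}\{v^{\varepsilon}\}.
\]
Using $v^\varepsilon$ as a competitor at level $\varepsilon'$ in Step 1, the key inequality to prove is the radial analogue of \eqref{eq: ent_ue_int}:
\[
\mathrm{Ent}\{v^\varepsilon\}=\varepsilon^{-1}\lim_t t^{-1}\int (v^\varepsilon_t-u_t)\,\omega_{v^\varepsilon_t}^n/V .
\]
We would combine this with Jensen and the maximality of $v^\varepsilon$, which means the Euler--Lagrange inequality is replaced by comparison with perturbed rays. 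The cleanest route is to approximate: apply Theorem \ref{thm:main_f'} to $u_t$ for each fixed $t$, obtaining $f_{u_t}$ concave with $f'_{u_t}=\mathrm{Ent}((u_t)_\varepsilon)$, and show
\[
\frac{f_{u_t}(\varepsilon)}{t}\to H_u(\varepsilon),\qquad \frac{\mathrm{Ent}((u_t)_{\varepsilon/t})}{t}\to \cdots.
\]
Concave functions converging pointwise have converging derivatives at differentiability points, and this gives the derivative formula. Here the time rescaling must be handled carefully, since $\beta$ stays fixed while $t\to\infty$.

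The $C^{1,1/4^n}$ regularity then follows from the radial version of Lemma \ref{lem: Ent_Holder}, namely H\"older continuity of $\varepsilon\mapsto\mathrm{Ent}\{v^\varepsilon\}$ obtained by passing the pointwise estimate to slopes. At $0$, $H'_{u,+}(0)=\mathrm{Ent}\{u_t\}$ requires the upper bound $H_u(\varepsilon)-I\{u_t\}\le\varepsilon\,\mathrm{Ent}\{u_t\}$. This follows from the entropy/Legendre duality $\log\int e^{g}\mu\ge\int g\,\omega_{v}^n/V-\mathrm{Ent}(v)$ applied at each $t$. The matching lower bound comes from the competitor $u_t+\varepsilon\,\cdot$ built in the approximation.

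\textbf{Main obstacle.} I expect the hardest step to be identifying the limit of the slopes, that is, interchanging $t\to\infty$ with the Aubin--Yau construction. The relationship between the rays $\{(u_t)_\varepsilon\}$, which are not geodesic, and $P_\varepsilon(u)$ is only via envelopes. I would first try to show that $t^{-1}I((u_t)_\varepsilon)$ has the same limit as $I\{P_\varepsilon(u)\}$, using the uniform boundedness of $P_\varepsilon(u)_t-u_t$ from Theorem \ref{thm: maximizer_main}.
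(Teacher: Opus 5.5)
Your Steps 1--2 are sound. The H\"older interpolation argument gives a legitimate proof of concavity that is more direct than the paper's: a convex combination of rays admissible at levels $\varepsilon_0,\varepsilon_1$ is admissible at level $\varepsilon_\lambda$, $I$ is concave along affine combinations, and the envelope dominates every admissible subgeodesic. The paper instead obtains concavity only as a limit of the concave functions $g_t(\varepsilon)=I(u_{t,\varepsilon})/t$. Your Jensen bound $H_u(\varepsilon)-I\{u_t\}\le\varepsilon\,\mathrm{Ent}\{u_t\}$ is also correct.

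The genuine gap is Step 3, the identification $H_u'(\varepsilon)=\mathrm{Ent}\{P_\varepsilon(u)_t\}$, which is the real content of the theorem. Your ``cleanest route'' is exactly Proposition~\ref{prop: thm12cor}, with $\varepsilon$ fixed; there is no rescaling $\varepsilon/t$. Uniform $C^{1,1/4^n}$ bounds plus convergence of derivatives of concave functions only give
\[
H_u'(\varepsilon)=\lim_{t\to\infty}\frac{\mathrm{Ent}(u_{t,\varepsilon})}{t},
\]
which is the entropy slope of the non-geodesic family $\{u_{t,\varepsilon}\}_t$. This is not $\mathrm{Ent}\{P_\varepsilon(u)_t\}$. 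The two families agree only up to $o(t)$ in $d_1$, and entropy is merely lower semicontinuous, so ``this gives the derivative formula'' does not follow. The same conflation affects your claimed H\"older continuity of $\varepsilon\mapsto\mathrm{Ent}\{P_\varepsilon(u)_t\}$.

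Your other route is circular. Jensen only gives $\mathrm{Ent}\{P_\varepsilon(u)_t\}\ge\limsup_t\frac{1}{\varepsilon tV}\int_X(P_\varepsilon(u)_t-u_t)\,\omega^n_{P_\varepsilon(u)_t}$ (Lemma~\ref{lem: entropy_est}). The reverse inequality is an Euler--Lagrange statement, and since $P_\varepsilon(u)$ satisfies no equation, ``comparison with perturbed rays'' yields nothing. In the paper, \eqref{eq: entr_formula} is a consequence of the derivative formula, not an input. Also, $P_\varepsilon(u)_t-u_t$ is not uniformly bounded: Theorem~\ref{thm: maximizer_main} only gives $|P_\varepsilon(u)_t|\le Ct$, while $u_t$ may be unbounded.

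The paper fills the gap with inputs absent from your plan. For test-configuration rays, $P_\varepsilon(u)$ is induced by the log discrepancy model $(\cX,\cL+\varepsilon K^{\log}_{\cX/X\times\PP^1})$, so $H_u$ is a volume function. Its derivative comes from differentiability of volumes and is identified with $\int A_X\,\mathrm{MA}^{\mathrm{NA}}$ via Li's formulas. For general rays, the paper uses:
\begin{enumerate}
\item Demailly approximation $\{u^j_t\}$;
\item the $d_1^c$-continuity of $u\mapsto P_\varepsilon(u)$ (Lemma~\ref{lem: P_beta_rad_conv});
\item convergence of derivatives of concave functions;
\item a squeeze (Lemma~\ref{lem: Ent_P_beta_conv}), whose one side is $d_1^c$-lower semicontinuity of radial entropy and whose other side is Lemma~\ref{lem: entropy_est} combined with Li's orthogonality for $P(U^j+\varepsilon A_X)$.
\end{enumerate}

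If you want to stay inside your approximation scheme, you must prove both inequalities between $\mathrm{Ent}\{P_\varepsilon(u)_t\}$ and $\lim_t\mathrm{Ent}(u_{t,\varepsilon})/t$. For ``$\ge$'', combine Lemma~\ref{lem: entropy_est} with \eqref{eq: ent_ue_int}, with $d_1(u_{t,\varepsilon},P_\varepsilon(u)_t)=o(t)$, and with H\"older continuity of mixed energies. For ``$\le$'' you need an additional convexity argument. One option is convexity and lower semicontinuity of the K-energy along the finite energy geodesic chords from $0$ to $u_{T,\varepsilon}$, combined with Busemann convexity of $d_1$.

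Finally, your lower bound at $\varepsilon=0$ (``the competitor $u_t+\varepsilon\,\cdot$'') is not an argument. The paper uses $\mathrm{Ent}^{1/\varepsilon}\{u_t\}\to\mathrm{Ent}\{u_t\}$ from \cite[Theorem 6.3]{DZ25}. Alternatively, once the derivative formula is known, concavity and lower semicontinuity of radial entropy suffice.
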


Theorem \ref{thm:intro_radial} is obtained by taking the asymptotic slope at infinity in Theorem \ref{thm:main_f'} along a geodesic ray. From this perspective, the K\"ahler and radial formulas arise as two instances of the same underlying variational principle. More precisely, given a fixed ray $\{u_t\}$, one may either solve the K\"ahler Aubin--Yau problem at each time $t$, obtaining $u_{t,\varepsilon}$, and subsequently pass to the limit $t\to\infty$, or pass directly to the radial setting and solve the corresponding radial Aubin--Yau deformation problem, obtaining $\{P_\varepsilon(u)_t\}$. While the rays $\{u_{t,\varepsilon}\}$ and $\{P_\varepsilon(u)_t\}$ need not coincide, they determine the same asymptotic energy, and the asymptotic slope of the K\"ahler entropy coincides with the radial entropy. This is the content of our next main result:

\begin{theorem}[Slope compatibility]\label{thm: intro_slope}
Let $\{u_t\}\in\mathcal R^1$, and for $t>0$ and $\varepsilon>0$ set
\[
 g_t(\varepsilon):=\frac{I(u_{t,\varepsilon})}{t},
 \qquad
 g_\infty(\varepsilon):=I\{P_\varepsilon(u)_t\}.
\]
Then $g_t(\cdot)\to g_\infty(\cdot)$ locally uniformly on $(0,+\infty)$, as $t \to \infty$, and the derivatives
converge locally uniformly as well. In particular, for every $\varepsilon>0$,
\[
 \lim_{t\to\infty}\frac{I(u_{t,\varepsilon})}{t}
 =I\{P_\varepsilon(u)_t\},
 \qquad
 \lim_{t\to\infty}\frac{\mathrm{Ent}(u_{t,\varepsilon})}{t}
 =\mathrm{Ent}\{P_\varepsilon(u)_t\}.
\]
\end{theorem}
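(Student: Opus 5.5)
The plan is to first establish pointwise convergence $g_t(\varepsilon)\to g_\infty(\varepsilon)$ for each fixed $\varepsilon>0$. The derivative statement should then follow from concavity: by Theorem \ref{thm:main_f'}, each $g_t=f_{u_t}/t$ is concave and non-decreasing on $[0,\infty)$. Pointwise convergence of concave functions to a limit that is $C^1$ (Theorem \ref{thm:intro_radial}, which we may apply when $\mathrm{Ent}\{u_t\}<\infty$) gives locally uniform convergence, and also locally uniform convergence of the derivatives. This last point is the standard fact that for concave functions the one-sided derivatives are squeezed between difference quotients, and the limit derivative is continuous. Since $g_t'(\varepsilon)=\mathrm{Ent}(u_{t,\varepsilon})/t$ and $g_\infty'(\varepsilon)=\mathrm{Ent}\{P_\varepsilon(u)_t\}$, the two displayed identities follow. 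For rays with infinite entropy slope, I would still use Theorem \ref{thm:main_f'} on $(0,\infty)$, where entropies of the Aubin--Yau solutions are finite. The $C^1$ regularity of $g_\infty$ on $(0,\infty)$ should come from the radial theory for $P_\varepsilon$, which does not require finite entropy at $\varepsilon=0$, or else from the H\"older estimate of Lemma \ref{lem: Ent_Holder} passed to the limit.

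For the pointwise convergence I would prove two inequalities. \emph{Lower bound.} Take $v_t:=P_\varepsilon(u)_t$, a bounded geodesic ray with $L^\beta(\{v_t\},\{u_t\})=0$ and $\beta=1/\varepsilon$. By the maximizing property \eqref{eq:sup_Ding} at each time $t$,
$$F^\beta(u_{t,\varepsilon},u_t)\ge F^\beta(v_t,u_t)=-\log\int_X e^{\beta(v_t-u_t)}\frac{\omega^n}{V}+\beta\big(I(v_t)-I(u_t)\big).$$
The left side equals $\beta(I(u_{t,\varepsilon})-I(u_t))$ because of the normalization. Dividing by $\beta t$ and taking $\liminf$ gives $\liminf_t g_t(\varepsilon)\ge I\{P_\varepsilon(u)_t\}$. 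Here I take the liminf in the definition of $L^\beta$ along a suitable sequence, and use the equality $L^\beta=0$.

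\emph{Upper bound.} This is the main obstacle. The family $t\mapsto u_{t,\varepsilon}$ is not a geodesic ray, so I would produce one from it. First, $u_{t,\varepsilon}\ge u_t$ up to bounded error and $u_{t,\varepsilon}-u_t\le C t$ for $t$-Lipschitz rays. Next, consider $w_t:=u_{t,\varepsilon}$ along a sequence $t_j\to\infty$ and form the geodesic segments from $0$ to $u_{t_j,\varepsilon}$, rescaled in time. By $d_1$-compactness (the entropy bounds from \eqref{eq: ent_ue_int}, or the fact that $u_{t,\varepsilon}$ is sandwiched between $u_t$ and $u_t+O(t)$), a subsequence converges to a subgeodesic ray $\{w_s\}\in\mathcal R^1$ with $I\{w_s\}\ge\limsup_t g_t(\varepsilon)$, using concavity of $I$ along the segments. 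The normalization $\int e^{\beta(u_{t,\varepsilon}-u_t)}\omega^n/V=1$, combined with convexity of $t\mapsto -\log\int e^{\beta(\cdot)}$ along (sub)geodesics in the Berndtsson sense, should pass to the limit as $L^\beta(\{w_s\},\{u_s\})\ge0$. Then $\{w_s\}$ is a candidate in the envelope \eqref{eq: P_beta_u_def}, so $w_s\le P_\varepsilon(u)_s$ and $I\{w_s\}\le I\{P_\varepsilon(u)_t\}$.

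The delicate point is justifying the semicontinuity of $L^\beta$ under this limiting procedure. I would try to handle it via Demailly's/Guan--Zhou openness-type or an Ohsawa--Takegoshi estimate along the segments, as done in \cite[\S 6]{DZ25}.
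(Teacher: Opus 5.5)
Your lower bound is correct. Testing the Ding functional \eqref{eq:sup_Ding} at time $t$ against $P_\varepsilon(u)_t$, and using $L^{\beta}(\{P_\varepsilon(u)_t\},\{u_t\})=0$, gives $\liminf_t g_t(\varepsilon)\ge I\{P_\varepsilon(u)_t\}$. (The maximizing property extends from continuous to bounded competitors by decreasing approximation.) This is the easy half of Theorem~\ref{thm: rad_Ent_beta=sup}.

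\textbf{The upper bound has a genuine gap, and the issue is not semicontinuity.} For the segments $s\mapsto w^j_s$ joining $0$ to $u_{t_j,\varepsilon}$, the normalization $\int_X e^{\beta(u_{t_j,\varepsilon}-u_{t_j})}\omega^n=V$ is known only at $s=t_j$. To get $\tilde L^\beta(\{w_s\},\{u_s\})\ge 0$ you need, in effect, a bound $h_j(s):=-\log\int_X e^{\beta(w^j_s-u_s)}\frac{\omega^n}{V}\ge -\delta s-C_\delta$ for all $s\le t_j$, uniformly in $j$. Such a bound would pass to the limit by Fatou, but nothing in your sketch produces it. Berndtsson's theorem does not apply, because the weight $\beta(u_s-w^j_s)$ is a difference of geodesics and is not psh in $(x,z)$. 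Even if $h_j$ were convex, $h_j(0)=h_j(t_j)=0$ would only give $h_j\le 0$ on $[0,t_j]$, which is the wrong direction. Pointwise convexity in $s$ does not help either, since it bounds both $w^j_s$ and $u_s$ from above by their chords.

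The paper does not construct such a limit ray. It takes $\lim_t\frac{\beta}{t}(I(u_{t,\varepsilon})-I(u_t))=\mathrm{Ent}^\beta\{u_t\}=\sup F^\beta(\{\cdot\},\{u_t\})$ from Theorem~\ref{thm: rad_Ent_beta=sup} (i.e.\ \cite[Theorem 6.1, Remarks 6.9, 6.10]{DZ25}). By Theorem~\ref{thm: maximizer_main}(ii) this supremum is attained at $\{P_\varepsilon(u)_t\}$ with $L^\beta=0$, which gives $\lim_t g_t(\varepsilon)=I\{P_\varepsilon(u)_t\}$ at once. You should use that.

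\textbf{The derivative part has a second problem.} Once pointwise convergence is known, your concavity squeeze is valid provided $g_\infty$ is differentiable. Your fallback for this is exactly the paper's route: \eqref{eq: ent_continuous} applied to $u_t$ and divided by $t$ gives $C^{1,1/4^n}$ bounds for $g_t$ on compacts of $(0,\infty)$ that are uniform in $t$, and Arzel\`a--Ascoli then gives $g_\infty\in C^{1,1/4^n}$ and $C^1_{\mathrm{loc}}$ convergence.

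However, the identification $g_\infty'(\varepsilon)=\mathrm{Ent}\{P_\varepsilon(u)_t\}$, needed for the second displayed limit, cannot be borrowed from Theorem~\ref{thm:intro_radial}. First, that theorem is deduced in the paper from Proposition~\ref{prop: thm12cor}: the concavity and differentiability of $H_u=g_\infty$ on $(0,\infty)$ are obtained precisely as limits of the $g_t$, so the appeal is circular. Second, it assumes $\mathrm{Ent}\{u_t\}<\infty$, and your appeal to Theorem~\ref{thm:main_f'} in the infinite case identifies only $g_t'$, not $g_\infty'$.

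The paper proves the identification separately, in two steps:
\begin{enumerate}
\item For test configuration rays, by differentiating Li's volume formula for the log discrepancy models $(\mathcal X,\mathcal L+\varepsilon K^{\log}_{\mathcal X/X\times\PP^1})$.
\item In general, by Demailly approximation. This combines Lemma~\ref{lem: P_beta_rad_conv}, the concavity trick for convergence of derivatives, Lemma~\ref{lem: entropy_est} for one inequality, and $d_1^c$-lower semicontinuity of radial entropy for the other.
\end{enumerate}
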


The critical part of the proof of the above result is contained in Proposition \ref{prop: thm12cor}, that provides quantitative continuity estimates for the Monge--Ampère energy of Aubin--Yau solutions with respect to both the potential and the parameter $\beta=1/\varepsilon$.  In the radial setting the main additional
point is to identify the derivative of $g_\infty$ with radial entropy.  This is
first done for rays induced by test configurations, where the statement follows
from intersection-theoretic slope formulas in \cite{Li21}, and then extended to arbitrary
finite energy rays using Demailly approximation and lower semicontinuity of
radial entropy.

\subsection{Non-Archimedean entropy}
\label{subset: NA}
An analogous Aubin--Yau variational picture emerges on the non-Archimedean (NA) context as well. Since Aubin--Yau type equations have not yet been given a precise formulation in NA geometry, we propose a variational definition of the corresponding ``NA Aubin--Yau objects,'' motivated by the radial setting discussed above.

Let $(X,L)$ be a complex projective manifold, with $L$ being an ample line bundle on $X$. Let us recall some standard notions from NA geometry. We refer the reader to \cite{BoJ18a,BoJ18,BBJ18,BJ22b} for more background.

Let $X^{an}$ denote the Berkovich analytification of $X$ with respect to the trivial absolute value on the ground field $\CC$. We equip $X^{an}$ with the Berkovich topology, so that it is a compact topological space. Inside $X^{an}$ there is a dense subset $X^{div}$ consisting of divisorial valuations on $X$.
Each $v\in X^{div}$ is of the form $v=\lambda\ord_F$, where $\lambda\in\QQ_{\geq 0}$ and $F\subset Y\xrightarrow{\pi}X$ is a prime divisor on some smooth birational model over $X$.
One then defines the log discrepancy function $A_X: X^{div}\to [0,+\infty)$ as
$$
A_X(v)=A_X(\lambda\ord_F):=\lambda(1+\ord_F(K_Y-\pi^*K_X)).
$$
When $\lambda=0$, $v=v_{triv}$ is the trivial valuation, so that $A_X(v_{triv})=0$; otherwise, $A_X(v)>0$.
And $A_X(\cdot)$ admits a maximal lower semi-continuous (lsc) extension to $X^{an}$, so that $A_X:X^{an}\to [0,+\infty]$ is lsc on $X^{an}$ (see \cite[Appendix A]{BoJ18}).

Following \cite[\S 6.2]{BBJ18}, we denote by $\cH^{\mathrm{NA}}$ the space of continuous $L$-psh functions on $X^{an}$ arising from test configurations of $(X,L)$ (see \S\ref{sec: pre}). This space is the NA analogue of $\cH$ for $(X,\omega)$ (where $\omega\in c_1(L)$). A general $L$-psh function on $X^{an}$ is not identically $-\infty$ and is the decreasing limit of a net of functions in $\cH^{\mathrm{NA}}$. We denote by $\mathrm{PSH}^{\mathrm{NA}}$ the set of all such functions, and each $U\in\mathrm{PSH}^{\mathrm{NA}}$ is upper semi-continuous (usc) on $X^{an}$. One can also define the NA analogue of $\cE^1$, denoted by $\cE^{1,\mathrm{NA}}$, which consists of $L$-psh functions with finite NA Monge--Amp\`ere energies (see \S\ref{sub: NA_pre}).

For any $U\in\cE^{1,\mathrm{NA}}$, one is also interested in the NA entropy of $U$, which is defined as (see \cite[\S 2.4]{BoJ18})
$$
\mathrm{Ent}^{\mathrm{NA}}(U):=\int_{X^{an}}A_X\ \mathrm{MA}^{\mathrm{NA}}(U).
$$
Here $\mathrm{MA}^{\mathrm{NA}}(U)$ denotes the NA Monge--Amp\`ere measure of $U$ (as defined in \cite[Theorem 6.9]{BoJ18a}), which is a probability Radon measure on $X^{an}$. 

%Note that $\cE^{1,\mathrm{NA}}$ is equipped with some strong topology (see \cite[\S 6.6]{BoJ18a}). For us, we do not need to work with this topology specifically, but only use one aspect of it: if there is a decreasing (or a uniformly convergent) sequence $U^k\to U$ in $\cE^{1,\mathrm{NA}}$, then one has the weak convergence $\mathrm{MA}^{\mathrm{NA}}(U^k)\to \mathrm{MA}^{\mathrm{NA}}(U)$ of probability measures (see \cite[\S 6.2]{BoJ18a}). So in particular, $\mathrm{Ent}^{\mathrm{NA}}:\cE^{1,\mathrm{NA}}\to[0,+\infty]$ is lsc along such a sequence.

Now as in the K\"ahler case, for any $U\in\cE^{1,\mathrm{NA}}$ and $\varepsilon>0$, we would like to define some continuous potential $U_\varepsilon\in \cE^{1,\mathrm{NA}}$ that is the solution to some NA version of the Aubin--Yau type equation \eqref{eq: def_u_e}. Unfortunately, it seems that such NA equations do not quite make sense on $X^{an}$ (due to the fact that there is no canonical background NA measure associated to the pair $(X,L)$). However, it remains meaningful to seek a maximizer of the following NA Ding-type quantity (compare \eqref{eq:sup_Ding}):
\begin{equation}
    \label{eq:sup_NA_Ding}
    \sup_{V\in \cE^{1,\mathrm{NA}}\cap C^0(X^{an})}\left(\mathrm{L}^{\mathrm{NA}}\left(\frac{1}{\varepsilon}(U-V)\right)+\frac{1}{\varepsilon}(I^{\mathrm{NA}}(V)-I^{\mathrm{NA}}(U))\right).
\end{equation}
Here $\mathrm{L}^{\mathrm{NA}}(f):=\inf_{X^{div}}(A_X+f)$ is the NA $L$ energy for some (not necessarily continuous) function $f$ on $X^{an}$ (cf. \cite[\S 2.6]{BoJ18}) and $I^{\mathrm{NA}}$ is the NA Monge--Amp\`ere energy (see \S\ref{sec: pre}). The quantity \eqref{eq:sup_NA_Ding} arises naturally from the NA $\beta$-entropy considered in \cite[\S 8]{DZ25}.

We show in Corollary \ref{cor:U_e_is_max} that for any $\varepsilon>0$ and $U\in\cE^{1,\mathrm{NA}}$ there exists a unique normalized maximizer $U_\varepsilon\in \cE^{1,\mathrm{NA}}\cap C^0(X^{an})$ for \eqref{eq:sup_NA_Ding}, satisfying
$$
\mathrm{L}^{\mathrm{NA}}\left(\frac{1}{\varepsilon}(U-U_\varepsilon)\right)=0.
$$
In this sense, $U_\varepsilon$ is indeed the NA analogue for the solution $u_\varepsilon$ to the Aubin--Yau type equation \eqref{eq: def_u_e}.

In contrast to the K\"ahler setting, but similarly to the radial one, the maximizer $U_\varepsilon$ admits a rather explicit description as an envelope (see Corollary \ref{cor:U_e_is_max}).
$$
U_\varepsilon=\mathrm{P}(U+\varepsilon A_X)=\sup\{V: V\in \mathrm{PSH}^{\mathrm{NA}},\ V\leq U+\varepsilon A_X\}.
$$
To the authors' knowledge, such an envelope first appeared in Xia's work \cite[\S7]{X23}.
In Proposition \ref{prop: U_e_conti} we will prove a somewhat surprising result, saying that the above envelope is always a continuous $L$-psh function on $X^{an}$ for any $U\in\cE^{1,\mathrm{NA}}$ and $\varepsilon>0$. Moreover, one has $U_\varepsilon\searrow U$ as $\varepsilon\searrow 0$ (see Corollary \ref{cor:Ue_to_U}).

As in the K\"ahler setting, we consider
$
I^{\mathrm{NA}}(U_\varepsilon)\text{ for }\varepsilon\in [0,+\infty).
$
Now we state the NA version of Theorem \ref{thm:main_f'}.

\begin{theorem}[NA Aubin-Yau deformations]
    \label{thm: main_F'}
    For any $U\in\cE^{1,\mathrm{NA}}$, the function $[0,+\infty)\ni\varepsilon\mapsto I^{\mathrm{NA}}(U_\varepsilon)$ is concave, non-decreasing and belongs to  $C^1([0,\infty))$. Moreover,
\begin{equation}\label{eq: NA_id}
    \frac{d}{d\varepsilon}I^{\mathrm{NA}}(U_\varepsilon)=\mathrm{Ent}^{\mathrm{NA}}(U_\varepsilon)\text{ for any $\varepsilon>0$ and }
    \frac{d}{d\varepsilon}\bigg|_{\varepsilon=0^+}I^{\mathrm{NA}}(U_\varepsilon)=\mathrm{Ent}^{\mathrm{NA}}(U).
\end{equation}
\end{theorem}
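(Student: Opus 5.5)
The plan is to work directly with the envelope description $U_\varepsilon=\mathrm{P}(U+\varepsilon A_X)$ from Corollary \ref{cor:U_e_is_max}, together with its continuity (Proposition \ref{prop: U_e_conti}) and the monotone convergence $U_\varepsilon\searrow U$ (Corollary \ref{cor:Ue_to_U}).

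\textbf{Monotonicity.} Since $\varepsilon\mapsto U+\varepsilon A_X$ is pointwise non-decreasing, so is $\varepsilon\mapsto U_\varepsilon$. Monotonicity of $I^{\mathrm{NA}}$ then shows that $\varepsilon\mapsto I^{\mathrm{NA}}(U_\varepsilon)$ is non-decreasing.

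\textbf{Concavity.} For $\varepsilon=(1-s)\varepsilon_0+s\varepsilon_1$, the function $(1-s)U_{\varepsilon_0}+sU_{\varepsilon_1}$ is $L$-psh and bounded above by $U+\varepsilon A_X$. Hence
$$U_\varepsilon\geq (1-s)U_{\varepsilon_0}+sU_{\varepsilon_1}.$$
We then use monotonicity of $I^{\mathrm{NA}}$ and its concavity along affine segments, which follows from $\frac{d^2}{ds^2}I^{\mathrm{NA}}\le 0$ in the NA setting. This gives concavity of $f(\varepsilon):=I^{\mathrm{NA}}(U_\varepsilon)$. Continuity at $0$ follows from $U_\varepsilon\searrow U$ and continuity of $I^{\mathrm{NA}}$ along decreasing sequences.

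\textbf{Derivative bounds.} We use the standard two-sided bounds
$$\int(V-W)\,\mathrm{MA}^{\mathrm{NA}}(V)\le I^{\mathrm{NA}}(V)-I^{\mathrm{NA}}(W)\le \int (V-W)\,\mathrm{MA}^{\mathrm{NA}}(W).$$
The key input is the orthogonality relation for envelopes: $\mathrm{MA}^{\mathrm{NA}}(U_\varepsilon)$ is supported on the contact set $\{U_\varepsilon=U+\varepsilon A_X\}$. This holds in the NA setting for envelopes of lsc/usc obstacles, and I would need to check it carefully, since the obstacle $U+\varepsilon A_X$ is neither usc nor lsc. The natural route is to approximate by $U^k\in\cH^{\mathrm{NA}}$ decreasing to $U$ and by continuous $A^j\nearrow A_X$, then pass to the limit.

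\textbf{Upper bound.} For $\delta>0$, on the support of $\mathrm{MA}^{\mathrm{NA}}(U_\varepsilon)$ we have $U_\varepsilon=U+\varepsilon A_X$, while $U_{\varepsilon+\delta}\le U+(\varepsilon+\delta)A_X$ everywhere. Therefore
$$f(\varepsilon+\delta)-f(\varepsilon)\le\int(U_{\varepsilon+\delta}-U_\varepsilon)\,\mathrm{MA}^{\mathrm{NA}}(U_\varepsilon)\le \delta\,\mathrm{Ent}^{\mathrm{NA}}(U_\varepsilon).$$
The same argument with roles swapped, using $U_{\varepsilon-\delta}\le U+(\varepsilon-\delta)A_X$, gives
$$f(\varepsilon)-f(\varepsilon-\delta)\ge \delta\,\mathrm{Ent}^{\mathrm{NA}}(U_\varepsilon).$$
Combining these with concavity, the one-sided derivatives satisfy
$$f'_+(\varepsilon)\le \mathrm{Ent}^{\mathrm{NA}}(U_\varepsilon)\le f'_-(\varepsilon).$$
Applying the same bounds at $\varepsilon+\delta$ gives
$$\delta\,\mathrm{Ent}^{\mathrm{NA}}(U_{\varepsilon+\delta})\le f(\varepsilon+\delta)-f(\varepsilon)\le\delta\,\mathrm{Ent}^{\mathrm{NA}}(U_\varepsilon).$$
Hence $\varepsilon\mapsto\mathrm{Ent}^{\mathrm{NA}}(U_\varepsilon)$ is non-increasing.

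\textbf{$C^1$ and the value at $0$.} Differentiability and $C^1$ regularity on $(0,\infty)$ reduce to continuity of $\varepsilon\mapsto \mathrm{Ent}^{\mathrm{NA}}(U_\varepsilon)$. Since this function is monotone, it suffices to show its one-sided limits agree, and I would get this from weak continuity of $\mathrm{MA}^{\mathrm{NA}}(U_\varepsilon)$ in $\varepsilon$ combined with the sandwich above. Once $f$ is known to be differentiable with derivative $\mathrm{Ent}^{\mathrm{NA}}(U_\varepsilon)$, monotone convergence in the contact-set identity will pin down the limit.

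At $0$, the bound at $\varepsilon=0$ (with contact set the support of $\mathrm{MA}^{\mathrm{NA}}(U)$, where $U_0=U$) gives $f(\delta)-f(0)\ge\delta\,\mathrm{Ent}^{\mathrm{NA}}(U)$ only if we use $\mathrm{MA}^{\mathrm{NA}}(U_\delta)$ as the test measure. So I instead combine two facts:
$$\liminf_{\varepsilon\to0}\mathrm{Ent}^{\mathrm{NA}}(U_\varepsilon)\ge\mathrm{Ent}^{\mathrm{NA}}(U),$$
which holds by lower semicontinuity of $A_X$ and weak convergence $\mathrm{MA}^{\mathrm{NA}}(U_\varepsilon)\to\mathrm{MA}^{\mathrm{NA}}(U)$ along the decreasing family; and the reverse bound
$$f(\delta)-f(0)\le\int(U_\delta-U)\,\mathrm{MA}^{\mathrm{NA}}(U)\le\delta\,\mathrm{Ent}^{\mathrm{NA}}(U).$$
Together these give $f'_+(0)=\mathrm{Ent}^{\mathrm{NA}}(U)$ and continuity of the derivative at $0$; this includes the case where both sides are infinite.

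\textbf{Main obstacle.} The hardest step is the orthogonality/contact property of $\mathrm{MA}^{\mathrm{NA}}(U_\varepsilon)$ for the non-continuous obstacle $U+\varepsilon A_X$. I would first try to derive it via the approximation scheme used in the proof of Proposition \ref{prop: U_e_conti}, replacing $A_X$ by continuous minorants $\min(A_X,\text{piecewise linear})$ where the envelope orthogonality of Boucksom--Jonsson applies, and then pass to the limit using the continuity of $U_\varepsilon$.
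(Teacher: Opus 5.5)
The gap is in the $C^1$ step on $(0,\infty)$. Write $h(\varepsilon):=\mathrm{Ent}^{\mathrm{NA}}(U_\varepsilon)$. Your sandwich $\delta\,h(\varepsilon+\delta)\le f(\varepsilon+\delta)-f(\varepsilon)\le\delta\,h(\varepsilon)$, combined with concavity, identifies $f'_+(\varepsilon)$ and $f'_-(\varepsilon)$ with the right and left limits of the non-increasing function $h$. So $f'(\varepsilon)=h(\varepsilon)$ holds exactly when $h$ is continuous at $\varepsilon$.

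Weak convergence of $\mathrm{MA}^{\mathrm{NA}}(U_{\varepsilon'})$ to $\mathrm{MA}^{\mathrm{NA}}(U_\varepsilon)$ cannot give this. Because $A_X$ is only lsc and unbounded, it yields $\liminf_{\varepsilon'\to\varepsilon}h(\varepsilon')\ge h(\varepsilon)$. Together with monotonicity this gives right-continuity. From the left, however, you already know $h(\varepsilon')\ge h(\varepsilon)$, so nothing is gained. A corner of $f$ at $\varepsilon$ is consistent with every inequality you list, and appealing to monotone convergence ``once $f$ is known to be differentiable'' is circular.

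Rewriting $h(\varepsilon')=\frac{1}{\varepsilon'}\int(U_{\varepsilon'}-U)\,\mathrm{MA}^{\mathrm{NA}}(U_{\varepsilon'})$ does not rescue a weak-convergence argument either. Since $U$ is merely usc, you would need $\liminf\int U\,\mathrm{MA}^{\mathrm{NA}}(U_{\varepsilon'})\ge\int U\,\mathrm{MA}^{\mathrm{NA}}(U_\varepsilon)$, and weak convergence only gives the opposite inequality.

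The missing input is the paper's Lemma \ref{lem: Ent_Ue_conti_in_e}, which has three parts:
\begin{itemize}
\item Corollary \ref{cor: U_e_Lip_in_e} gives $U_{\varepsilon'}\to U_\varepsilon$ \emph{uniformly} as $\varepsilon'\to\varepsilon$. This comes from the convex-combination trick with the constant competitor $-C_{\varepsilon/2}$, not from the continuity of each $U_\varepsilon$ on $X^{an}$.
\item For fixed $U\in\cE^{1,\mathrm{NA}}$, the quantity $\int(\phi-U)\,\mathrm{MA}^{\mathrm{NA}}(\phi)$ is continuous along uniformly convergent bounded $\phi$ \cite[Corollary 6.14]{BoJ18a}.
\item The orthogonality identity converts this into two-sided continuity of $h$.
\end{itemize}

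With that added, your argument closes, and it is a more direct route than the paper's. The paper first proves everything for $F_k(\varepsilon)=I^{\mathrm{NA}}(U^k_\varepsilon)$ with $U^k\in\cH^{\mathrm{NA}}$, via Li's volume formula and differentiability of volumes \cite{BFJ09}, and then lets $k\to\infty$ using Corollary \ref{cor: Ent=lim Ent}. You instead get concavity straight from $U_{(1-s)\varepsilon_0+s\varepsilon_1}\ge(1-s)U_{\varepsilon_0}+sU_{\varepsilon_1}$ and run the sandwich on $U_\varepsilon$ itself.

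The orthogonality you flag as the main obstacle is Proposition \ref{prop:Ent_U_e=int_Ue_U}, in integrated (a.e.) form, which is all your sandwich uses. The paper proves it via Xia's identity \eqref{eq: xia} and the fact that $\mathrm{MA}^{\mathrm{NA}}(U^k_\varepsilon)$ is carried by $\Delta_{\cX^k}$. Your minorant scheme can also be completed:
\begin{itemize}
\item Take genuinely continuous $A^j\nearrow A_X$, such as $A_X\circ r_{\cX}$ over finer models; note that $\min(A_X,\cdot)$ is not continuous.
\item Continuity of $U^k_\varepsilon$ plus compactness gives $\mathrm{P}(U^k+\varepsilon A^j)\to U^k_\varepsilon$ uniformly.
\item Because the $A^j$ increase, the limit of $\int A^j\,\mathrm{MA}^{\mathrm{NA}}(\mathrm{P}(U^k+\varepsilon A^j))$ dominates $\mathrm{Ent}^{\mathrm{NA}}(U^k_\varepsilon)$. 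This is the nontrivial direction.
\item Then let $k\to\infty$ using lower semicontinuity of entropy.
\end{itemize}

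Your treatment of $\varepsilon=0$ is fine.
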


The proof follows a similar strategy as the one of Theorem \ref{thm:main_f'}, but relies also heavily on the deep NA machinery developed in \cite{BoJ18a,BoJ18,BBJ18,BJ22b}. Additionally, the second identity of this theorem also confirms a conjecture of Xia \cite[Remark 7.1]{X23}. 

The above result readily implies that one can regularize the NA entropy of $U$ using the decreasing continuous family $U_\varepsilon$. As shown in \cite[Theorem B]{BoJ18}, any continuous $L$-psh function arises from some filtration of the section ring $R(X,L)$. So the NA entropy of any $U\in\cE^{1,\mathrm{NA}}$ can be approximated using a sequence of filtrations, thus reproducing a slightly weaker version of Li's regularization result \cite[Proposition 6.3]{Li20-cscK}. Recall that Li used the NA Calabi--Yau theorem \cite{BFJ15a,BoJ18a} to prove his result, while our work shows that one can alternatively use Aubin--Yau type equations to carry out the regularization procedure. Actually, from the proof of Theorem \ref{thm: main_F'} one can furthermore deduce the following more refined regularization result for the NA entropy:

\begin{theorem}
\label{thm: main_reg_NA_ent}
    For any $U\in\cE^{1,\mathrm{NA}}$, there exists a sequence $U^k\in\cH^{\mathrm{NA}}$ with $U^k\searrow U$ and a sequence of rational numbers $\varepsilon_k\searrow 0$ such that $U^k_{\varepsilon_k}\searrow U$ and
    $$
    \mathrm{Ent}^{\mathrm{NA}}(U)=\lim_{k\to\infty}\mathrm{Ent}^{\mathrm{NA}}(U^k_{\varepsilon_k}).
    $$
\end{theorem}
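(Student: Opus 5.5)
The plan is to combine three ingredients: the monotonicity of the envelope construction $U\mapsto U_\varepsilon=\mathrm{P}(U+\varepsilon A_X)$ in both $U$ and $\varepsilon$; Theorem \ref{thm: main_F'}, which identifies $\mathrm{Ent}^{\mathrm{NA}}$ with the derivative of $\varepsilon\mapsto I^{\mathrm{NA}}(U_\varepsilon)$; and lower semicontinuity of $\mathrm{Ent}^{\mathrm{NA}}$ along decreasing sequences.

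\emph{Choosing the approximations.} Since $U\in\cE^{1,\mathrm{NA}}\subset\mathrm{PSH}^{\mathrm{NA}}$, we may pick a decreasing sequence $U^k\in\cH^{\mathrm{NA}}$ with $U^k\searrow U$ (countability should follow from working with the Demailly-type approximation of \cite{BBJ18,BoJ18}). The envelope is clearly monotone in both arguments: if $U\le U'$ and $\varepsilon\le\varepsilon'$, then $U_\varepsilon\le U'_{\varepsilon'}$. Hence for any choice of decreasing rationals $\varepsilon_k\searrow0$, the sequence $U^k_{\varepsilon_k}$ is decreasing and satisfies $U^k_{\varepsilon_k}\ge U$. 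For each fixed $j$ we have $\lim_k U^k_{\varepsilon_k}\le \lim_k U^k_{\varepsilon_j}=U_{\varepsilon_j}$. Here the equality $U^k_{\varepsilon}\searrow U_\varepsilon$ is continuity of the envelope along decreasing sequences; I would prove it by noting that the limit is psh, lies below $U+\varepsilon A_X$, and lies above $U_\varepsilon$. Letting $j\to\infty$ and using Corollary \ref{cor:Ue_to_U} gives $U^k_{\varepsilon_k}\searrow U$.

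\emph{The entropy limit.} The lower bound $\liminf_k \mathrm{Ent}^{\mathrm{NA}}(U^k_{\varepsilon_k})\ge\mathrm{Ent}^{\mathrm{NA}}(U)$ follows from weak convergence of $\mathrm{MA}^{\mathrm{NA}}$ along decreasing sequences in $\cE^{1,\mathrm{NA}}$ together with lower semicontinuity of $A_X$. For the upper bound I would use concavity from Theorem \ref{thm: main_F'}. The function $\varepsilon\mapsto I^{\mathrm{NA}}(U^k_\varepsilon)$ is concave and $C^1$ with derivative $\mathrm{Ent}^{\mathrm{NA}}(U^k_\varepsilon)$, so for $0<\varepsilon_k$,
\[
\mathrm{Ent}^{\mathrm{NA}}(U^k_{2\varepsilon_k})\le \frac{I^{\mathrm{NA}}(U^k_{2\varepsilon_k})-I^{\mathrm{NA}}(U^k_{\varepsilon_k})}{\varepsilon_k}\le \mathrm{Ent}^{\mathrm{NA}}(U^k_{\varepsilon_k}).
\]
It is therefore convenient to target $U^k_{2\varepsilon_k}$, or equivalently to reindex, and bound its entropy by the difference quotient
\[
\frac{I^{\mathrm{NA}}(U^k_{\varepsilon_k})-I^{\mathrm{NA}}(U^k)}{\varepsilon_k}.
\]
For fixed $\varepsilon$, continuity of $I^{\mathrm{NA}}$ along decreasing sequences gives
\[
\frac{I^{\mathrm{NA}}(U^k_{\varepsilon})-I^{\mathrm{NA}}(U^k)}{\varepsilon}\longrightarrow\frac{I^{\mathrm{NA}}(U_{\varepsilon})-I^{\mathrm{NA}}(U)}{\varepsilon},
\]
and the right-hand side tends to $\mathrm{Ent}^{\mathrm{NA}}(U)$ as $\varepsilon\to0$ by Theorem \ref{thm: main_F'}.

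\emph{Diagonal selection (main obstacle).} We now choose $\varepsilon_k$ by a diagonal argument. Pick rationals $\delta_j\searrow0$ with
\[
\frac{I^{\mathrm{NA}}(U_{\delta_j})-I^{\mathrm{NA}}(U)}{\delta_j}\le\mathrm{Ent}^{\mathrm{NA}}(U)+\frac1j.
\]
Then choose $k_j$ increasing so that for $k\ge k_j$ the corresponding quotient for $U^k$ is within $1/j$. Set $\varepsilon_k:=\delta_j$ for $k_j\le k<k_{j+1}$, and replace $U^k$ by the subsequence if strict decrease is wanted. Combining the concavity bound with this choice gives
\[
\limsup_k\mathrm{Ent}^{\mathrm{NA}}(U^k_{\varepsilon_k})\le\mathrm{Ent}^{\mathrm{NA}}(U),
\]
after adjusting by a factor of $2$ in the indexing, that is, using $\varepsilon_k=2\delta_j$ in the statement.

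The delicate point is the continuity $I^{\mathrm{NA}}(U^k)\to I^{\mathrm{NA}}(U)$, and likewise for the envelopes at fixed $\varepsilon$, since $U$ need not be continuous. I expect this to follow from monotone continuity of $I^{\mathrm{NA}}$ on $\cE^{1,\mathrm{NA}}$ \cite{BoJ18a}, combined with the fact that $U^k_\varepsilon\searrow U_\varepsilon$ established above. If that step fails, the fallback is to use the quantitative estimates behind the continuity statement in Proposition \ref{prop: U_e_conti}.
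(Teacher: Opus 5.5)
Your proof is correct, but it gets the upper bound by a different mechanism than the paper.

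\textbf{The paper's route.} The paper uses the specific multiplier-ideal sequence $U^k$ and combines two convergence statements:
\begin{itemize}
\item Corollary \ref{cor: Ent=lim Ent}, i.e.\ $\mathrm{Ent}^{\mathrm{NA}}(U^k_\varepsilon)\to\mathrm{Ent}^{\mathrm{NA}}(U_\varepsilon)$ at each fixed $\varepsilon>0$. This rests on the orthogonality formula of Proposition \ref{prop:Ent_U_e=int_Ue_U} and on convergence of $\int(U^k_\varepsilon-U^k)\,\mathrm{MA}^{\mathrm{NA}}(U^k_\varepsilon)$ along decreasing sequences.
\item Lemma \ref{lem: lim_Ent_Ue=Ent_U}, i.e.\ $\mathrm{Ent}^{\mathrm{NA}}(U_\varepsilon)\to\mathrm{Ent}^{\mathrm{NA}}(U)$ as $\varepsilon\to0$.
\end{itemize}
A diagonal choice of $(k_j,\varepsilon_j)$ then finishes.

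\textbf{Your route.} You squeeze instead. The lower bound comes from lower semicontinuity of entropy directly along $U^k_{\varepsilon_k}\searrow U$. The upper bound comes from $\mathrm{Ent}^{\mathrm{NA}}(U^k_\varepsilon)\le\varepsilon^{-1}\bigl(I^{\mathrm{NA}}(U^k_\varepsilon)-I^{\mathrm{NA}}(U^k)\bigr)$, followed only by monotone continuity of $I^{\mathrm{NA}}$ and of the envelope. Your proof that $U^k_\varepsilon\searrow U_\varepsilon$ for an arbitrary decreasing sequence is fine.

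\textbf{Comparison.} Your route never needs the fixed-$\varepsilon$ entropy convergence, and it works verbatim for any decreasing sequence in $\cH^{\mathrm{NA}}$, not just the multiplier-ideal one. The paper's route yields Corollary \ref{cor: Ent=lim Ent} as a by-product, which it needs anyway to identify $F'(\varepsilon)$ in Theorem \ref{thm: main_F'}. Your use of Theorem \ref{thm: main_F'} is not circular, since its proof does not use the regularization statement.

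\textbf{Two simplifications.}
\begin{itemize}
\item The factor $2$ is unnecessary. Concavity of $F_k$ on $[0,\infty)$ with $F_k'(\varepsilon)=\mathrm{Ent}^{\mathrm{NA}}(U^k_\varepsilon)$ already gives $\mathrm{Ent}^{\mathrm{NA}}(U^k_\varepsilon)\le\varepsilon^{-1}\bigl(F_k(\varepsilon)-F_k(0)\bigr)$. This bound also follows from Proposition \ref{prop:Ent_U_e=int_Ue_U} and \cite[(7.23)]{BJ22b}, as in the proof of Lemma \ref{lem: lim_Ent_Ue=Ent_U}.
\item Your constraint on $\delta_j$ is vacuous. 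One has $\varepsilon^{-1}\bigl(I^{\mathrm{NA}}(U_\varepsilon)-I^{\mathrm{NA}}(U)\bigr)\le\mathrm{Ent}^{\mathrm{NA}}(U)$ for every $\varepsilon>0$, so any rationals $\delta_j\searrow0$ work and only the choice of $k_j$ matters.
\end{itemize}
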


As shown in \cite[\S 9]{DZ25}, potentials of the form $U^{k}_{\varepsilon_k}$ correspond to log discrepancy models of $(X,L)$ (introduced in \cite[(11)]{DZ25}), so this yields an alternative (and more conceptual) proof of our regularization result \cite[Theorem 1.5]{DZ25}. We refer to \cite{T26} for a resolution of the Boucksom--Jonsson conjecture, showing that it is possible to approximate finite energy non-Archimedean entropy using non-Archimedean entropy of semi-ample test configurations.

\paragraph{The three approaches unified.} We will see in Theorem \ref{thm: 3kingdoms} that $I^{\mathrm{NA}}(U_\varepsilon)$ can be computed as the Monge--Amp\`ere slope of a distinguished (non-geodesic) ray, thereby bringing together the K\"ahler, radial, and non-Archimedean approaches:
\begin{equation}
\label{eq: NA_E=rad_E}
I^{\mathrm{NA}}(U_\varepsilon)=I\{P_\varepsilon(u)_t\}=I\{u_{t,\varepsilon}\}:
=\liminf_{t\to\infty}\frac{I(u_{t,\varepsilon})}{t},
\qquad \varepsilon\geq 0.
\end{equation}
Here $\{u_t\}$ is the maximal geodesic ray associated to $U\in\cE^{1,\mathrm{NA}}$ (see \cite[\S 6.4]{BBJ18}) and for each $t>0$ the potential $u_{t,\varepsilon}$ is the solution of \eqref{eq: def_u_e} along $u_t$ (when $\varepsilon=0$ we simply define $u_{t,0}=u_t$). When $U\in\cH^{\mathrm{NA}}$, the slope formula \eqref{eq: NA_E=rad_E} has already been established in \cite[\S9]{DZ25}. Here we are proving that the same identity holds for more general $U\in\cE^{1,\mathrm{NA}}$ as well. In fact, building on a more refined non-Archimedean analysis, we establish a stronger slope formula that applies even to possibly non-maximal geodesic rays (see Theorem \ref{thm: rad_E=sup}).

\subsection{Applications to cscK metrics}
Finally we point out that our Aubin--Yau deformation viewpoint also has new applications for the problem of finding constant scalar curvature K\"ahler (cscK) metrics. Indeed, in \cite{DZ25} we introduced an increasing family of $K^\beta$ energies and showed that the existence of a unique cscK metric in $c_1(L)$ is equivalent to the properness of $K^\beta$ for some $\beta>0$. Combining our techniques with that of  \cite{DZ25} we obtain the following addendum to the YTD correspondence of \cite{DZ25} for polarized K\"ahler manifolds without non-trivial holomorphic vectorfields:

\begin{theorem}
\label{thm:cscK}
    Let $(X,L)$ be a polarized manifold. The following are equivalent.
    \begin{enumerate}
        \item There exists a unique cscK metric in $c_1(L)$.
        \item For some $\beta>0$ the $K^\beta$ energy is proper.
 \item $(X,L)$ is uniformly $K^\beta$-stable for some $\beta>0.$
        \item For some $\beta>0$ the $K^\beta$ energy is bounded from below.
        \item $(X,L)$ is $K^\beta$-semistable for some $\beta>0.$
    \end{enumerate}
\end{theorem}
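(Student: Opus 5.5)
The plan is to reduce everything to the equivalences (1)$\Leftrightarrow$(2)$\Leftrightarrow$(3) already established in \cite{DZ25}, and then close the loop through the two new conditions. The trivial implications are (2)$\Rightarrow$(4), since a proper functional is bounded below, and (3)$\Rightarrow$(5), since uniform stability implies semistability. For (4)$\Rightarrow$(5) I would restrict $K^\beta$ to the geodesic ray $\{u_t\}$ of a test configuration and take slopes. The slope formula of \cite[\S 9]{DZ25}, together with \eqref{eq: NA_E=rad_E}, identifies the slope of $K^\beta$ with its non-Archimedean counterpart $K^{\beta,\mathrm{NA}}$. A lower bound for $K^\beta$ forces this slope to be $\geq 0$, which is $K^\beta$-semistability. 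So everything reduces to \textbf{(5)$\Rightarrow$(3) after enlarging $\beta$}: semistability at level $\beta$ should give uniform stability at some level $\beta'>\beta$, and then \cite{DZ25} yields (1).

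For that step I would use Theorem \ref{thm: main_F'} to write the non-Archimedean $\beta$-entropy as an averaged entropy. Put $\varepsilon=1/\beta$ and $F_U(s):=I^{\mathrm{NA}}(U_s)$. I expect the entropy part of $K^{\beta,\mathrm{NA}}$ (from \cite[\S 8]{DZ25}) to equal the difference quotient
\[
\frac{F_U(\varepsilon)-F_U(0)}{\varepsilon}=\frac1\varepsilon\int_0^\varepsilon \mathrm{Ent}^{\mathrm{NA}}(U_s)\,ds,
\]
where the identity holds because $F_U\in C^1([0,\infty))$ with $F_U'(s)=\mathrm{Ent}^{\mathrm{NA}}(U_s)$. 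Since $s\mapsto\mathrm{Ent}^{\mathrm{NA}}(U_s)$ is non-increasing (concavity of $F_U$), passing from $\varepsilon$ to $\varepsilon'=\varepsilon/2$ gives
\[
K^{\beta',\mathrm{NA}}(U)-K^{\beta,\mathrm{NA}}(U)=\frac1\varepsilon\Big(\int_0^{\varepsilon/2}-\int_{\varepsilon/2}^{\varepsilon}\Big)\mathrm{Ent}^{\mathrm{NA}}(U_s)\,ds\ \geq\ 0 .
\]
The work is to make this gap quantitative, namely to show it is at least $\delta J^{\mathrm{NA}}(U)$ for a uniform $\delta>0$ and all $U\in\cH^{\mathrm{NA}}$.

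I would try to get this from two ingredients. The first is the non-Archimedean $\alpha$-invariant type bound $\mathrm{Ent}^{\mathrm{NA}}(V)\geq \alpha J^{\mathrm{NA}}(V)$ with $\alpha>0$, applied to $V=U_s$ for small $s$. The second is the envelope description $U_s=\mathrm{P}(U+sA_X)$ from Corollary \ref{cor:U_e_is_max}, which bounds $J^{\mathrm{NA}}(U)-J^{\mathrm{NA}}(U_s)$ by a term linear in $s$ times a quantity controlled by $\mathrm{Ent}^{\mathrm{NA}}(U)$. This uses $\int A_X\,\mathrm{MA}^{\mathrm{NA}}$ and the monotonicity $U\le U_s\le U+sA_X$. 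Combining them, the first half of the average dominates the second by an amount comparable to $\varepsilon\,\alpha J^{\mathrm{NA}}(U)$, up to terms absorbed by choosing $\beta'$ large. This yields $K^{\beta',\mathrm{NA}}\geq \delta J^{\mathrm{NA}}$, i.e.\ uniform $K^{\beta'}$-stability.

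I expect the main obstacle to be this quantitative gap. Monotonicity of the difference quotients is free from concavity, but strict improvement proportional to $J^{\mathrm{NA}}$ is not. If the direct $\alpha$-invariant argument fails, I would run the same interpolation on the Kähler side. There Theorem \ref{thm:main_f'} gives $f_u(\varepsilon)-f_u(0)=\int_0^\varepsilon\mathrm{Ent}(u_s)\,ds$, Tian's bound $\mathrm{Ent}\geq \alpha J-C$ is available, and the goal would be properness of $K^{\beta'}$ directly from a lower bound on $K^\beta$. That would prove (4)$\Rightarrow$(2) and bypass the non-Archimedean step.
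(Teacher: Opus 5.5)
\textbf{The uniform gap, which your whole reduction rests on, is not proved, and the ingredients you propose cannot prove it.} Your reductions (2)$\Rightarrow$(4), (3)$\Rightarrow$(5) and (4)$\Rightarrow$(5) are fine. The missing step is a bound $K^{\beta',\mathrm{NA}}(U)-K^{\beta,\mathrm{NA}}(U)\geq\delta J^{\mathrm{NA}}(U)$ holding uniformly over $U\in\cH^{\mathrm{NA}}$.

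To see why your two inputs fail, normalize $\sup U=0$. Then $\sup U_s=0$ and $J^{\mathrm{NA}}(U_s)=-F_U(s)$. Your inputs become $F_U'(s)\geq-\alpha F_U(s)$ and $F_U(s)-F_U(0)\leq s\,\mathrm{Ent}^{\mathrm{NA}}(U)$. Together with concavity and $F_U\leq0$, they are all satisfied by the following profile: affine with slope $m$ on $[0,J^{\mathrm{NA}}(U)/m]$ and constant afterwards, for any $m\geq\alpha J^{\mathrm{NA}}(U)$. If moreover $m\leq J^{\mathrm{NA}}(U)/\varepsilon$, then your gap $\frac1\varepsilon\big(\int_0^{\varepsilon/2}-\int_{\varepsilon/2}^{\varepsilon}\big)F_U'$ is zero. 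Such an $m$ exists as soon as $\beta=1/\varepsilon\geq\alpha$, which is exactly your regime, since $K^\beta$ increases with $\beta$.

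Lower bounds on $F_U'$ can never force $F_U'$ to drop. What actually excludes affineness is a rigidity phenomenon: affine on $[0,\varepsilon]$ forces $U_\varepsilon=U$, and then $\mathrm{Ent}=0$. This is a statement about one potential at a time. Making it uniform in $J$ would need a compactness argument on $\{J=1\}$ along which the $\beta$-entropy slopes are continuous. No such compactness is available: continuity of $\mathrm{Ent}^\beta\{\cdot\}$ is only known under $d_1^c$-convergence. Your K\"ahler fallback has the same defect. A bound $\mathrm{Ent}^{\beta'}(u)-\mathrm{Ent}^{\beta}(u)\geq\delta(\sup u-I(u))-C$ would, after taking slopes along rays, give the same uniform radial gap.

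\textbf{The missing idea is that no uniformity is needed.} By \cite{CC2} and \cite{BerBern17}, (1) follows once $K\{u_t\}>0$ for every nontrivial $\{u_t\}\in\mathcal R^1$, which is a ray-by-ray condition. The paper proceeds as follows.
\begin{enumerate}
\item For a ray with $K\{u_t\}<\infty$, Li's approximation by test-configuration rays \cite{Li20-cscK} and the continuity result \cite[Theorem 5.2]{DZ25} turn $K^\beta$-semistability into $K^\beta\{u_t\}\geq0$.
\item It then suffices to prove the strict inequality $\mathrm{Ent}\{u_t\}>\mathrm{Ent}^\beta\{u_t\}$ for that single ray.
\item Suppose equality held. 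Then $h(\varepsilon)=(I\{u_{t,\varepsilon}\}-I\{u_t\})/\varepsilon$ is constant on $(0,1/\beta]$. By Theorems \ref{thm:intro_radial} and \ref{thm: intro_slope}, this gives $\mathrm{Ent}\{P(u)_t^{1/\varepsilon}\}=\mathrm{Ent}\{u_t\}$ for all such $\varepsilon$.
\item Corollary \ref{cor: Ent_id} then yields $P(u)^{1/\varepsilon}_t=u_t$. Hence $H_u$ is constant near $0$ and $\mathrm{Ent}\{u_t\}=H'_{u,+}(0)=0$.
\item This contradicts $\mathrm{Ent}\{u_t\}>0$ for nontrivial rays, which comes from the bound $\mathrm{Ent}(u)\geq\alpha(\sup u-I(u))-C$ of \cite{BBEGZ19}.
\end{enumerate}
Uniform $K^{\beta'}$-stability is obtained only a posteriori, from existence via \cite{DZ25}. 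Your rigidity intuition is the right one, but it has to be fed into the geodesic-stability criterion rather than into a uniform estimate.
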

We refer the reader to \S\ref{sec: cscK} for the definition of the $K^\beta$ energies, and to Definition \ref{def: K_beta_st} for the notion of $K^\beta$-(semi)stability. The equivalences between 1), 2), and 3) were established in \cite{DZ25}, the new equivalences are with 4) and 5). 

In a nutshell, the proof of Theorem \ref{thm:cscK} reduces to establishing the implication $(5)\Rightarrow(3)$, since the remaining equivalences follow from the results of \cite{DZ25}. For this, we will observe that if the slope of the $K^\beta$ energy along a non-trivial weak geodesic ray $\{u_t \}$ is non-negative, then the slope of the $K^{\beta'}$ energy along the same ray is strictly positive for every $\beta'>\beta$. This will follow from strict concavity of $H_u(\varepsilon)$ (recall \eqref{eq: H_u_def}).

Thus the main interest of the above result lies in the observation that, for sufficiently large \(\beta\), \(K^\beta\)-semistability is already equivalent to the existence of cscK metrics. This phenomenon appears to have no direct analogue for the more traditional notions of \(K\)-stability studied in \cite{Tian97,Don02,Sz06, StSz,Paul12, Der14,BHJ17,Li20-cscK, BJ25preprint, MPWN,T26}.

\paragraph{Acknowledgments.} The second author is grateful to J. Han and M. Xia for helpful discussions related to this work.  The first author was partially supported by NSF grant DMS-2405274. 
The second author is supported by Scientific Research Innovation Capability Support Project for Young Faculty SRICSPYF-ZY2025169 and NSFC grants 12571060, 12271038 and 12271040. 

After the completion of this work, we learned of the recently appeared manuscript \cite{LZ26} that also establishes the equivalences of Theorem \ref{thm:cscK}, and develops a version compatible with continuous automorphism groups. We also note another related recent work \cite{Pap26}, comparing K-stability and $K^\beta$-stability using algebraic methods.

\paragraph{AI use.} All arguments presented in this manuscript were developed and written up by the authors prior to April 2026. The second author gave talks about this work on several occasions, including Zhuhai's workshop on Geometric Analysis in May 2026. 
AI assistance was only used for improving presentation of the final manuscript.

\section{Preliminaries}
\label{sec: pre}

\subsection{Pluripotential theory}

Let $(X,\omega)$ be a compact K\"ahler manifold of dimension $n$.
The space of smooth K\"ahler potentials is defined as follows:
$$\mathcal H := \{u \in C^\infty(X,\RR) \textup{ s.t. } \omega_u :=\omega + \ddc u >0 \},$$
where $\ddc:=\frac{i}{2\pi}\partial\bar \partial.$ The metric completion of $\mathcal H$ with respect to the $d_1$ metric \cite{Dar15} is denoted by $\cE^1$, which is called the space of finite energy potentials \cite{GZ07}. Given $u,v\in\cE^1$, the $d_1$-distance between $u$ and $v$ is given as
\[
d_1(u,v)=I(u)+I(v)-2I(P(u,v)),
\]
where $I(\cdot)$ denotes the Monge--Amp\`ere energy (recall \eqref{eq: def_E}) and $P(u,v) \in \mathcal E^1$ is the largest potential in $\cE^1$ lying below $u$ and $v$.

Given $a,b \in [-\infty,\infty]$ with $a < b$, a path $(a,b)\ni t\mapsto u_t
\in \cE^1$ is called a finite energy subgeodesic segment if $U_z(x):=u_{-\log |z|^2}(x)$ (with $x\in X$ and $z\in\CC^*$) satisfies
 $
 p_1^*\omega+\mathrm{dd}^c_{X\times\CC}U_z\geq 0
 $
 as a current on $X\times\{e^{-b/2}<|z|<e^{-a/2}\}$, where $p_1:X\times\CC\to X$ is the projection map.

Let $h_0,h_1 \in \mathcal E^1$. As shown in \cite{Dar15}, a distinguished $d_1$-geodesic $(0,1) \ni t \mapsto h_t 
\in \mathcal 
E^1$ connecting $h_0,h_1$ can be obtained as:
\begin{equation}
\label{fegeod}
h_t := \textup{usc}\Big(\sup \{v_t \ | (0,1) \ni l \ \mapsto v_l  \in \mathcal E^1 \textup{ subgeodesic with } \limsup_{l \to 0,1} v_l \leq h_{0,1}\}\Big), \ t \in (0,1).
\end{equation}
We call \eqref{fegeod}
the  \emph{finite energy geodesic} connecting $h_0,h_1$.

In the case $a=0$ and $b=\infty$, the curve $(0,\infty) \ni t \mapsto u_t \in \cE^1$ is called a finite energy (sub)geodesic ray, which we will denote by $\{u_t\}$. All rays in this paper will be assumed to emanate from $u_0=0\in\cH$, meaning that $d_1(u_t,0) \to 0$ as $t\searrow 0$. It is well known that $t\mapsto I(u_t)$ is convex (resp. linear) when $\{u_t\}$ is a subgeodesic (resp. geodesic). A subgeodesic ray $\{u_t\}$ is called sublinear if $u_t\leq Ct+C$ for some $C>0$. 
Note that a geodesic ray is always sublinear.

Finally, given two geodesic rays $\{u_t\}$ and $\{v_t\}$, the chordal metric between them is defined as (see also \cite{DL19})
$$
d_1^c(\{u_t\},\{v_t\}):=\lim_{t\to\infty}\frac{d_1(u_t,v_t)}{t}.
$$
We denote by $\cR^1$ the space of all geodesic rays and equip it with the $d_1^c$-metric, which is a complete metric space.

\subsection{Test configurations and models}

Let $(X,L)$ be a complex projective manifold, with $L$ being an ample line bundle on $X$.

As in \cite[\S 2]{DZ25}, a \emph{test configuration} $(\cX,\cL)$ of $(X, L)$ is a $\CC^*$-equivariant partial compactification over $\CC$ of $(X, L)\times\CC^*$ which consists of a normal variety $\cX$, a flat projective morphism $\pi: \cX \to\CC$, a relatively semi-ample $\QQ$-line bundle $\cL\to \cX$, a $\CC^*$-action on $(\cX,\cL)$ lifting the standard one on $\CC$, and an identification of the fiber $(\mathcal X_1, \mathcal L_1)$ with $(X, L)$.

In practice, it is useful to compactify  a test configuration $(\cX,\cL)$ trivially over $\infty\in\PP^1$ (see \cite[Definition 2.7]{Xubook}), which we still denote as $(\cX,\cL)$. In this paper we assume that all test configurations $(\cX,\cL)$ are compactified in this sense, making intersection theory more convenient on $\cX$. 

Given two test configurations $(\cX,\cL)$ and $(\cX',\cL')$, we say $(\cX',\cL')$ dominates $(\cX,\cL)$ if there is a $\CC^*$-equivariant birational morphism $\cX'\xrightarrow{\pi}\cX$. So any test configuration is dominated by a smooth one.

In this work we will also work occasionally with \emph{models}, introduced in \cite[Definition 2.7]{Li20-cscK}. These are generalizations of test configurations $(\cX,\cL)$, where $\cL$ is only relatively big on $\cX$ (namely, $\cL+c\cX_0$ is big for some $c>0$). In \cite{DZ25} a special class of models was introduced, called \emph{log discrepancy models}. More precisely, given a smooth test configuration $(\cX,\cL)$ dominating $X\times\PP^1$ (via a map $\pi$), then a log discrepancy model of $(\cX,\cL)$ is of the form $(\cX,\cL+\varepsilon K^{\log}_{\cX/X\times\PP^1})$ for some $\varepsilon\in\QQ_{\geq 0}$, where $K^{\log}_{\cX/X\times\PP^1}:=K_\cX-\pi^*K_{X\times\PP^1}+\cX_{0,red}-\cX_0$.

\subsection{Revisiting the radial analysis of \cite[\S 6]{DZ25}}

Recall the definition of the  $\mathrm{Ent}^\beta$ functional and its radial version from \cite{DZ25}:
\begin{equation}\label{eq: beta_entropy_def}
\mathrm{Ent}^\beta(v) = \beta(I(v^\beta) - I(v)), \ \ \ \ \mathrm{Ent}^\beta\{u_t\}=\lim_{t\to\infty}\frac{\mathrm{Ent}^\beta(u_t)}{t},\qquad v \in \mathcal E^1, \ \{u_t\}\in\mathcal R^1.
\end{equation}
That the above limit exists follows from analysis in \cite[\S6]{DZ25}. We also proved the following in \cite[Theorem 6.1]{DZ25} (more details on these points will appear in the revision of \cite{DZ25}).
\begin{theorem}\label{thm: rad_Ent_beta=sup}
    For any $\beta>0$ and $\{u_t\} \in \mathcal R^1$,
    we have that
\begin{equation}
    \label{eq: rad_Ent_beta_formula}
    \mathrm{Ent}^\beta\{u_t\}=\sup_{\{v_t\}} F^\beta(\{v_t\},\{u_t\}):=\sup_{\{v_t\}}\left(L^\beta(\{v_t\},\{u_t\})+\beta (I\{v_t\}-I\{u_t\})\right),
\end{equation}
    where the sup is over all geodesic rays $\{v_t\}$ arising from test-configurations of $(X,L)$ (or more generally $\{v_t\} \in \mathcal R^1$). Here
    \begin{equation}
    \label{eq:def_L_beta}
    L^\beta(\{v_t\},\{u_t\}):=\liminf_{t\to\infty}\left(-\frac{1}{t}\log\int_Xe^{\beta(v_t-u_t)}\frac{\omega^n}{V}\right)
\end{equation}
for a sublinear subgeodesic ray $\{v_t\}$.
\end{theorem}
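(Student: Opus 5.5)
The plan is to prove the two inequalities separately. The bound $\mathrm{Ent}^\beta\{u_t\}\ge\sup F^\beta$ will come pointwise in $t$ from the K\"ahler variational characterization \eqref{eq:sup_Ding}. The reverse bound will be proved by building, at least approximately, a ray that realizes the supremum.

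\emph{Lower bound.} Fix $t>0$ and let $\{v_t\}$ be any ray in $\mathcal R^1$, or any sublinear subgeodesic ray. By \cite[Proposition 3.6]{DZ25}, $u_t^\beta$ maximizes $F^\beta(\cdot,u_t)$. It is normalized, so its value is $\beta(I(u_t^\beta)-I(u_t))=\mathrm{Ent}^\beta(u_t)$. The potentials $v_t$ need not be continuous, so I first approximate them by continuous finite energy potentials, decreasingly. The integral term and $I$ are both continuous along such sequences, so I get, for every $t$,
$$
-\log\int_X e^{\beta(v_t-u_t)}\frac{\omega^n}{V}+\beta(I(v_t)-I(u_t))\le \mathrm{Ent}^\beta(u_t).
$$
Dividing by $t$ and taking $\liminf_{t\to\infty}$ gives $L^\beta(\{v_t\},\{u_t\})+\beta(I\{v_t\}-I\{u_t\})\le \mathrm{Ent}^\beta\{u_t\}$. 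Here I use that $I(v_t)/t$ and $I(u_t)/t$ converge, since $I$ is linear along geodesics and convex along subgeodesics. I also use that $\mathrm{Ent}^\beta(u_t)/t$ has a limit; if that existence is not available, I first check that $t\mapsto \mathrm{Ent}^\beta(u_t)$ is convex, using that $u_t^\beta$ is obtained by a sup-type construction along the ray. This settles ``$\ge$'' for every class of competitors at once.

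\emph{Upper bound for test configuration rays.} Let $\{u_t\}$ be induced by a smooth test configuration $(\cX,\cL)$ dominating $X\times\PP^1$. Take $\beta=1/\varepsilon$ rational first, then argue by monotonicity and continuity in $\beta$. The candidate maximizer is the ray $\{v_t\}$ induced by the log discrepancy model $(\cX,\cL+\varepsilon K^{\log}_{\cX/X\times\PP^1})$ of \cite[\S 9]{DZ25}, which is possibly only relatively big. For this model, the slope formula of \cite[\S 9]{DZ25} identifies $\lim_t \beta(I(u_t^\beta)-I(u_t))/t$ with $\beta$ times the difference of the energies of the two models. That difference is exactly $\beta(I\{v_t\}-I\{u_t\})$. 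The remaining task is to show $L^\beta(\{v_t\},\{u_t\})\ge 0$. Locally near $\cX_0$, $\beta(v_t-u_t)$ differs from the log discrepancy divisor by a bounded term, so the integral $\int_X e^{\beta(v_t-u_t)}\omega^n$ is controlled by the integrability of $e^{\log|\text{Jacobian}|}$. This reduces to the usual computation of log canonical thresholds via $A_X$, and it gives subexponential growth. Nonnegativity of $L^\beta$ is what makes this candidate ray attain the supremum. Non-test-configuration competitors are handled by the density of test configuration rays in $(\mathcal R^1,d_1^c)$, together with upper semicontinuity of $L^\beta$ along decreasing approximations.

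\emph{General rays.} For arbitrary $\{u_t\}\in\mathcal R^1$, I would take a Demailly-type approximation by test configuration rays $\{u^k_t\}\to\{u_t\}$ in $d_1^c$, decreasingly if possible. I then need continuity of both sides along this approximation. For the left side, Proposition-type quantitative estimates on $I(u^\beta)$ in terms of $d_1$ give $|\mathrm{Ent}^\beta(u^k_t)-\mathrm{Ent}^\beta(u_t)|\le C\beta\, d_1(u^k_t,u_t)$ up to lower order terms. Dividing by $t$ yields convergence of the slopes. For the right side, $L^\beta(\{v_t\},\{u^k_t\})$ changes under perturbation of $u$ by at most $\beta\sup$-type distances. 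I expect this to be the main obstacle, because $d_1^c$ does not control $\sup(u^k_t-u_t)/t$, so exponential integrals are not obviously continuous in $d_1^c$. To get around this, I would choose the approximants decreasing, $u^k_t\searrow u_t$. Then $L^\beta(\{v_t\},\{u^k_t\})\ge L^\beta(\{v_t\},\{u_t\})$ holds trivially. Combined with convergence of $I\{u^k_t\}$, this shows that the supremum for $u$ is at least the $\limsup$ of the suprema for $u^k$ minus $o(1)$, which closes the argument.
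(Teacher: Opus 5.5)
Your pointwise Ding bound is correct and is how the paper obtains $\mathrm{Ent}^\beta\{u_t\}\geq\sup F^\beta(\{\cdot\},\{u_t\})$: you use maximality of $u_t^\beta$, then divide by $t$. Your test-configuration case, via log discrepancy models and the slope formula of \cite[\S 9]{DZ25}, is in line with Proposition \ref{prop: rad_E=NA_E}. The gap is in the passage to general rays, where the monotonicity you invoke points the wrong way.

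If $u^k_t\searrow u_t$, then $L^\beta(\{v_t\},\{u^k_t\})\geq L^\beta(\{v_t\},\{u_t\})$ yields $F^\beta(\{v_t\},\{u^k_t\})\geq F^\beta(\{v_t\},\{u_t\})-\beta(I\{u^k_t\}-I\{u_t\})$. That is, $\sup F^\beta(\{\cdot\},\{u^k_t\})\geq \sup F^\beta(\{\cdot\},\{u_t\})-o(1)$. This bounds the supremum for $\{u_t\}$ from \emph{above}, which you already know.

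To get $\mathrm{Ent}^\beta\{u_t\}\leq\sup F^\beta(\{\cdot\},\{u_t\})$ you must show that near-maximizers $\{v^k_t\}$ for $\{u^k_t\}$ stay nearly admissible for $\{u_t\}$, i.e.\ $L^\beta(\{v^k_t\},\{u_t\})\geq L^\beta(\{v^k_t\},\{u^k_t\})-o(1)$. Since $e^{-\beta u_t}\geq e^{-\beta u^k_t}$, this is exactly the nontrivial direction. Approximating from below instead is impossible when $\{u_t\}$ is not linearly bounded below, because test-configuration rays are linearly bounded below.

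The same problem occurs when you replace the model ray by test-configuration competitors. Along $v^k_t\searrow v_t$, upper semicontinuity of $L^\beta$ is automatic and useless. What is needed is $\liminf_k L^\beta(\{v^k_t\},\{u_t\})\geq L^\beta(\{v_t\},\{u_t\})$, and $d_1^c$-density gives no control over exponential integrals.

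In this paper the statement is quoted from \cite[Theorem 6.1]{DZ25}. However, Theorem \ref{thm: rad_E=sup} reproves it (multiply by $\beta=1/\varepsilon$ and subtract $\beta I\{u_t\}$), and it shows what input is missing. One never transfers maximizers from $\{u^k_t\}$ to $\{u_t\}$. Instead, Lemma \ref{lem:remove_bd} computes $\mathrm{L}_\varepsilon(\{v_t\},\{u_t\})=\inf_{X^{div}}(A_X+\varepsilon^{-1}(U-V))$ for test-configuration $\{v_t\}$ directly against the possibly unbounded $\{u_t\}$, through its NA potential $U$.

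The envelope $U_\varepsilon=\mathrm{P}(U+\varepsilon A_X)$ is continuous (Proposition \ref{prop: U_e_conti}, resting on $(2^k+m_0)U^k\leq 2^kU+A_X+1$). So it admits $V^k\in\cH^{\mathrm{NA}}$ with $U_\varepsilon\leq V^k\leq U_\varepsilon+1/k$. By Lemma \ref{lem: Ue<U+eA} this gives $\mathrm{L}_\varepsilon(\{v^k_t\},\{u_t\})\geq -1/(\varepsilon k)$. Meanwhile $I\{v^k_t\}\geq I^{\mathrm{NA}}(U_\varepsilon)=I\{w_{t,\varepsilon}\}\geq I\{u_{t,\varepsilon}\}$, where $\{w_t\}\geq\{u_t\}$ is the maximal ray of $U$.

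Only the identity $I^{\mathrm{NA}}(U_\varepsilon)=I\{w_{t,\varepsilon}\}$ uses your decreasing approximation and continuity of the left-hand side. That continuity is H\"older rather than Lipschitz in $d_1$, cf.\ \cite[Proposition 5.1]{DZ25}, which is harmless after dividing by $t$. The Archimedean substitute for the needed lower semicontinuity in the competitor is the H\"older argument of Lemma \ref{lem: Dem_approx_exponent}. Without an input of this kind, your argument proves the upper bound at most for test-configuration $\{u_t\}$ with competitors in $\mathcal R^1$.
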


Recall from \cite[\S 6]{DZ25} the following related functional, for $\{u_t\} \in \mathcal R^1$ and $\{v_t\}$ sublinear finite energy subgeodesics :
$$\tilde L^\beta(\{v_t\},\{u_t\}): = \sup\left\{\tau\in\RR\ :\int_0^\infty e^{\tau t}\int_Xe^{\beta(v_t-u_t)}\omega^n dt<\infty\right\}.$$

We prove a series of lemmas about this latter functional before showing that it is equal to $L^\beta(\{v_t\},\{u_t\})$. For the sublinear finite energy rays considered below, $\widetilde L^\beta(\{v_t\},\{u_t\})$ is finite: the two required exponential bounds follow from sublinearity, Jensen's inequality, and \cite[Lemma 6.5]{DZ25}.

\begin{lemma}\label{lem: Dem_approx_exponent} Let $\{u_t\},\{v_t\}$ be sublinear finite energy subgeodesics with $\lim_{t \to \infty} \frac{\sup_X v_t}{t} =0$.
Then there exists $C := C(\{u_t\},\beta,\tilde L^\beta(\{v_t\},\{u_t\}) )>0$ such that
$$\tilde L^\beta(\{v_t\},\{u_t\}) \geq \tilde L^\beta(\{v^m_t\},\{u_t\}) \geq \tilde L^\beta(\{v_t\},\{u_t\}) - \frac{C}{m},$$
 for the Demailly approximations of $\{v^m_t\}$ of $\{v_t\}$. 
\end{lemma}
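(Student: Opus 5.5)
The second inequality is the substantive one: $\tilde L^\beta(\{v^m_t\},\{u_t\}) \ge \tilde L^\beta(\{v_t\},\{u_t\}) - C/m$. Here is the plan.

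Regard the rays as $S^1$-invariant functions on $X\times\{0<|z|<1\}$ via $t=-\log|z|^2$. Write $V(x,z)=v_{-\log|z|^2}(x)$, and let $V^m$ be its Demailly approximation, built from the Bergman kernel of the Hilbert space of sections that are $L^2$ against $e^{-2mV}$. Then $v^m_t$ is the corresponding slice. Two standard properties are used.

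\begin{enumerate}[(a)]
\item $V^m\ge V-C_0/m$, which comes from Ohsawa--Takegoshi.
\item The local integrability $\int e^{2m(V^m-V)}\le C_1$ on each set $X\times\{t\in[s,s+1]\}$. This holds because locally $e^{2mV^m}$ is comparable to $\sum_j|\sigma_j|^2$, with the $\sigma_j$ ranging over an $L^2(e^{-2mV})$-orthonormal family.
\end{enumerate}

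A preliminary step is to check that $C_0,C_1$ are \emph{uniform in $s$}. The Ohsawa--Takegoshi constants and the Bergman kernel comparison depend only on the local geometry of $p_1^*\omega$ on unit-width annuli, and in the $t$-coordinate all of these are isometric. The hypothesis $\sup_X v_t=o(t)$ keeps the normalizations under control as $s$ varies.

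Property (a) gives $e^{\beta(v^m_t-u_t)}\ge e^{-\beta C_0/m}e^{\beta(v_t-u_t)}$, and hence the first inequality $\tilde L^\beta(\{v_t\},\{u_t\})\ge\tilde L^\beta(\{v^m_t\},\{u_t\})$ (note that this direction uses the lower bound $V^m\ge V-C_0/m$). For the second inequality, set $q=2m/(2m-\beta)$, which is the conjugate exponent of $p=2m/\beta$. For $m>\beta$, Hölder's inequality on each unit slab gives
$$\int_s^{s+1}\!\!\int_X e^{\tau t}e^{\beta(v^m_t-u_t)}\omega^n dt\le C_1^{\beta/2m}\Big(\int_s^{s+1}\!\!\int_X e^{q\tau t}e^{q\beta(v_t-u_t)}\omega^ndt\Big)^{1/q}.$$
Summing over $s\in\mathbb N$, with a further Hölder inequality on the sum (or by requiring summability with a slightly smaller exponent), reduces the problem to the following claim. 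If $\tau<\tilde L^{q\beta}(\{v_t\},\{u_t\})/q - \delta$, then the right-hand side is summable in $s$. Consequently $\tilde L^\beta(\{v^m_t\},\{u_t\})\ge \tilde L^{q\beta}(\{v_t\},\{u_t\})/q$, up to arbitrarily small losses.

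It remains to compare $\tilde L^{q\beta}/q$ with $\tilde L^{\beta}$, where $q-1=O(1/m)$. The tool is convexity. The function
$$\Phi(\gamma,\tau):=\log\int_0^\infty\!\!\int_X e^{\tau t+\gamma(v_t-u_t)}\omega^n dt$$
is convex in $(\gamma,\tau)$ by Hölder, so its finiteness domain $D$ is convex. Two points of $D$ are available.
\begin{enumerate}[(i)]
\item The point $(\beta,\tau_0)$ for any $\tau_0<\tilde L^\beta(\{v_t\},\{u_t\})$.
\item The point $(\gamma_1,-A)$ for some fixed $\gamma_1>2\beta$ and some $A=A(\{u_t\},\gamma_1)$. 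This comes from sublinearity together with $\sup_X v_t=o(t)$, Jensen's inequality, and the uniform exponential integrability $\int_X e^{-\gamma_1 u_t}\omega^n\le Ce^{At}$ of \cite[Lemma 6.5]{DZ25}.
\end{enumerate}
Writing $q\beta=(1-\lambda)\beta+\lambda\gamma_1$ with $\lambda=(q-1)\beta/(\gamma_1-\beta)=O(1/m)$, convexity of $D$ puts $(q\beta,(1-\lambda)\tau_0-\lambda A)$ in $D$. Hence
$$\tilde L^{q\beta}\ge(1-\lambda)\tilde L^\beta-\lambda A,\qquad \tilde L^{q\beta}/q\ge \tilde L^\beta-C/m,$$
with $C$ depending only on $\{u_t\}$, $\beta$ and $|\tilde L^\beta(\{v_t\},\{u_t\})|$, as claimed.

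I expect the main obstacle to be property (b) with constants uniform in $t$, namely the local bound $\int e^{2m(V^m-V)}\le C_1$ on all unit slabs simultaneously. This bound needs the Bergman kernel sum to be controlled against the weight $e^{-2mV}$. I would first try to obtain it directly from the extremal characterization $V^m(x)=\sup\{\tfrac1m\log|\sigma(x)|:\|\sigma\|_{L^2(e^{-2mV})}\le1\}$. Combining this with the mean value inequality on polydiscs of fixed size yields $e^{2m(V^m-V)}\lesssim r^{-2n-2}\cdot$(a local average of $e^{-2mV}$ weighted by $e^{2mV}$), which integrates to a bounded quantity after shrinking the exponent $2m$ to $2m-1$. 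If that shrinking is needed, $q$ changes only by $O(1/m)$, so the same convexity argument goes through unchanged.
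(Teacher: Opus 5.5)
There is a genuine gap. It is the step you flagged, property (b), and it comes from working with a different approximating family than the lemma's.

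\textbf{What the lemma's approximation actually provides.} In the lemma, $\{v^m_t\}$ are the geodesic rays induced by test configurations whose singularity along $X\times\{0\}$ is $\frac{1}{m+m_0}\log|\mathcal I(mV_z)|$. Every $v_t$ has zero Lelong numbers, so $\mathcal I(mV)$ is trivial off the central fibre, and nothing local on your slabs governs $v^m$. The construction gives one global and \emph{mismatched} bound,
$$\int_0^\infty e^{-t}\int_X e^{(m+m_0)v^m_t-mv_t}\,\omega^n\,dt<\infty.$$
This is just $|\mathcal I(mV)|\,e^{-mV}\in L^1_{loc}$ near $X\times\{0\}$, written in the variable $t=-\log|z|^2$; the factor $e^{-t}$ is the area element of the disc.

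\textbf{Why (b) fails.} Your H\"older step needs the matched quantity $e^{m(v^m_t-v_t)}$ (in this normalization) to be uniformly integrable on unit slabs, and it is not. Take a ray of deformation-to-the-normal-cone type, $v_t\approx\max(\log|s|_h^2,-ct)$ near $D=\{s=0\}$. Then roughly $v^m_t\approx\frac{m}{m+m_0}v_t$ near $D$, so $m(v^m_t-v_t)\approx-\frac{mm_0}{m+m_0}v_t$. The slab integrals therefore grow exponentially in $s$, at a rate depending on $c$. Your proposed repair via the mean value inequality fails on the same example: $\int e^{2m(\sup_{B(x,r)}V-V)}$ over the $s$-th slab also grows exponentially in $s$.

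If you instead feed in the available bound, you get $e^{(m+m_0)(v^m-v)}=e^{(m+m_0)v^m-mv}\,e^{-m_0v}$. The leftover factor $e^{-m_0v_t}$ can only be controlled by exponential integrability of $v$ itself. Either way, the split $v^m-u=(v^m-v)+(v-u)$ produces a constant depending on $\{v_t\}$.

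\textbf{Why this matters.} The lemma must avoid exactly that dependence: Lemma \ref{lem: increasing_lemma} applies it to a sequence $\{v^j_t\}$ and needs $C$ uniform in $j$. In addition, an estimate for a Bergman-kernel approximation would not serve the later arguments. Strong openness giving $W^{m,j}=W^m$, and the boundedness of $\{P(u)^\beta_t\}$, both use the test-configuration rays.

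\textbf{The missing idea.} Compare $v^m$ with $\frac{m}{m+m_0}v$ rather than with $v$, and move the leftover onto $u$:
$$\beta(v^m_t-u_t)=\beta\Big(v^m_t-\tfrac{m}{m+m_0}v_t\Big)+\tfrac{m\beta}{m+m_0}(v_t-u_t)-\tfrac{m_0\beta}{m+m_0}u_t .$$
The paper then applies a three-factor H\"older inequality with exponents $\frac{m+m_0}{\beta}$, $\frac{m+m_0}{m}$ and $\frac{m+m_0}{m_0-\beta}$:
\begin{enumerate}
\item the first factor is exactly the integrability above, after using $e^{-m_0t}\le e^{-t}$;
\item the second factor is $\int e^{(\tilde L-\varepsilon)t+\beta(v_t-u_t)}$;
\item the third factor is handled by \cite[Lemma 6.5]{DZ25} for $u$, with $\alpha=\frac{\beta m_0}{m_0-\beta}$.
\end{enumerate}
Every exponent loss is then $O(1/m)$, with constants depending only on $\{u_t\}$, $\beta$ and $\tilde L$.

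Your interpolation via convexity of $D$ plays the role of that third factor, so that part of your plan is sound. The first inequality is also fine; for the paper's rays it follows at once from $v^m_t\ge\frac{m}{m+m_0}v_t\ge v_t$, since $v_t\le 0$. What is missing is the correct input for the first factor.
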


By the Demailly approximations $\{v^m_t\}$ of $\{v_t\}$ we mean the geodesic rays whose singularity is governed by $\frac{1}{m + m_0}\log \big|\mathcal I(mV_z)\big|$, as described in \cite[Section 2]{DZ25}. The constant $m_0 > \max\{\beta,1\}$ will be chosen in the argument below, it will depend on $\beta$, but it will not depend on $\{u_t\},\{v_t\}$.

\begin{proof} Let $\tilde L:= \tilde L^\beta(\{v_t\},\{u_t\})$. Since $0 \geq v^m_t \geq \frac{m}{m+ m_0}v_t \geq v_t$, we get that $\tilde L \geq \tilde L^\beta(\{v^m_t\},\{u_t\})$. Now we prove the other inequality.
Let $\varepsilon \in (0,1)$. We have that
$$\int_0^\infty e^{ (\tilde L-\varepsilon)t}\int_Xe^{\beta(v_t-u_t)}\omega^n dt<\infty$$

    Moreover, for some sufficiently large $m_0>\max\{\beta,1\}$ we have that
    $$
    \int_0^\infty e^{-t}\int_X e^{(m+m_0)v^m_t-m v_t}\omega^n dt<\infty, \ \forall m.
    $$
    According to \cite[Lemma 6.5]{DZ25}, for any $\alpha>0$ there exists $c_{\alpha,u}>0$ such that 
    $$
    \int_0^\infty e^{-c_{\alpha,u} t}\int_X e^{-\alpha u_t}\omega^n dt<\infty.
    $$
We fix $\alpha: = \frac{\beta m_0}{m_0 - \beta}$, and for  simplicity we introduce $Y:=X\times(0,\infty)$ and $d\mu:=\omega^n\wedge dt$. 
    
We choose $p,q,r>0$ so that $\frac{1}{p}+\frac{1}{q}+\frac{1}{r}=1$. Specifically,
$$
p:=\frac{m+m_0}{\beta},\ q:=\frac{m+m_0}{m},\ r:=\frac{m+m_0}{m_0-\beta}.
$$    
For this triplet we apply the H\"older inequality in the following way:
    \begin{equation*}
        \begin{split}
            \int_Y & e^{\beta(v^m_{t}-u_t) + \big((1-\frac{m_0}{m + m_0})(\tilde L - \varepsilon)-\frac{\beta m_0}{m + m_0}-c_{\alpha,u} \frac{m_0 - \beta}{m + m_0}\big) t}d\mu\\
            &= \int_Y e^{\beta(v^m_t-(1-\frac{m_0}{m + m_0})v_t)-\frac{\beta m_0}{m + m_0} t}\cdot e^{\beta(1-\frac{m_0}{m + m_0})(v_t-u_t)+(1-\frac{m_0}{m + m_0})(\tilde L - \varepsilon) t}\cdot e^{-\beta\frac{m_0}{m + m_0} u_t -c_{\alpha,u} \frac{m_0 - \beta}{m + m_0}t}d\mu\\
            &\leq\left(\int_Y e^{p\beta(v^m_t-(1-\frac{m_0}{m + m_0})v_t)-\frac{p\beta m_0}{m + m_0} t} d\mu\right)^{1/p}\left(\int_Y e^{ q\beta(1-\frac{m_0}{m + m_0})(v_t-u_t)+q(1-\frac{m_0}{m + m_0})(\tilde L - \varepsilon) t} d\mu\right)^{1/q}\\
            &\cdot \left(\int_Ye^{-r\beta\frac{m_0}{m + m_0} u_t -rc_{\alpha,u} \frac{m_0 - \beta}{m + m_0}t} d\mu \right)^{1/r}.\\
        \end{split}
    \end{equation*}

Now we point out that each of the integrals in the last product are finite:

\begin{flalign*}
\int_Y e^{p\beta(v^m_t-(1-\frac{m_0}{m + m_0})v_t)-\frac{p\beta m_0}{m + m_0} t} d\mu = \int_Y e^{(m+m_0)v^m_t-m v_t-m_0t} d\mu\leq \int_Y e^{(m+m_0)v^m_t-m v_t-t}d\mu< +\infty.
\end{flalign*}
\begin{flalign*}
\int_Y e^{ q\beta(1-\frac{m_0}{m + m_0})(v_t-u_t)+q(1-\frac{m_0}{m + m_0})(\tilde L - \varepsilon) t} d\mu =\int_Y e^{ \beta(v_t-u_t)+(\tilde L - \varepsilon) t} d\mu < + \infty.
\end{flalign*}
\begin{flalign*}
\int_Ye^{-r\beta\frac{m_0}{m + m_0} u_t -rc_{\alpha,u} \frac{m_0 - \beta}{m + m_0}t} d\mu  &= \int_Ye^{-\alpha u_t -c_{\alpha,u} t} d\mu < +\infty.
\end{flalign*}
Since $(1-\frac{m_0}{m + m_0})(\tilde L - \varepsilon)-\frac{\beta m_0}{m + m_0}-c_{\alpha,u} \frac{m_0 - \beta}{m + m_0} \geq \tilde L - \varepsilon - \frac{C}{m}$ for some constant $C:=C(\beta, u, \tilde L)$, and $\varepsilon >0$ is arbitrary, the result follows.
\end{proof}

\begin{remark} In the above result the condition $c := \lim_{t \to \infty} \frac{\sup_X v_t}{t} =0$ can be removed immediately, if we define the Demailly approximation $\{v^m_t\}$ of a general sublinear finite energy subgeodesic $\{v_t\}$ by 
$$v^m_t : = \tilde v^m_t + c t,$$
where $\tilde v_t := v_t - ct$. 
\end{remark}

The previous result immediately allows us to sharpen \cite[Proposition 6.2]{DZ25}.

\begin{corollary} \label{cor: liminf}
Let $\{u_t\}$ and $\{v_t\}$ be two sublinear finite energy subgeodesic rays and $\beta >0$. Then 
$$L^\beta(\{v_t\},\{u_t\})= \tilde L^\beta(\{v_t\},\{u_t\}).$$
In particular, it does not matter if one defines $F^\beta(\{\cdot\},\{u_t\})$ using $L^\beta(\{\cdot\},\{u_t\})$ or $\tilde L^\beta(\{\cdot\},\{u_t\})$. 
\end{corollary}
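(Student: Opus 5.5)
The plan is to prove the two inequalities $L^\beta \le \tilde L^\beta$ and $L^\beta \ge \tilde L^\beta$ separately. Write $\phi(t):=\int_X e^{\beta(v_t-u_t)}\frac{\omega^n}{V}$, so that $L^\beta=\liminf_{t\to\infty}\big(-\tfrac1t\log\phi(t)\big)$ and $\tilde L^\beta=\sup\{\tau:\int_0^\infty e^{\tau t}\phi(t)\,dt<\infty\}$. By the preceding Remark we may subtract $ct$ from $v_t$. This shifts both quantities by the same amount $\beta c$, so we may assume $\sup_X v_t/t\to 0$ whenever Lemma \ref{lem: Dem_approx_exponent} is needed.

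\textbf{The inequality $L^\beta\le\tilde L^\beta$.} Take any $\tau<L^\beta$. Then $\phi(t)\le e^{-\tau' t}$ for all large $t$, for some $\tau'\in(\tau,L^\beta)$. Hence $\int^\infty e^{\tau t}\phi(t)\,dt<\infty$; near $t=0$ the integrand is bounded by the exponential integrability bounds already quoted. Therefore $\tilde L^\beta\ge\tau$, and letting $\tau\nearrow L^\beta$ gives the claim.

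\textbf{The inequality $L^\beta\ge\tilde L^\beta$.} This is the real content, since integrability of $e^{\tau t}\phi$ does not by itself control $\phi(t)$ pointwise in $t$. The idea is to replace $\{v_t\}$ by its Demailly approximations $\{v^m_t\}$. These have analytic singularities, and their exponential integrals behave like $t\mapsto e^{-\lambda t}$ up to polynomial factors, so for them a lim inf and an integral exponent can be compared.

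\textbf{Obtaining pointwise decay.} Concretely, I would use the subgeodesic (convexity in $t$) structure to convert integral bounds into pointwise ones. Since $t\mapsto v^m_t-u_t$ is not convex, I would work on $X\times\{$annulus$\}$. The function $\beta(v_t-u_t)$ is, in the $\CC^*$-variable, quasi-psh in $v$ minus a bounded-slope term; $u$ is sublinear and $-u_t$ is controlled via \cite[Lemma 6.5]{DZ25}. A sub-mean-value estimate over unit time windows $[t-1,t+1]$ then gives
\begin{equation}
\phi_m(t)\le C\int_{t-1}^{t+1}\phi_m(s)\,ds\cdot e^{o(t)},
\end{equation}
using Hölder exactly as in the proof of Lemma \ref{lem: Dem_approx_exponent} to absorb the $-u_t$ term. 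Integrability of $e^{\tau s}\phi_m(s)$ with $\tau<\tilde L^\beta(\{v^m\},\{u\})$ then forces $\phi_m(t)\le e^{-\tau t+o(t)}$, so $L^\beta(\{v^m\},\{u\})\ge\tilde L^\beta(\{v^m\},\{u\})$.

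\textbf{Conclusion and main obstacle.} Since $v^m_t\ge v_t$ up to the normalization, monotonicity gives $L^\beta(\{v\},\{u\})\ge L^\beta(\{v^m\},\{u\})$; here a larger $v$ means a larger $\phi$ and hence a smaller $L$, so the direction must be checked carefully. Combining this with Lemma \ref{lem: Dem_approx_exponent} and letting $m\to\infty$ yields $L^\beta\ge\tilde L^\beta$. I expect the windowed sub-mean-value step to be the main obstacle. If it fails, I would instead invoke the known identity from \cite[Proposition 6.2]{DZ25}, namely $\tilde L^\beta\le L^\beta$ for rays with analytic singularities, apply it to $\{v^m_t\}$, and pass to the limit using Lemma \ref{lem: Dem_approx_exponent}.
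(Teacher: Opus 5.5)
Your proposal is sound because your fallback is exactly the paper's proof. The easy direction is identical. For the hard direction the paper runs precisely the chain
\[
L^\beta(\{v_t\},\{u_t\})\geq L^\beta(\{v^m_t\},\{u_t\})=\tilde L^\beta(\{v^m_t\},\{u_t\})\geq \tilde L^\beta(\{v_t\},\{u_t\})-\frac{C}{m},
\]
and it gets the middle identity simply by quoting \cite[Proposition 6.2]{DZ25} for the bounded geodesics $\{v^m_t\}$.

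Your primary route for that middle step is not justified as written, although the inequality you display there is in fact true.
\begin{itemize}
\item There is no sub-mean-value property for $e^{\beta(v^m_t-u_t)}$ in the $\CC^*$-variable. The map $t\mapsto -u_t(x)$ is concave, and it is $\{v^m_t\}$, not $\{u_t\}$, that has bounded $t$-slope.
\item H\"older with $t$-independent exponents, as in Lemma \ref{lem: Dem_approx_exponent}, would cost a fixed positive amount in the decay rate, not a factor $e^{o(t)}$.
\end{itemize}

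What works is a one-sided comparison with later times $s\in[t,t+1]$. Convexity and sublinearity of $\{u_t\}$ give $u_t\geq u_s-c$. Since $v^m$ is $C_m$-Lipschitz in $t$, you also have $v^m_t\leq v^m_s+C_m$. Together these give
\[
\phi_m(t)\leq e^{\beta(C_m+c)}\int_t^{t+1}\phi_m(s)\,ds,
\]
which forces $L^\beta(\{v^m_t\},\{u_t\})\geq\tilde L^\beta(\{v^m_t\},\{u_t\})$.

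The same idea even handles $\{v_t\}$ directly, with no Demailly approximation.
\begin{itemize}
\item After normalizing $\sup_X v_t\leq 0$, convexity and $v_0=0$ give $v_t\leq \frac{t}{t+1}v_s$.
\item Apply H\"older with exponents $\frac{t+1}{t}$ and $t+1$.
\item Use the bound $\int_X e^{-\beta u_s}\omega^n\leq Ke^{Ks}$, which follows from \cite[Lemma 6.5]{DZ25} and the same one-sided comparison.
\end{itemize}
This yields $\phi(t)\leq K'\big(\int_t^{t+1}\phi(s)\,ds\big)^{t/(t+1)}$, and hence $L^\beta\geq\tilde L^\beta$.

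The paper's route is shorter only because it reuses Lemma \ref{lem: Dem_approx_exponent} together with \cite[Proposition 6.2]{DZ25}. It needs that lemma anyway for Lemma \ref{lem: increasing_lemma} and Theorem \ref{thm: maximizer_main}.
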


\begin{proof} After normalization, we can assume that $\lim_t \frac{\sup_X v_t}{t} =0$.

For any $\tau < L^\beta(\{v_t\},\{u_t\})$, pick $\varepsilon>0$ such that $\tau+\varepsilon<L^\beta(\{v_t\},\{u_t\})$. Then for $t\gg 1$,
    $$
    -\log\int_X e^{\beta(v_t-u_t)}\frac{\omega^n}{V}\geq\tau t+\varepsilon t.
    $$
    Thus for $t_0$ big enough,
    $
    \int_{t_0}^\infty e^{\tau t}\int_X e^{\beta(v_t-u_t)}\frac{\omega^n}{V}\,dt\leq\int_{t_0}^\infty e^{-\varepsilon t}dt<\infty.
    $
    So, $L^\beta(\{v_t\},\{u_t\})\leq \tilde L^\beta(\{v_t\},\{u_t\}).$

For the converse inequality, we consider a Demailly approximation $\{v^m_t\}$ of $\{v_t\}$ as in the previous result. Since $v^m_t \geq v_t$, using the previous lemma, we can start the following chain of inequalities:
$$\tilde L^\beta(\{v_t\},\{u_t\}) \leq \tilde L^\beta (\{v^m_t\},\{u_t\}) + \frac{C}{m}= L^\beta (\{v^m_t\},\{u_t\}) + \frac{C}{m} \leq L^\beta (\{v_t\},\{u_t\}) + \frac{C}{m},$$
where we also used \cite[Proposition 6.2]{DZ25} in the middle identity, since $\{v_t^m\}$ are bounded geodesics.
Letting $m \to \infty$, the result follows.
\end{proof}

\subsection{Non-Archimedean geometry}
\label{sub: NA_pre}
Let $(X,L)$ be a complex projective manifold of dimension $n$. In \S \ref{subset: NA} we already introduced several notions from NA geometry. Here let us recap a few more, for later discussions.

Fix a K\"ahler form $\omega\in c_1(L)$. Then any geodesic ray $\{u_t\}\subset\cE^1$ gives rise to an element $U\in\cE^{1,\mathrm{NA}}$ (see \cite[Theorem 6.4]{BBJ18}), which is essentially determined by the Lelong numbers of $\{u_{-\log|z|^2}\}$ near $z=0$ (see \cite[Definition 4.2]{BBJ18}). Conversely, any $U\in \cE^{1,\mathrm{NA}}$ uniquely determines a maximal geodesic ray $\{u_t\}\subset\cE^1$, in the sense of \cite[\S 6.4]{BBJ18}.

 Let $(\cX,\cL)$ be a test configuration of $(X,L)$. Then one can attach a geodesic ray $\{u_t\}\subset\cE^1$ to it \cite{PS07a,PS10, RWN14,Ber16,CTW18}, whose associated NA potential $U$ belongs to $\cH^{\mathrm{NA}}$. Conversely, any $U\in\cH^{\mathrm{NA}}$ arises in this way.

Given a test configuration $(\cX,\cL)$, with $U\in\cH^{\mathrm{NA}}$ being its associated potential, then the NA Monge--Amp\`ere energy of $U$ is given by
$$
I^{\mathrm{NA}}(U):=\frac{\cL^{n+1}}{(n+1)V},
$$
where $V=L^n$. This further extends to any $U\in \mathrm{PSH}^{\mathrm{NA}}$ by putting
$$
I^{\mathrm{NA}}(U):=\inf\{I^{\mathrm{NA}}(W):W\in\cH^{\mathrm{NA}},U\leq W\}.
$$
Then the space $\cE^{1,\mathrm{NA}}$ of NA potentials with finite energies can be described as
$$
\cE^{1,\mathrm{NA}}:=\{U\in\mathrm{PSH}^{\mathrm{NA}}: I^{\mathrm{NA}}(U)>-\infty\}.
$$

Now let $(\cX,\cL)$ be a smooth test configuration with simple normal crossing central fiber $\cX_0$. Then $\cX_0$ determines a dual complex $\Delta_\cX$, whose points induce quasi-monomial valuations on $X$. In this way, $\Delta_\cX$ is viewed as a subset of $X^{an}$. Moreover, there is a retraction map $r_\cX:X^{an}\to \Delta_\cX$, making $A_X\circ r_\cX$ a continuous function on $X^{an}$. For more details, we refer to \cite[\S 4]{BoJ18a} and \cite[Appendix A]{BoJ18}. 

Assume that $U\in\cH^{\mathrm{NA}}$ corresponds to some smooth test configuration $(\cX,\cL)$. Then its NA Monge--Amp\`ere measure $\mathrm{MA}^{\mathrm{NA}}(U)$ is a Dirac type measure supported on the vertices $v_E$ of $\Delta_\cX$, where each $v_E$ corresponds to an irreducible component $E\subset\cX_0$. If we write $\cX_0=\sum_E b_E E$, then (see \cite[\S 6.7]{BHJ17})
$$
\mathrm{MA}^{\mathrm{NA}}(U):=\sum_E V^{-1}b_E(\cL^n\cdot E)\delta_{v_E}.
$$

For a general $U\in\cE^{1,\mathrm{NA}}$, one can pick a decreasing net $\cH^{\mathrm{NA}}\ni U^k\to U$. Then the NA Monge--Amp\`ere measure $\mathrm{MA}^{\mathrm{NA}}(U)$ is the weak limit of $\mathrm{MA}^{\mathrm{NA}}(U^k)$ (see \cite[Corollary 6.12]{BoJ18a}).

Finally, recall that a model of $(X,L)$ also induces an element in $\cE^{1,\mathrm{NA}}$. For us, we will only need the case of log discrepancy models: given a smooth test configuration $(\cX,\cL)$ dominating $X\times\PP^1$, with $U\in\cH^{\mathrm{NA}}$ being its associated NA potential, then the NA potential of the log discrepancy model $(\cX,\cL+\varepsilon K^{\log}_{\cX/X\times\PP^1})$ is given by the following envelope (see \cite[Remark 9.2]{DZ25}):
$$
P(U+\varepsilon A_X\circ r_\cX):=\sup\{V:V\in \mathrm{PSH}^{\mathrm{NA}}, V\leq U+\varepsilon A_X\circ r_\cX\text{ on }X^{an}\}.
$$
Moreover, it follows from \cite[(1)]{Li21} that the NA Monge--Amp\`ere measure of this envelope is supported on the dual complex $\Delta_\cX$.

\section{K\"ahler entropy}

This part is devoted to proving Theorem \ref{thm:main_f'}. Let $(X,\omega)$ be a compact K\"ahler manifold of dimension $n$.
For any $u\in\cE^{1}$ and $\varepsilon>0$, let $u_\varepsilon$ be the solution of \eqref{eq: def_u_e}.
Let us recap several necessary estimates for $u_\varepsilon$ from \cite[\S 3]{DZ25}.

\begin{lemma}
	\label{lem: mono_conv}
	If $u_i\searrow u$ in $\cE^1$, then $u_{i,\varepsilon}\searrow u_\varepsilon$ for any $\varepsilon>0$.
\end{lemma}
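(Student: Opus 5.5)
The plan is to combine the comparison principle for the Aubin--Yau equation \eqref{eq: def_u_e} with a stability argument.

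\emph{Monotonicity.} If $u\le w$ in $\cE^1$, then $u_\varepsilon\le w_\varepsilon$. To see this, note that $\omega_{u_\varepsilon}^n=e^{\frac1\varepsilon(u_\varepsilon-u)}\omega^n\ge e^{\frac1\varepsilon(u_\varepsilon-w)}\omega^n$. So $u_\varepsilon$ is a subsolution of the equation solved by $w_\varepsilon$. The standard domination principle for $\omega_\varphi^n=e^{\varphi/\varepsilon}\mu$ then applies. Concretely, on the set $\{u_\varepsilon>w_\varepsilon\}$ we have $\omega_{u_\varepsilon}^n\ge e^{\frac1\varepsilon(u_\varepsilon-w)}\omega^n> e^{\frac1\varepsilon(w_\varepsilon-w)}\omega^n=\omega_{w_\varepsilon}^n$. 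The comparison principle in $\cE^1$ gives $\int_{\{u_\varepsilon>w_\varepsilon\}}\omega_{u_\varepsilon}^n\le\int_{\{u_\varepsilon>w_\varepsilon\}}\omega_{w_\varepsilon}^n$, so this set is $\omega_{u_\varepsilon}^n$-null. The measure $\omega_{u_\varepsilon}^n$ has positive density with respect to $\omega^n$ wherever $u>-\infty$, and $\{u=-\infty\}$ is pluripolar. Hence the set is Lebesgue-null, and since both functions are quasi-psh we get $u_\varepsilon\le w_\varepsilon$ everywhere. Applied to $u_{i+1}\le u_i$, this shows that $u_{i,\varepsilon}$ is decreasing, and applied to $u\le u_i$ it gives $u_{i,\varepsilon}\ge u_\varepsilon$. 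Therefore $\phi:=\lim_i u_{i,\varepsilon}$ exists, satisfies $\phi\ge u_\varepsilon$, and belongs to $\cE^1$ because it dominates $u_\varepsilon\in\cE^1$.

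\emph{Passing to the limit.} Decreasing sequences in $\cE^1$ converge in $d_1$, so $\omega_{u_{i,\varepsilon}}^n\to\omega_\phi^n$ weakly. On the right-hand side, $e^{\frac1\varepsilon(u_{i,\varepsilon}-u_i)}$ converges pointwise to $e^{\frac1\varepsilon(\phi-u)}$ off the pluripolar set $\{u=-\infty\}$. The main obstacle is justifying convergence of the right-hand side as measures, since $-u_i$ increases and the densities could escape in $L^1$. Two facts handle this. First, the total masses are all equal to $V$. Second, Fatou's lemma gives
\[
\int_X e^{\frac1\varepsilon(\phi-u)}\omega^n\le \liminf_i \int_X e^{\frac1\varepsilon(u_{i,\varepsilon}-u_i)}\omega^n = V.
\]

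To conclude, let $\nu$ be any weak limit of the measures $e^{\frac1\varepsilon(u_{i,\varepsilon}-u_i)}\omega^n$. For continuous $\chi\ge0$, Fatou gives $\int\chi\, e^{\frac1\varepsilon(\phi-u)}\omega^n\le\int\chi\, d\nu$, so $\omega_\phi^n=\nu\ge e^{\frac1\varepsilon(\phi-u)}\omega^n$. This says $\phi$ is a subsolution of the equation with data $u$. The comparison argument of the first step, run with $\phi$ in place of $u_\varepsilon$ and $u_\varepsilon$ in place of $w_\varepsilon$, then gives $\phi\le u_\varepsilon$. Combined with $\phi\ge u_\varepsilon$, this yields $\phi=u_\varepsilon$, which proves $u_{i,\varepsilon}\searrow u_\varepsilon$. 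If needed, uniqueness from \cite[\S3]{DZ25} can be invoked in place of this last comparison.
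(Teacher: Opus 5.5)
Your proof is correct. The paper gives no argument of its own for this lemma: it is recalled from \cite[\S 3]{DZ25}. Your monotonicity step $u\le w\Rightarrow u_\varepsilon\le w_\varepsilon$ is the content of \cite[Lemma 3.1]{DZ25}, which the paper later invokes in exactly that form. So the only place a comparison is meaningful is how you pass to the limit.

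There, the step you flag as the main obstacle is harmless. Since $u_{i,\varepsilon}\le u_{1,\varepsilon}$ and $u_i\ge u$, the densities satisfy $e^{\frac1\varepsilon(u_{i,\varepsilon}-u_i)}\le e^{\frac1\varepsilon(u_{1,\varepsilon}-u)}\le C_\varepsilon e^{-u/\varepsilon}$, and $e^{-u/\varepsilon}\in L^1(\omega^n)$ by \cite{Ze01}. This is the same domination the paper uses to prove $\mathrm{Ent}(u_{i,\varepsilon})\to\mathrm{Ent}(u_\varepsilon)$. Dominated convergence then shows directly that $\phi$ solves $\omega_\phi^n=e^{\frac1\varepsilon(\phi-u)}\omega^n$, and uniqueness gives $\phi=u_\varepsilon$.

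Your Fatou route avoids the integrability input in the limit step; it is needed anyway for the existence and boundedness of $u_\varepsilon$. The price is that it only yields a subsolution, so you need a second comparison.

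That ending can be shortened. From $\omega_\phi^n\ge e^{\frac1\varepsilon(\phi-u)}\omega^n$ and $\phi\ge u_\varepsilon$ you get $\omega_\phi^n\ge e^{\frac1\varepsilon(\phi-u)}\omega^n\ge\omega_{u_\varepsilon}^n$. Both outer measures have total mass $V$, so all three coincide. Hence $\phi=u_\varepsilon$ almost everywhere, and everywhere since both are quasi-psh, with no second comparison needed.

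This mass count also repairs your closing remark. Uniqueness from \cite{DZ25} applies to solutions, not subsolutions, so it can replace the final comparison only after this upgrade.
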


\begin{lemma}
	\label{lem: u_e_d1_conv_u}
	One has $u_\varepsilon\rightarrow_{d_1}u$ as $\varepsilon\to 0$.
\end{lemma}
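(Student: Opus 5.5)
Since $d_1$-convergence in $\cE^1$ is equivalent to $I(u_\varepsilon)\to I(u)$ together with $d_1(u_\varepsilon,P(u_\varepsilon,u))$-type control, the plan is to squeeze $u_\varepsilon$ between two potentials that both converge to $u$ in $d_1$. The natural lower comparison is $u_\varepsilon\geq u+\varepsilon\,c$ type bounds coming from the equation: $\omega_{u_\varepsilon}^n=e^{(u_\varepsilon-u)/\varepsilon}\omega^n$ has total mass $V$, so $\int_X e^{(u_\varepsilon-u)/\varepsilon}\omega^n=V$, and by Jensen $\int_X (u_\varepsilon-u)\omega^n\le 0$, while the comparison principle will give a one-sided pointwise estimate.

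First I would treat smooth $u\in\cH$. Here the maximum principle at the maximum/minimum points of $u_\varepsilon-u$ gives $|u_\varepsilon-u|\le \varepsilon\sup_X|\log(\omega_u^n/\omega^n)|$: at a maximum of $u_\varepsilon-u$ one has $\omega_{u_\varepsilon}^n\le\omega_u^n$, hence $e^{(u_\varepsilon-u)/\varepsilon}\le \omega_u^n/\omega^n$, and symmetrically at the minimum. (If $u_\varepsilon$ is not smooth enough, I would use the comparison principle of Bedford--Taylor/Guedj--Zeriahi for bounded potentials instead, comparing with $u\pm\varepsilon C$.) Thus $u_\varepsilon\to u$ uniformly, so in $d_1$.

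For general $u\in\cE^1$, pick smooth $u_i\in\cH$ with $u_i\searrow u$ (Demailly/BK regularization). By Lemma \ref{lem: mono_conv}, $u_{i,\varepsilon}\searrow u_\varepsilon$ for each fixed $\varepsilon$, and I also want a monotonicity in $u$: $u\le v$ implies $u_\varepsilon\le v_\varepsilon$ (comparison principle, which is how Lemma \ref{lem: mono_conv} is proved). Then $u_\varepsilon\le u_{i,\varepsilon}\le u_i+\varepsilon C_i$. For the lower bound, I would show $u_\varepsilon\ge u$ is false in general, so instead use energy: by \eqref{eq:sup_Ding}, $u_\varepsilon$ maximizes $F^{1/\varepsilon}(\cdot,u)$ among normalized competitors, and testing against $v=u+\varepsilon c$ (after approximating by continuous potentials) with $c$ chosen for normalization yields $I(u_\varepsilon)-I(u)\ge -\varepsilon\log\int e^{0}\cdot\ldots =0$, i.e. $I(u_\varepsilon)\ge I(u)$ essentially; more carefully, testing with $u_i$ and letting $i\to\infty$ gives $\liminf_{\varepsilon\to0} I(u_\varepsilon)\ge I(u)$.

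Combining: $\limsup_\varepsilon u_\varepsilon\le u_i$ for every $i$, so any $d_1$/$L^1$ limit point $w$ of $u_\varepsilon$ satisfies $w\le u$, while $\limsup I(u_\varepsilon)\le I(u_i)+\varepsilon C_i\to I(u)$ and $\liminf I(u_\varepsilon)\ge I(u)$. Since $\{u_\varepsilon\}$ has bounded energy and $\sup$, it is $d_1$-relatively compact in the weak sense; a limit $w\le u$ with $I(w)\ge \limsup I(u_\varepsilon)$ (usc of $I$) and the reverse inequalities force $w=u$ and $I(u_\varepsilon)\to I(u)$; then weak convergence plus energy convergence, together with $u_\varepsilon\le u_i+\varepsilon C_i$ (giving $P(u_\varepsilon,u)$ control), yields $d_1$-convergence by the standard criterion of \cite{Dar15}/BBGZ. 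The main obstacle is the lower energy bound $\liminf I(u_\varepsilon)\ge I(u)$, which needs the variational characterization and care with normalization of competitors in $\cE^1\cap C^0$.
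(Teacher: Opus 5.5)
The paper does not prove this lemma; it is quoted without argument from \cite[\S 3]{DZ25}, so there is no in-text proof to compare with. Judged on its own, your argument is correct. The ingredients are the natural ones: the bound $|u_{i,\varepsilon}-u_i|\le\varepsilon C_i$ for smooth $u_i$ (with $C_i:=\sup_X|\log(\omega_{u_i}^n/\omega^n)|$), the comparison $u_\varepsilon\le u_{i,\varepsilon}$, and a lower energy bound. Two remarks follow.

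\emph{The lower energy bound.} This is not the obstacle you expect, and it needs no variational principle. By concavity of $I$ and \eqref{eq: ent_ue_int},
\[
I(u_\varepsilon)-I(u)\ \ge\ \frac{1}{V}\int_X(u_\varepsilon-u)\,\omega_{u_\varepsilon}^n=\varepsilon\,\mathrm{Ent}(u_\varepsilon)\ \ge\ 0
\]
for every $\varepsilon>0$; this is the right half of Lemma \ref{lem: Ent>Ent_e>Ent_e}. Normalizing competitors in \eqref{eq:sup_Ding} is also a non-issue, since $F^{1/\varepsilon}(\cdot,u)$ is invariant under adding constants.

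If you do test \eqref{eq:sup_Ding} with $u_i$, let $i\to\infty$ at fixed $\varepsilon$, using dominated convergence with $e^{(u_1-u)/\varepsilon}\in L^1$ \cite{Ze01}. In the other order the bound fails: $\varepsilon\log\int_X e^{(u_i-u)/\varepsilon}\frac{\omega^n}{V}\to\operatorname{ess\,sup}(u_i-u)=+\infty$ whenever $u$ is unbounded.

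\emph{The endgame.} Your compactness, upper semicontinuity and strong-topology argument is valid, but a direct estimate does the job. Both $u_\varepsilon$ and $u$ lie below $u_i+\varepsilon C_i$. Using $d_1(a,b)=I(b)-I(a)$ for $a\le b$ together with $I(u_\varepsilon)\ge I(u)$, you get
\[
d_1(u_\varepsilon,u)\le d_1(u_\varepsilon,u_i+\varepsilon C_i)+d_1(u_i+\varepsilon C_i,u)\le 2\,d_1(u_i,u)+2\varepsilon C_i .
\]
Hence $\limsup_{\varepsilon\to0}d_1(u_\varepsilon,u)\le 2\,d_1(u_i,u)$ for every $i$, which tends to $0$. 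This gives a quantitative rate in terms of the approximation. It also removes the need for $L^1$-compactness of $\{u_\varepsilon\}$ and for the theorem that $L^1$-convergence plus convergence of $I$ implies $d_1$-convergence.
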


\begin{lemma}
	\label{lem: Ent_ue_conv_Ent_u}
	One has $\mathrm{Ent}(u_\varepsilon)\to \mathrm{Ent}(u)$ as $\varepsilon\to 0$.

\end{lemma}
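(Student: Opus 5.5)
The plan is to sandwich $\mathrm{Ent}(u_\varepsilon)$: first show the monotone bound $\mathrm{Ent}(u_\varepsilon)\leq \mathrm{Ent}(u)$ for every $\varepsilon>0$, and then get the reverse inequality in the limit from lower semicontinuity of entropy. If $\mathrm{Ent}(u)=+\infty$, only the second step is needed.

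\textbf{Lower bound.} By Lemma \ref{lem: u_e_d1_conv_u}, $u_\varepsilon\to u$ in $d_1$. Hence $\omega_{u_\varepsilon}^n\to\omega_u^n$ weakly as measures. The relative entropy $\mu\mapsto\int\log\frac{d\mu}{\omega^n}\,d\mu$ with respect to the fixed measure $\omega^n/V$ is weakly lower semicontinuous on probability measures; this follows from its Legendre dual description as a supremum of continuous linear functionals. Therefore
\[
\liminf_{\varepsilon\to0}\mathrm{Ent}(u_\varepsilon)\geq\mathrm{Ent}(u).
\]

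\textbf{Upper bound.} Assume $\mathrm{Ent}(u)<\infty$ and set $g:=(u_\varepsilon-u)/\varepsilon$. Since $u_\varepsilon$ is continuous and $u$ is bounded above, $g$ is bounded below. By \eqref{eq: def_u_e} we have $\int_X e^g\,\omega^n/V=1$, and by \eqref{eq: ent_ue_int},
\[
\mathrm{Ent}(u_\varepsilon)=\frac1V\int_X g\,\omega_{u_\varepsilon}^n .
\]
The Gibbs/Young variational inequality, applied to the probability measure $\omega_u^n/V$, gives
\[
\mathrm{Ent}(u)\geq \frac1V\int_X g\,\omega_u^n-\log\int_X e^g\frac{\omega^n}{V}=\frac1{\varepsilon V}\int_X (u_\varepsilon-u)\,\omega_u^n .
\]
To justify this when $g$ is unbounded above, I would apply the inequality to the truncations $\min(g,k)$ and let $k\to\infty$. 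The integrals of the truncations against $\omega_u^n$ converge by monotone convergence, since $g$ is bounded below; the log-integrals converge to $\log 1=0$. Subtracting the two displays yields
\[
\mathrm{Ent}(u)-\mathrm{Ent}(u_\varepsilon)\geq \frac1{\varepsilon V}\int_X (u_\varepsilon-u)\big(\omega_u^n-\omega_{u_\varepsilon}^n\big).
\]
The right-hand side is $\geq 0$ by the standard monotonicity of the complex Monge--Amp\`ere operator on $\cE^1$, namely $\int(\varphi-\psi)(\omega_\psi^n-\omega_\varphi^n)\geq0$, i.e. nonnegativity of Aubin's $I$-functional. Both integrals are finite because $u_\varepsilon$ is bounded and $u\in\cE^1$. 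Thus $\mathrm{Ent}(u_\varepsilon)\leq\mathrm{Ent}(u)$ for all $\varepsilon>0$, and combined with the lower bound the limit follows.

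\textbf{Main obstacle.} The delicate point is the integrability and justification in the upper bound. One must make sure that $\int(u_\varepsilon-u)\,\omega_u^n$ and $\int(u_\varepsilon-u)\,\omega_{u_\varepsilon}^n$ are finite, and that the Monge--Amp\`ere monotonicity inequality holds for finite-energy (not smooth) potentials. If needed, I would first prove it for $u\in\cH$, where everything is smooth, and pass to general $u\in\cE^1$ via decreasing smooth approximations $u_i\searrow u$. Lemma \ref{lem: mono_conv} gives $u_{i,\varepsilon}\searrow u_\varepsilon$, and the energy terms converge by continuity of mixed Monge--Amp\`ere integrals along decreasing sequences in $\cE^1$.
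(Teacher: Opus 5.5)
Your argument is correct and is essentially the standard proof of this lemma, which the paper quotes from \cite{DZ25} rather than proving. The bound $\mathrm{Ent}(u_\varepsilon)\leq\mathrm{Ent}(u)$ is exactly what Lemma \ref{lem: Ent>Ent_e>Ent_e} gives (your Gibbs inequality plus positivity of Aubin's functional is that chain with the middle term $\frac{1}{\varepsilon}(I(u_\varepsilon)-I(u))$ skipped), and the reverse inequality in the limit is lower semicontinuity of entropy along the $d_1$-convergence of Lemma \ref{lem: u_e_d1_conv_u}, so your fallback approximation step is unnecessary.
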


\begin{lemma}
	\label{lem: Ent>Ent_e>Ent_e}
	One has $\mathrm{Ent}(u)\geq \frac{1}{\varepsilon}(I(u_\varepsilon )-I(u))\geq \mathrm{Ent}(u_\varepsilon)$.
\end{lemma}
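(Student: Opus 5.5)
The plan is to prove the two inequalities separately. Each one combines the concavity of the Monge--Amp\`ere energy $I$ along affine segments in $\cE^1$ with one property of $u_\varepsilon$: for the right-hand inequality the identity \eqref{eq: ent_ue_int}, and for the left-hand inequality the normalization built into \eqref{eq: def_u_e}.

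\emph{Right-hand inequality.} I will use the standard first-order concavity bound for $I$: for $a,b\in\cE^1$,
\[
\frac1V\int_X (a-b)\,\omega_a^n\;\le\; I(a)-I(b)\;\le\;\frac1V\int_X (a-b)\,\omega_b^n .
\]
For $a,b\in\cH$ this follows by differentiating $I$ along $b+s(a-b)$. It extends to $\cE^1$ by decreasing smooth approximation, using continuity of $I$ and of the mixed integrals along such sequences. Take $a=u_\varepsilon$ and $b=u$. The lower bound gives $I(u_\varepsilon)-I(u)\ge \frac1V\int_X(u_\varepsilon-u)\,\omega_{u_\varepsilon}^n$, and by \eqref{eq: ent_ue_int} the right-hand side equals $\varepsilon\,\mathrm{Ent}(u_\varepsilon)$. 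Dividing by $\varepsilon$ gives $\frac1\varepsilon(I(u_\varepsilon)-I(u))\ge \mathrm{Ent}(u_\varepsilon)$.

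\emph{Left-hand inequality.} If $\mathrm{Ent}(u)=+\infty$ there is nothing to prove, so assume $\omega_u^n=g\,\omega^n$ with $\int_X g\log g\,\omega^n<\infty$. The upper bound in the concavity estimate gives
\[
\frac1\varepsilon\bigl(I(u_\varepsilon)-I(u)\bigr)\le \frac1V\int_X f\,\omega_u^n,\qquad f:=\frac{u_\varepsilon-u}{\varepsilon}.
\]
This integral is finite because $u_\varepsilon$ is bounded and $u\in\cE^1$ is integrable against $\omega_u^n$. Integrating \eqref{eq: def_u_e} over $X$ and noting that $\omega_{u_\varepsilon}^n$ has total mass $V$ yields $\int_X e^{f}\,\frac{\omega^n}{V}=1$.

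I then apply the Gibbs (Jensen) inequality to the probability measure $\mu=g\,\omega^n/V$. On $\{g>0\}$,
\[
\int_X f\,d\mu-\int_X\log g\,d\mu=\int_X\log\frac{e^f}{g}\,d\mu\le \log\int_{\{g>0\}} e^f\,\frac{\omega^n}{V}\le 0 .
\]
Hence $\frac1V\int_X f\,\omega_u^n\le \mathrm{Ent}(u)$, which is the desired bound.

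The only delicate step is justifying the concavity inequalities for non-smooth $u\in\cE^1$, that is, the convergence of $\int_X(a-b)\,\omega_b^n$ under decreasing approximation. This is standard finite energy pluripotential theory, using that $u_\varepsilon\in C^0$, so I expect it to be routine rather than a genuine obstacle. The integrability of $f$ against $\mu$, needed for Jensen, is covered by the same facts.
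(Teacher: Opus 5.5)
Your argument is correct and is essentially the standard one. The paper only cites \cite[\S 3]{DZ25} for this lemma, but its non-Archimedean analogue (Lemma \ref{lem: lim_Ent_Ue=Ent_U}) runs exactly your chain: concavity of $I$ sandwiches $I(u_\varepsilon)-I(u)$ between $\frac1V\int_X(u_\varepsilon-u)\,\omega_{u_\varepsilon}^n$ and $\frac1V\int_X(u_\varepsilon-u)\,\omega_u^n$, then \eqref{eq: ent_ue_int} gives one bound and your Gibbs step with $\int_X e^{(u_\varepsilon-u)/\varepsilon}\omega^n/V=1$ plays the role that $U_\varepsilon\le U+\varepsilon A_X$ plays there.
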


\begin{lemma}
	\label{lem:lim_Ent_e=Ent}
	One has
	$\mathrm{Ent}(u)=\lim_{\varepsilon\to 0}\frac{1}{\varepsilon}(I(u_\varepsilon )-I(u))$.
\end{lemma}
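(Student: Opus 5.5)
The squeeze in Lemma \ref{lem: Ent>Ent_e>Ent_e},
\[
\mathrm{Ent}(u)\geq \frac{1}{\varepsilon}(I(u_\varepsilon)-I(u))\geq \mathrm{Ent}(u_\varepsilon),
\]
combined with the convergence $\mathrm{Ent}(u_\varepsilon)\to\mathrm{Ent}(u)$ of Lemma \ref{lem: Ent_ue_conv_Ent_u}, gives the statement directly. Both outer quantities tend to $\mathrm{Ent}(u)$ as $\varepsilon\to 0$, so the middle difference quotient does too. So the plan is to invoke these two lemmas in this order, with a short case check for infinite entropy.

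\textbf{Finite entropy.} Assume $\mathrm{Ent}(u)<\infty$. From the squeeze,
\[
\liminf_{\varepsilon\to0}\frac{1}{\varepsilon}(I(u_\varepsilon)-I(u))\geq \lim_{\varepsilon\to 0}\mathrm{Ent}(u_\varepsilon)=\mathrm{Ent}(u).
\]
The upper bound $\limsup\leq \mathrm{Ent}(u)$ is immediate from the left inequality of the squeeze. Together these give the claimed limit.

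\textbf{Infinite entropy.} If $\mathrm{Ent}(u)=+\infty$, the left inequality carries no information. The right inequality still gives $\frac{1}{\varepsilon}(I(u_\varepsilon)-I(u))\geq \mathrm{Ent}(u_\varepsilon)$. By Lemma \ref{lem: Ent_ue_conv_Ent_u}, read in the extended sense, $\mathrm{Ent}(u_\varepsilon)\to+\infty$, so the quotient also tends to $+\infty$.

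\textbf{Main obstacle.} The delicate point is whether Lemma \ref{lem: Ent_ue_conv_Ent_u} covers $\mathrm{Ent}(u)=\infty$. If it does not, I would fall back on lower semicontinuity of entropy under $d_1$-convergence, using $u_\varepsilon\to_{d_1}u$ from Lemma \ref{lem: u_e_d1_conv_u}. This gives $\liminf_{\varepsilon\to0}\mathrm{Ent}(u_\varepsilon)\geq \mathrm{Ent}(u)=\infty$, which is all that is needed, and it uses only the standard lower semicontinuity of relative entropy under weak convergence of the measures $\omega_{u_\varepsilon}^n\to\omega_u^n$.
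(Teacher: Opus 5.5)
Your proof is correct and is essentially the intended argument. The paper recalls this lemma from \cite[\S 3]{DZ25} without proof, and it follows exactly from the squeeze in Lemma \ref{lem: Ent>Ent_e>Ent_e} together with $\mathrm{Ent}(u_\varepsilon)\to\mathrm{Ent}(u)$; equivalently, lower semicontinuity of entropy along $u_\varepsilon\rightarrow_{d_1}u$ gives the lower bound and covers $\mathrm{Ent}(u)=+\infty$ without a separate case.
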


We now fix some $u\in\cE^1$ and investigate the function:
\begin{equation*}
f(\varepsilon):=
    \begin{cases}
        I(u_\varepsilon),\ \varepsilon>0,\\
        I(u),\ \varepsilon=0.
        
    \end{cases}
\end{equation*}
We aim to show the following.

\begin{theorem}[=Theorem \ref{thm:main_f'}]
\label{thm: f'}
    The function $f(\varepsilon)$ is concave, non-decreasing and belongs to  $C^1([0,\infty))$. Moreover,
    $$
    f'(\varepsilon)=\mathrm{Ent}(u_\varepsilon)\text{ for any $\varepsilon>0$ and }
    f'_+(0)=\mathrm{Ent}(u).
    $$
\end{theorem}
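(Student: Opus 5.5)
The key observation is a semigroup property of the Aubin--Yau deformation. I would relate $u_{\varepsilon+\delta}$ to the solution of \eqref{eq: def_u_e} started from $u_\varepsilon$, apply Lemma \ref{lem: Ent>Ent_e>Ent_e} at the base point $u_\varepsilon$, and obtain two-sided difference-quotient bounds by $\mathrm{Ent}(u_\varepsilon)$. The difficulty is that $(u_\varepsilon)_\delta\neq u_{\varepsilon+\delta}$ in general, so I would instead work directly with the variational characterization \eqref{eq:sup_Ding}.

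\textbf{Step 1: concavity.} Define $G(v,\varepsilon):=-\varepsilon\log\int_X e^{(v-u)/\varepsilon}\frac{\omega^n}{V}+I(v)$. By \eqref{eq:sup_Ding} and the normalization, $f(\varepsilon)=\sup_v G(v,\varepsilon)$, with the maximum attained at $v=u_\varepsilon$, where the log term vanishes. For fixed $v$, the map $\varepsilon\mapsto -\varepsilon\log\int e^{(v-u)/\varepsilon}$ is a perspective-type function. Since $\log\int e^{\phi}$ is convex in $\phi$, the map $\varepsilon\mapsto\varepsilon\log\int e^{w/\varepsilon}$ is convex in $\varepsilon$; this is the perspective of a convex function. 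Hence $G(v,\cdot)$ is concave, but a supremum of concave functions need not be concave, so this route fails as stated. I would instead obtain concavity from joint concavity. The function $(v,\varepsilon)\mapsto G(v,\varepsilon)$ is jointly concave along the affine family $v=(1-s)v_0+sv_1$, $\varepsilon=(1-s)\varepsilon_0+s\varepsilon_1$: $I$ is concave along affine combinations, and $(w,\varepsilon)\mapsto \varepsilon\log\int e^{w/\varepsilon}$ is jointly convex as a perspective. Because a partial maximization of a jointly concave function is concave, $f$ is concave on $(0,\infty)$. Monotonicity also follows. Given $\varepsilon<\varepsilon'$, set $c=\varepsilon\log\int e^{(u_\varepsilon-u)/\varepsilon'}$; by H\"older this satisfies $c\le0$. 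Then $v:=u_\varepsilon-\varepsilon' c/\varepsilon\cdots$ requires a more careful constant. More simply, $f(\varepsilon')\ge G(u_\varepsilon,\varepsilon')\ge I(u_\varepsilon)$, because Jensen gives $\int e^{(u_\varepsilon-u)/\varepsilon'}\le(\int e^{(u_\varepsilon-u)/\varepsilon})^{\varepsilon/\varepsilon'}=1$. Continuity at $0$ comes from Lemma \ref{lem: u_e_d1_conv_u}, which gives $I(u_\varepsilon)\to I(u)$, so concavity extends to $[0,\infty)$.

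\textbf{Step 2: derivative.} This is an envelope (Danskin) argument. Using $u_\varepsilon$ as a competitor in $G(\cdot,\varepsilon')$ yields $f(\varepsilon')\ge G(u_\varepsilon,\varepsilon')$ with equality at $\varepsilon'=\varepsilon$. Hence any supergradient of the concave $f$ at $\varepsilon$ is pinned by $\partial_\varepsilon G(u_\varepsilon,\varepsilon)$, which is the only tangent line of $G(u_\varepsilon,\cdot)$ touching from below. Computing it, the term $-\log\int e^{w/\varepsilon}$ vanishes, and the derivative of the exponent gives $\frac{1}{\varepsilon^2}\int (u_\varepsilon-u)e^{(u_\varepsilon-u)/\varepsilon}\frac{\omega^n}{V}$. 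Multiplying by $\varepsilon$ yields $\frac1{\varepsilon V}\int(u_\varepsilon-u)\omega^n_{u_\varepsilon}=\mathrm{Ent}(u_\varepsilon)$ by \eqref{eq: ent_ue_int}. Since $G(u_\varepsilon,\cdot)$ is differentiable and lies below the concave $f$, touching it at $\varepsilon$, $f$ is differentiable there with $f'(\varepsilon)=\mathrm{Ent}(u_\varepsilon)$. Differentiating under the integral requires $u_\varepsilon-u$ to be integrable against the relevant measures; this holds because $u_\varepsilon\in C^0$ and $u\in\cE^1$, which has exponential integrability for the small exponents involved. If that fails, I would first do the smooth case and approximate via Lemma \ref{lem: mono_conv}.

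\textbf{Step 3: behaviour at $0$ and $C^1$.} Lemma \ref{lem:lim_Ent_e=Ent} gives $f'_+(0)=\mathrm{Ent}(u)$, possibly $+\infty$, and Lemma \ref{lem: Ent_ue_conv_Ent_u} gives $f'(\varepsilon)\to f'_+(0)$. A concave function that is differentiable everywhere has a continuous derivative, so $f\in C^1$. The main obstacle is Step 2, namely the justification of the touching argument in the tangency at $\varepsilon$ and the integrability needed to differentiate $G(u_\varepsilon,\cdot)$.
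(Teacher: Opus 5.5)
Your proof is correct, and it takes a genuinely different route from the paper's.

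\textbf{The paper's route.} It argues PDE-theoretically. For smooth $u_i\searrow u$ it differentiates the Aubin--Yau equation in $\varepsilon$ and obtains $f_i'=\mathrm{Ent}(u_{i,\varepsilon})$ together with $Vf_i''=-\int_X|\nabla_{i,\varepsilon}\dot u_{i,\varepsilon}|^2\omega^n_{u_{i,\varepsilon}}-\varepsilon\int_X(\Delta_{i,\varepsilon}\dot u_{i,\varepsilon})^2\omega^n_{u_{i,\varepsilon}}\leq 0$. It then lets $i\to\infty$. This needs two separate inputs before the one-sided derivatives can be squeezed: $\mathrm{Ent}(u_{i,\varepsilon})\to\mathrm{Ent}(u_\varepsilon)$ (Ko\l odziej's estimate plus dominated convergence), and continuity of $\varepsilon\mapsto\mathrm{Ent}(u_\varepsilon)$ (Guedj--Lu--Zeriahi stability).

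\textbf{Your route.} You work only with the variational characterization \eqref{eq:sup_Ding}, rewritten as $f(\varepsilon)=\sup_v G(v,\varepsilon)$.
\begin{enumerate}
\item Joint concavity of $G$ follows from concavity of $I$ along affine segments and joint convexity of the perspective of $w\mapsto\log\int_X e^{w}\omega^n/V$. Testing with $(1-s)u_{\varepsilon_0}+su_{\varepsilon_1}$ at $(1-s)\varepsilon_0+s\varepsilon_1$ then gives concavity of $f$.
\item Set $h:=G(u_\varepsilon,\cdot)$. Then $h\leq f$, with equality at $\varepsilon$, $h$ is differentiable, and $f$ is concave. This forces $h'(\varepsilon)\leq f'_+(\varepsilon)\leq f'_-(\varepsilon)\leq h'(\varepsilon)$, so $f'(\varepsilon)=\mathrm{Ent}(u_\varepsilon)$.
\end{enumerate}
This holds directly for every $u\in\cE^1$. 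It needs no approximation and no smooth dependence of solutions on $\varepsilon$. Continuity of $\varepsilon\mapsto\mathrm{Ent}(u_\varepsilon)$ comes out for free from concavity rather than being an input.

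\textbf{What each buys.} The paper's approach gives the explicit second-variation formula in the smooth case. It also gives the convergence $\mathrm{Ent}(u_{i,\varepsilon})\to\mathrm{Ent}(u_\varepsilon)$, which is exactly what feeds the subsequent regularization corollary. Your argument would need that lemma separately to recover the corollary. On the other hand, your argument is closer in spirit to the supergradient argument the paper uses in the non-Archimedean section, and it transfers to settings where only a variational characterization is available.

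\textbf{A small correction.} In Step 2 the exponents are near $1/\varepsilon$, which is large when $\varepsilon$ is small. So what you need is not integrability ``for small exponents'' but $e^{-pu}\in L^1$ for every $p>0$. This is Zeriahi's theorem, available because $\cE^1$ potentials have zero Lelong numbers. Since $u_\varepsilon$ is bounded, $|\partial_{\varepsilon'}e^{(u_\varepsilon-u)/\varepsilon'}|\leq C(1+|u|)e^{-2u/\varepsilon}$ for $\varepsilon'\in[\varepsilon/2,2\varepsilon]$, which justifies differentiating under the integral. Your fallback to the smooth case is therefore unnecessary.
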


Here $f\in C^1([0,\infty))$ means that $f\in C^1(0,\infty )$ and $\lim_{\varepsilon\to 0}f'(\varepsilon )=f'_+(0)$. Note that $f'_+(0)$ denotes the right derivative of $f$ at $\varepsilon= 0$ that is allowed to be $+\infty$ (this happens precisely when $\mathrm{Ent}(u)=+\infty$). Indeed, by Lemma \ref{lem: Ent>Ent_e>Ent_e} and Lemma \ref{lem:lim_Ent_e=Ent},
$$
f'_+(0)=\lim_{\varepsilon\to 0^+}\frac{I(u_\varepsilon)-I(u)}{\varepsilon}=\mathrm{Ent}(u).
$$

An immediate consequence of Theorem \ref{thm: f'} is the following result.
\begin{corollary}
    For any $u\in\cE^1$, $\mathrm{Ent}(u_\varepsilon)\nearrow \mathrm{Ent}(u)$ as $\varepsilon\searrow 0$.
\end{corollary}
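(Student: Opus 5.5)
The statement to prove is the Corollary: $\mathrm{Ent}(u_\varepsilon)\nearrow \mathrm{Ent}(u)$ as $\varepsilon\searrow 0$. The plan is to read this off from Theorem \ref{thm: f'}, together with Lemma \ref{lem: Ent_ue_conv_Ent_u}.

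\textbf{Monotonicity.} By Theorem \ref{thm: f'}, the function $f(\varepsilon)=I(u_\varepsilon)$ is concave on $[0,\infty)$ and differentiable on $(0,\infty)$, with $f'(\varepsilon)=\mathrm{Ent}(u_\varepsilon)$. The derivative of a concave differentiable function is non-increasing. Hence for $0<\varepsilon_1<\varepsilon_2$ we get
\[
\mathrm{Ent}(u_{\varepsilon_1})=f'(\varepsilon_1)\geq f'(\varepsilon_2)=\mathrm{Ent}(u_{\varepsilon_2}).
\]
So $\varepsilon\mapsto \mathrm{Ent}(u_\varepsilon)$ is non-decreasing as $\varepsilon$ decreases to $0$. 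Concavity also gives the uniform upper bound $f'(\varepsilon)\leq f'_+(0)=\mathrm{Ent}(u)$, which is consistent with Lemma \ref{lem: Ent>Ent_e>Ent_e}.

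\textbf{Identifying the limit.} Since the family is monotone, the limit as $\varepsilon\searrow 0$ exists in $(-\infty,+\infty]$. The $C^1([0,\infty))$ part of Theorem \ref{thm: f'} says $\lim_{\varepsilon\to0}f'(\varepsilon)=f'_+(0)=\mathrm{Ent}(u)$. Alternatively, Lemma \ref{lem: Ent_ue_conv_Ent_u} gives this limit directly. This covers the case $\mathrm{Ent}(u)=+\infty$ as well, where the convergence is monotone to $+\infty$.

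\textbf{Main obstacle.} There is essentially no difficulty. The one point to handle carefully is the case $\mathrm{Ent}(u)=+\infty$, where $f'_+(0)$ is infinite. Monotonicity there still follows from concavity on the open interval $(0,\infty)$, and the value of the limit is supplied by Lemma \ref{lem: Ent_ue_conv_Ent_u}.
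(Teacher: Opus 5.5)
Your proposal is correct and follows the paper's reasoning: the corollary is read off from Theorem \ref{thm: f'}. Monotonicity comes from the concavity of $f$ together with $f'(\varepsilon)=\mathrm{Ent}(u_\varepsilon)$, and the value of the limit comes from $f\in C^1([0,\infty))$, or equivalently from Lemma \ref{lem: Ent_ue_conv_Ent_u}.
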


The nontrivial part of Theorem \ref{thm: f'} is to show that $f$ is $C^1$ for $\varepsilon\in (0,\infty)$ and
$$
f'(\varepsilon)=\mathrm{Ent}(u_\varepsilon).
$$
The proof takes several steps. Let us first pick a regularization sequence $\cH\ni u_i\searrow u$. We define as above a sequence of functions
\begin{equation*}
f_i(\varepsilon):=
    \begin{cases}
        I(u_{i,\varepsilon}),\ \varepsilon>0,\\
        I(u_i),\ \varepsilon=0
        
    \end{cases}
\end{equation*}
We first argue that Theorem \ref{thm: f'} holds for $f_i(\varepsilon)$ and then let $i\to\infty$. 

Note that $u_i\in C^\infty$, $\{u_{i,\varepsilon}\}_{\varepsilon\in\RR_{>0}}$ is a smooth family of solutions to \eqref{eq: def_u_e},
so $f_i$ is smooth for $\varepsilon\in(0,\infty)$. Its continuity at $\varepsilon=0$ follows from Lemma \ref{lem: u_e_d1_conv_u}.
 Moreover, by differentiating $\varepsilon$ in \eqref{eq: def_u_e} we obtain that
 \begin{equation}
     \label{eq:Lapdot_u_e}
     \Delta_{i,\varepsilon}\dot u_{i,\varepsilon}=-\frac{1}{\varepsilon^2}(u_{i,\varepsilon}-u_i)+\frac{1}{\varepsilon}\dot u_{i,\varepsilon},\ \varepsilon>0,
 \end{equation}
where $\Delta_{i,\varepsilon}$ denotes the Laplacian of $\omega_{u_{i,\varepsilon}}$ and $\dot u_{i,\varepsilon}:=\frac{\partial}{\partial\varepsilon}u_{i,\varepsilon}$. Integrating the above equation against $\omega^n_{u_{i,\varepsilon}}$ implies that
\begin{equation}
    \label{eq: fi'=ent}
    f_i'(\varepsilon)=\frac{1}{V}\int_X\dot u_{i,\varepsilon}\omega^n_{u_{i,\varepsilon}}=\frac{1}{V}\int_X\frac{1}{\varepsilon}(u_{i,\varepsilon}-u_i)\omega^n_{u_{i,\varepsilon}}=\mathrm{Ent}(u_{i,\varepsilon}),\ \varepsilon>0.
\end{equation}
At the endpoint $\varepsilon=0$, Lemma \ref{lem:lim_Ent_e=Ent} implies that
$$
f'_{i,+}(0)=\lim_{\varepsilon\to 0}\frac{1}{\varepsilon}(I(u_{i,\varepsilon})-I(u_i))=\mathrm{Ent}(u_i).
$$
Combining this with Lemma \ref{lem: Ent_ue_conv_Ent_u}, we deduce that
$$
f'_{i,+}(0)=\mathrm{Ent}(u_i)=\lim_{\varepsilon\to0}\mathrm{Ent}(u_{i,\varepsilon})=\lim_{\varepsilon\to0} f'_i(\varepsilon ).
$$
The above analysis yields the following.

\begin{lemma}
	One has $f_i\in C^1([0,\infty))$.

\end{lemma}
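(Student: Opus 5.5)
The claim is that $f_i\in C^1([0,\infty))$, where $f_i(\varepsilon)=I(u_{i,\varepsilon})$ for $\varepsilon>0$ and $f_i(0)=I(u_i)$, with $u_i\in\cH$ smooth. The plan is to check three things in turn: $f_i$ is $C^1$ on the open half-line, $f_i$ is continuous at the endpoint with a right derivative there, and $f_i'(\varepsilon)\to f'_{i,+}(0)$ as $\varepsilon\to 0^+$.

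\textbf{Smoothness on $(0,\infty)$.} Since $u_i$ is smooth, I would apply the implicit function theorem to the operator
\[
(\varepsilon,v)\mapsto \log\frac{\omega_v^n}{\omega^n}-\frac{1}{\varepsilon}(v-u_i)
\]
between H\"older spaces $C^{k+2,\alpha}\to C^{k,\alpha}$. Its linearization in $v$ is $\Delta_{\omega_v}-\frac1\varepsilon$. This is invertible because $\Delta$ is nonpositive, so the extra term $-\frac1\varepsilon$ removes the kernel. Together with uniqueness of solutions to \eqref{eq: def_u_e}, this shows that $\varepsilon\mapsto u_{i,\varepsilon}$ is a smooth curve of smooth potentials, so $f_i$ is smooth on $(0,\infty)$.

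\textbf{Derivative on $(0,\infty)$.} Using the first variation $\frac{d}{d\varepsilon}I(u_{i,\varepsilon})=\frac1V\int_X \dot u_{i,\varepsilon}\,\omega^n_{u_{i,\varepsilon}}$, I would differentiate \eqref{eq: def_u_e} to obtain \eqref{eq:Lapdot_u_e}. Integrating it against $\omega^n_{u_{i,\varepsilon}}$ kills the Laplacian term, and then \eqref{eq: ent_ue_int} gives
\[
f_i'(\varepsilon)=\mathrm{Ent}(u_{i,\varepsilon}),
\]
which is \eqref{eq: fi'=ent}.

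\textbf{The endpoint $\varepsilon=0$.} Continuity of $f_i$ at $0$ follows because $I$ is $d_1$-continuous and $u_{i,\varepsilon}\to u_i$ in $d_1$ by Lemma \ref{lem: u_e_d1_conv_u}. By Lemma \ref{lem:lim_Ent_e=Ent}, the right derivative exists and equals $\mathrm{Ent}(u_i)$. This value is finite because $\omega_{u_i}^n$ has a smooth positive density with respect to $\omega^n$. Finally, Lemma \ref{lem: Ent_ue_conv_Ent_u} gives $f_i'(\varepsilon)=\mathrm{Ent}(u_{i,\varepsilon})\to \mathrm{Ent}(u_i)=f'_{i,+}(0)$, so the derivative is continuous up to $0$.

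The step I expect to need the most care is the smooth dependence on $\varepsilon$, specifically justifying the implicit function theorem at each $\varepsilon>0$ and matching its local solution with the global one. I would handle this with the uniqueness statement from \cite[\S3]{DZ25} and a priori $C^{k,\alpha}$ bounds that are locally uniform in $\varepsilon$. The rest is direct bookkeeping from the lemmas already stated.
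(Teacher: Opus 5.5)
Your proposal is correct and follows essentially the same route as the paper: smoothness of $\varepsilon\mapsto u_{i,\varepsilon}$ on $(0,\infty)$, the identity $f_i'(\varepsilon)=\mathrm{Ent}(u_{i,\varepsilon})$ obtained by integrating \eqref{eq:Lapdot_u_e}, continuity and the right derivative at $0$ from Lemmas \ref{lem: u_e_d1_conv_u} and \ref{lem:lim_Ent_e=Ent}, and continuity of $f_i'$ at $0$ from Lemma \ref{lem: Ent_ue_conv_Ent_u}. The paper simply asserts that $\{u_{i,\varepsilon}\}$ is a smooth family. Your implicit function theorem argument supplies that justification: the linearization $\Delta_{\omega_v}-\frac1\varepsilon$ is invertible, and you match the local branch to the global solution via uniqueness, starting from the smooth Aubin--Yau solution at each $\varepsilon_0$.
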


The next result follows from a direct calculation.

\begin{lemma}
	The function $f_i$ is concave on $[0,\infty )$.
\end{lemma}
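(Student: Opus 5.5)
Since $f_i$ is smooth on $(0,\infty)$ and $f_i'(\varepsilon)=\mathrm{Ent}(u_{i,\varepsilon})$ by \eqref{eq: fi'=ent}, concavity on $(0,\infty)$ amounts to showing that $\varepsilon\mapsto \mathrm{Ent}(u_{i,\varepsilon})$ is non-increasing. Concavity then extends to $[0,\infty)$ because $f_i$ is continuous at $0$ (Lemma \ref{lem: u_e_d1_conv_u}). I will therefore compute $\frac{d}{d\varepsilon}\mathrm{Ent}(u_{i,\varepsilon})$ and show it is $\le 0$. To lighten notation I drop the index $i$ and write $w:=\dot u_\varepsilon$, $\Delta:=\Delta_{\varepsilon}$, $\psi:=\frac{1}{\varepsilon}(u_\varepsilon-u)=\log\frac{\omega^n_{u_\varepsilon}}{\omega^n}$.

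\textbf{Step 1 (rewrite the linearized equation).} Since $\dot\psi=\frac{1}{\varepsilon}w-\frac{1}{\varepsilon^2}(u_\varepsilon-u)$, equation \eqref{eq:Lapdot_u_e} reads
$$\Delta w=\dot\psi .$$
\textbf{Step 2 (differentiate the entropy).} We have $\mathrm{Ent}(u_\varepsilon)=\frac1V\int_X\psi\,\omega^n_{u_\varepsilon}$, and $\frac{d}{d\varepsilon}\omega^n_{u_\varepsilon}=(\Delta w)\,\omega^n_{u_\varepsilon}$. Hence
$$\frac{d}{d\varepsilon}\mathrm{Ent}(u_\varepsilon)=\frac1V\int_X\big(\dot\psi+\psi\,\Delta w\big)\omega^n_{u_\varepsilon}=\frac1V\int_X\psi\,\Delta w\,\omega^n_{u_\varepsilon}.$$
The term $\int\dot\psi\,\omega^n_{u_\varepsilon}=\int\Delta w\,\omega^n_{u_\varepsilon}$ vanishes by integration by parts; equivalently, differentiate the normalization $\int e^{\psi}\omega^n=V$.

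\textbf{Step 3 (express via the Dirichlet form).} From Step 1, $\psi=\frac1\varepsilon\big(w-\varepsilon\Delta w\big)$. Substituting,
$$\frac{d}{d\varepsilon}\mathrm{Ent}(u_\varepsilon)=\frac{1}{\varepsilon V}\int_X\big(w\Delta w-\varepsilon(\Delta w)^2\big)\omega^n_{u_\varepsilon}=-\frac{1}{\varepsilon V}\int_X\big(|\nabla w|^2_{\omega_{u_\varepsilon}}+\varepsilon(\Delta w)^2\big)\omega^n_{u_\varepsilon}\le 0.$$
This shows $f_i''\le0$ on $(0,\infty)$. As a by-product, equality forces $w$ to be constant; this strictness may be useful later.

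The only delicate point is justifying the differentiation: smooth dependence of $u_{i,\varepsilon}$ on $\varepsilon$. This follows from the implicit function theorem, since the linearized operator $\Delta-\frac1\varepsilon$ is invertible. This smoothness was already used to derive \eqref{eq:Lapdot_u_e}, so no real obstacle remains. The sign computation in Step 3 is the heart of the argument.
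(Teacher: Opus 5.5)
Your argument is correct and is essentially the paper's proof. Both differentiate $f_i'(\varepsilon)=\mathrm{Ent}(u_{i,\varepsilon})$, use \eqref{eq:Lapdot_u_e} to replace $\frac{1}{\varepsilon}(u_{i,\varepsilon}-u_i)$ by $\dot u_{i,\varepsilon}-\varepsilon\Delta_{i,\varepsilon}\dot u_{i,\varepsilon}$, and integrate by parts to get $V f_i''(\varepsilon)=-\int_X|\nabla_{i,\varepsilon}\dot u_{i,\varepsilon}|^2\omega^n_{u_{i,\varepsilon}}-\varepsilon\int_X(\Delta_{i,\varepsilon}\dot u_{i,\varepsilon})^2\omega^n_{u_{i,\varepsilon}}\le 0$.

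One small algebra slip: Step 1 actually gives $\psi=w-\varepsilon\Delta w$, not $\frac{1}{\varepsilon}(w-\varepsilon\Delta w)$. So the prefactor $\frac{1}{\varepsilon}$ in your final display is spurious, but it is harmless since it does not change the sign.
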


\begin{proof}
	It suffices to show that $f''_i(\varepsilon )\leq 0$ for any $\varepsilon>0$.
	For this, we will use \eqref{eq:Lapdot_u_e} and \eqref{eq: fi'=ent} to write
	\begin{equation*}
		\begin{split}
			V\cdot f''_i(\varepsilon )&=-\int_X\frac{1}{\varepsilon^2}(u_{i,\varepsilon}-u_i)\omega^n_{u_{i,\varepsilon}}+\int_X\frac{1}{\varepsilon}\dot u_{i,\varepsilon}\omega^n_{u_{i,\varepsilon}}+\int_X\frac{1}{\varepsilon}(u_{i,\varepsilon}-u_i)\Delta_{i,\varepsilon}\dot u_{i,\varepsilon}\omega^n_{u_{i,\varepsilon}}\\
			&=\int_X \Delta_{i,\varepsilon} \dot u_{i,\varepsilon}\omega^n_{u_{i,\varepsilon}}+\int_X(\dot u_{i,\varepsilon}-\varepsilon \Delta_{i,\varepsilon} \dot u_{i,\varepsilon} ) \Delta_{i,\varepsilon} \dot u_{i,\varepsilon}\omega^n_{u_{i,\varepsilon}}\\
			&=-\int_X|\nabla_{i,\varepsilon} \dot u_{i,\varepsilon} |^2\omega^n_{u_{i,\varepsilon}}-\varepsilon\int_X(\Delta_{i,\varepsilon} \dot u_{i,\varepsilon} )^2\omega^n_{u_{i,\varepsilon}}\leq 0.
		\end{split}
	\end{equation*}
	This completes the proof.
\end{proof}

Therefore, we conclude that Theorem \ref{thm: f'} holds for the sequence $\{f_i\}$.
By Lemma \ref{lem: mono_conv}, we know that $f_i\to f$ pointwise on $[0,\infty)$, so we deduce the following.

\begin{corollary}
    The function $f$ is non-decreasing and concave on $[0,\infty)$. 
\end{corollary}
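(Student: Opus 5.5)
The plan is to pass the corresponding properties of the smooth approximants $f_i$ to the limit. Since pointwise limits preserve concavity and monotonicity, it suffices to check two things: that each $f_i$ is concave and non-decreasing on $[0,\infty)$, and that $f_i(\varepsilon)\to f(\varepsilon)$ for every $\varepsilon\geq 0$.

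For the first point, concavity of $f_i$ on $[0,\infty)$ is the preceding lemma. For monotonicity, \eqref{eq: fi'=ent} gives $f_i'(\varepsilon)=\mathrm{Ent}(u_{i,\varepsilon})$ for $\varepsilon>0$. Both $\omega^n_{u_{i,\varepsilon}}/V$ and $\omega^n/V$ are probability measures, so Jensen's inequality applied to $-\log$ yields $\mathrm{Ent}(u_{i,\varepsilon})\geq 0$. Hence $f_i'\geq 0$ on $(0,\infty)$. Since $f_i$ is continuous on $[0,\infty)$ (indeed $f_i\in C^1([0,\infty))$), it follows that $f_i$ is non-decreasing on $[0,\infty)$.

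For the second point, at $\varepsilon=0$ we have $f_i(0)=I(u_i)$ and $u_i\searrow u$ in $\cE^1$. For $\varepsilon>0$, Lemma \ref{lem: mono_conv} gives $u_{i,\varepsilon}\searrow u_\varepsilon$, with $u_\varepsilon\in\cE^1$. In both cases I will invoke the standard fact that the Monge--Amp\`ere energy $I$ is continuous along decreasing sequences in $\cE^1$; equivalently, decreasing convergence in $\cE^1$ implies $d_1$-convergence, and $I$ is $d_1$-continuous. Therefore $f_i(\varepsilon)=I(u_{i,\varepsilon})\to I(u_\varepsilon)=f(\varepsilon)$ for $\varepsilon>0$, and $f_i(0)\to f(0)$.

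Finally, for $0\leq a<b$ and $\lambda\in[0,1]$ we have the inequalities $f_i(\lambda a+(1-\lambda)b)\geq\lambda f_i(a)+(1-\lambda)f_i(b)$ and $f_i(a)\leq f_i(b)$. Both pass to the pointwise limit, which proves the corollary. The only point requiring any care is the continuity of $I$ under decreasing limits, and that is classical.
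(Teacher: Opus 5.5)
Your proposal is correct and follows the paper's own argument. The paper also establishes concavity and monotonicity for the smooth approximants $f_i$, the latter via $f_i'(\varepsilon)=\mathrm{Ent}(u_{i,\varepsilon})\geq 0$. It then passes to the pointwise limit $f_i\to f$ on $[0,\infty)$, which comes from Lemma \ref{lem: mono_conv} and the continuity of $I$ along decreasing sequences. You simply spell out the steps the paper leaves implicit: nonnegativity of entropy via Jensen, and the convergence at $\varepsilon=0$.
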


Next we move on to show that $f'_i\to f'$ on $(0,\infty)$.

\begin{lemma}
    For any $\varepsilon>0$, one has $\lim_{i\to\infty}\mathrm{Ent}(u_{i,\varepsilon})=\mathrm{Ent}(u_\varepsilon).$
\end{lemma}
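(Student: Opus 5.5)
We prove the claim by rewriting the entropy through the Aubin--Yau equation and passing to the limit with Lemma \ref{lem: mono_conv}. By \eqref{eq: ent_ue_int},
\[
\mathrm{Ent}(u_{i,\varepsilon})=\frac{1}{\varepsilon V}\int_X (u_{i,\varepsilon}-u_i)\,\omega^n_{u_{i,\varepsilon}} = \frac{1}{\varepsilon V}\int_X (u_{i,\varepsilon}-u_i)\,e^{\frac{1}{\varepsilon}(u_{i,\varepsilon}-u_i)}\omega^n,
\]
and the same formula holds without the index $i$. So it suffices to show that $g_i:=\frac1\varepsilon(u_{i,\varepsilon}-u_i)$ converges to $g:=\frac1\varepsilon(u_\varepsilon-u)$ in a sense strong enough to pass to the limit in $\int_X g_i e^{g_i}\omega^n$. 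Since $\varepsilon$ is fixed, it can be ignored.

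\textbf{Step 1: pointwise convergence and measures.} Lemma \ref{lem: mono_conv} gives $u_{i,\varepsilon}\searrow u_\varepsilon$. We also have $u_i\searrow u$. Hence $g_i\to g$ pointwise on the set $\{u>-\infty\}$, which has full $\omega^n$-measure. Moreover, $e^{g_i}\omega^n=\omega^n_{u_{i,\varepsilon}}$ are probability-type measures of total mass $V$.

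\textbf{Step 2: uniform integrability.} The function $x\mapsto xe^x$ is bounded below by $-1/e$, so only the positive part of $g_ie^{g_i}$ needs control. We want a uniform bound $\int_X e^{p g_i}\omega^n\le C$ for some $p>1$. Then $g_i^+e^{g_i}\le C_p e^{pg_i}$ for a slightly larger exponent, which gives uniform integrability, and Vitali's theorem together with Step 1 finishes the proof.

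To get the bound, write $g_i=\frac1\varepsilon(u_{i,\varepsilon}-u_i)$. First, $u_{i,\varepsilon}\le u_{1,\varepsilon}\le C$ is uniformly bounded above, since $u_{1,\varepsilon}$ is continuous. Second, $-u_i\le -u$. Hence
\[
e^{pg_i}\le C e^{-\frac{p}{\varepsilon}u}.
\]
This is not integrable for general $u\in\cE^1$, so a better bound is needed. Instead we use $u_{i,\varepsilon}-u_i\le \sup_X(u_{i,\varepsilon}-u_i)$, which is uniformly bounded by the $C^0$/comparison estimates of \cite[\S 3]{DZ25}: since $u_{i,\varepsilon}\ge u_i$ and $u_i$ is bounded above uniformly, we would like $\sup_X(u_{i,\varepsilon}-u_i)\le C$ independently of $i$.

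I expect this to follow from the maximum principle applied to \eqref{eq: def_u_e}. At a maximum point $x_0$ of $u_{i,\varepsilon}-u_i$, where $u_i$ is smooth, we have $\omega_{u_{i,\varepsilon}}\le\omega_{u_i}$. This gives
\[
e^{\frac1\varepsilon(u_{i,\varepsilon}-u_i)(x_0)}\le \frac{\omega^n_{u_i}}{\omega^n}(x_0),
\]
but the right-hand side is not uniformly bounded in $i$.

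\textbf{Main obstacle and the route around it.} The main obstacle is therefore exactly this uniform control, and I would avoid pointwise bounds altogether. Lower semicontinuity of entropy under weak convergence of measures, with $\omega^n_{u_{i,\varepsilon}}\to\omega^n_{u_\varepsilon}$ by monotone convergence, gives
\[
\mathrm{Ent}(u_\varepsilon)\le\liminf_i \mathrm{Ent}(u_{i,\varepsilon}).
\]
For the reverse inequality, write
\[
\mathrm{Ent}(u_{i,\varepsilon})=\frac{1}{\varepsilon V}\int_X (u_{i,\varepsilon}-u_i)\,\omega^n_{u_{i,\varepsilon}},
\]
and split it as $\int_X u_{i,\varepsilon}\,\omega^n_{u_{i,\varepsilon}}-\int_X u_i\,\omega^n_{u_{i,\varepsilon}}$. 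The first term converges because $u_{i,\varepsilon}\to u_\varepsilon$ in $d_1$: monotone convergence in $\cE^1$ gives $d_1$-convergence, and the mixed integrals $\int_X \varphi\,\omega^n_\psi$ are $d_1$-continuous. For the second term, $-u_i\le -u$ together with $d_1$-continuity of $\int_X u\,\omega^n_{\psi}$ and monotonicity gives
\[
\limsup_i\left(-\int_X u_i\,\omega^n_{u_{i,\varepsilon}}\right)\le -\int_X u\,\omega^n_{u_\varepsilon}.
\]
This uses $-\int_X u_i\,\omega^n_{u_{i,\varepsilon}}\le -\int_X u\,\omega^n_{u_{i,\varepsilon}}$ and $d_1$-continuity in the measure argument for the fixed $u\in\cE^1$. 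Combining the two terms yields $\limsup_i\mathrm{Ent}(u_{i,\varepsilon})\le\mathrm{Ent}(u_\varepsilon)$, which together with the lower semicontinuity bound proves the claim.
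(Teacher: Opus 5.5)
Your final argument is correct, but it follows a different route from the paper, and your reason for abandoning the first route is mistaken. The bound $e^{pg_i}\le Ce^{-\frac{p}{\varepsilon}u}$ in Step~2 is perfectly usable. Every $u\in\cE^1$ has zero Lelong numbers, so by Skoda--Zeriahi \cite{Ze01} one has $e^{-\alpha u}\in L^1(\omega^n)$ for every $\alpha>0$.

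This integrability is exactly what the paper's proof uses. From $u_\varepsilon\le u_{i,\varepsilon}\le u_{1,\varepsilon}$, $u\le u_i\le\sup u+C$ and Ko\l odziej's lower bound for $u_\varepsilon$, the paper gets $|u_{i,\varepsilon}-u_i|\le|u|+C'_\varepsilon$. It then dominates $(u_{i,\varepsilon}-u_i)e^{\frac1\varepsilon(u_{i,\varepsilon}-u_i)}$ by $(|u|+C'_\varepsilon)e^{\frac1\varepsilon(C-u)}$. Integrability of this bound is checked via Zeriahi and an auxiliary bounded potential $\psi$ with $\omega_\psi^n=e^{A_\varepsilon-u/\varepsilon}\omega^n$, and dominated convergence finishes the proof. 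With your observation $xe^x\ge-1/e$ and H\"older, you would not even need the lower bound on $u_\varepsilon$.

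Your replacement argument is purely energetic. Decreasing sequences in $\cE^1$ converge in $d_1$. Also, $(\varphi,\psi)\mapsto\int_X\varphi\,\omega_\psi^n$ is jointly $d_1$-continuous on $\cE^1\times\cE^1$; this is the same type of estimate, \cite[Lemma A.2]{BBJ18}, that the paper uses in Lemma~\ref{lem: Ent_Holder}. Since both $u_i\to u$ and $u_{i,\varepsilon}\to u_\varepsilon$ in $d_1$, the terms $\int_X u_{i,\varepsilon}\,\omega^n_{u_{i,\varepsilon}}$ and $\int_X u_i\,\omega^n_{u_{i,\varepsilon}}$ each converge outright. So your lower semicontinuity half and the one-sided comparison $-u_i\le-u$ are superfluous.

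The trade-off is as follows:
\begin{itemize}
\item The paper's route needs only pointwise convergence and exponential integrability, but it uses the monotone structure essentially.
\item Your route avoids Zeriahi and Ko\l odziej altogether. Combined with the $d_1$-stability of $u\mapsto u_\varepsilon$ from \cite[Proposition 5.1]{DZ25}, it would give $\mathrm{Ent}(v_{i,\varepsilon})\to\mathrm{Ent}(u_\varepsilon)$ along arbitrary $d_1$-convergent sequences $v_i\to u$, not only decreasing ones.
\end{itemize}
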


\begin{proof}
    Note that
    $$
    \mathrm{Ent}(u_{i,\varepsilon})=\frac{1}{\varepsilon V}\int_X(u_{i,\varepsilon}-u_i)e^{\frac{1}{\varepsilon}(u_{i,\varepsilon}-u_i)}\omega^n = \frac{1}{\varepsilon V}\int_X(u_{i,\varepsilon}-u_i)\omega_{u_{i,\varepsilon}}^n
    $$
    and
    $$
    \mathrm{Ent}(u_{\varepsilon})=\frac{1}{\varepsilon V}\int_X(u_{\varepsilon}-u)e^{\frac{1}{\varepsilon}(u_{\varepsilon}-u)}\omega^n=\frac{1}{\varepsilon V}\int_X(u_{\varepsilon}-u)\omega_{u_\varepsilon}^n.
    $$
    The assertion then follows from dominated convergence. Indeed, we can assume that $\sup u_i\leq \sup u+100$ for all $i\geq 1$, so that
    $$
    u_\varepsilon-\sup u-100\leq u_{i,\varepsilon}-u_i\leq \sup u+100-u.
    $$
    Moreover, by Ko\l odziej's $L^\infty$ estimate \cite{Kol98}, $u_\varepsilon>-C_\varepsilon$ for some $C_\varepsilon>0$. Thus
    $$
    |u_{i,\varepsilon}-u_i|\leq |u|+C_\varepsilon'.
    $$
    So we have that
    $$
    |u_{i,\varepsilon}-u_i|e^{\frac{1}{\varepsilon}(u_{i,\varepsilon}-u_i)}\leq (|u|+C'_\varepsilon)e^{\frac{1}{\varepsilon}(\sup u+100-u)}.
    $$
    We claim that the right hand side is integrable against $\omega^n$. Indeed, by \cite{Ze01} $e^{-pu}\in L^1$ for all $p>1$. So by \cite{Kol98} for some suitable $A_\varepsilon\in\RR$ one can find some $\psi\in\cE^1$ such that $\omega^n_\psi=e^{A_\varepsilon-\frac{1}{\varepsilon}u}\omega^n$. Then the claim follows from \cite[Corollary 3.50]{DarvasSurvey}. Thus we conclude from dominated convergence, since $(u_{i,\varepsilon}-u_i)e^{\frac{1}{\varepsilon}(u_{i,\varepsilon}-u_i)}$ converges to $(u_{\varepsilon}-u)e^{\frac{1}{\varepsilon}(u_{\varepsilon}-u)}$ pointwise. 
\end{proof}

\begin{lemma}
    The map $(0,\infty)\ni\varepsilon\mapsto \mathrm{Ent}(u_\varepsilon)$ is continuous.
\end{lemma}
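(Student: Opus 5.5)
We need to prove continuity of $\varepsilon\mapsto \mathrm{Ent}(u_\varepsilon)$ on $(0,\infty)$. The plan is to write
$$\mathrm{Ent}(u_\varepsilon)=\frac{1}{\varepsilon V}\int_X(u_\varepsilon-u)e^{\frac{1}{\varepsilon}(u_\varepsilon-u)}\omega^n,$$
and to apply dominated convergence along any sequence $\varepsilon_j\to\varepsilon_0>0$, exactly as in the preceding lemma. This requires two inputs: pointwise (a.e.) convergence $u_{\varepsilon_j}\to u_{\varepsilon_0}$, and a uniform integrable majorant for $\varepsilon$ in a compact interval $[a,b]\subset(0,\infty)$.

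\textbf{Pointwise convergence.} I will first show that $\varepsilon\mapsto u_\varepsilon$ is monotone and continuous.

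For monotonicity, comparing the equations at levels $\varepsilon<\varepsilon'$ via the domination principle should give $u_\varepsilon\le u_{\varepsilon'}$ after an additive correction, or more robustly a two-sided bound of the form $u_{\varepsilon'}-C|\varepsilon'-\varepsilon|(\ldots)$. Rather than rely on this, it is cleaner to argue by compactness. The family $\{u_\varepsilon\}_{\varepsilon\in[a,b]}$ has uniformly bounded $\sup$, since $\int e^{\frac1\varepsilon(u_\varepsilon-u)}\omega^n=V$ forces $\sup(u_\varepsilon-u)$ to be controlled. By Ko\l odziej's estimate \cite{Kol98}, the family is also uniformly bounded below, since the densities $e^{\frac1\varepsilon(u_\varepsilon-u)}\le e^{\frac1a(C-u)}$ lie uniformly in $L^p$ by \cite{Ze01}.

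So any sequence $u_{\varepsilon_j}$ has a subsequence converging in $L^1$ (and a.e.) to some $w\in\mathrm{PSH}\cap L^\infty$. Uniform $L^p$ bounds on the densities together with Ko\l odziej's stability estimate upgrade this to uniform convergence, giving $\omega_{u_{\varepsilon_j}}^n\to\omega_w^n$. Passing to the limit in \eqref{eq: def_u_e} yields $\omega_w^n=e^{\frac1{\varepsilon_0}(w-u)}\omega^n$, and by uniqueness $w=u_{\varepsilon_0}$. Hence the whole sequence converges, and in fact uniformly on $X$.

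\textbf{Domination.} For $\varepsilon\in[a,b]$ we have $|u_\varepsilon|\le C$ uniformly, so
$$\frac1\varepsilon|u_\varepsilon-u|e^{\frac1\varepsilon(u_\varepsilon-u)}\le \frac1a(|u|+C)e^{\frac{1}{a}(C-u)}+\frac1a(|u|+C)e^{\frac{C}{b}},$$
treating separately the regions where $u_\varepsilon-u$ is positive or negative. The right-hand side is integrable against $\omega^n$ by the same argument as in the previous lemma, namely \cite{Ze01} combined with \cite[Corollary 3.50]{DarvasSurvey}. Dominated convergence then gives $\mathrm{Ent}(u_{\varepsilon_j})\to\mathrm{Ent}(u_{\varepsilon_0})$.

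\textbf{Main obstacle.} The key difficulty is the uniform $L^\infty$ bound and the stability (uniqueness of limit) step for $u_\varepsilon$ as $\varepsilon$ varies. I expect both to follow from Ko\l odziej's estimates together with the uniform $L^p$ control of the right-hand sides for $\varepsilon$ bounded away from $0$.

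An alternative route avoids compactness. Since $f$ is concave, $f'$ exists off a countable set, and one could try to show $\mathrm{Ent}(u_\varepsilon)$ is monotone and matches one-sided derivatives. However, the direct dominated convergence argument above seems simplest.
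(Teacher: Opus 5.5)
Your proposal is correct and follows the paper's route: uniform convergence $u_{\varepsilon_j}\to u_{\varepsilon_0}$, then dominated convergence in $\frac{1}{\varepsilon V}\int_X(u_\varepsilon-u)e^{\frac1\varepsilon(u_\varepsilon-u)}\omega^n$ with the same kind of majorant as in the preceding lemma. The only difference is that the paper takes the $C^0$ convergence directly from the stability estimate \cite[Remark 3.2]{GLZ18} (cf.\ \cite[Lemma 3.5]{DZ25}), whereas you derive it from uniform bounds, compactness, Ko\l odziej's $L^1$--$L^\infty$ stability and uniqueness of the limit equation; one small fix there is that the normalization $\int_X e^{\frac1\varepsilon(u_\varepsilon-u)}\omega^n=V$ controls $\sup_X u_\varepsilon$ from both sides (Jensen for the upper bound, finiteness of $\int_X e^{-u/\varepsilon}\omega^n$ for the lower bound), not $\sup_X(u_\varepsilon-u)$, which is $+\infty$ whenever $u$ is unbounded below.
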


\begin{proof}
Fix $\varepsilon>0$.
    By the stability estimate \cite[Remark 3.2]{GLZ18} (cf. also \cite[Lemma 3.5]{DZ25}), it is clear that $u_{\varepsilon+s}\rightarrow_{C^0}u_\varepsilon$ as $s\to 0$.
    Writing
    $$
    \mathrm{Ent}(u_{\varepsilon+s})=\frac{1}{(\varepsilon+s)V}\int_X(u_{\varepsilon+s}-u)e^{\frac{1}{\varepsilon+s}(u_{\varepsilon+s}-u)}\omega^n
    $$
    and
    $$
    \mathrm{Ent}(u_{\varepsilon})=\frac{1}{\varepsilon V}\int_X(u_{\varepsilon}-u)e^{\frac{1}{\varepsilon}(u_{\varepsilon}-u)}\omega^n,
    $$
    the assertion follows from dominated convergence, as in the previous proof.
\end{proof}

\begin{proof}
    [Proof of Theorem \ref{thm: f'}]

    Now for any $\varepsilon>0$ and $s\in(0,\varepsilon)$, we can write
\begin{equation*}
    \begin{split}
        f'_+(\varepsilon)&\geq \frac{f(\varepsilon+s)-f(\varepsilon)}{s}=\lim_{i\to\infty}\frac{f_i(\varepsilon+s)-f_i(\varepsilon)}{s}\\
        &\geq \lim_{i\to\infty}f'_i(\varepsilon+s)=\lim_{i\to\infty} \mathrm{Ent}(u_{i,\varepsilon+s})=\mathrm{Ent}(u_{\varepsilon+s}).\\
    \end{split}
\end{equation*}
Similarly,
\begin{equation*}
    \begin{split}
        f'_-(\varepsilon)&\leq \frac{f(\varepsilon)-f(\varepsilon-s)}{s}=\lim_{i\to\infty}\frac{f_i(\varepsilon)-f_i(\varepsilon-s)}{s}\\
        &\leq \lim_{i\to\infty}f'_i(\varepsilon-s)=\lim_{i\to\infty} \mathrm{Ent}(u_{i,\varepsilon-s})=\mathrm{Ent}(u_{\varepsilon-s}).\\
    \end{split}
\end{equation*}

By the previous lemma we conclude that $f'(\varepsilon)=\mathrm{Ent}(u_\varepsilon)$ for all $\varepsilon>0.$ Combining with $f'_+(0)=\mathrm{Ent}(u)=\lim_{\varepsilon\to 0^+}\mathrm{Ent}(u_\varepsilon)$ (Lemma \ref{lem: Ent_ue_conv_Ent_u}), 
we finish the proof.
\end{proof}

Summing up, in the above discussion we have shown that
$$
f(\varepsilon)=\lim_{i\to\infty}f_i(\varepsilon)\text{ for all }\varepsilon\geq 0,
$$
and
$$
\mathrm{Ent}(u_\varepsilon)=f'(\varepsilon)=\lim_{i\to\infty}f'_i(\varepsilon)=\lim_{i\to\infty}\mathrm{Ent}(u_{i,\varepsilon}
)\text{ for all }\varepsilon>0.
$$
But at the endpoint $\varepsilon=0$ it may happen that
$$
f'_+(0)\neq \lim_{i\to\infty}f'_{i,+}(0).
$$
In other words, it is possible that
$$
\mathrm{Ent}(u)\neq \lim_{i\to\infty} \mathrm{Ent}(u_i).
$$
However, due to $\mathrm{Ent}(u)=\lim_{\varepsilon\to 0^+}\mathrm{Ent}(u_\varepsilon)$, and $\mathrm{Ent}(u_\varepsilon)=\lim_{i\to\infty}\mathrm{Ent}(u_{i,\varepsilon})$, we reproduce the following regularization result contained in \cite[Theorem 3.2]{BDL17}.

\begin{corollary}
    One can find some $\varepsilon_i\searrow 0$ such that the sequence $v_i:=u_{i,\varepsilon_i}\in\cH$ satisfies
    $d_1(v_i,u)\to 0$ and
$
\mathrm{Ent}(u)=\lim_{i\to\infty}\mathrm{Ent}(v_i).
$
\end{corollary}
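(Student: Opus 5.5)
The corollary follows by a diagonal argument that combines Theorem \ref{thm: f'} with the two convergence statements just established. We fix a regularizing sequence $\cH\ni u_i\searrow u$, as in the proof of Theorem \ref{thm: f'}. For each $\varepsilon>0$ the potential $u_{i,\varepsilon}$ is smooth, since it is the solution of \eqref{eq: def_u_e} with smooth data $u_i$. It is a Kähler potential by the Aubin--Yau theorem, so $u_{i,\varepsilon}\in\cH$.

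Next we choose the $\varepsilon_i$. For each $k\ge 1$ we set $\delta_k:=1/k$. By Lemma \ref{lem: u_e_d1_conv_u} and Lemma \ref{lem: Ent_ue_conv_Ent_u} (or the monotonicity $\mathrm{Ent}(u_\varepsilon)\nearrow\mathrm{Ent}(u)$), we have $d_1(u_{\delta_k},u)\to0$ and $\mathrm{Ent}(u_{\delta_k})\to\mathrm{Ent}(u)$, including the case $\mathrm{Ent}(u)=+\infty$. For fixed $k$, Lemma \ref{lem: mono_conv} gives $u_{i,\delta_k}\searrow u_{\delta_k}$. Monotone convergence in $\cE^1$ implies $d_1$-convergence, so $d_1(u_{i,\delta_k},u_{\delta_k})\to0$ as $i\to\infty$. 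The lemma preceding the proof of Theorem \ref{thm: f'} gives $\mathrm{Ent}(u_{i,\delta_k})\to\mathrm{Ent}(u_{\delta_k})$. We therefore pick an increasing sequence $i_k$ such that for all $i\ge i_k$,
\[
d_1(u_{i,\delta_k},u_{\delta_k})<1/k,\qquad |\mathrm{Ent}(u_{i,\delta_k})-\mathrm{Ent}(u_{\delta_k})|<1/k.
\]
For $i_k\le i<i_{k+1}$ we set $\varepsilon_i:=\delta_k$, and for $i<i_1$ we set $\varepsilon_i:=1$. This sequence is non-increasing and tends to $0$. If strict decrease is wanted, we perturb slightly using the continuity of $\varepsilon\mapsto u_{i,\varepsilon}$ and of $\varepsilon\mapsto\mathrm{Ent}(u_{i,\varepsilon})$.

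Finally, the triangle inequality gives $d_1(v_i,u)\le d_1(u_{i,\delta_k},u_{\delta_k})+d_1(u_{\delta_k},u)\to0$. In the same way, $\mathrm{Ent}(v_i)$ is within $1/k$ of $\mathrm{Ent}(u_{\delta_k})$, and $\mathrm{Ent}(u_{\delta_k})\to\mathrm{Ent}(u)$. The only points needing care are that monotone convergence in $\cE^1$ gives $d_1$-convergence, which is standard (\cite{Dar15}), and the infinite-entropy case. In that case we replace the $1/k$ closeness of entropies by the lower bound $\mathrm{Ent}(u_{i,\delta_k})>\mathrm{Ent}(u_{\delta_k})-1$, and $\mathrm{Ent}(u_{\delta_k})\to\infty$ then forces $\mathrm{Ent}(v_i)\to\infty$. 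There is no real obstacle, since all the analysis was done in Theorem \ref{thm: f'}.
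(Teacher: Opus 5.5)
Your proposal is correct and follows the paper's argument: the paper obtains the corollary by combining $\mathrm{Ent}(u)=\lim_{\varepsilon\to0^+}\mathrm{Ent}(u_\varepsilon)$ with $\mathrm{Ent}(u_\varepsilon)=\lim_{i\to\infty}\mathrm{Ent}(u_{i,\varepsilon})$ through a diagonal choice of $\varepsilon_i$, and your write-up spells this out, with the $d_1$-convergence coming from Lemma~\ref{lem: mono_conv}, Lemma~\ref{lem: u_e_d1_conv_u} and the triangle inequality. Your separate treatment of the case $\mathrm{Ent}(u)=+\infty$ is harmless but unnecessary: each $\mathrm{Ent}(u_{\delta_k})\leq \delta_k^{-1}(I(u_{\delta_k})-I(u))$ is finite by Lemma~\ref{lem: Ent>Ent_e>Ent_e}, so the $1/k$-closeness already gives $\mathrm{Ent}(v_i)\to+\infty$.
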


\section{Radial entropy}
\label{sec: rad}

This section is devoted to proving Theorem \ref{thm:intro_radial} and Theorem \ref{thm: intro_slope}.

\paragraph{The radial Aubin--Yau deformation.}
In what follows let $\{u_t\} \in \mathcal R^1$ with $\sup_X u_t=0$. We introduce the following envelope of finite energy subgeodesics for $t \in [0,\infty)$:
\begin{equation}\label{eq: P_beta_u_def}
P(u)_t^\beta:=\textup{usc}\left(\sup v_t\right),
\end{equation}
where the supremum is taken  over all sublinear subgeodesic rays $[0,\infty) \ni t \to v_t\in \mathcal E^1$ with $v_0 =0$ and $\tilde L^\beta(\{v_t\},\{u_t\}) \geq 0$.

A basic computation shows the following about rays that are candidates for $\{P(u)_t^\beta\}$:

\begin{lemma} \label{lem: max_lemma} If $\tilde L^\beta(\{v_t\},\{u_t\}) \geq 0,\tilde L^\beta(\{w_t\},\{u_t\}) \geq 0$ then $\tilde L^\beta(\{\max(v_t,w_t)\},\{u_t\}) \geq 0$.
\end{lemma}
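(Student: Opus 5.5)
The plan is to work directly with the integral definition of $\tilde L^\beta$ and use the elementary pointwise bound
$$
e^{\beta(\max(v_t,w_t)-u_t)}\leq e^{\beta(v_t-u_t)}+e^{\beta(w_t-u_t)}.
$$
First we check that $\{\max(v_t,w_t)\}$ is an admissible competitor, i.e.\ a sublinear finite energy subgeodesic ray with $\max(v_0,w_0)=0$. The maximum of two subgeodesics is again a subgeodesic, since the maximum of two $p_1^*\omega$-psh functions on $X\times\{|z|<1\}$ is $p_1^*\omega$-psh. Sublinearity is preserved because $\max(v_t,w_t)\leq \max(C_v,C_w)(t+1)$. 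Finite energy holds because $\max(v_t,w_t)\geq v_t\in\cE^1$ and $\cE^1$ is stable under taking larger potentials. So $\tilde L^\beta(\{\max(v_t,w_t)\},\{u_t\})$ is defined.

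Next, fix any $\tau<0$. Since $\tilde L^\beta(\{v_t\},\{u_t\})\geq 0>\tau$ and $\tilde L^\beta$ is a supremum over an interval of admissible exponents, we get $\int_0^\infty e^{\tau t}\int_X e^{\beta(v_t-u_t)}\omega^n\,dt<\infty$. Note that the set of admissible $\tau$ is a down-set, because $e^{\tau' t}\leq e^{\tau t}$ for $\tau'\leq \tau$ and $t\geq0$. The same bound holds with $w_t$ in place of $v_t$.

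Adding the two and using the pointwise inequality gives
$$
\int_0^\infty e^{\tau t}\int_X e^{\beta(\max(v_t,w_t)-u_t)}\omega^n\,dt<\infty .
$$
Hence $\tilde L^\beta(\{\max(v_t,w_t)\},\{u_t\})\geq\tau$ for every $\tau<0$, and letting $\tau\nearrow0$ yields the claim.

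The only point needing any care is the admissibility of the maximum as a finite energy subgeodesic ray emanating from $0$; the analytic estimate itself is immediate. If one prefers to phrase things via $L^\beta$, Corollary \ref{cor: liminf} gives the same conclusion, since $\log(a+b)\leq \log 2+\max(\log a,\log b)$ and the $\log 2$ term disappears after dividing by $t$.
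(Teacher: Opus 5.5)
Your proof is correct and follows the paper's argument: for each $\tau<0$ the pointwise bound $e^{\beta(\max(v_t,w_t)-u_t)}\leq e^{\beta(v_t-u_t)}+e^{\beta(w_t-u_t)}$ makes the weighted integral finite, which is exactly what the paper does. Your check that $\{\max(v_t,w_t)\}$ is an admissible sublinear finite energy subgeodesic ray is left implicit in the paper.
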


\begin{proof} For $\tau < 0$ we have that
\begin{flalign*}
\int_0^\infty e^{\tau t}\int_Xe^{\beta(\max(v_t,w_t)-u_t)}\omega^n dt < \int_0^\infty e^{\tau t}\int_Xe^{\beta(v_t-u_t)}\omega^n dt + \int_0^\infty e^{\tau t}\int_Xe^{\beta(w_t-u_t)}\omega^n dt<\infty,
\end{flalign*}
finishing the proof.
\end{proof}
Later we show that the transfinite version of this lemma also holds. Before that we list some basic properties of $\{P(u)_t^\beta\}$:

\begin{lemma}\label{lem: sup_P} (i) $\sup_X P(u)_t^\beta = 0$ for all $t \geq 0$.\\
(ii) If $\beta \leq \beta'$ then $P(u)_t^\beta \geq P(u)_t^{\beta'}$.
\end{lemma}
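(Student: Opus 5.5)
For (ii) I compare the admissible classes defining $P(u)^\beta_t$ and $P(u)^{\beta'}_t$ and show the class for $\beta'$ is contained in the class for $\beta$. Let $\{v_t\}$ be a sublinear subgeodesic ray with $v_0=0$ and $\tilde L^{\beta'}(\{v_t\},\{u_t\})\ge 0$. It is enough to check $\tilde L^{\beta}(\{v_t\},\{u_t\})\ge 0$, i.e. that for each $\tau<0$,
$$\int_0^\infty e^{\tau t}\int_X e^{\beta(v_t-u_t)}\omega^n\,dt<\infty .$$

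Where $v_t-u_t\ge 0$ we have $e^{\beta(v_t-u_t)}\le e^{\beta'(v_t-u_t)}$. Where $v_t-u_t<0$ the integrand is bounded by $1$. Hence
$$e^{\beta(v_t-u_t)}\le 1+e^{\beta'(v_t-u_t)}.$$
Multiplying by $e^{\tau t}$ and integrating, the constant term contributes $V\int_0^\infty e^{\tau t}dt<\infty$ because $\tau<0$. The other term is finite by the hypothesis on $\beta'$. So every competitor for $\beta'$ is a competitor for $\beta$; taking suprema and then usc regularizations gives $P(u)^\beta_t\ge P(u)^{\beta'}_t$.

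For (i) I prove two inequalities.

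For $\ge 0$: I show the constant ray $v_t\equiv 0$ is admissible. It is a sublinear subgeodesic with $v_0=0$. We need $\int_0^\infty e^{\tau t}\int_X e^{-\beta u_t}\omega^n dt<\infty$ for all $\tau<0$. I get this from \cite[Lemma 6.5]{DZ25} combined with Jensen/sublinearity, as in the finiteness remark before Lemma \ref{lem: Dem_approx_exponent}. The needed fact is that $\int_X e^{-\beta u_t}\omega^n$ grows subexponentially in $t$, because $\{u_t\}$ is a geodesic ray with $\sup_X u_t=0$ (Lelong numbers of $u_t/t$ stay bounded and in fact tend to the slope, which is zero here). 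This is the main obstacle: \cite[Lemma 6.5]{DZ25} only gives growth rate $c_{\beta,u}$, not zero. I would instead use $\sup_X u_t=0$ together with Zeriahi/Skoda uniform integrability along the ray to push the exponent down to $0$. Granting this, $P(u)^\beta_t\ge 0$ and so $\sup_X P(u)^\beta_t\ge0$.

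For $\le 0$: take any admissible $\{v_t\}$. By subgeodesic convexity, $t\mapsto\sup_X v_t$ is convex with value $0$ at $t=0$. If $\sup_X v_{t_0}=\delta>0$ for some $t_0$, convexity gives $\sup_X v_t\ge \delta t/t_0$ for $t\ge t_0$. A mean-value/submean inequality for $\omega$-psh functions then gives
$$\int_X e^{\beta v_t}\omega^n\ge c\,e^{\beta\delta t/t_0}.$$
Since $u_t\le 0$, this yields $\int_X e^{\beta(v_t-u_t)}\omega^n\ge c e^{\beta \delta t/t_0}$. Then $\tau=-\beta\delta/(2t_0)$ makes the defining integral diverge, contradicting $\tilde L^\beta\ge0$. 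So $\sup_X v_t\le 0$ for every competitor, and by Hartogs-type reasoning the usc envelope also satisfies $\sup_X\le 0$.

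The uniform lower bound $\int_X e^{\beta v}\omega^n\ge c e^{\beta\sup v}$ for $\omega$-psh $v$ is standard (compactness of normalized $\omega$-psh functions), and is the only ingredient needed here.
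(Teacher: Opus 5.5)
Your upper bound in (i) and your proof of (ii) are correct. The lower bound $\sup_X P(u)^\beta_t\ge 0$, however, has a genuine gap. It rests on the constant ray $v_t\equiv 0$ being a competitor, i.e. on $\int_X e^{-\beta u_t}\omega^n$ growing subexponentially, and this is false in general once $\beta$ is large. Skoda--Zeriahi uniform integrability bounds $\int_X e^{-\beta(u_t-\sup_X u_t)}\omega^n$ uniformly only for $\beta$ below the $\alpha$-invariant of $(X,\omega)$; beyond it, exponential growth does occur.

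Concretely, take $X=\PP^1$ with $\omega=\omega_{FS}\in c_1(\mathcal O(1))$ and the smooth geodesic ray $u_t=\log\frac{|x_0|^2+e^{-t}|x_1|^2}{|x_0|^2+|x_1|^2}$ induced by a $\CC^*$-action. It satisfies $u_0=0$ and $\sup_X u_t=0$. In the coordinate $w=x_0/x_1$ near $[0:1]$ one has $e^{-\beta u_t}=\big(\frac{1+|w|^2}{|w|^2+e^{-t}}\big)^\beta$. Hence $\int_X e^{-\beta u_t}\omega$ grows like $e^{(\beta-1)t}$ for $\beta>1$, so $\tilde L^\beta(\{0\},\{u_t\})=1-\beta<0$ and $0$ is not admissible.

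More generally, by Lemma \ref{lem:remove_bd} with $V=0$ and Corollary \ref{cor: liminf}, $\tilde L^\beta(\{0\},\{u_t\})=\inf_{X^{div}}(A_X+\beta U)$. This is negative for large $\beta$ as soon as $U\not\equiv 0$ on $X^{div}$. So the step you flagged as the main obstacle cannot be pushed through, and your heuristic about Lelong numbers of $u_t/t$ does not control this growth rate.

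The repair is immediate: use $\{u_t\}$ itself as a competitor. It is a sublinear (sub)geodesic ray with $u_0=0$, and $\int_X e^{\beta(u_t-u_t)}\omega^n=V$. Thus $\int_0^\infty e^{\tau t}V\,dt<\infty$ for every $\tau<0$, i.e. $\tilde L^\beta(\{u_t\},\{u_t\})=0$. Consequently $P(u)^\beta_t\ge u_t$ and $\sup_X P(u)^\beta_t\ge \sup_X u_t=0$, which is exactly how the paper concludes. With that change your proof is complete.

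Your upper bound is essentially the paper's argument (convexity of $t\mapsto\sup_X v_t$, Jensen, and $u_t\le 0$). For (ii) you take a more elementary route. The paper applies H\"older to the liminf functional to get $L^\beta\ge\frac{\beta}{\beta'}L^{\beta'}$. It then needs Corollary \ref{cor: liminf}, which rests on the Demailly-approximation Lemma \ref{lem: Dem_approx_exponent}, to transfer this to $\tilde L$. Your pointwise bound $e^{\beta x}\le 1+e^{\beta' x}$ works directly with $\tilde L$. It yields only the qualitative implication $\tilde L^{\beta'}\ge 0\Rightarrow \tilde L^{\beta}\ge 0$, which is all the lemma needs.
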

\begin{proof} (i) Let us assume that $\{v_t\}_t$ is a candidate for $\{P(u)_t^\beta\}$, i.e., $\tilde L^\beta(\{v_t\},\{u_t\}) \geq 0$. We also assume that  $\sup_X v_t >0$ for some $t>0$. Since $t \to \sup_X v_t$ is convex, we get that $c:=  \lim_{t \to \infty} \frac{\sup_X v_t}{t} >0$.

Let $\varepsilon \in (0,c)$. We claim that 
\begin{equation}\label{eq: liminf_lower_bound}
\liminf_{t \to \infty} \int_X e^{\beta(v_t-\varepsilon t)}\frac{\omega^n}{V} \geq 1.
\end{equation}
Let us suppose otherwise. By Jensen's inequality there exists $t_j \to \infty$ such that
$$\frac{\beta}{V}\int_Xv_{t_j}\omega^n-\beta\varepsilon t_j \leq \log\int_Xe^{\beta(v_{t_j}-\varepsilon t_j)}\frac{\omega^n}{V} \leq0.$$
Since $\int_X v_{t_j} - \sup_X v_{t_j}$ is uniformly bounded, we obtain that $c =\lim_{j \to \infty} \frac{\sup_X v_{t_j}}{t_j} \leq \varepsilon$, a contradiction.

Suppose that $\tau \in (0,\varepsilon\beta )$. Then using \eqref{eq: liminf_lower_bound} we conclude that
\begin{flalign*}
\int_0^\infty e^{-\tau t}\int_Xe^{\beta(v_t-u_t)} \omega^n dt&=\int_0^\infty e^{\beta \varepsilon t - \tau t} \int_Xe^{\beta(v_t - \varepsilon t -u_t)} \omega^n dt\\
& \geq V\int_0^\infty e^{(\beta\varepsilon-\tau)t}\int_Xe^{\beta(v_t-\varepsilon t)}\frac{\omega^n}{V}dt=\infty,
\end{flalign*}
which contradicts $\tilde L^\beta(\{v_t\},\{u_t\})\geq0$. Consequently every candidate satisfies $\sup_Xv_t\leq0$. Since $\{u_t\}$ is itself a candidate and $\sup_Xu_t=0$, it follows that $\sup_XP(u)_t^\beta=0$.\smallskip 

\noindent (ii) For this we use the H\"older inequality and Corollary \ref{cor: liminf}:
\begin{flalign*}
L^\beta(\{v_t\},\{u_t\})&=\liminf_{t\to\infty}\left(-\frac{1}{t}\log\int_Xe^{\beta(v_t-u_t)}\frac{\omega^n}{V}\right) \geq \liminf_{t\to\infty}\left(-\frac{\beta}{t\beta'}\log\int_Xe^{{\beta'}(v_t-u_t)}\frac{\omega^n}{V}\right)\\
&= \frac{\beta}{\beta'}L^{\beta'}(\{v_t\},\{u_t\}).
\end{flalign*}
So if $\{v_t\}$ is a candidate for $\{P(u)^{\beta'}_t\}$ then it is candidate for $\{P(u)^{\beta}_t\}$ too.
\end{proof}

Next we prove the transfinite version of Lemma \ref{lem: max_lemma}:

\begin{lemma} \label{lem: increasing_lemma}Suppose that $\tilde L^\beta(\{v^j_t\},\{u_t\}) \geq 0$ with $\{v^j_t\}$ an increasing sequence of sublinear finite energy subgeodesics. Then for $v_t:= \textup{usc}(\lim_j v^j_t)$  we have that $\tilde L^\beta(\{v_t\},\{u_t\}) \geq 0$.
\end{lemma}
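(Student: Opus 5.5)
The plan is to reduce the statement to the Demailly approximation estimate of Lemma \ref{lem: Dem_approx_exponent}, using strong openness to pass from $\{v^j_t\}$ to $\{v_t\}$ at the level of multiplier ideals.

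\emph{Why monotone convergence alone is not enough.} Fix $\tau<0$. Since $v_t=\lim_j v^j_t$ away from a pluripolar (hence $\mu$-null) subset of $Y=X\times(0,\infty)$, monotone convergence gives
$$\int_0^\infty e^{\tau t}\int_X e^{\beta(v_t-u_t)}\omega^n\,dt=\lim_{j\to\infty}\int_0^\infty e^{\tau t}\int_X e^{\beta(v^j_t-u_t)}\omega^n\,dt .$$
Each term on the right is finite, but we have no uniform bound in $j$. So instead of working with these integrals directly, I will go through the Demailly approximations $\{v^m_t\}$, $\{(v^j)^m_t\}$.

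\emph{Normalization.} First I normalize the slopes. The argument in Lemma \ref{lem: sup_P}(i) shows that every candidate with $\tilde L^\beta\ge 0$ satisfies $\sup_X v^j_t\le 0$. Since the sequence is increasing, the slopes $c_j=\lim_t \sup_X v^j_t/t\le 0$ increase to some $c\le 0$. If $c<0$, I subtract $ct$ using the Remark after Lemma \ref{lem: Dem_approx_exponent}. This shifts every $\tilde L^\beta$ by the same amount $\beta c$, so I will tacitly assume we are in the setting of that lemma. I also expect a small technical point here, namely checking that the slopes of $v^j$ converge to that of $v$. This should follow from convexity of $t\mapsto\sup_X v_t$ and from Hartogs-type convergence of $\sup_X v^j_t$ to $\sup_X v_t$.

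\emph{Key steps.} With the normalization in place, the proof runs as follows.

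\begin{enumerate}
\item Let $V_z$ and $V^j_z$ denote the $S^1$-invariant psh functions on $X\times\{0<|z|<1\}$ attached to the rays. Since $V^j_z\nearrow V_z$ almost everywhere, strong openness in its increasing-sequence form (Guan--Zhou) gives $\mathcal I(mV^j)\nearrow\mathcal I(mV)$. By Noetherianity on the relatively compact region used in \cite[Section 2]{DZ25} to build the Demailly rays, the ideals stabilize. Hence $(v^j)^m_t=v^m_t$ for all $j\ge j(m)$.
\item By the first inequality of Lemma \ref{lem: Dem_approx_exponent} applied to $v$, we have $\tilde L^\beta(\{v_t\},\{u_t\})\ge \tilde L^\beta(\{v^m_t\},\{u_t\})$.
\item By step 1, $\tilde L^\beta(\{v^m_t\},\{u_t\})=\tilde L^\beta(\{(v^j)^m_t\},\{u_t\})$ for $j\ge j(m)$.
\item By the second inequality of Lemma \ref{lem: Dem_approx_exponent} applied to $v^j$, this quantity is at least $\tilde L^\beta(\{v^j_t\},\{u_t\})-C/m\ge -C/m$.
\item Letting $m\to\infty$ gives $\tilde L^\beta(\{v_t\},\{u_t\})\ge 0$.
\end{enumerate}

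\emph{Uniformity of the constant.} Step 4 needs a constant $C$ that does not depend on $j$. Inspecting the proof of Lemma \ref{lem: Dem_approx_exponent}, the constant depends on $\tilde L^\beta$ only through the term $\frac{m_0}{m+m_0}(\tilde L-\varepsilon)$. So it is enough to have an upper bound on $\tilde L^\beta(\{v^j_t\},\{u_t\})$ that is uniform in $j$. Because $v^j_t\ge v^1_t$, we get $\tilde L^\beta(\{v^j_t\},\{u_t\})\le\tilde L^\beta(\{v^1_t\},\{u_t\})<\infty$, which is such a bound. The other ingredients of the constant, $m_0$ and $c_{\alpha,u}$, depend only on $\beta$ and $\{u_t\}$.

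\emph{Main obstacle.} The step I expect to be hardest is step 1: justifying that the Demailly approximants of the increasing limit agree with those of $v^j$ for large $j$. This needs strong openness for increasing sequences, together with the stabilization of ideals on the relevant compact region, in a form compatible with the precise construction of the Demailly rays in \cite[Section 2]{DZ25}. If exact equality of the ideals is awkward to arrange, the fallback is to show only $\mathcal I(mV)\subseteq\mathcal I(m'V^j)$ for some $m'$ slightly larger than $m$ and $j$ large. I would then absorb the discrepancy into the $C/m$ error using the same Hölder splitting as in the proof of Lemma \ref{lem: Dem_approx_exponent}.
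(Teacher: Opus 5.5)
Your proposal is correct and is essentially the paper's proof. Both take Demailly approximations of $\{v^j_t\}$ and $\{v_t\}$, use Guan--Zhou strong openness to get $\mathcal I(mV^j_z)=\mathcal I(mV_z)$ and hence identical approximating rays for $j\geq j(m)$, use $\tilde L^\beta(\{v^j_t\},\{u_t\})\leq\tilde L^\beta(\{v^1_t\},\{u_t\})$ to make the constant $C$ of Lemma \ref{lem: Dem_approx_exponent} independent of $j$, and then let $m\to\infty$. Your preliminary slope normalization is an addition the paper does not make; subtracting the single shift $ct$ leaves the $v^j$ with slopes $c_j-c\leq 0$ rather than $0$, but the paper's argument passes over this technicality as well, so it is not a gap in the strategy.
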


Note that it follows from the argument of the previous Lemma \ref{lem: sup_P} that $\sup_X v^j_t \leq 0$, hence $\{v_t\}$ is well defined in the above statement. 

\begin{proof} We have that $\tilde L^\beta(\{v^1_t\},\{u_t\}) \geq \tilde L^\beta(\{v^j_t\},\{u_t\}) \geq 0$, so the sequence of constants $\tilde L^\beta(\{v^j_t\},\{u_t\})$ is uniformly bounded.

Let $\{w^{m,j}_t\}$ and $\{w^m_t\}$ be the Demailly approximations of $\{v^j_t\}$ and $\{v_t\}$, respectively. From Lemma \ref{lem: Dem_approx_exponent} we obtain that 
\begin{flalign}\label{eq: int_ineq1}
\tilde L^\beta(\{w^{m,j}_t\},\{u_t\}) \geq \tilde L^\beta(\{v^j_t\},\{u_t\}) - \frac{C}{m} \geq - \frac{C}{m}.
\end{flalign}

Due to Lemma \ref{lem: Dem_approx_exponent} the constant $C$ can be chosen independently of $m,j$, since all the constants $\tilde L^\beta(\{v^j_t\},\{u_t\})$ are uniformly bounded.

Recall that the singularity type of the potentials $W^{m,j}_z = w^{m,j}_{-\log|z|^2}$ are governed by $\frac{1}{m + m_0}\log \big|\mathcal I(mV^j_z)\big|$. Since $V^j_z \nearrow V_z$, by the Guan--Zhou strong openness theorem \cite{GZ15}, we get that  $\mathcal I(m V^j_z) = \mathcal I(m V_z)$ for all $j \geq j(m)$. Hence, for any fixed $m$ we get that $W^{m,j}_z=W^m_z$ for $j \geq j(m)$. 

Putting this into \eqref{eq: int_ineq1}, we obtain that 
$$\tilde L^\beta(\{v_t\},\{u_t\}) \geq \tilde L^\beta(\{w^m_t\},\{u_t\})  = \tilde L^\beta(\{w^{m,j}_t\},\{u_t\})\geq - \frac{C}{m}.$$
Letting $m \to \infty$ concludes the argument.
\end{proof}

\begin{theorem}\label{thm: maximizer_main}  Let $\{u_t\} \in \mathcal R^1$ and $\beta>0$. Then \smallskip\\
(i) $\{P(u)^\beta_t\}$ is a bounded geodesic ray.\smallskip\\
(ii) $\{P(u)^\beta_t\}$ is the unique maximizing ray of $F^\beta(\{\cdot\},\{u_t\})$, satisfying $\tilde L^\beta(\{P(u)^\beta_t\},\{u_t\})=0$.\smallskip\\
(iii) If $\{w_t^m\}$ are Demailly approximations of $\{u_t\}$, then
$\displaystyle\sup_{t>0}t^{-1}\|w_t^m-P(u)_t^\beta\|_{L^\infty(X)}\to0$.
\end{theorem}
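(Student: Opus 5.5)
We first show that $\{P(u)^\beta_t\}$ itself satisfies the constraint. By Lemma \ref{lem: sup_P}(i) every candidate has $\sup_X v_t\le 0$, so the family of candidates is bounded above. By Choquet's lemma the usc-regularized supremum can be realized by a countable subfamily. Lemma \ref{lem: max_lemma} lets us replace that subfamily by an increasing sequence $\{v^j_t\}$ of finite maxima. Lemma \ref{lem: increasing_lemma} then gives $\tilde L^\beta(\{P(u)^\beta_t\},\{u_t\})\ge 0$. The limit is still a sublinear subgeodesic with $P(u)^\beta_0=0$, because usc limits of increasing subgeodesics are subgeodesics; the value at $t=0$ is controlled by $0\ge P(u)^\beta_t\ge u_t$. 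Hence $\{P(u)^\beta_t\}$ is the maximal candidate.

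For the geodesic property in (i), we replace $\{P(u)^\beta_t\}$ on each segment $[0,T]$ by the finite energy geodesic joining $0$ and $P(u)^\beta_T$, using \eqref{fegeod}. This dominates the original subgeodesic on $[0,T]$. Letting $T\to\infty$, these segments increase to a geodesic ray that still lies above $\{P(u)^\beta_t\}$. We must check that the constraint survives. Since $\tilde L^\beta$ only depends on the tail, and the modified rays agree with $P(u)^\beta$ at the endpoints $T$, the cleanest route is again Lemma \ref{lem: increasing_lemma}. The new ray is a candidate, so maximality forces equality, and $\{P(u)^\beta_t\}$ is geodesic.

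For boundedness, we compare with Demailly approximations. Let $\{w^m_t\}$ be the Demailly approximations of $\{u_t\}$; these are bounded geodesic rays (Lemma \ref{lem: Dem_approx_exponent} context), and $\tilde L^\beta(\{w^m_t\},\{u_t\})\ge -C/m$. Then $\{w^m_t - \tfrac{C}{m\beta}t\}$ has $\tilde L^\beta\ge 0$, so it is a candidate. Therefore
\[
w^m_t-\tfrac{C}{m\beta}t\le P(u)^\beta_t .
\]
Conversely, $P(u)^\beta_t\le 0\le$ const, and since bounded geodesic rays have bounded potentials at each time, $P(u)^\beta$ is bounded below by a bounded ray. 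This gives (i).

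For (iii) we need an upper bound $P(u)^\beta_t\le w^m_t+\delta_m t$. I would obtain it from the fact that $P(u)^\beta$ has finite $L^\beta$. The singularity type at $z=0$ of $P(u)^\beta_{-\log|z|^2}$ must be at least as singular as the multiplier-ideal singularity of $\beta U$ up to a twist. This is the Guan--Zhou / Demailly comparison $\mathcal I(mU)$ versus $\frac{1}{m+m_0}\log|\mathcal I(mU)|$, with Lelong-number slopes differing by $O(1/m)$. This step, turning the integrability condition into a pointwise sup-norm bound uniform in $t$ (divided by $t$), is the main obstacle. I would try to use Ohsawa--Takegoshi extension on $X\times\Delta$ to produce sections of $\mathcal I(m\,\cdot)$ and compare slopes.

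For (ii), the uniqueness and maximizing properties come next. By Corollary \ref{cor: liminf} and Theorem \ref{thm: rad_Ent_beta=sup}, $F^\beta$ is maximized over rays with $L^\beta\ge0$. Translating $v_t\mapsto v_t+ct$ shifts $L^\beta$ by $-\beta c$ and $\beta I$ by $\beta c$, leaving $F^\beta$ invariant, so we may normalize to $L^\beta=0$. Any normalized candidate satisfies $v_t\le P(u)^\beta_t$, so $I\{v_t\}\le I\{P(u)^\beta_t\}$ and $P(u)^\beta$ maximizes. Equality $\tilde L^\beta(P(u)^\beta)=0$ holds because, if it were strictly positive, adding $ct$ for small $c>0$ would give a larger candidate. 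For uniqueness, a maximizer with $L^\beta=0$ lies below $P(u)^\beta$ and has the same radial energy. Since both are geodesic rays, $d_1^c=I\{P\}-I\{v\}=0$, so they coincide.
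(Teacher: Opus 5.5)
Your arguments for (i) and (ii) are sound. However, (iii) is left unproved, and the route you sketch for it cannot work. With $\{w^m_t\}$ the Demailly approximations of $\{u_t\}$, the estimate is false in general: these rays do not depend on $\beta$, and when $\{u_t\}$ is maximal (e.g.\ induced by a test configuration) they converge to $\{u_t\}$ in $d_1^c$. Your desired bound $P(u)^\beta_t\le w^m_t+\delta_m t$ with $\delta_m\to0$ would then give $I\{P(u)^\beta_t\}\le I\{u_t\}$. Since $P(u)^\beta_t\ge u_t$, this forces $P(u)^\beta=u$. That fails for every nontrivial ray of finite entropy: $H_u$ is non-decreasing with $H_{u,+}'(0)=\mathrm{Ent}\{u_t\}\ge\delta\,\mathrm{J}\{u_t\}>0$ (see \S\ref{sec: cscK}), so $H_u(1/\beta)>H_u(0)$. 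Hence no Ohsawa--Takegoshi comparison of multiplier ideals can close this step.

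The paper's proof takes $\{w^m_t\}$ to be the Demailly approximations of $\{P(u)^\beta_t\}$ itself. Then (iii) is just your boundedness argument with $u$ replaced by $P(u)^\beta$:
\begin{itemize}
\item Since $\sup_XP(u)^\beta_t=0$, one has $w^m_t\ge\frac{m}{m+m_0}P(u)^\beta_t\ge P(u)^\beta_t$.
\item Lemma \ref{lem: Dem_approx_exponent} with $v=P(u)^\beta$ and $\tilde L^\beta(\{P(u)^\beta_t\},\{u_t\})=0$ gives $\tilde L^\beta(\{w^m_t\},\{u_t\})\ge -C/m$.
\item So $w^m_t-\frac{C}{m\beta}t$ is a candidate, and therefore
\[
w^m_t-\tfrac{C}{m\beta}t\le P(u)^\beta_t\le w^m_t .
\]
\end{itemize}
This is (iii), with rate $C/(m\beta)$.

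For (i) and (ii), your route differs from the paper's in a useful way. The paper obtains boundedness by Demailly-approximating a maximizer $\{v_t\}$ of $F^\beta$, whose existence it imports from \cite{DZ25}, and then compares $P(u)^\beta$ with that $v$. You instead approximate $\{u_t\}$, which is itself a candidate. You also prove maximality directly: by translation invariance of $F^\beta$ you normalize an arbitrary $\{v_t\}\in\mathcal R^1$ to $\tilde L^\beta=0$, so that $v\le P(u)^\beta$. This makes (i)--(ii) independent of the external existence result.

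In the geodesic step you do not need the increasing limit in $T$ or Lemma \ref{lem: increasing_lemma}. The curve equal to the geodesic on $[0,T]$ and to $P(u)^\beta$ on $[T,\infty)$ has the same tail, so it is already a candidate. This is exactly the paper's argument on $[a,b]$.
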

\begin{proof} 
First we argue that $\{P(u)_t^\beta\}$ is a geodesic ray. By Choquet's lemma and Lemma \ref{lem: max_lemma}, there exists $\{w^j_t\}$, an increasing sequence of sublinear finite energy subgeodesics, such that $\tilde L^\beta(\{w^j_t\}, \{u_t\}) \geq 0$ and $w^j_t \nearrow P(u)_t$. By Lemma \ref{lem: increasing_lemma} we obtain that $\tilde L^\beta(\{P(u)_t^\beta\}, \{u_t\}) \geq 0$, i.e., $\{P(u)_t^\beta\}$ is a candidate for itself. But then $\{P(u)_t^\beta\}$ must be a geodesic ray. Indeed, suppose it is not, then for $0<a<b$ let $\{g_t\}$ be the subgeodesic that agrees with $\{P(u)_t^\beta\}$ outside $[a,b]$ and is equal to the weak geodesic joining $P(u)_a^\beta$ and $P(u)_b^\beta$ on $[a,b]$. By the comparison principle, $g_t\geq P(u)^\beta_t$. Moreover, the two rays agree outside the compact interval $[a,b]$, so their exponential integrability thresholds coincide. Hence $\tilde L^\beta(\{g_t\},\{u_t\})=\tilde L^\beta(\{P(u)^\beta_t\},\{u_t\})\geq0$, and therefore $g_t=P(u)^\beta_t$.

Since $P(u)^\beta_t \geq u_t$ we obtain that $0= \tilde L^\beta(\{u_t\},\{u_t\}) \geq \tilde L^\beta(\{P(u)^\beta_t\},\{u_t\}) \geq 0.$

Suppose that $\{v_t\} \in \mathcal R^1$ is a maximizer of $F^\beta(\{\cdot\},\{u_t\})$ with $\tilde L^\beta(\{v_t\},\{u_t\})=0$. Such a maximizer is guaranteed to exist by results developed in \cite[Section 6]{DZ25} (this was not explicitly recorded there but will appear in its revision). 

Let $v^m_t$ be a Demailly approximation of $\{v_t\}$. By Lemma \ref{lem: Dem_approx_exponent}, we get that  
$$0 \geq c_m: = \tilde L^\beta(\{v^m_t\}, \{u_t\})  \to 0.$$
Since, $\tilde L^\beta(\{v^m_t + \frac{c_m}{\beta} t \}, \{u_t\})=0$, we get that $P(u)^\beta_t \geq v^m_t + \frac{c_m}{\beta} t$, in particular $\{P(u)^\beta_t\}$ is a bounded geodesic ray. Letting $m \to \infty$, it results that $P(u)^\beta_t\geq v_t$. But this implies that $I\{P(u)^\beta_t\} \geq I\{v_t\} $, giving that
$$F^\beta(\{P(u)^\beta_t\},\{u_t\}) \geq F^\beta_u\{v_t\}.$$
Since $v_t$ is a maximizer of $F^\beta(\{\cdot \},\{u_t\}) $, we get that $\{P(u)^\beta_t\}$ must be a maximizer as well, hence there is equality above, hence also $I\{P(u)^\beta_t\} = I\{v_t\}$, hence $P(u)^\beta_t = v_t$, as desired. 

Let $\{w^m_t\}$ be a Demailly approximation of $\{P(u)^\beta_t\}$. By the above argument we get that
$$w^m_t \geq P(u)^\beta_t \geq w^m_t + \frac{c_m}{\beta} t,$$
where $0\geq c_m\to0$. Hence
$\sup_{t>0}t^{-1}\|w_t^m-P(u)_t^\beta\|_{L^\infty(X)}\to0$.
\end{proof}

We note the following elementary estimate that will be needed later:

\begin{lemma}\label{lem: entropy_est} Let $\{u_t\}\in\mathcal R^1$ and $\beta>0$. Then

$$\mathrm{Ent}\{P(u)^\beta_t\} \geq \limsup_{t \to \infty} \frac{\beta}{tV}\int_X(P(u)^\beta_t-u_t)\omega^n_{P(u)^\beta_t}.$$
\end{lemma}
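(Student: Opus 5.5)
We need to show $\mathrm{Ent}\{P_t\} \ge \limsup_t \frac{\beta}{tV}\int_X (P_t-u_t)\,\omega^n_{P_t}$, where $P_t:=P(u)^\beta_t$. The plan is to use the Gibbs (Legendre duality) inequality for relative entropy at each fixed time $t$, and then divide by $t$ and pass to the limit using the normalization $L^\beta(\{P_t\},\{u_t\})=0$ from Theorem \ref{thm: maximizer_main}.

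\textbf{Step 1: the fixed-time inequality.} Fix $t>0$ and set $\mu_t:=\omega^n_{P_t}/V$, a probability measure. If $\mathrm{Ent}(P_t)=+\infty$ for a sequence of $t\to\infty$, there is nothing to prove, so we may assume $\mu_t$ has finite entropy relative to $\omega^n/V$. For any function $g$ that is bounded above and $\mu_t$-integrable, the variational characterization of relative entropy gives
$$
\mathrm{Ent}(P_t)=\frac{1}{V}\int_X \log\frac{\omega^n_{P_t}}{\omega^n}\,\omega^n_{P_t}\ \geq\ \int_X g\, d\mu_t-\log\int_X e^{g}\frac{\omega^n}{V}.
$$
We apply this with $g=\beta(P_t-u_t)$. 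Since $P_t$ is bounded (Theorem \ref{thm: maximizer_main}(i)) and $u_t\le 0$, the function $g$ is bounded below. To apply the inequality rigorously, we first use the truncations $g_k:=\min(g,k)$ and then let $k\to\infty$ by monotone convergence on both sides. The integral $\int_X e^{\beta(P_t-u_t)}\omega^n$ is finite by the choice of $P_t$: $\tilde L^\beta\ge 0$ forces finiteness for a.e. $t$, and Corollary \ref{cor: liminf} together with the convexity of $t\mapsto\int_X e^{\beta(P_t-u_t)}\omega^n$ along subgeodesics should give finiteness for all large $t$. This yields
$$
\mathrm{Ent}(P_t)\ \ge\ \frac{\beta}{V}\int_X (P_t-u_t)\,\omega^n_{P_t}+\Big(-\log\int_X e^{\beta(P_t-u_t)}\frac{\omega^n}{V}\Big).
$$

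\textbf{Step 2: divide by $t$ and take limits.} We divide by $t$ and take $\limsup_{t\to\infty}$. The left side converges to $\mathrm{Ent}\{P_t\}$, by definition of the radial entropy, whenever that limit is finite; if it is $+\infty$ there is nothing to prove. For the right side we use $\limsup(a_t+b_t)\ge \limsup a_t+\liminf b_t$. Here
$$
\liminf_{t\to\infty}\Big(-\frac1t\log\int_X e^{\beta(P_t-u_t)}\frac{\omega^n}{V}\Big)=L^\beta(\{P_t\},\{u_t\})=\tilde L^\beta(\{P_t\},\{u_t\})=0,
$$
by Corollary \ref{cor: liminf} and Theorem \ref{thm: maximizer_main}(ii). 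This gives the claimed inequality.

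\textbf{Main obstacle.} The delicate point is justifying the Gibbs inequality at each large $t$ with the unbounded weight $g=\beta(P_t-u_t)$, because $-u_t$ may be unbounded and integrability against $\mu_t$ is not automatic. The monotone truncation $g_k\nearrow g$ handles it. The term $\int_X g_k\,d\mu_t$ increases to $\int_X g\,d\mu_t$, which lies in $[0,\infty]$ since $g$ is bounded below. The term $-\log\int_X e^{g_k}\omega^n/V$ decreases to $-\log\int_X e^{g}\omega^n/V$, which is finite for large $t$. In particular the inequality forces $\int_X g\,d\mu_t<\infty$ whenever $\mathrm{Ent}(P_t)<\infty$, so both sides stay meaningful in the limit.
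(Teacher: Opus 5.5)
Your proposal is correct and is essentially the paper's proof: you apply the Jensen (Gibbs) inequality at each fixed $t$ with $g=\beta(P(u)^\beta_t-u_t)$, then divide by $t$ and use $L^\beta(\{P(u)^\beta_t\},\{u_t\})=\tilde L^\beta(\{P(u)^\beta_t\},\{u_t\})=0$. The one superfluous and unjustified remark is the appeal to convexity of $t\mapsto\int_X e^{\beta(P(u)^\beta_t-u_t)}\omega^n$; finiteness of this integral for all large $t$ already follows from $L^\beta=0$, since otherwise the $\liminf$ defining $L^\beta$ would be $-\infty$.
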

\begin{proof}Since $L^\beta(\{P(u)^\beta_t\},\{u_t\})=\tilde L^\beta(\{P(u)^\beta_t\},\{u_t\})=0$, for any $\varepsilon>0$ there exists $t_\varepsilon$ so that
$$\varepsilon\geq\frac{1}{t}\log\int_Xe^{\beta(P(u)^\beta_t-u_t)}\frac{\omega^n}{V},\qquad t\geq t_\varepsilon.$$
On the other hand, applying the Jensen inequality we obtain 
\begin{flalign*}
\log\int_Xe^{\beta(P(u)^\beta_t-u_t)} \frac{1}{V} \omega^n &= \log\int_Xe^{\beta(P(u)^\beta_t-u_t)} \frac{1}{V} \frac{\omega^n}{\omega_{P(u)_t^\beta}^n} \omega_{P(u)_t^\beta}^n\\
&\geq \frac{\beta}{V}\int_X(P(u)^\beta_t-u_t)\omega^n_{P(u)^\beta_t}-\mathrm{Ent}(P(u)^\beta_t).
\end{flalign*}
Putting these estimates together and letting $t \to \infty$, the lemma follows.
\end{proof}

\begin{lemma}\label{lem: beta_beta'cont} Let $0<D<\beta<\beta'$ and $u\in\mathcal E^1$ with $u\leq0$. Then there exists an increasing function $f_D:\mathbb R_+\to\mathbb R_+$ with $f_D(0)=0$ such that $f_D\in C^{0,1/2^n}_{\mathrm{loc}}(\mathbb R_+)$ and
$$d_1(u^\beta,u^{\beta'})\leq f_D(\beta'-\beta)(d_1(0,u)+1).$$
\end{lemma}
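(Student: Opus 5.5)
The plan is to compare $\varphi:=u^\beta$ and $\psi:=u^{\beta'}$ through a monotonicity pairing of their Aubin--Yau equations. The resulting integral quantity controls $d_1$ by the standard Hölder-type comparison between $d_1$ and the $I$–$J$ type functionals on sets of bounded energy. That comparison is where the exponent $1/2^n$ will come from.

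\textbf{Step 1 (a priori bounds).} I first record bounds uniform for parameters in $[D,\beta']$. Jensen applied to the normalization $\int_X e^{\beta(\varphi-u)}\omega^n=V$ bounds $\sup_X\varphi$. The maximizing property \eqref{eq:sup_Ding} then bounds $d_1(0,\varphi)$. Both bounds should be $\leq C_D(d_1(0,u)+1)$, using $u\le 0$ and $\beta\ge D$, and the same holds for $\psi$. By Lemma \ref{lem: Ent>Ent_e>Ent_e} with $\varepsilon=1/\beta$,
$$\frac{\beta}{V}\int_X(\varphi-u)\,\omega_\varphi^n=\mathrm{Ent}(\varphi)\le \beta\bigl(I(\varphi)-I(u)\bigr)\le C_D\bigl(d_1(0,u)+1\bigr),$$
and the same bound holds for $\psi$.

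\textbf{Step 2 (monotonicity pairing).} Next I pair $\varphi-\psi$ against the difference of the two Monge--Ampère measures:
$$0\le \int_X(\varphi-\psi)\,(\omega_\psi^n-\omega_\varphi^n)=\int_X(\varphi-\psi)\bigl(e^{\beta'(\psi-u)}-e^{\beta(\varphi-u)}\bigr)\omega^n .$$
I split the exponential factor as $\bigl(e^{\beta(\psi-u)}-e^{\beta(\varphi-u)}\bigr)+\bigl(e^{\beta'(\psi-u)}-e^{\beta(\psi-u)}\bigr)$.

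The first part gives a nonpositive contribution. Its absolute value dominates $\int_X|\varphi-\psi|\,\bigl|e^{\beta(\varphi-u)}-e^{\beta(\psi-u)}\bigr|\,\omega^n$, which is an $L^1$-type closeness in the measures.

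The second part is the error. By the mean value theorem it is at most
$$(\beta'-\beta)\int_X|\varphi-\psi|\,|\psi-u|\,e^{\beta'(\psi-u)}\omega^n =(\beta'-\beta)\int_X|\varphi-\psi|\,|\psi-u|\,\omega_\psi^n .$$

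\textbf{Step 3 (controlling the error; main obstacle).} I expect the main difficulty to be bounding this error term linearly in $d_1(0,u)+1$ without any $L^\infty$ control on $u$. I would try Hölder's inequality against $\omega_\psi^n$, followed by the estimates \cite[Corollary 3.50]{DarvasSurvey} type bound. Concretely, $\int_X|\varphi-\psi|^2\omega_\psi^n$ should be controlled by energy bounds, while $\int_X(\psi-u)^2\omega_\psi^n$ should be controlled by $\int_X(\psi-u)^2e^{\beta'(\psi-u)}\omega^n$. The latter is bounded via Zeriahi's uniform integrability of $e^{-pu}$ combined with Step 1. If the quadratic moment is too costly, I would fall back on the entropy bound of Step 1, together with the elementary inequality $xe^x\le e^{2x}+C$ and the $\sup$ bound on $\psi-u$.

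This should give
$$0\le\int_X(\varphi-\psi)(\omega_\psi^n-\omega_\varphi^n)\le C_D(\beta'-\beta)^{1/2}\bigl(d_1(0,u)+1\bigr).$$
The square root allows for the Hölder step.

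\textbf{Step 4 (from the pairing to $d_1$).} Finally, the quantity $\int_X(\varphi-\psi)(\omega_\psi^n-\omega_\varphi^n)$ is the symmetric $I$–$J$ functional. On $d_1$-bounded sets it controls $d_1(\varphi,\psi)$ with a power $1/2^{n-1}$, by the BBEGZ/Darvas quasi-triangle estimates, with constants linear in the energy radius. Combining this with Step 3 yields
$$d_1(u^\beta,u^{\beta'})\le f_D(\beta'-\beta)\bigl(d_1(0,u)+1\bigr),\qquad f_D(s)=C_D\bigl(s^{1/2^n}+s\bigr),$$
which is locally $C^{0,1/2^n}$, increasing, and satisfies $f_D(0)=0$.

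The homogeneity is the point to watch. The energy bounds enter nonlinearly, so I would absorb them using $x^{1/2^n}\le x+1$. If needed, I would also rescale $d_1(0,u)+1$ inside $f_D$ to keep the dependence on $d_1(0,u)+1$ linear.
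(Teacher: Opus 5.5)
\textbf{Verdict.} Your route differs from the paper's: you pair the two Aubin--Yau equations and then pass from the symmetric $I$-functional to $d_1$. As set up, it cannot give the stated estimate, and it breaks exactly at the step you flagged.

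\textbf{The gap in Step 3.} The content of the lemma is the product form $f_D(\beta'-\beta)(d_1(0,u)+1)$: a coefficient small in $\beta'-\beta$, depending only on $D$, times a quantity \emph{linear} in $d_1(0,u)$. Linearity is what lets the paper apply it to $u=u_t$ and divide by $t$.

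Your error term $(\beta'-\beta)\int_X|\varphi-\psi|\,|\psi-u|\,e^{\xi(\psi-u)}\omega^n$ is a product of two factors, each of the size of the energy scale $R=d_1(0,u)$. So it is naturally of order $(\beta'-\beta)R^2$. Along a ray $u=u_t$, both $\varphi_t-\psi_t$ and $\psi_t-u_t$ are of order $t$ on the bulk of $\omega^n_{\psi_t}$ (compare the orthogonality relation $U_\varepsilon-U=\varepsilon A_X$ a.e.\ for $\mathrm{MA}^{\mathrm{NA}}(U_\varepsilon)$).

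The tools you invoke make this worse. Uniform integrability of $e^{-pu}$ over $d_1$-balls gives non-explicit constants that are superlinear (typically exponential) in $R$. Since you need $p\gtrsim\beta'$, they also depend on $\beta'$.

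Feeding an $O(R^2)$ bound into any H\"older comparison $d_1\lesssim\mathcal I^{a}R^{1-a}$ gives $d_1\lesssim R^{1+a}$. Neither $x^{1/2^n}\le x+1$ nor a rescaling turns this into a linear factor, so the claimed bound $C_D(\beta'-\beta)^{1/2}(d_1(0,u)+1)$ does not follow. The underlying obstruction is that comparing $e^{\beta(\psi-u)}$ and $e^{\beta'(\psi-u)}$ additively fails at large energy: their ratio is $e^{(\beta'-\beta)(\psi-u)}$, whose exponent has size $R$.

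\textbf{Other claims that fail as stated.}
In Step 1, $\beta(I(u^\beta)-I(u))=\mathrm{Ent}^\beta(u)$ increases to $\mathrm{Ent}(u)$ as $\beta\to\infty$, and this may be $+\infty$. So there is no bound $C_D(d_1(0,u)+1)$ uniform in $\beta\ge D$; your constants silently depend on $\beta'$.

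In Step 2, on $\{\psi<u\}$ the mean value bound needs the factor $e^{\beta(\psi-u)}$, not $e^{\beta'(\psi-u)}$.

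In Step 4, $\int_X(\varphi-\psi)(\omega_\psi^n-\omega_\varphi^n)$ is invariant under $\psi\mapsto\psi+c$. By itself it cannot control $d_1(\varphi,\psi)$; the constant mode needs a separate argument.

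\textbf{The missing idea: the paper's scaling trick.} It turns the change of parameter into a change of \emph{potential} rather than of density.
Feeding $\frac{\beta}{\beta'}u$ into the $\beta'$-equation produces the same exponent $-\beta u$ as the $\beta$-equation for $u$. Hence $\frac{\beta}{\beta'}u^\beta$ is, up to a constant, a subsolution. The comparison principle (this is \cite[Lemma 6.7]{DZ25} with $\delta=1-\beta/\beta'$) then gives $u^\beta\le\frac{\beta}{\beta'}u^\beta\le(\frac{\beta}{\beta'}u)^{\beta'}+\frac{n}{\beta'}\log\frac{\beta'}{\beta}$.

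Next combine four standard facts: the triangle inequality, $d_1(v,w)=I(w)-I(v)$ for $v\le w$, the $1$-Lipschitz property of $I$, and monotonicity of $\beta\mapsto I(u^\beta)$. Together they give $d_1(u^\beta,u^{\beta'})\le 2d_1((\frac{\beta}{\beta'}u)^{\beta'},u^{\beta'})+\frac{2n}{\beta'}\log\frac{\beta'}{\beta}$.

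Finally, use the fixed-parameter stability estimate \cite[Proposition 5.1]{DZ25}, which is homogeneous of degree one in the energy, together with $d_1(\frac{\beta}{\beta'}u,u)\lesssim(1-\frac{\beta}{\beta'})d_1(0,u)$. This bounds the right-hand side by $C_n(1-\frac{\beta}{\beta'})^{1/2^n}d_1(0,u)+\frac{2n(\beta'-\beta)}{\beta\beta'}$. That is linear in $d_1(0,u)$, with constants depending only on $D$.
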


\begin{proof}
Choosing $\delta = 1 - \frac{\beta}{\beta'}$ in \cite[Lemma 6.7]{DZ25} we obtain:
    $$
u^\beta \leq \frac{\beta}{\beta'}u^\beta\leq \Big(\frac{\beta}{\beta'}u\Big)^{\beta'}-\frac{n}{\beta'}\log \Big(\frac{\beta}{\beta'}\Big).
$$
Using the triangle inequality, this estimate allows to initiate the following sequence of inequalities:
\begin{flalign*}
d_1&(u^\beta,u^{\beta'}) \leq d_1(u^\beta,\Big(\frac{\beta}{\beta'}u\Big)^{\beta'}-\frac{n}{\beta'}\log \Big(\frac{\beta}{\beta'}\Big)) + d_1(\Big(\frac{\beta}{\beta'}u\Big)^{\beta'}-\frac{n}{\beta'}\log \Big(\frac{\beta}{\beta'}\Big),u^{\beta'})\\
&\leq I(\Big(\frac{\beta}{\beta'}u\Big)^{\beta'})-I(u^\beta)  + d_1(\Big(\frac{\beta}{\beta'}u\Big)^{\beta'},u^{\beta'}) + \frac{2n}{\beta'}\log \Big(\frac{\beta'}{\beta}\Big)\\
&= I(u^{\beta'}) -I(u^{\beta}) + I(\Big(\frac{\beta}{\beta'}u\Big)^{\beta'}) - I(u^{\beta'})  + d_1(\Big(\frac{\beta}{\beta'}u\Big)^{\beta'},u^{\beta'}) + \frac{2n}{\beta'}\log \Big(\frac{\beta'}{\beta}\Big)\\
&\leq 2 d_1(\Big(\frac{\beta}{\beta'}u\Big)^{\beta'},u^{\beta'}) + \frac{2n}{\beta'}\log \Big(\frac{\beta'}{\beta}\Big),
\end{flalign*}
where in the last line we have used that $I$ is $1$-Lipschitz with respect to $d_1$, $\beta\mapsto I(u^\beta)$ is decreasing, hence $I(u^{\beta'})-I(u^\beta)\leq 0$. Using  \cite[Proposition 5.1]{DZ25} and the fact that $\log(1+t) \leq t$, we obtain that 
\begin{flalign*}
d_1&(u^\beta,u^{\beta'}) \leq C_n (1 - \frac{\beta}{\beta'})^{\frac{1}{2^n}}d_1(0,u) + \frac{2n}{\beta'} (\frac{\beta'}{\beta} -1).
\end{flalign*}
Taking $f_D(t):= C_D(t + t^{1/2^n}),$ the result follows.
\end{proof}

\begin{lemma}\label{lem: Ent_Holder} Suppose that $u\in\mathcal E^1$ with $\sup_Xu=0$. Let $D<\beta<\beta'$. Then there exists $C:=C(n,D)>0$ such that
\begin{flalign}\label{eq: ent_continuous}
|\frac{1}{\beta}\mathrm{Ent}(u^\beta)-\frac{1}{\beta'}\mathrm{Ent}(u^{\beta'})| &\leq C (f_D(\beta' - \beta)(d_1(0,u) + 1))^{\frac{1}{2^n}} \cdot |I(u)|^{1 - \frac{1}{2^n}},
\end{flalign}
where $f_D$ is the function from the previous result. 

In particular, $g(\varepsilon):=I(u^{1/\varepsilon})\in C^{1,1/4^n}(0,\infty)$ with $g'(\varepsilon)=\mathrm{Ent}(u^{1/\varepsilon})\in C^{0,1/4^n}(0,\infty)$.
\end{lemma}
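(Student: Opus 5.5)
The plan is to rewrite both normalized entropies as integrals of potentials against Monge--Amp\`ere measures, and then use the standard Hölder-type continuity of such integrals in the $d_1$ topology. By \eqref{eq: ent_ue_int}, with $\varepsilon=1/\beta$,
\[
\frac{1}{\beta}\mathrm{Ent}(u^\beta)=\frac{1}{V}\int_X (u^\beta-u)\,\omega^n_{u^\beta}.
\]
So the difference in \eqref{eq: ent_continuous} splits as $T_1+T_2$, where
\[
T_1=\frac1V\int_X (u^\beta-u^{\beta'})\,\omega^n_{u^\beta},\qquad
T_2=\frac1V\int_X (u^{\beta'}-u)\,\big(\omega^n_{u^\beta}-\omega^n_{u^{\beta'}}\big).
\]

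\textbf{Estimating $T_1$ and $T_2$.} Both terms are controlled by the well-known estimate for finite energy potentials $\varphi,\psi,\phi_1,\phi_2,\chi$ with $d_1$-distances to $0$ at most $R$ (cf.\ \cite{BBGZ}, \cite{DarvasSurvey}):
\[
\Big|\int_X(\phi_1-\phi_2)\,(\omega_\varphi^n-\omega_\psi^n)\Big|\le C_n\, d_1(\varphi,\psi)^{1/2^n}R^{1-1/2^n},
\qquad
\int_X|\varphi-\psi|\,\omega^n_\chi\le C_n\, d_1(\varphi,\psi)^{1/2^n}R^{1-1/2^n}.
\]
The first form is obtained by repeatedly applying Cauchy--Schwarz to $\int (\phi_1-\phi_2)\,\ddc(\varphi-\psi)\wedge\omega_\varphi^{j}\wedge\omega_\psi^{n-1-j}$.
\begin{itemize}
\item For $T_1$, use the second inequality with $\varphi=u^\beta$, $\psi=u^{\beta'}$, $\chi=u^\beta$.
\item For $T_2$, use the first inequality with $\phi_1=u^{\beta'}$, $\phi_2=u$ and measures $\omega^n_{u^\beta}$, $\omega^n_{u^{\beta'}}$.
\end{itemize}
In both cases $d_1(u^\beta,u^{\beta'})$ is then bounded by $f_D(\beta'-\beta)(d_1(0,u)+1)$, using Lemma \ref{lem: beta_beta'cont}.

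\textbf{The radius $R$.} It remains to check that $R$ can be taken comparable to $|I(u)|$. Since $\sup_X u=0$, we have $d_1(0,u)=-I(u)=|I(u)|$. Moreover $u\le u^\beta\le u^\beta$'s upper bound $C_n/\beta\le C_n/D$, which follows from Jensen applied to $\int_X e^{\beta(u^\beta-u)}\omega^n=V$ together with $\sup u^\beta-\int u^\beta$ being bounded. Monotonicity of $I$ then gives $d_1(0,u^\beta)\le|I(u)|+C(n,D)$. If $|I(u)|$ is small, the additive constants are absorbed by adjusting $C$, possibly after enlarging the factor $(d_1(0,u)+1)$. I expect this bookkeeping, which makes the energy factor exactly $|I(u)|^{1-1/2^n}$ rather than $(|I(u)|+1)^{1-1/2^n}$, to be the only delicate point. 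If needed, I would accept the weaker factor, since that is all the second assertion uses.

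\textbf{The regularity statement.} By Theorem \ref{thm:main_f'}, $g'(\varepsilon)=\mathrm{Ent}(u^{1/\varepsilon})=\beta\cdot\frac1\beta\mathrm{Ent}(u^\beta)$. On a compact interval $[\varepsilon_0,\varepsilon_1]\subset(0,\infty)$, the map $\varepsilon\mapsto\beta=1/\varepsilon$ is Lipschitz, and we may take $D<1/\varepsilon_1$. Also $\frac1\beta\mathrm{Ent}(u^\beta)$ is bounded there, by $\mathrm{Ent}(u^\beta)\le\mathrm{Ent}$-type bounds from Lemma \ref{lem: Ent>Ent_e>Ent_e} via $\beta(I(u^\beta)-I(u))$. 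Since $f_D(t)\lesssim t^{1/2^n}$ for small $t$, the right-hand side of \eqref{eq: ent_continuous} is $O(|\beta'-\beta|^{1/4^n})$. The product of a bounded Lipschitz factor and a bounded $1/4^n$-Hölder factor is $1/4^n$-Hölder. Hence $g'\in C^{0,1/4^n}_{\mathrm{loc}}$ and $g\in C^{1,1/4^n}(0,\infty)$.
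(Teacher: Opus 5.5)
Your argument is essentially the paper's proof. The ingredients match: the same splitting of $\frac{V}{\beta}\mathrm{Ent}(u^\beta)-\frac{V}{\beta'}\mathrm{Ent}(u^{\beta'})$ (the paper writes your $T_2$ as two integrals, against $u^{\beta'}$ and against $u$), and the same H\"older-type $d_1$ estimates, cited there as \cite[Lemma A.2]{BBJ18} and \cite[Lemma 3.12]{DZ25}. Likewise, Lemma \ref{lem: beta_beta'cont} bounds $d_1(u^\beta,u^{\beta'})$, and Theorem \ref{thm:main_f'} with $f_D(t)^{1/2^n}\lesssim t^{1/4^n}$ gives the regularity statement.

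One auxiliary claim is wrong and should be replaced. The inequality $u\le u^\beta$ fails for every nonconstant $u$: if $u^\beta\ge u$, then $\omega^n_{u^\beta}=e^{\beta(u^\beta-u)}\omega^n\ge\omega^n$ with equal total masses, which forces $u^\beta=u$ and $\omega_u^n=\omega^n$.

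What you actually need is the following.
\begin{itemize}
\item $u^\beta\le 0$. This follows from monotonicity of $u\mapsto u^\beta$ (\cite[Lemma 3.1]{DZ25}) applied to $u\le 0$, together with $0^\beta=0$. The same fact is used implicitly in the proof of Lemma \ref{lem: beta_beta'cont}.
\item $I(u^\beta)\ge I(u)$, which is Lemma \ref{lem: Ent>Ent_e>Ent_e}.
\end{itemize}
Together these give $d_1(0,u^\beta)=-I(u^\beta)\le -I(u)=|I(u)|$, and similarly for $u^{\beta'}$. So $R=|I(u)|$ works with no additive constant, and the bookkeeping issue you anticipated does not arise.
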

\begin{proof} We start with the following computation:
\begin{flalign*}
\frac{V}{\beta}\mathrm{Ent}(u^\beta)-\frac{V}{\beta'}\mathrm{Ent}(u^{\beta'})
&=\int_X(u^\beta-u)\omega_{u^\beta}^n-\int_X(u^{\beta'}-u)\omega_{u^{\beta'}}^n\\
&=\int_X(u^\beta-u^{\beta'})\omega_{u^\beta}^n
+\int_Xu^{\beta'}(\omega_{u^\beta}^n-\omega_{u^{\beta'}}^n)
+\int_Xu(\omega_{u^{\beta'}}^n-\omega_{u^\beta}^n).
\end{flalign*}
After integrating by parts in the second and third integrals, \cite[Lemma A.2]{BBJ18} and \cite[Lemma 3.12]{DZ25} imply
\begin{flalign*}
\left|\frac{1}{\beta}\mathrm{Ent}(u^\beta)-\frac{1}{\beta'}\mathrm{Ent}(u^{\beta'})\right|
&\leq C d_1(u^\beta,u^{\beta'})^\frac{1}{2^n} |I(u)|^{1-\frac{1}{2^n}}.
\end{flalign*}
Applying the previous lemma to $d_1(u^\beta,u^{\beta'})$ gives \eqref{eq: ent_continuous}.

Regarding the last statement, Theorem \ref{thm:main_f'} gives
$g'(\varepsilon)=\mathrm{Ent}(u^{1/\varepsilon}).$
The $C^{0,1/4^n}$-regularity of the derivative follows from \eqref{eq: ent_continuous}, while the concavity follows from the fact that $\varepsilon\mapsto\mathrm{Ent}(u^{1/\varepsilon})$ is decreasing by \cite{DZ25}.
\end{proof}

The next proposition gives the most critical ingredients for the proof of Theorem \ref{thm: intro_slope}.

\begin{proposition}\label{prop: thm12cor} Let $\{u_t\} \in \mathcal R^1$ with $\sup_X u_t = 0$. Define
\[
g_t(\varepsilon) := \frac{I\big(u_t^{1/\varepsilon}\big)}{t}.
\]
Then each $g_t\in C^{1,1/4^n}(0,\infty)$ is concave, and its $C^{1,1/4^n}$ Hölder norm on compact subsets of $(0,\infty)$ is independent of $t>1$. Set
\[
g_\infty(\varepsilon):=I\big\{P(u)_t^{1/\varepsilon}\big\}\quad(\varepsilon>0),
\qquad g_\infty(0):=I\{u_t\}.
\]
We have $g_\infty\in C^{1,1/4^n}(0,\infty)$, and the following convergences hold uniformly on compact subsets of $(0,\infty)$:
$$g_t(\varepsilon)=\frac{I(u_t^{\frac{1}{\varepsilon}})}{t} \to I\{P(u)_t^{\frac{1}{\varepsilon}}\} = g_\infty(\varepsilon), \ \ \textup{ as } t \to \infty,$$
$$\frac{d}{d\varepsilon}g_t(\varepsilon)=\frac{\mathrm{Ent}(u_t^{1/\varepsilon})}{t}\to\frac{d}{d\varepsilon}g_\infty(\varepsilon)\quad\textup{as }t\to\infty.$$
\end{proposition}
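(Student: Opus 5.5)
The plan is to get uniform-in-$t$ regularity of $g_t$, extract locally uniform subsequential limits by Arzelà--Ascoli, and identify every limit with $g_\infty$ through the Ding-type variational characterizations \eqref{eq:sup_Ding} and Theorem \ref{thm: rad_Ent_beta=sup}.

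\textbf{Step 1: uniform regularity of $g_t$.} By Theorem \ref{thm:main_f'}, each $g_t$ is concave with $g_t'(\varepsilon)=\mathrm{Ent}(u_t^{1/\varepsilon})/t$. For the Hölder bound, apply Lemma \ref{lem: Ent_Holder} to $u=u_t$ with $\beta=1/\varepsilon$, $\beta'=1/\varepsilon'$ and $\varepsilon,\varepsilon'\in[a,b]\subset(0,\infty)$ (so $D<1/b$). On this compact set the factors $1/\beta$ and $f_D$ are controlled, and we have $d_1(0,u_t)=|I(u_t)|=ct$ for geodesic rays with $\sup u_t=0$. The right-hand side of \eqref{eq: ent_continuous} is therefore
\[
\lesssim \bigl(|\varepsilon-\varepsilon'|^{1/2^n}(ct+1)\bigr)^{1/2^n}(ct)^{1-1/2^n}\lesssim t\,|\varepsilon-\varepsilon'|^{1/4^n}.
\]
Rewriting $\mathrm{Ent}(u^\beta)$ versus $\frac{1}{\beta}\mathrm{Ent}(u^\beta)$ costs a term $|\beta-\beta'|\,\mathrm{Ent}(u^{\beta'})/\beta'$. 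This is handled by a sup bound: $0\le \mathrm{Ent}(u_t^\beta)\le \beta(I(u_t^\beta)-I(u_t))\le \beta\, d_1(0,u_t)=O(t)$, which follows from Lemma \ref{lem: Ent>Ent_e>Ent_e} and $\sup u_t^\beta\le C$. Dividing by $t$ gives $t$-independent $C^{1,1/4^n}$ norms of $g_t$ on $[a,b]$ for $t>1$. The values $g_t(\varepsilon)$ are also uniformly bounded, since $I(u_t)/t\le g_t\le 0+O(1)$.

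\textbf{Step 2: compactness.} By Arzelà--Ascoli, every sequence $t_j\to\infty$ has a subsequence along which $g_{t_j}\to \tilde g$ in $C^1_{\mathrm{loc}}(0,\infty)$, with $\tilde g\in C^{1,1/4^n}$ concave. It therefore suffices to prove pointwise convergence $g_t(\varepsilon)\to g_\infty(\varepsilon)$ for every $\varepsilon>0$. Convergence of derivatives then follows, since $C^1$ limits of concave functions have derivative equal to the limit of the derivatives. It also yields $g_\infty\in C^{1,1/4^n}$ and the identity $g_\infty'=\lim_t\mathrm{Ent}(u_t^{1/\varepsilon})/t$.

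\textbf{Step 3: identifying the limit.} Here I use \eqref{eq: beta_entropy_def}:
\[
\beta\, g_t(1/\beta)-\beta I(u_t)/t=\mathrm{Ent}^\beta(u_t)/t\to \mathrm{Ent}^\beta\{u_t\}.
\]
Theorem \ref{thm: rad_Ent_beta=sup} identifies this limit with $\sup F^\beta(\cdot,\{u_t\})$. By Theorem \ref{thm: maximizer_main}(ii) the supremum is attained at $\{P(u)^\beta_t\}$ with $L^\beta=0$, so it equals $\beta(I\{P(u)^\beta_t\}-I\{u_t\})$. Since $I(u_t)/t=I\{u_t\}$ exactly along a geodesic, this gives $g_t(\varepsilon)\to g_\infty(\varepsilon)$, and the uniqueness of limits removes the need for subsequences.

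\textbf{Main obstacle.} I expect the main obstacle to be Step 1: making the constants in Lemmas \ref{lem: beta_beta'cont} and \ref{lem: Ent_Holder} genuinely scale linearly in $d_1(0,u_t)\sim t$ uniformly for $\beta$ in a compact range. In particular one must check that the $\frac{1}{2^n}$-power of $(d_1+1)$ times $|I|^{1-1/2^n}$ yields exactly order $t$. If needed, I would also pass through the bound $\mathrm{Ent}(u_t^\beta)=O(t)$ to absorb the $1/\beta$ prefactors.
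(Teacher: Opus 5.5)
Your proof is correct and follows the paper's route: uniform $C^{1,1/4^n}$ bounds on compacta from \eqref{eq: ent_continuous}, Arzel\`a--Ascoli, and identification of the pointwise limit through the maximizer $\{P(u)^\beta_t\}$ of Theorem \ref{thm: maximizer_main}(ii). The only difference is minor. You get pointwise convergence from the sup-formula of Theorem \ref{thm: rad_Ent_beta=sup}, whereas the paper uses the $d_1$-asymptotics of $\{u_t^\beta\}$ to $\{P(u)^\beta_t\}$ from \cite[Remarks 6.9, 6.10]{DZ25}; you also spell out the passage from $\frac{1}{\beta}\mathrm{Ent}$ to $\mathrm{Ent}$ via the $O(t)$ entropy bound, which the paper leaves implicit.
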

\begin{proof} That $g_t(\varepsilon)=\frac{I(u_t^{\frac{1}{\varepsilon}})}{t} \to I\{P(u)_t^{\frac{1}{\varepsilon}}\} = g_\infty(\varepsilon)$, as $t \to \infty$, follows from \cite[Remark 6.9, Remark 6.10]{DZ25} and Theorem \ref{thm: maximizer_main}(ii). 

By \eqref{eq: ent_continuous} the $C^{1, 1/4^n}$-norms of each $g_t$ are uniformly bounded on any compact subset of $(0,\infty)$. Thus, the rest of the statement follows from the Arzela-Ascoli theorem of  $C^{1,1/4^n}$-H\"older functions.
\end{proof}

\begin{corollary}
If $\mathrm{Ent}\{u_t\}<\infty$, then
$$
\frac{d}{d\varepsilon}\bigg|_{\varepsilon=0^+}g_\infty(\varepsilon)=\mathrm{Ent}\{u_t\}.
$$
\end{corollary}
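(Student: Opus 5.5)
The plan is to prove the two inequalities
$$
\limsup_{\varepsilon\to0^+}\frac{g_\infty(\varepsilon)-g_\infty(0)}{\varepsilon}\leq \mathrm{Ent}\{u_t\}
\quad\text{and}\quad
\liminf_{\varepsilon\to0^+}\frac{g_\infty(\varepsilon)-g_\infty(0)}{\varepsilon}\geq \mathrm{Ent}\{u_t\}
$$
separately. Concavity of $g_\infty$ on $(0,\infty)$ comes from Proposition \ref{prop: thm12cor} as a locally uniform limit of the concave $g_t$. Then $\frac{g_\infty(\varepsilon)-g_\infty(0)}{\varepsilon}\geq g_\infty'(\varepsilon)$, and the right derivative at $0$ is the monotone limit of $g'_\infty(\varepsilon)$. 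Since $P(u)^\beta_t\geq u_t$, we also have $g_\infty\geq g_\infty(0)$.

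\emph{Upper bound.} Apply Lemma \ref{lem: Ent>Ent_e>Ent_e} to each $u_t$:
$$
\frac{I(u_{t,\varepsilon})-I(u_t)}{t}\leq \varepsilon\,\frac{\mathrm{Ent}(u_t)}{t}.
$$
Let $t\to\infty$ and use Proposition \ref{prop: thm12cor} (namely $g_t(\varepsilon)\to g_\infty(\varepsilon)$ and $I(u_t)/t\to I\{u_t\}$). This gives $0\leq g_\infty(\varepsilon)-g_\infty(0)\leq \varepsilon\,\mathrm{Ent}\{u_t\}$. In particular $g_\infty$ is continuous at $0$, and the right difference quotients are bounded by $\mathrm{Ent}\{u_t\}$.

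\emph{Lower bound.} It suffices to show two things:
\begin{enumerate}[(a)]
\item $g'_\infty(\varepsilon)\geq \mathrm{Ent}\{P_\varepsilon(u)_t\}$ for each $\varepsilon>0$;
\item $\liminf_{\varepsilon\to0}\mathrm{Ent}\{P_\varepsilon(u)_t\}\geq \mathrm{Ent}\{u_t\}$.
\end{enumerate}
For (b), I would use that $P_\varepsilon(u)_t$ decreases to $u_t$ as $\varepsilon\searrow0$ (Lemma \ref{lem: sup_P}(ii)). Together with $I\{P_\varepsilon(u)_t\}=g_\infty(\varepsilon)\to I\{u_t\}$ from the upper bound step, this gives $d_1^c$-convergence of the rays. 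Lower semicontinuity of radial entropy under $d_1^c$-convergence then yields (b).

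For (a), first treat rays induced by a test configuration $(\cX,\cL)$. There $P_\varepsilon(u)$ is the ray of the log discrepancy model $(\cX,\cL+\varepsilon K^{\log}_{\cX/X\times\PP^1})$, by \cite[\S9]{DZ25}. Its energy slope is the polynomial $(\cL+\varepsilon K^{\log})^{n+1}/((n+1)V)$. Differentiating gives $(K^{\log}\cdot(\cL+\varepsilon K^{\log})^n)/V$, which by Li's slope formula \cite{Li21} equals the radial entropy of the model ray, i.e.\ $\mathrm{Ent}\{P_\varepsilon(u)_t\}$. For a general ray, approximate $\{u_t\}$ by Demailly approximations $\{w^m_t\}$. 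By the quantitative estimates behind Proposition \ref{prop: thm12cor} and Lemma \ref{lem: beta_beta'cont}, the corresponding $g^m_\infty$ converge to $g_\infty$ in $C^1_{loc}(0,\infty)$. Theorem \ref{thm: maximizer_main}(iii) gives $P_\varepsilon(w^m)\to P_\varepsilon(u)$ in $d_1^c$. Lower semicontinuity of radial entropy then passes the identity for $g^m_\infty$ to the inequality (a) in the limit.

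The main obstacle is (a) for general rays. It needs the test-configuration identification of $P_\varepsilon(u)$ with log discrepancy models, and it needs the stability of $P_\varepsilon$ and of $g_\infty'$ under Demailly approximation. If the convergence $P_\varepsilon(w^m)\to P_\varepsilon(u)$ proves delicate, the fallback is a direct route. Combine Lemma \ref{lem: entropy_est} with the expression $g'_\infty(\varepsilon)=\lim_t \frac{\beta}{tV}\int_X(u_{t,\varepsilon}-u_t)\omega^n_{u_{t,\varepsilon}}$, and compare $u_{t,\varepsilon}$ with $P_\varepsilon(u)_t$ through their common energy slope and the maximizing property of Theorem \ref{thm: maximizer_main}(ii).
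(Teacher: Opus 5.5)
Your argument takes a genuinely different route from the paper's. The paper's proof is two lines. By the energy-slope compatibility of Proposition~\ref{prop: thm12cor}, the difference quotient $\frac{g_\infty(\varepsilon)-g_\infty(0)}{\varepsilon}$ is exactly the radial $\beta$-entropy $\mathrm{Ent}^{1/\varepsilon}\{u_t\}$. The convergence $\mathrm{Ent}^\beta\{u_t\}\to\mathrm{Ent}\{u_t\}$ as $\beta\to\infty$ is then quoted from \cite[Theorem 6.3]{DZ25}, and continuity at $0$ and concavity on $[0,\infty)$ are read off afterwards.

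Your upper bound is the easy half of that theorem, $\mathrm{Ent}^\beta\{u_t\}\leq\mathrm{Ent}\{u_t\}$, obtained from Lemma~\ref{lem: Ent>Ent_e>Ent_e} applied ray-wise; it is fine. For the lower bound you replace the hard half by two ingredients: the inequality $g'_\infty(\varepsilon)\geq\mathrm{Ent}\{P_\varepsilon(u)_t\}$, and $d_1^c$-lower semicontinuity as $\varepsilon\to0$. Your step (b) is literally the paper's later argument for $\lim_{\varepsilon\to0^+}\mathrm{Ent}\{P(u)_t^{1/\varepsilon}\}=\mathrm{Ent}\{u_t\}$, so you invert the paper's logical order. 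This is not circular: the test-configuration lemma, Lemma~\ref{lem: P_beta_rad_conv} and the concavity trick from \cite{BBN} do not use the corollary. Moreover, for (a) you only need the lower-semicontinuity half of Lemma~\ref{lem: Ent_P_beta_conv}, not the chain built on Lemma~\ref{lem: entropy_est} and Deligne pairings. You gain independence from \cite[Theorem 6.3]{DZ25}. You pay with the full test-configuration and approximation machinery for a statement the paper gets immediately.

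As written, however, the approximation step has a gap. Demailly approximations $\{w^m_t\}$ of $\{u_t\}$ converge to $\{u_t\}$ in $d_1^c$ only if $\{u_t\}$ is maximal. For a general ray they converge to the maximal ray of its non-Archimedean potential, and without further input you do not know that the limit of the $g^m_\infty$ is your $g_\infty$. You need to add that $\mathrm{Ent}\{u_t\}<\infty$ forces $\{u_t\}$ to be maximal. This is Li's theorem \cite[Theorem 1.2]{Li20-cscK}, used the same way in \S\ref{sec: cscK}; the paper's own results for this step are stated for maximal rays.

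Theorem~\ref{thm: maximizer_main}(iii) is also the wrong citation. As its proof shows, it concerns Demailly approximations of the ray $\{P(u)^\beta_t\}$ itself and says nothing about $P_\varepsilon(w^m)$. The statement you want is Lemma~\ref{lem: P_beta_rad_conv}, and that is exactly where $d_1^c(\{w^m_t\},\{u_t\})\to0$, hence maximality, enters.

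Two smaller corrections:
\begin{enumerate}
\item In the test-configuration case, $\cL+\varepsilon K^{\log}$ is only relatively big. The energy slope is therefore $\vol(\cL+c\cX_0+\varepsilon K^{\log})/((n+1)V)-c$, not an intersection polynomial, and its derivative is the positive intersection product given by \cite{BFJ09}.
\item Your fallback cannot work. Lemma~\ref{lem: entropy_est} bounds $\mathrm{Ent}\{P(u)^\beta_t\}$ from below, while (a) needs an upper bound.
\end{enumerate}
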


\begin{proof}
By the preceding energy-slope compatibility and the definition of radial quantized entropy,
\[
\frac{g_\infty(\varepsilon)-g_\infty(0)}{\varepsilon}
=\frac{I\{u_t^{1/\varepsilon}\}-I\{u_t\}}{\varepsilon}
=\mathrm{Ent}^{1/\varepsilon}\{u_t\}.
\]
The right-hand side converges to $\mathrm{Ent}\{u_t\}$ by \cite[Theorem 6.3]{DZ25}. This proves the asserted right derivative. It also shows that $g_\infty(\varepsilon)\to g_\infty(0)$; hence the concavity of $g_\infty$ on $(0,\infty)$ extends to $[0,\infty)$.
\end{proof}

To finish the proof of Theorem \ref{thm:intro_radial} we need to show that $\frac{d}{d\varepsilon}g_\infty(\varepsilon)=\mathrm{Ent}\{P(u)_t^{1/\varepsilon}\}$ for any $\varepsilon>0$. By the previous discussion we already know that this derivative exists, but we need to determine its exact value. We start with a special case.\medskip

\begin{lemma} Suppose that $\{u_t\} \in \mathcal R^1$ is induced by test configurations. Then
\begin{equation}\label{eq: to_prove}
\frac{d}{d\varepsilon}g_\infty(\varepsilon)=\mathrm{Ent}\{P(u)_t^{1/\varepsilon}\},\qquad\varepsilon>0.
\end{equation}
\end{lemma}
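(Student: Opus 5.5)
When $\{u_t\}$ comes from a smooth test configuration $(\cX,\cL)$ dominating $X\times\PP^1$, we identify $\{P(u)^{1/\varepsilon}_t\}$ with the ray of the log discrepancy model $(\cX,\cL+\varepsilon K^{\log}_{\cX/X\times\PP^1})$. Both the energy slope and the entropy slope then become intersection numbers, which are polynomial in $\varepsilon$, and the identity \eqref{eq: to_prove} reduces to differentiating a polynomial.

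\textbf{Step 1: identify the maximizing ray.} Pass to a smooth $\CC^*$-equivariant resolution with snc central fiber; this does not change the ray. By \cite[\S 9]{DZ25} and Theorem \ref{thm: maximizer_main}(ii), for rational $\varepsilon$ the maximizer $\{P(u)^{1/\varepsilon}_t\}$ is the ray attached to the log discrepancy model $(\cX,\cL_\varepsilon)$ with $\cL_\varepsilon:=\cL+\varepsilon K^{\log}_{\cX/X\times\PP^1}$. Concretely, one checks two things:
\begin{itemize}
\item the model ray satisfies $L^{1/\varepsilon}=0$, which is a log canonical threshold computation on $\cX$;
\item its energy slope equals $\sup F^{1/\varepsilon}$ from Theorem \ref{thm: rad_Ent_beta=sup}.
\end{itemize}
Uniqueness in Theorem \ref{thm: maximizer_main}(ii) then forces the two rays to coincide. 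Consequently
\[
g_\infty(\varepsilon)=I\{P(u)^{1/\varepsilon}_t\}=\frac{\cL_\varepsilon^{n+1}}{(n+1)V},
\]
and on the NA side the ray's potential is $P(U+\varepsilon A_X\circ r_\cX)$.

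\textbf{Step 2: differentiate the energy.} Since $\cL_\varepsilon^{n+1}$ is a polynomial in $\varepsilon$, we get, at least for rational $\varepsilon$,
\[
g_\infty'(\varepsilon)=\frac{1}{V}\,\cL_\varepsilon^n\cdot K^{\log}_{\cX/X\times\PP^1}.
\]
If the model is only relatively big, this identity should instead use the differentiability of the volume (positive intersection products), which is the Boucksom--Favre--Jonsson derivative formula. By the $C^1$ regularity of $g_\infty$ from Proposition \ref{prop: thm12cor}, it suffices to establish \eqref{eq: to_prove} on a dense set of $\varepsilon$ and then use continuity of both sides. Continuity of the right side comes from uniform convergence of $\mathrm{Ent}(u_t^{1/\varepsilon})/t$. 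Lower semicontinuity of the radial entropy gives one inequality directly.

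\textbf{Step 3: compute the entropy slope.} We need the radial entropy slope of the model ray. Li's slope formula \cite{Li21} gives
\[
\mathrm{Ent}\{P(u)^{1/\varepsilon}_t\}=\frac{1}{V}\,\langle\cL_\varepsilon^n\rangle\cdot K^{\log}_{\cX/X\times\PP^1}.
\]
This agrees with $\mathrm{Ent}^{\mathrm{NA}}$ of $P(U+\varepsilon A_X\circ r_\cX)$, because the Monge--Amp\`ere measure is supported on $\Delta_\cX$, where $A_X\circ r_\cX=A_X$. Comparing with Step 2 gives \eqref{eq: to_prove}.

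\textbf{Main obstacle.} The hardest part is justifying Li's slope formula for models that are only relatively big rather than semiample, where the geodesic is only bounded and not smooth. The radial entropy slope must then be expressed through positive intersection products. If this fails, the fallback is Lemma \ref{lem: entropy_est}, which gives $\mathrm{Ent}\{P\}\ge \limsup \frac{\beta}{tV}\int (P-u_t)\,\omega_P^n$. The right-hand side can be computed as a mixed energy slope equal to $g_\infty'$, which yields $\ge$. Combined with the upper bound obtained from concavity and the Step 2 limits, this gives equality.
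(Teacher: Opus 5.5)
Your main line is the paper's proof. Both arguments identify $\{P(u)^{1/\varepsilon}_t\}$ with the ray of the log discrepancy model $(\cX,\cL+\varepsilon K^{\log})$ via \cite[Remark 9.2]{DZ25}, write the energy slope as a volume, differentiate using \cite{BFJ09}, and recognize the derivative as $\int_{X^{an}}A_X\,\mathrm{MA}^{\mathrm{NA}}\{P(u)^{1/\varepsilon}\}$ via Li's formula.

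\textbf{The polynomial framing.} The framing you lead with is false in general, and the exception is not a rare case. Adding the effective, $\cX_0$-supported divisor $\varepsilon K^{\log}$ to $\cL$ typically destroys relative nefness, so $g_\infty(\varepsilon)\neq\cL_\varepsilon^{n+1}/((n+1)V)$. For example, pull back the trivial test configuration to the blow-up of $X\times\PP^1$ at a point of $X\times\{0\}$. Then $K^{\log}=nE$ and $g_\infty\equiv 0$, but $\cL_\varepsilon^{n+1}=(-1)^n n^{n+1}\varepsilon^{n+1}$. The correct expression is $g_\infty(\varepsilon)=\vol(\cL+c\cX_0+\varepsilon K^{\log})/((n+1)V)-c$. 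It is merely $C^1$, with derivative $V^{-1}\langle(\cL+c\cX_0+\varepsilon K^{\log})^n\rangle\cdot K^{\log}$. This positive product, not the naive $\cL_\varepsilon^n\cdot K^{\log}$ of your Step 2, is what matches Step 3. So the differentiability of the volume is the heart of the argument, exactly as in the paper, rather than a fallback.

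\textbf{The dense-set reduction and the fallback.} Your reduction to a dense set of $\varepsilon$ is justified circularly. By Proposition \ref{prop: thm12cor}, $\lim_{t\to\infty}\mathrm{Ent}(u_t^{1/\varepsilon})/t=g_\infty'(\varepsilon)$. Identifying this limit with $\mathrm{Ent}\{P(u)^{1/\varepsilon}_t\}$ is precisely what is being proved; it is the second identity of Theorem \ref{thm: intro_slope}. So this limit cannot give continuity of the right-hand side.

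The paper sidesteps the issue by applying the model description and Li's formula to $\cL+\varepsilon K^{\log}$ for all $\varepsilon$ at once. If you do want to pass through rational $\varepsilon$, lower semicontinuity of radial entropy gives only $\mathrm{Ent}\{P(u)^{1/\varepsilon}_t\}\le g_\infty'(\varepsilon)$. That step uses that the rays increase in $\varepsilon$, so their $d_1^c$-distances are differences of values of $g_\infty$. The reverse inequality needs a separate argument, e.g.\ Lemma \ref{lem: entropy_est} combined with a slope formula for $\frac{1}{t}\int_X(P(u)^{1/\varepsilon}_t-u_t)\,\omega^n_{P(u)^{1/\varepsilon}_t}$ at irrational $\varepsilon$.

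For the same reason your fallback cannot replace Li's formula. Lemma \ref{lem: entropy_est} yields at best the lower bound $\mathrm{Ent}\{P(u)^{1/\varepsilon}_t\}\ge g_\infty'(\varepsilon)$. Concavity of $g_\infty$ gives no upper bound on $\mathrm{Ent}\{P(u)^{1/\varepsilon}_t\}$. That upper bound is the nontrivial input Li's entropy formula supplies.
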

\begin{proof}
After pulling back to a smooth dominating model, let $(\mathcal X,\mathcal L)$ be a semi-ample test configuration dominating
$X\times\mathbb P^1$ that induces $\{u_t\}$. Set
$K^{\log}:=K^{\log}_{\mathcal X/X\times\mathbb P^1}$ and
$\mathcal L_\varepsilon:=\mathcal L+\varepsilon K^{\log}$. By
\cite[Remark 9.2]{DZ25}, the Li type model
$(\mathcal X,\mathcal L_\varepsilon)$ induces the ray
$\{P(u)_t^{1/\varepsilon}\}$. Choose $c\gg 1$ so that
$\mathcal L+c\mathcal X_0+\varepsilon K^{\log}$ is big for the values of
$\varepsilon$ under consideration. The Deligne functional slope formula for
models from \cite{Li20-cscK} gives
\[
 g_\infty(\varepsilon)=I\{P(u)_t^{1/\varepsilon}\}
 =\frac{\vol(\mathcal L+c\mathcal X_0+\varepsilon K^{\log})}{(n+1)V}-c .
\]
In the semi-ample case this is the usual intersection calculation, and in the
big case it follows from the differentiability of the volume function. Hence
\[
 \frac{d}{d\varepsilon}g_\infty(\varepsilon)
 =\frac{1}{V}\big\langle(\mathcal L+c\mathcal X_0+
 \varepsilon K^{\log})^n\big\rangle\cdot K^{\log}.
\]
By Li's intersection formula for log discrepancy models \cite{Li20-cscK}, or equivalently by the
non-Archimedean entropy formula, the last expression is
\[
 \int_{X^{an}} A_X\,\mathrm{MA}^{\mathrm{NA}}\{P(u)^{1/\varepsilon}\}
 =\mathrm{Ent}\{P(u)_t^{1/\varepsilon}\}.
\]
This proves \eqref{eq: to_prove}.
\end{proof}

\begin{lemma}\label{lem: P_beta_rad_conv} Suppose that $\{u_t^j\},\{u_t\} \in \mathcal R^1$ and $\varepsilon>0$. If we have that $d_1^c(\{u_t^j\},\{u_t\}) \to 0$ then $d_1^c(\{P(u^j)_t^{1/\varepsilon}\},\{P(u)_t^{1/\varepsilon}\}) \to 0$.
\end{lemma}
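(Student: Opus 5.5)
The plan is to pass through the K\"ahler Aubin--Yau solutions at each fixed time $t$, where continuity is already quantified. The comparison with the radial objects is then made via the slope identification of Proposition \ref{prop: thm12cor}, together with uniqueness of maximizers in Theorem \ref{thm: maximizer_main}. Throughout, set $\beta=1/\varepsilon$ and normalize $\sup_X u_t=\sup_X u^j_t=0$; this normalization is harmless since $d_1^c$-convergence forces the sup-slopes to converge.

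\textbf{Step 1 (energy of the envelopes).} First I would show that $I\{P(u^j)^\beta_t\}\to I\{P(u)^\beta_t\}$. For each fixed $t$, the map $v\mapsto v^\beta$ is $d_1$-Lipschitz-type continuous. Concretely, by \cite[\S3, \S5]{DZ25} (the estimate underlying Lemma \ref{lem: beta_beta'cont}) one has $d_1(v^\beta,w^\beta)\le C\,d_1(v,w)^{\gamma}(d_1(0,v)+d_1(0,w)+1)^{1-\gamma}$ for some $\gamma=\gamma(n)>0$; if a clean Lipschitz bound is unavailable, the maximizing characterization \eqref{eq:sup_Ding} gives directly $|I(v^\beta)-I(w^\beta)|\le |I(v)-I(w)|+\|\cdot\|$-type bounds through $\sup_X|v-w|$-free comparisons using $P(v,w)$. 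Applying this with $v=u^j_t$ and $w=u_t$, dividing by $t$, letting $t\to\infty$, and using Proposition \ref{prop: thm12cor} gives
\[
\big|I\{P(u^j)^\beta_t\}-I\{P(u)^\beta_t\}\big|\le C\, d_1^c(\{u^j_t\},\{u_t\})^\gamma\,(\text{bounded})^{1-\gamma}\to0 .
\]
Alternatively, and more robustly, I would use the variational formula of Theorem \ref{thm: rad_Ent_beta=sup}. Since $F^\beta$ depends on $\{u_t\}$ only through $L^\beta$ and $I\{u_t\}$, and $L^\beta(\{v_t\},\cdot)$ changes by at most $\beta\,d_1^c$-controlled amounts (via H\"older, as in Lemma \ref{lem: sup_P}(ii), comparing $u$ with $P(u,u^j)$), the maximal values converge.

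\textbf{Step 2 (from energy to $d_1^c$).} I would pass to the ray $\{\tilde u_t\}$ of $P(u_t,u^j_t)$-type, i.e.\ the geodesic ray $\{r^j_t\}$ with $r^j_t\le \min(u_t,u^j_t)$ and $I\{r^j_t\}\ge I\{u_t\}-d_1^c\to$ controlled (as in \cite{DL19}). Monotonicity of the envelope in $u$ is immediate from the definition \eqref{eq: P_beta_u_def}: $u\le u'$ implies $L^\beta(\cdot,u)\le L^\beta(\cdot,u')$, hence $P(u)^\beta\le P(u')^\beta$. Therefore $P(r^j)^\beta_t\le \min(P(u)^\beta_t,P(u^j)^\beta_t)$. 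Then
\[
d_1^c\big(\{P(u^j)^\beta\},\{P(u)^\beta\}\big)\le \big(I\{P(u)^\beta\}-I\{P(r^j)^\beta\}\big)+\big(I\{P(u^j)^\beta\}-I\{P(r^j)^\beta\}\big),
\]
using the Pythagorean-type formula $d_1^c(a,b)\le d_1^c(a,c)+d_1^c(c,b)$ with $c\le a,b$, where $d_1^c(a,c)=I\{a\}-I\{c\}$. By Step 1 applied to $r^j\to u$ and $r^j$ versus $u^j$ (both $d_1^c$-close), each bracket tends to $0$.

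The main obstacle is the quantitative continuity of $I\{P(\cdot)^\beta\}$ in Step 1. I would try first the slope route via Proposition \ref{prop: thm12cor} and \cite[Proposition 5.1]{DZ25}, making sure the constant is uniform in $t$ after dividing by $t$, since $d_1(0,u_t)$ grows linearly.
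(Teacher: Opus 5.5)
Your argument is correct, and it differs from the paper's in how the pointwise estimate is transferred to the radial envelopes. The paper also starts from the H\"older estimate of \cite[Proposition 5.1]{DZ25} divided by $t$. It then uses the stronger fact, from \cite[Remark 6.10]{DZ25} and Theorem \ref{thm: maximizer_main}(ii), that $d_1(P(u)^{1/\varepsilon}_t,u^{1/\varepsilon}_t)=o(t)$; that is, $\{P(u)^{1/\varepsilon}_t\}$ is the geodesic ray $d_1$-asymptotic to the curve $t\mapsto u_t^{1/\varepsilon}$. A single triangle inequality then bounds $d_1^c(\{P(u^j)^{1/\varepsilon}_t\},\{P(u)^{1/\varepsilon}_t\})$ directly.

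You use only the weaker energy identity $I(u_t^{1/\varepsilon})/t\to I\{P(u)^{1/\varepsilon}_t\}$ of Proposition \ref{prop: thm12cor}. By itself this gives only convergence of energies, and you upgrade it to $d_1^c$-convergence with two structural facts:
\begin{enumerate}
\item Monotonicity of $\{u_t\}\mapsto\{P(u)^\beta_t\}$. This is immediate from the definition, since $u\le u'$ gives $\tilde L^\beta(\cdot,\{u_t\})\le\tilde L^\beta(\cdot,\{u'_t\})$.
\item A ray $\{r^j_t\}$ lying below both $\{u_t\}$ and $\{u^j_t\}$. It is the decreasing limit of the geodesic segments joining $0$ to $P(u_s,u^j_s)$, and satisfies $I\{r^j_t\}=\tfrac12\big(I\{u_t\}+I\{u^j_t\}-d_1^c(\{u_t\},\{u^j_t\})\big)$. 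Hence both $d_1^c(\{r^j_t\},\{u_t\})$ and $d_1^c(\{r^j_t\},\{u^j_t\})$ are at most $d_1^c(\{u_t\},\{u^j_t\})$.
\end{enumerate}
The ray $\{r^j_t\}$ need not satisfy $\sup_X r^j_t=0$. This is harmless: adding $ct$ commutes with both $v\mapsto v^\beta$ and $\{u_t\}\mapsto\{P(u)^\beta_t\}$, so Proposition \ref{prop: thm12cor} still applies.

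Your Step 1 bound is quantitative for an arbitrary pair of rays. Apply it to $(r^j,u)$ and $(r^j,u^j)$, and use that $d_1^c$ between ordered geodesic rays equals the difference of their energies. This closes the argument with the same H\"older modulus as the paper, up to a factor $2$. The paper's route is shorter once the asymptoticity statement is available. Yours needs less information about how $u_t^\beta$ relates to $P(u)^\beta_t$. It also shows a general principle: a monotone map on $\mathcal R^1$ whose energy is $d_1^c$-H\"older is itself $d_1^c$-H\"older.

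You should drop the two fallback suggestions in Step 1. The sentence about ``$\sup_X|v-w|$-free comparisons using $P(v,w)$'' is not an argument. The claim that $L^\beta(\{v_t\},\cdot)$ changes by $d_1^c$-controlled amounts is unjustified, because exponential integrals such as $\int_X e^{\beta(u_t-u'_t)}\omega^n$ are not controlled by $d_1(u_t,u'_t)$. Your main route does not need either.
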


\begin{proof}
Using \cite[Proposition 5.1]{DZ25}, we obtain
\[
\frac{1}{t}d_1(u_t^{j,1/\varepsilon},u_t^{1/\varepsilon})
\leq C
\left(\frac{d_1(u_t^j,u_t)}{t}\right)^{\frac{1}{2^n}}
\max\left\{
\frac{d_1(0,u_t^j)}{t},
\frac{d_1(0,u_t)}{t}
\right\}^{1-\frac{1}{2^n}}.
\]
Taking the limsup as $t\to\infty$ gives
\[
\limsup_{t\to\infty}
\frac{d_1(u_t^{j,1/\varepsilon},u_t^{1/\varepsilon})}{t}
\leq
C\,d_1^c(\{u_t^j\},\{u_t\})^{\frac{1}{2^n}}
\max\left\{
d_1^c(\{0\},\{u_t^j\}),
d_1^c(\{0\},\{u_t\})
\right\}^{1-\frac{1}{2^n}}.
\]
The quantities $d_1^c(\{0\},\{u_t^j\})$ are uniformly bounded, since
\[
d_1^c(\{0\},\{u_t^j\})
\leq
d_1^c(\{0\},\{u_t\})
+d_1^c(\{u_t^j\},\{u_t\}),
\]
and $d_1^c(\{u_t^j\},\{u_t\})\to0$.

By \cite[Remark 6.10]{DZ25} and Theorem \ref{thm: maximizer_main}(ii),
$\{P(u^j)_t^{1/\varepsilon}\}$ and $\{P(u)_t^{1/\varepsilon}\}$ are the
geodesic rays asymptotic to $\{u_t^{j,1/\varepsilon}\}$ and
$\{u_t^{1/\varepsilon}\}$, respectively. In other words,
\[
\lim_{t\to\infty}
\frac{d_1(P(u^j)_t^{1/\varepsilon},u_t^{j,1/\varepsilon})}{t}=0,
\qquad
\lim_{t\to\infty}
\frac{d_1(P(u)_t^{1/\varepsilon},u_t^{1/\varepsilon})}{t}=0.
\]
The triangle inequality therefore yields
\[
d_1^c(\{P(u^j)_t^{1/\varepsilon}\},\{P(u)_t^{1/\varepsilon}\})
\leq
C\,d_1^c(\{u_t^j\},\{u_t\})^{\frac{1}{2^n}}
\max\left\{
d_1^c(\{0\},\{u_t^j\}),
d_1^c(\{0\},\{u_t\})
\right\}^{1-\frac{1}{2^n}},
\]
which tends to zero by assumption.
\end{proof}

\begin{corollary}
Let $\{u_t\}\in\cR^1$ be a maximal ray.
Suppose that $\{u_t^j\} \in \mathcal R^1$ is the Demailly approximation of $\{u_t\}$ and $\varepsilon>0$. Let $g^j_\infty(\varepsilon) := I\{P(u^j)^{1/\varepsilon}_t\}$ and $g_\infty(\varepsilon) := I\{P(u)^{1/\varepsilon}_t\}$. Then for any $\varepsilon >0$ we have
$$g^j_\infty(\varepsilon) \to g_\infty(\varepsilon),$$
$$\frac{d}{d\varepsilon}g^j_\infty(\varepsilon)=\mathrm{Ent}\{P(u^j)_t^{1/\varepsilon}\}\to\frac{d}{d\varepsilon}g_\infty(\varepsilon).$$
\end{corollary}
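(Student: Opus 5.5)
The plan is to deduce both convergences from Lemma \ref{lem: P_beta_rad_conv} and from the concavity and uniform regularity of the functions $g^j_\infty$.

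\textbf{Pointwise convergence.} For a maximal ray $\{u_t\}$, the Demailly approximations $\{u^j_t\}$ are rays induced by test configurations and satisfy $d_1^c(\{u^j_t\},\{u_t\})\to 0$. This follows from Theorem \ref{thm: maximizer_main}(iii) applied with $\beta\to\infty$, or from the standard results of \cite{BBJ18,DL19}. Lemma \ref{lem: P_beta_rad_conv} then gives
\[
d_1^c(\{P(u^j)^{1/\varepsilon}_t\},\{P(u)^{1/\varepsilon}_t\})\to0 .
\]
Since $I$ is $1$-Lipschitz with respect to $d_1$, the radial energy $I\{\cdot\}$ is $1$-Lipschitz with respect to $d_1^c$. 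Hence $g^j_\infty(\varepsilon)\to g_\infty(\varepsilon)$ for every $\varepsilon>0$.

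\textbf{Identifying $\frac{d}{d\varepsilon}g^j_\infty$.} The rays $\{u^j_t\}$ are induced by test configurations, so the preceding lemma gives
\[
\frac{d}{d\varepsilon}g^j_\infty(\varepsilon)=\mathrm{Ent}\{P(u^j)^{1/\varepsilon}_t\}.
\]

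\textbf{Convergence of derivatives.} By Proposition \ref{prop: thm12cor}, each $g^j_\infty$ is concave and lies in $C^{1,1/4^n}(0,\infty)$. I would argue that the H\"older norms are uniform in $j$ on compact subsets of $(0,\infty)$. The bound \eqref{eq: ent_continuous} depends on $u$ only through $d_1(0,u_t)/t$ and $|I(u_t)|/t$. For the rays $\{u^j_t\}$ these quantities are bounded by $d_1^c(\{0\},\{u^j_t\})$, and these are uniformly bounded by the triangle inequality, exactly as in the proof of Lemma \ref{lem: P_beta_rad_conv}. Passing to the limit $t\to\infty$ as in Proposition \ref{prop: thm12cor} therefore gives a uniform $C^{1,1/4^n}$ bound on compacts for the family $\{g^j_\infty\}_j$.

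With this bound, Arzel\`a--Ascoli shows that every subsequence of $\{(g^j_\infty)'\}$ has a further subsequence converging locally uniformly. The limit must be $g_\infty'$, because $g^j_\infty\to g_\infty$ pointwise. Hence the whole sequence of derivatives converges to $g_\infty'$.

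A softer alternative avoids the uniform bound. Concave functions converging pointwise on an open interval have derivatives converging at every point where the limit is differentiable, and $g_\infty$ is $C^1$ by Proposition \ref{prop: thm12cor}. This gives the second convergence at every $\varepsilon>0$ directly, and I would use it as the primary argument.

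\textbf{Main obstacle.} The main difficulty is the first step: one must confirm that the Demailly approximations of a maximal ray converge in $d_1^c$, and not merely decrease to it. Maximality is exactly what guarantees this, via the correspondence with $\cE^{1,\mathrm{NA}}$ in \cite{BBJ18}. After that, the concavity argument is routine.
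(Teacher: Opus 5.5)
Your proposal is correct and follows the paper's proof. Pointwise convergence comes from Lemma~\ref{lem: P_beta_rad_conv} and the $d_1^c$-Lipschitz continuity of $I\{\cdot\}$; the identity $(g^j_\infty)'(\varepsilon)=\mathrm{Ent}\{P(u^j)^{1/\varepsilon}_t\}$ comes from the test-configuration lemma; and convergence of derivatives follows from the concavity trick of \cite{BBN} (your ``softer alternative''), using differentiability of $g_\infty$ from Proposition~\ref{prop: thm12cor}.

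One justification should be dropped: $d_1^c(\{u^j_t\},\{u_t\})\to0$ does not follow from Theorem~\ref{thm: maximizer_main}(iii) by letting $\beta\to\infty$, since that estimate is for fixed $\beta$ and is not uniform in $\beta$. The correct reason is the one from \cite{BBJ18}: $u^j_t\ge u_t$, and by maximality $I\{u^j_t\}=I^{\mathrm{NA}}(U^j)\searrow I^{\mathrm{NA}}(U)=I\{u_t\}$, so $d_1^c=I\{u^j_t\}-I\{u_t\}\to0$.
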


\begin{proof}
That $g^j_\infty(\varepsilon)\to g_\infty(\varepsilon)$ follows from the previous
result and $d_1^c$-Lipschitz continuity of $I\{\cdot\}$. For the convergence of the derivatives we use the following standard trick going back to \cite{BBN}. Let $f_j,f:(0,\infty)\to\mathbb R$ be concave $C^1$ functions such that
$f_j\to f$ pointwise, and assume that $f$ is differentiable at $\varepsilon$.
For $0<h<\varepsilon$, concavity gives
\[
 \frac{f_j(\varepsilon)-f_j(\varepsilon-h)}{h}
 \geq f_j'(\varepsilon)\geq
 \frac{f_j(\varepsilon+h)-f_j(\varepsilon)}{h}.
\]
Taking $\limsup_j$ and $\liminf_j$, and then letting $h\downarrow 0$, gives
\[
 \limsup_{j\to\infty} f_j'(\varepsilon)\leq f'(\varepsilon)
 \leq \liminf_{j\to\infty} f_j'(\varepsilon).
\]
Thus $f_j'(\varepsilon)\to f'(\varepsilon)$. Applying this to
$f_j=g^j_\infty$ and $f=g_\infty$, and using the preceding test configuration
case for the identity
$(g^j_\infty)'(\varepsilon)=\mathrm{Ent}\{P(u^j)_t^{1/\varepsilon}\}$, proves the claim.
\end{proof}

\begin{lemma}\label{lem: Ent_P_beta_conv} 
Let $\{u_t\}\in\cR^1$ be a maximal ray. Then 
$\mathrm{Ent}\{P(u^j)_t^{1/\varepsilon}\}\to
\mathrm{Ent}\{P(u)_t^{1/\varepsilon}\}$, when $\{u^j_t\}\in\mathcal R^1$ is
the Demailly approximation of a maximal ray $\{u_t\} \in \mathcal R^1$. So we have 
$$
\frac{d}{d\varepsilon}g_\infty(\varepsilon)=\mathrm{Ent}\{P(u)_t^{1/\varepsilon}\},\qquad\varepsilon>0.
$$
\end{lemma}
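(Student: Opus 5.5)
The plan is to prove the second identity $\frac{d}{d\varepsilon}g_\infty(\varepsilon)=\mathrm{Ent}\{P(u)_t^{1/\varepsilon}\}$ by establishing two opposite inequalities. The convergence $\mathrm{Ent}\{P(u^j)_t^{1/\varepsilon}\}\to\mathrm{Ent}\{P(u)_t^{1/\varepsilon}\}$ then follows from the preceding corollary. That corollary already gives $\mathrm{Ent}\{P(u^j)_t^{1/\varepsilon}\}=\frac{d}{d\varepsilon}g^j_\infty(\varepsilon)\to\frac{d}{d\varepsilon}g_\infty(\varepsilon)$. So it suffices to show that $\frac{d}{d\varepsilon}g_\infty(\varepsilon)$ equals $\mathrm{Ent}\{P(u)_t^{1/\varepsilon}\}$. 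Throughout I fix $\varepsilon>0$ and write $\beta=1/\varepsilon$.

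\emph{Upper bound on entropy.} By Lemma \ref{lem: P_beta_rad_conv}, $d_1^c(\{P(u^j)^\beta_t\},\{P(u)^\beta_t\})\to0$. I would invoke the lower semicontinuity of radial entropy with respect to $d_1^c$ (the same lsc mentioned in the introduction's outline, ultimately coming from lsc of $\mathrm{Ent}$ in the Kähler setting and the convexity of $t\mapsto \mathrm{Ent}(v_t)$ along geodesics). Combined with the preceding corollary, this gives
\[
\mathrm{Ent}\{P(u)^\beta_t\}\leq\liminf_{j}\mathrm{Ent}\{P(u^j)^\beta_t\}=\frac{d}{d\varepsilon}g_\infty(\varepsilon).
\]

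\emph{Lower bound on entropy.} Here I would use Lemma \ref{lem: entropy_est}:
\[
\mathrm{Ent}\{P(u)^\beta_t\}\geq\limsup_{t\to\infty}\frac{\beta}{tV}\int_X(P(u)^\beta_t-u_t)\,\omega^n_{P(u)^\beta_t}.
\]
The goal is to compare the integrand with the Kähler quantity
\[
\frac{\beta}{tV}\int_X(u^\beta_t-u_t)\,\omega^n_{u_t^\beta}=\frac{\mathrm{Ent}(u_t^\beta)}{t}=\frac{d}{d\varepsilon}g_t(\varepsilon),
\]
which tends to $\frac{d}{d\varepsilon}g_\infty(\varepsilon)$ by Proposition \ref{prop: thm12cor}. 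I split the difference exactly as in the proof of Lemma \ref{lem: Ent_Holder}:
\[
\int_X(P_t-u_t)\,\omega^n_{P_t}-\int_X(u^\beta_t-u_t)\,\omega^n_{u^\beta_t}
=\int_X(P_t-u^\beta_t)\,\omega^n_{P_t}+\int_X u^\beta_t(\omega^n_{P_t}-\omega^n_{u^\beta_t})-\int_X u_t(\omega^n_{P_t}-\omega^n_{u^\beta_t}),
\]
with $P_t:=P(u)^\beta_t$. Integrating by parts and applying \cite[Lemma A.2]{BBJ18} together with \cite[Lemma 3.12]{DZ25}, the absolute value of this difference should be bounded by
\[
C\,d_1(P_t,u_t^\beta)^{1/2^n}\,\max\{d_1(0,u_t),d_1(0,P_t),d_1(0,u^\beta_t)\}^{1-1/2^n}.
\]
The max factor is $O(t)$ because all three curves have linear energy growth. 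By \cite[Remark 6.10]{DZ25} and Theorem \ref{thm: maximizer_main}(ii), $d_1(P_t,u_t^\beta)=o(t)$, as already used in the proof of Lemma \ref{lem: P_beta_rad_conv}. Hence the difference is $o(t)^{1/2^n}O(t)^{1-1/2^n}=o(t)$, and dividing by $t$ yields
\[
\mathrm{Ent}\{P(u)^\beta_t\}\geq\lim_t\frac{\mathrm{Ent}(u_t^\beta)}{t}=\frac{d}{d\varepsilon}g_\infty(\varepsilon).
\]

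Combining the two inequalities gives the identity, and therefore also the claimed convergence $\mathrm{Ent}\{P(u^j)_t^{1/\varepsilon}\}\to\mathrm{Ent}\{P(u)_t^{1/\varepsilon}\}$.

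\emph{Main obstacle.} The delicate step is the $o(t)$ comparison in the lower bound. I need the integration-by-parts estimate to be uniform in $t$ with only linear dependence on the energies. I also need a check that $u^\beta_t$ and $P(u)^\beta_t$ are bounded for each $t$, so that the mixed integrals make sense; this holds since $u_t^\beta$ is bounded by Kołodziej and $P(u)^\beta$ is a bounded ray. If the sharp form of \cite[Lemma 3.12]{DZ25} is insufficient, I would first try normalizing by $\sup_X u_t=0$ and using $|I(u_t)|\le Ct$ directly, mirroring Lemma \ref{lem: Ent_Holder}.
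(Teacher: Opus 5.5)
Your argument is correct. Your upper bound is exactly the paper's: lower semicontinuity of radial entropy along $\{P(u^j)^{1/\varepsilon}_t\}\to\{P(u)^{1/\varepsilon}_t\}$, combined with the preceding corollary. The lower bound is where you take a genuinely different route.

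The paper also starts from Lemma \ref{lem: entropy_est}. It then transfers the quantity $\frac{1}{\varepsilon tV}\int_X(P(u)^{1/\varepsilon}_t-u_t)\,\omega^n_{P(u)^{1/\varepsilon}_t}$ to the Demailly approximants, using $d_1^c$-continuity of mixed energies. It evaluates the resulting slope for test configurations by the Deligne pairing formalism, and uses Li's non-Archimedean orthogonality for $P(U^j+\varepsilon A_X)$ to identify it with $\mathrm{Ent}\{P(u^j)^{1/\varepsilon}_t\}$. This gives $\mathrm{Ent}\{P(u)^{1/\varepsilon}_t\}\geq\limsup_j\mathrm{Ent}\{P(u^j)^{1/\varepsilon}_t\}$.

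You instead compare, at each fixed $t$, $P(u)^\beta_t$ with the K\"ahler Aubin--Yau solution $u^\beta_t$. In place of the non-Archimedean orthogonality you use its Archimedean counterpart \eqref{eq: ent_ue_int}. You close the argument with $d_1(P(u)^\beta_t,u^\beta_t)=o(t)$, recorded in the proof of Lemma \ref{lem: P_beta_rad_conv}, and with the derivative convergence of Proposition \ref{prop: thm12cor}.

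What your route buys:
\begin{enumerate}
\item The lower bound becomes purely pluripotential-theoretic and uses neither maximality nor test configurations.
\item It shows directly that the $\limsup$ in Lemma \ref{lem: entropy_est} is a genuine limit equal to $g_\infty'(\varepsilon)$. So \eqref{eq: entr_formula} follows without the separate approximation argument the paper gives afterwards.
\end{enumerate}
What the paper's route buys is an explicit identification of the radial quantity with the non-Archimedean entropy of the approximants, in line with its unifying theme.

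Two small points to tighten. First, the mixed Monge--Amp\`ere estimate you borrow must be the homogeneous one, of total degree one in the energies, as in Lemma \ref{lem: Ent_Holder}. Second, $d_1(0,u^\beta_t)=O(t)$ is best justified by the triangle inequality $d_1(0,u^\beta_t)\leq d_1(0,P(u)^\beta_t)+d_1(P(u)^\beta_t,u^\beta_t)$ rather than by ``linear energy growth'', since $\{u^\beta_t\}$ is not a geodesic.
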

\begin{proof}
Using Lemma \ref{lem: entropy_est} we can start the following estimates:
\begin{flalign}\label{eq: to_prov_ineq}
\mathrm{Ent}\{P(u)^{1/\varepsilon}_t\} & \geq  \limsup_{t \to \infty}\frac{1}{\varepsilon tV}\int_{X} (P(u)_t^{1/\varepsilon} - u_t)  \omega^n_{P(u)_t^{1/\varepsilon}}\\
&= \limsup_{j \to \infty} \limsup_{t \to \infty}\frac{1}{\varepsilon tV}\int_{X} (P(u^j)_t^{1/\varepsilon} - u^j_t)  \omega^n_{P(u^j)_t^{1/\varepsilon}} \nonumber\\
&= \limsup_{j \to \infty} \lim_{t \to \infty}\frac{1}{\varepsilon tV}\int_{X} (P(u^j)_t^{1/\varepsilon} - u^j_t)  \omega^n_{P(u^j)_t^{1/\varepsilon}} \nonumber \\
&= \limsup_{j \to \infty} \frac{1}{\varepsilon}\int_{X^{an}} (P(u^j)^{1/\varepsilon} - u^j)\mathrm{MA}^{\mathrm{NA}}\{P(u^j)^{1/\varepsilon}\} \nonumber \\
&= \limsup_{j \to \infty} \int_{X^{an}} A_X\,\mathrm{MA}^{\mathrm{NA}}\{P(u^j)^{1/\varepsilon}\} \nonumber\\
&= \limsup_{j \to \infty}\mathrm{Ent}\{P(u^j)^{1/\varepsilon}_t\}. \nonumber
\end{flalign}
In the second line we used Lemma \ref{lem: P_beta_rad_conv} together with the
standard continuity estimate for mixed Monge--Amp\`ere energies under
$d_1^c$-convergence. The third line uses that $\{u_t^j\}$ is induced by a
Demailly approximation, hence by a test configuration. The fourth line is the
Deligne pairing slope formula for the functional
$\int_X(\cdot-\cdot)\omega^n_{\cdot}$, as in \cite{SD18}. Finally, if
$U^j$ denotes the non-Archimedean potential of $\{u_t^j\}$, then
$P(u^j)^{1/\varepsilon}$ is the envelope $P(U^j+\varepsilon A_X)$. The
orthogonality relation for this envelope from \cite{Li21} gives
\[
 \int_{X^{an}}(P(u^j)^{1/\varepsilon}-u^j)\mathrm{MA}^{\mathrm{NA}}\{P(u^j)^{1/\varepsilon}\}
 =\varepsilon\int_{X^{an}}A_X\,\mathrm{MA}^{\mathrm{NA}}\{P(u^j)^{1/\varepsilon}\}.
\]
This justifies the fourth and fifth lines, while the last line is the
non-Archimedean/radial entropy formula for test configurations.

On the other hand, Lemma \ref{lem: P_beta_rad_conv} gives
$d_1^c(\{P(u^j)_t^{1/\varepsilon}\},\{P(u)_t^{1/\varepsilon}\})\to0$, and the
radial entropy is $d_1^c$-lower semicontinuous. Hence
\[
 \mathrm{Ent}\{P(u)_t^{1/\varepsilon}\}\leq
 \liminf_{j\to\infty}\mathrm{Ent}\{P(u^j)_t^{1/\varepsilon}\}.
\]
Together with \eqref{eq: to_prov_ineq}, this proves the desired convergence.
\end{proof}

\begin{corollary}Let $\{u_t\}\in\cR^1$ be a maximal ray and $\varepsilon >0$. Then
\begin{equation}\label{eq: entr_formula} 
\mathrm{Ent}\{P(u)^{1/\varepsilon}_t\}=\lim_{t\to\infty}\frac{1}{\varepsilon tV}\int_X(P(u)_t^{1/\varepsilon}-u_t)\omega^n_{P(u)_t^{1/\varepsilon}}.
\end{equation}
\end{corollary}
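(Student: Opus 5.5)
The chain of inequalities \eqref{eq: to_prov_ineq} in the proof of Lemma \ref{lem: Ent_P_beta_conv} already gives most of what is needed. We want to show that the $\limsup$ in Lemma \ref{lem: entropy_est} is a genuine limit and equals $\mathrm{Ent}\{P(u)^{1/\varepsilon}_t\}$. To do this we bound the $\liminf$ of the same quantity from below by $\mathrm{Ent}\{P(u)^{1/\varepsilon}_t\}$.

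Write
$$a_t:=\frac{1}{\varepsilon tV}\int_X(P(u)_t^{1/\varepsilon}-u_t)\,\omega^n_{P(u)_t^{1/\varepsilon}}.$$
Lemma \ref{lem: entropy_est} gives $\limsup_t a_t\le \mathrm{Ent}\{P(u)^{1/\varepsilon}_t\}$. For the reverse inequality we use the Demailly approximations $\{u^j_t\}$ of $\{u_t\}$, which are induced by test configurations. Set
$$a^j_t:=\frac{1}{\varepsilon tV}\int_X(P(u^j)_t^{1/\varepsilon}-u^j_t)\,\omega^n_{P(u^j)_t^{1/\varepsilon}}.$$
For each $j$ the limit $\lim_t a^j_t$ exists. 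By the Deligne pairing slope formula, the orthogonality relation of \cite{Li21} and the radial/NA entropy formula for test configurations (lines three to six of \eqref{eq: to_prov_ineq}), this limit equals $\mathrm{Ent}\{P(u^j)^{1/\varepsilon}_t\}$.

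The key step is a uniform-in-$t$ comparison of the form
$$|a_t-a^j_t|\le C\left(\frac{d_1(u_t,u^j_t)+d_1(P(u)^{1/\varepsilon}_t,P(u^j)^{1/\varepsilon}_t)}{t}\right)^{1/2^n}\left(\frac{\max\{d_1(0,u_t),d_1(0,u^j_t)\}}{t}+1\right)^{1-1/2^n}.$$
We prove it exactly as in the proof of Lemma \ref{lem: Ent_Holder}. We split the difference of the two integrals into a term $\int(\phi-\phi')\,\omega_\phi^n$ plus terms of the form $\int \psi\,(\omega^n_\phi-\omega^n_{\phi'})$, integrate by parts, and apply \cite[Lemma A.2]{BBJ18} and \cite[Lemma 3.12]{DZ25}. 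This is precisely the "standard continuity estimate for mixed Monge--Amp\`ere energies" invoked in the second line of \eqref{eq: to_prov_ineq}. Here we make it explicit and uniform in $t$; this is the step that needs care. Taking $\limsup_{t}$ of the right-hand side bounds it by a quantity $\delta_j$ built from $d_1^c(\{u^j_t\},\{u_t\})$ and $d_1^c(\{P(u^j)^{1/\varepsilon}_t\},\{P(u)^{1/\varepsilon}_t\})$. The first distance tends to zero by properties of Demailly approximation, and the second tends to zero by Lemma \ref{lem: P_beta_rad_conv}, so $\delta_j\to0$.

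It follows that
$$\liminf_t a_t\ \ge\ \lim_t a^j_t-\delta_j=\mathrm{Ent}\{P(u^j)^{1/\varepsilon}_t\}-\delta_j,$$
and similarly $\limsup_t a_t\le \mathrm{Ent}\{P(u^j)^{1/\varepsilon}_t\}+\delta_j$. Letting $j\to\infty$ and using Lemma \ref{lem: Ent_P_beta_conv}, which gives $\mathrm{Ent}\{P(u^j)^{1/\varepsilon}_t\}\to \mathrm{Ent}\{P(u)^{1/\varepsilon}_t\}$, both the $\liminf$ and the $\limsup$ of $a_t$ equal $\mathrm{Ent}\{P(u)^{1/\varepsilon}_t\}$. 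This proves \eqref{eq: entr_formula}.

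The main obstacle is making the mixed-energy continuity estimate uniform in $t$ after dividing by $t$. The potentials $P(u)^{1/\varepsilon}_t$ are only bounded, with sup-norm growing linearly in $t$, so we must check that the constants in \cite[Lemma A.2]{BBJ18} depend only on $d_1$-distances to $0$, which grow linearly and are therefore harmless after normalization. If needed, Theorem \ref{thm: maximizer_main}(iii) provides sup-norm control $t^{-1}\|w^m_t-P(u)^\beta_t\|_{L^\infty}\to0$, which can replace the $d_1$ term for the $P$-potentials.
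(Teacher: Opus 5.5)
Your proposal is correct and follows the paper's route. The paper gets the identity with a $\limsup$ from equality in \eqref{eq: to_prov_ineq}, notes that for the test-configuration rays $\{u^j_t\}$ this $\limsup$ is a genuine limit by the Deligne formalism, and passes to $\{u_t\}$ by approximation; your uniform-in-$t$, scale-homogeneous H\"older bound on $|a_t-a^j_t|$, combined with Lemma \ref{lem: P_beta_rad_conv} and Lemma \ref{lem: Ent_P_beta_conv}, is exactly what the paper's terse ``the general result follows using approximation'' requires.
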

\begin{proof}As a consequence of the previous argument, we must have equality in \eqref{eq: to_prov_ineq}, yielding the corollary, with limsup on the right hand side. Since in case of test configurations using the Deligne formalism the limsup is a limit \cite{SD18}, the general result follows using approximation.
\end{proof}

The following lemma finishes the proof of Theorem \ref{thm:intro_radial}, showing that $H_u \in C^1([0,\infty))$:

\begin{lemma}
    For any $\{u_t\}\in\cR^1$ with $\mathrm{Ent}\{u_t\}<\infty$, one has
    $$
    \mathrm{Ent}\{u_t\}=\lim_{\varepsilon\to0^+}\mathrm{Ent}\{P(u)_t^{1/\varepsilon}\}.
    $$
\end{lemma}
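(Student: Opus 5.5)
The approach is to combine concavity of $g_\infty=H_u$ with two pieces of information already established: the right derivative at $0$ (the Corollary following Proposition \ref{prop: thm12cor}) and the derivative formula on $(0,\infty)$ (Lemma \ref{lem: Ent_P_beta_conv}). For a concave function on $[0,\infty)$ that is differentiable on $(0,\infty)$ and continuous at $0$, the derivative $g_\infty'$ is non-increasing on $(0,\infty)$. Moreover,
\[
\lim_{\varepsilon\to0^+}g_\infty'(\varepsilon)=g'_{\infty,+}(0)
\]
as an elementary fact about concave functions. Indeed, for $0<s<\varepsilon$, concavity gives
\[
g_\infty'(\varepsilon)\le \frac{g_\infty(\varepsilon)-g_\infty(0)}{\varepsilon}\le g'_{\infty,+}(0),
\]
and
\[
g_\infty'(s)\ge \frac{g_\infty(\varepsilon)-g_\infty(s)}{\varepsilon-s}.
\]
Letting $s\to0$ in the second inequality (using continuity at $0$) and then $\varepsilon\to0$ shows that $\lim_{s\to 0^+}g_\infty'(s)$ exists and is at least $g'_{\infty,+}(0)$. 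Combined with the first inequality, the limit equals $g'_{\infty,+}(0)$.

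The plan is therefore as follows.

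\textbf{Step 1: concavity on $[0,\infty)$.} Note that $g_\infty$ is concave on $[0,\infty)$. On $(0,\infty)$ this follows as a pointwise limit of the concave $g_t$ (Proposition \ref{prop: thm12cor}). Continuity at $0$ and the extension of concavity were recorded in the Corollary after that proposition, using $\mathrm{Ent}^{1/\varepsilon}\{u_t\}\to\mathrm{Ent}\{u_t\}<\infty$.

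\textbf{Step 2: apply the elementary fact.} Invoke the concavity argument above to obtain $\lim_{\varepsilon\to0^+}g'_\infty(\varepsilon)=g'_{\infty,+}(0)=\mathrm{Ent}\{u_t\}$.

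\textbf{Step 3: identify the derivative.} Replace $g_\infty'(\varepsilon)$ by $\mathrm{Ent}\{P(u)^{1/\varepsilon}_t\}$ using Lemma \ref{lem: Ent_P_beta_conv}. This immediately yields the claim, and at the same time shows $H_u\in C^1([0,\infty))$.

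The main obstacle is that Lemma \ref{lem: Ent_P_beta_conv} was proved only for maximal rays, whereas the present statement concerns arbitrary $\{u_t\}\in\cR^1$. I would handle this by replacing $\{u_t\}$ with its maximal envelope $\{\hat u_t\}\ge\{u_t\}$, i.e.\ the maximal ray with the same non-Archimedean potential. Two facts are then needed. First, $\{P(u)^{\beta}_t\}=\{P(\hat u)^\beta_t\}$: the integrability thresholds $\tilde L^\beta(\cdot,\{u_t\})$ and $\tilde L^\beta(\cdot,\{\hat u_t\})$ should coincide, since the exponents $\int e^{-\beta u_t}$ depend only on the multiplier ideals, i.e.\ on Lelong data, which agree for $u$ and $\hat u$. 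Second, the entropy slopes satisfy $\mathrm{Ent}\{\hat u_t\}\le \mathrm{Ent}\{u_t\}$ and are in fact equal here, via $g'_{\infty,+}(0)$ computed for both rays and the equality of their $P$-rays.

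If this reduction proves delicate, a fallback is available. One can instead prove $g_\infty'(\varepsilon)=\mathrm{Ent}\{P(u)^{1/\varepsilon}_t\}$ directly for general rays, repeating the Demailly-approximation argument of Lemma \ref{lem: Ent_P_beta_conv}. That argument uses only Lemma \ref{lem: P_beta_rad_conv}, Lemma \ref{lem: entropy_est}, and lower semicontinuity of the radial entropy; maximality entered only through the $d_1^c$-convergence of the Demailly approximants.
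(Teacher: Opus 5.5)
Your argument is correct, but it gets the lower bound by a different route from the paper.

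\textbf{Comparison with the paper.} Both proofs obtain the upper bound $\mathrm{Ent}\{P(u)^{1/\varepsilon}_t\}=g'_\infty(\varepsilon)\le g'_{\infty,+}(0)=\mathrm{Ent}\{u_t\}$ from concavity. For the reverse inequality the paper argues geometrically. Since $P(u)^{1/\varepsilon}_t\ge u_t$, it gets
\[
d_1^c(\{u_t\},\{P(u)^{1/\varepsilon}_t\})=I\{P(u)^{1/\varepsilon}_t\}-I\{u_t\}=\varepsilon\,\mathrm{Ent}^{1/\varepsilon}\{u_t\}\le\varepsilon\,\mathrm{Ent}\{u_t\}\to0,
\]
and then invokes $d_1^c$-lower semicontinuity of radial entropy. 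You instead use the real-variable fact that the derivative of a concave function continuous at $0$ converges to its right derivative there, and you verify that fact correctly.

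Your route is more economical at this stage. Beyond what is already built into Lemma \ref{lem: Ent_P_beta_conv}, it needs neither semicontinuity of radial entropy nor the ordered $d_1$ formula, only the Corollary after Proposition \ref{prop: thm12cor}. The trade-off is that you use both inequalities in the identity $g'_\infty(\varepsilon)=\mathrm{Ent}\{P(u)^{1/\varepsilon}_t\}$. The paper's lower bound does not use that identity, and it gives as a by-product the quantitative convergence of $\{P(u)^{1/\varepsilon}_t\}$ to $\{u_t\}$ in $d_1^c$, at rate $\varepsilon\,\mathrm{Ent}\{u_t\}$.

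\textbf{The maximality obstacle.} The paper's proof also applies Lemma \ref{lem: Ent_P_beta_conv} to the given ray without comment. No reduction is needed, because $\mathrm{Ent}\{u_t\}<\infty$ already forces $\{u_t\}$ to be maximal (cf.\ \cite{Li20-cscK}). Within the paper this takes one line. Let $\{\hat u_t\}\ge\{u_t\}$ be the maximal ray of $U$ (\cite{BBJ18}). Theorem \ref{thm: rad_E=sup}, together with $U_\varepsilon\ge U$ and the Corollary after Proposition \ref{prop: thm12cor}, gives
\begin{align*}
0\le I\{\hat u_t\}-I\{u_t\}&=I^{\mathrm{NA}}(U)-g_\infty(0)\le I^{\mathrm{NA}}(U_\varepsilon)-g_\infty(0)\\
&=g_\infty(\varepsilon)-g_\infty(0)=\varepsilon\,\mathrm{Ent}^{1/\varepsilon}\{u_t\}\le\varepsilon\,\mathrm{Ent}\{u_t\}.
\end{align*}
Letting $\varepsilon\to0$ gives $d_1^c(\{u_t\},\{\hat u_t\})=0$, so the two rays coincide.

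The second half of your proposed reduction does not work as written. The two functions $g_\infty$ built from $\{u_t\}$ and $\{\hat u_t\}$ agree for $\varepsilon>0$, but at $\varepsilon=0$ they take the values $I\{u_t\}$ and $I\{\hat u_t\}$. So comparing their right derivatives at $0$ does not yield $\mathrm{Ent}\{\hat u_t\}=\mathrm{Ent}\{u_t\}$. As the display shows, this comparison is exactly what proves that the two rays coincide. Your first claim, $\{P(u)^\beta_t\}=\{P(\hat u)^\beta_t\}$, is right but unnecessary here.
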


\begin{proof}
Since the continuous extension of $g_\infty$ is concave on $[0,\infty)$, we have
\[
\mathrm{Ent}\{u_t\}=g'_{\infty,+}(0)\geq g'_\infty(\varepsilon)
=\mathrm{Ent}\{P(u)_t^{1/\varepsilon}\}.
\]
Moreover, $P(u)_t^{1/\varepsilon}\geq u_t$, so the ordered-potential formula for $d_1$ and energy-slope compatibility give
\begin{align*}
d_1^c(\{u_t\},\{P(u)_t^{1/\varepsilon}\})
&=I\{P(u)_t^{1/\varepsilon}\}-I\{u_t\}\\
&=I\{u_t^{1/\varepsilon}\}-I\{u_t\}
=\varepsilon\,\mathrm{Ent}^{1/\varepsilon}\{u_t\}
\leq\varepsilon\,\mathrm{Ent}\{u_t\}\longrightarrow0.
\end{align*}
The $d_1^c$-lower semicontinuity of radial entropy therefore yields
\[
\mathrm{Ent}\{u_t\}\leq\liminf_{\varepsilon\to0^+}
\mathrm{Ent}\{P(u)_t^{1/\varepsilon}\}.
\]
Combining the last two inequalities proves the claim.
\end{proof}

The following result will be needed in our applications to canonical metrics:

\begin{corollary}\label{cor: Ent_id} Let $\{u_t\}\in\mathcal R^1$ with $\sup_X u_t =0$ such that 
$\mathrm{Ent}\{u_t\}=\mathrm{Ent}\{P(u)^{1/\varepsilon}_t\}<\infty$ for some $\varepsilon>0$. Then $\{P(u)^{1/\varepsilon}_t\}=\{u_t\}$.
\end{corollary}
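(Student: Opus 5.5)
The plan is to use concavity of $H_u=g_\infty$ on $[0,\infty)$ together with its $C^1$ regularity and the derivative identifications from Theorem \ref{thm:intro_radial}. Since $g_\infty$ is concave, its derivative $\varepsilon\mapsto g_\infty'(\varepsilon)=\mathrm{Ent}\{P(u)^{1/\varepsilon}_t\}$ is non-increasing on $(0,\infty)$. Moreover, $g'_{\infty,+}(0)=\mathrm{Ent}\{u_t\}$. The hypothesis $\mathrm{Ent}\{u_t\}=\mathrm{Ent}\{P(u)^{1/\varepsilon}_t\}$ then forces $g_\infty'$ to be constant on $(0,\varepsilon]$, because a non-increasing function with equal endpoint values on an interval is constant there. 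Hence $g_\infty$ is affine on $[0,\varepsilon]$ with slope $\mathrm{Ent}\{u_t\}$, and in particular
\[
g_\infty(\varepsilon)-g_\infty(0)=\varepsilon\,\mathrm{Ent}\{u_t\}.
\]

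The next step is to upgrade this to an identity of rays. By Theorem \ref{thm: maximizer_main}(ii), $\{P(u)^{1/\varepsilon}_t\}$ is the unique maximizer of $F^{1/\varepsilon}(\{\cdot\},\{u_t\})$ subject to $L^{1/\varepsilon}=0$. Its maximal value equals $\frac{1}{\varepsilon}(g_\infty(\varepsilon)-g_\infty(0))=\mathrm{Ent}^{1/\varepsilon}\{u_t\}$ by Theorem \ref{thm: rad_Ent_beta=sup}. We know $\{u_t\}$ itself is a competitor with $L^{1/\varepsilon}(\{u_t\},\{u_t\})=0$, and it has $F$-value $0$. So equality of rays would follow if we could show that $F^{1/\varepsilon}$ attains the same value at $\{u_t\}$, i.e.\ that $\mathrm{Ent}^{1/\varepsilon}\{u_t\}=0$. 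That is false in general, so this route needs refinement. Instead I would exploit the affineness more cleverly.

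The refinement is a comparison principle: since $P(u)^{1/\varepsilon}_t\ge u_t$, the rays coincide if and only if $I\{P(u)^{1/\varepsilon}_t\}=I\{u_t\}$, i.e.\ $d_1^c=0$. The affine identity gives $I\{P(u)^{1/\varepsilon}_t\}-I\{u_t\}=\varepsilon\,\mathrm{Ent}\{u_t\}$, so I need another relation that forces this quantity to vanish, or forces $\mathrm{Ent}\{u_t\}=0$ together with triviality. My first attempt is to revisit the concavity computation of $f_i''$ at finite $t$. There
\[
-V f''=\int|\nabla\dot u|^2\,\omega_{u_\varepsilon}^n+\varepsilon\int(\Delta\dot u)^2\,\omega_{u_\varepsilon}^n,
\]
and affineness of the limit means these quantities are $o(t)$ after integrating in $\varepsilon$ over $[\delta,\varepsilon]$. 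Rescaled by $1/t$, I would then show that $\dot u_{t,\varepsilon}/t$ becomes asymptotically constant in space. Via \eqref{eq:Lapdot_u_e}, this gives $(u_{t,\varepsilon}-u_t)/t$ asymptotically constant with respect to $\omega^n_{u_{t,\varepsilon}}$, hence essentially $u_{t,\varepsilon}=u_t+c\,t$ asymptotically. The normalization $\sup_X=0$ from Lemma \ref{lem: sup_P}(i) then forces $c=0$, so $d_1^c(\{u_t\},\{P(u)^{1/\varepsilon}_t\})=0$.

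The main obstacle is making this rescaled strict-concavity argument rigorous in the limit $t\to\infty$. The Dirichlet energy of $\dot u$ being $o(t^2)$ has to be turned into a statement about $d_1^c$. I would do this through the identification $\mathrm{Ent}=\lim \frac{1}{\varepsilon t V}\int(P-u)\,\omega_P^n$ from \eqref{eq: entr_formula}, combined with a Poincaré-type inequality along $\omega_{u_{t,s}}$. That inequality is the delicate uniformity point, and I would first test it on test-configuration rays, where the intersection-theoretic formulas make $g_\infty$ explicit, before passing to general rays by Demailly approximation.
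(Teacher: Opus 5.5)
Your reduction is correct. Concavity of $g_\infty$ and the hypothesis give equality in $\mathrm{Ent}\{u_t\}\geq\frac1\varepsilon(I\{P(u)^{1/\varepsilon}_t\}-I\{u_t\})\geq\mathrm{Ent}\{P(u)^{1/\varepsilon}_t\}$. Since $P(u)^{1/\varepsilon}_t\geq u_t$, it then suffices to prove $I\{P(u)^{1/\varepsilon}_t\}=I\{u_t\}$. But that is the whole content of the corollary, and the proposal does not prove it.

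Your suggested route (a rescaled second-variation argument for $I(u_{t,s})$ plus a Poincar\'e inequality along $\omega_{u_{t,s}}$) is explicitly left open as the ``main obstacle'', and it is a genuine one. Since $u_t$ is only in $\mathcal E^1$, the formula for $f''$ holds only for smooth approximants. You would therefore need a Poincar\'e constant for $\omega^n_{u_{t,s}}$ uniform in $t$ and in the approximation. Nothing in the paper provides this, and one should not expect it, since these metrics degenerate as $t\to\infty$. Even granting that $(u_{t,\varepsilon}-u_t)/t$ is nearly constant in $L^2(\omega^n_{u_{t,\varepsilon}})$, this does not by itself give $d_1(u_{t,\varepsilon},u_t)=o(t)$. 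Moreover, $u_{t,\varepsilon}$ is normalized by $\int_Xe^{(u_{t,\varepsilon}-u_t)/\varepsilon}\omega^n=V$, not by $\sup_X=0$. Lemma \ref{lem: sup_P}(i) concerns $P(u)^{1/\varepsilon}_t$, not $u_{t,\varepsilon}$, so your step forcing $c=0$ is not justified as written.

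The missing idea is that no spectral input is needed: \eqref{eq: entr_formula}, which you treat only as an auxiliary tool, closes the argument on its own. Put $P_t:=P(u)^{1/\varepsilon}_t$. By \eqref{eq: entr_formula}, $\varepsilon\,\mathrm{Ent}\{P_t\}$ is the slope of $\frac1V\int_X(P_t-u_t)\omega^n_{P_t}$, while $I\{P_t\}-I\{u_t\}$ is the slope of $I(P_t)-I(u_t)$. The equality above therefore says
\[
\lim_{t\to\infty}\frac1t\Big(I(P_t)-I(u_t)-\frac1V\int_X(P_t-u_t)\,\omega^n_{P_t}\Big)=0 .
\]
The bracket is nonnegative and comparable to $\mathcal I(P_t,u_t)=\frac1V\int_X(P_t-u_t)(\omega^n_{u_t}-\omega^n_{P_t})$, so $\mathcal I(P_t,u_t)=o(t)$. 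The homogeneous estimate \cite[Lemma A.1]{BBJ18} then lets you replace $\omega^n_{P_t}$ by $\omega^n$ at cost $o(t)$, giving $I(P_t)-I(u_t)-\frac1V\int_X(P_t-u_t)\omega^n=o(t)$. Finally, $\sup_XP_t=\sup_Xu_t=0$ and $P_t\geq u_t$ give $0\leq\frac1V\int_X(P_t-u_t)\omega^n\leq C$ uniformly in $t$. Hence $I\{P_t\}=I\{u_t\}$, and the rays coincide. This is the equality case of $I(p)-I(q)\geq\frac1V\int_X(p-q)\omega_p^n$, i.e.\ the radial shadow of your strict-concavity heuristic, but it needs no Poincar\'e inequality.
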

\begin{proof}
By $\varepsilon$-concavity and \eqref{eq: entr_formula} we have
\begin{equation}\label{eq: Ent_ineq}
\mathrm{Ent}\{u_t\}\geq\frac{I\{P(u)^{1/\varepsilon}_t\}-I\{u_t\}}{\varepsilon}
\geq\mathrm{Ent}\{P(u)^{1/\varepsilon}_t\} = \lim_{t\to\infty}\frac{1}{\varepsilon tV}\int_X(P(u)_t^{1/\varepsilon}-u_t)\omega^n_{P(u)_t^{1/\varepsilon}}.
\end{equation}
By the hypothesis, equality holds throughout. Let
\[
\mathcal I(p,q):=\frac{1}{V}\int_X(p-q)(\omega_q^n-\omega_p^n).
\]
Since $I(p) - I(q) - \int_X (p-q) \omega_p^n \simeq \mathcal I(p,q)$, the above equalities give that 
\[
\lim_{t\to\infty}\frac{\mathcal I(P(u)_t^{1/\varepsilon},u_t)}{t}=0.
\]
Using the homogeneous estimate of \cite[Lemma A.1]{BBJ18}, the above gives that 
$$\limsup_{t \to \infty }\frac{I(P(u)_t^{1/\varepsilon}) - I(u_t) - \int_X (P(u)_t^{1/\varepsilon} - u_t) \omega^n}{t} =0.$$
We have $\int_X (P(u)_t^{1/\varepsilon} - u_t) \omega^n/t \to 0$, since $\sup_X u_t = \sup_X P(u)_t^{1/\varepsilon} =0$. Thus $I\{P(u)_t^{1/\varepsilon}\} = I(u_t)$. Since $P(u)_t^{1/\varepsilon} \geq u_t$, we obtain 
$\{P(u)^{1/\varepsilon}_t\}=\{u_t\}$.
\end{proof}

\section{Non-Archimedean entropy}
\label{sec: NA}

In this part we prove Theorem \ref{thm: main_F'} and Theorem \ref{thm: main_reg_NA_ent}. We start with a general discussions in NA geometry.

Let $(X,L)$ be a polarized complex projective manifold of dimension $n$. Let $\mathrm{PSH}^{\mathrm{NA}}$ be the set of $L$-psh functions on $X^{an}$.
Given any $U\in \mathrm{PSH}^{\mathrm{NA}}$ and $\varepsilon\geq 0$ let us investigate the following envelope:
\begin{equation}
    \label{eq: def_U_e}
    U_\varepsilon:=\mathrm{P}(U+\varepsilon A_X):=\mathrm{usc}\sup \left\{V:V\in \mathrm{PSH}^{\mathrm{NA}},\ V\leq U+\varepsilon A_X\text{ on }X^{an}\right\}.
\end{equation}
Here $\mathrm{usc}$ denotes the upper semi-continuous regularization (which turns out to be unnecessary, by Lemma \ref{lem: Ue<U+eA} below).
To the best of our knowledge, such a consideration first appeared in \cite[\S 7]{X23}. 

It is clear that
$
U\leq U_\varepsilon
$
and $U=U_0$. Now we establish several basic properties of $U_\varepsilon.$

\begin{lemma}
For any $U\in \mathrm{PSH}^{\mathrm{NA}}$ and $\varepsilon\geq 0$,
    one has $U_\varepsilon\in \mathrm{PSH}^{\mathrm{NA}}$ and
    \begin{equation}
        \label{eq: sup_Ue=sup_U}
         \sup U_\varepsilon=\sup U.
    \end{equation}
\end{lemma}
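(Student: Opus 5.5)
The plan is to sandwich the envelope between $U$ and an explicit $L$-psh function, and then use the standard closure properties of $\mathrm{PSH}^{\mathrm{NA}}$ from \cite{BoJ18a,BoJ18}. Since $A_X\geq 0$, $U$ itself is a competitor in \eqref{eq: def_U_e}. Hence the family of competitors is non-empty, $U\leq U_\varepsilon$, and $U_\varepsilon\not\equiv-\infty$.

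\textbf{Upper bound on competitors.} For every competitor $V$ we have $V\leq U+\varepsilon A_X$ on $X^{an}$. Evaluating at the trivial valuation $v_{triv}$, where $A_X(v_{triv})=0$, gives $V(v_{triv})\leq U(v_{triv})$. For $L$-psh functions the supremum is attained at the trivial valuation: $\sup V=V(v_{triv})$, since $L$-psh functions are decreasing under the partial order on $X^{an}$ generated by scaling towards $v_{triv}$ (cf. \cite[\S 4]{BoJ18a}). It follows that $\sup V\leq \sup U$ for every competitor. Therefore the family is uniformly bounded above by the constant $\sup U$, which is finite because $U$ is usc on the compact space $X^{an}$.

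\textbf{Closure under usc regularization.} A family of $L$-psh functions that is uniformly bounded above has usc upper envelope that is again $L$-psh; this is the non-Archimedean analogue of Choquet's lemma and of the stability of psh functions under usc-regularized suprema. I would invoke this from \cite[Theorem 4.7]{BoJ18} or \cite[\S 4--5]{BoJ18a}, which is valid because $X$ is smooth and hence satisfies the continuity-of-envelopes property. This gives $U_\varepsilon\in\mathrm{PSH}^{\mathrm{NA}}$.

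\textbf{The supremum.} From $U\leq U_\varepsilon$ we get $\sup U_\varepsilon\geq\sup U$. For the reverse inequality, the supremum of the competitors is bounded by the constant $\sup U$, and taking the usc regularization preserves a constant upper bound, so $\sup U_\varepsilon\leq\sup U$. This yields \eqref{eq: sup_Ue=sup_U}.

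The main point requiring care is the closure statement: that the usc envelope of a bounded-above family in $\mathrm{PSH}^{\mathrm{NA}}$ is still $L$-psh. This is a genuinely non-Archimedean input, and I would cite it rather than reprove it. The identity $\sup V=V(v_{triv})$ is standard.
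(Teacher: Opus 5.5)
Your proposal is correct and follows the paper's proof essentially verbatim. You evaluate each competitor at $v_{triv}$, using $\sup V=V(v_{triv})$ (the paper cites \cite[(6.1)]{BBJ18}) and $A_X(v_{triv})=0$ to get $\sup V\leq\sup U$. You then invoke closure of $\mathrm{PSH}^{\mathrm{NA}}$ under usc envelopes of families bounded above (the paper cites \cite[Corollary 5.28]{BoJ18a}) and conclude with $U\leq U_\varepsilon$.
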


\begin{proof}

For any candidate $V\in \mathrm{PSH}^{\mathrm{NA}}$ of the envelope $U_\varepsilon$, we have (by \cite[(6.1)]{BBJ18})
    $$
    \sup V=V(v_{triv})\leq U(v_{triv})+\varepsilon A_X(v_{triv})=\sup U.
    $$
    Thus
    $U_\varepsilon\in \mathrm{PSH}^{\mathrm{NA}}$ due to \cite[Corollary 5.28]{BoJ18a}, and $\sup U_\varepsilon\leq\sup U$. We then obtain \eqref{eq: sup_Ue=sup_U} since one also has $U\leq U_\varepsilon$.
\end{proof}

\begin{lemma}
\label{lem: Ue<U+eA}
    One has $U_\varepsilon\leq U+\varepsilon A_X$, so the upper semi-continuous regularization in \eqref{eq: def_U_e} is unnecessary.
\end{lemma}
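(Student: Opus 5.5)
The plan is to reduce the inequality on all of $X^{an}$ to an inequality on dual complexes $\Delta_\cX$, and then pass to the limit using retractions. The one genuine issue is the usc regularization: each candidate $V$ satisfies $V\leq U+\varepsilon A_X$, so the pointwise supremum $S:=\sup V$ satisfies it too. However, $U+\varepsilon A_X$ is neither usc nor lsc ($U$ is usc, $A_X$ is lsc), so $W:=\mathrm{usc}(S)$ need not inherit the bound automatically. By the previous lemma $W\in\mathrm{PSH}^{\mathrm{NA}}$, and we want $W\leq U+\varepsilon A_X$. Once this holds, $W$ is itself a candidate, so $W=S$ and the usc is unnecessary.

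\emph{Step 1 (divisorial points).} I will invoke the NA analogue of the fact that negligible sets are small, as in \cite{BoJ18a}. For a family of $L$-psh functions that is uniformly bounded above (here by $\sup U$), the usc regularization of its supremum coincides with the pointwise supremum at divisorial points. Hence $W(v)=S(v)\leq U(v)+\varepsilon A_X(v)$ for all $v\in X^{div}$. I expect this to be the main obstacle, since it is the only place where the pluripotential structure of $\mathrm{PSH}^{\mathrm{NA}}$ enters. If a direct reference is unavailable, I would instead argue via Choquet's lemma: reduce to an increasing sequence $V_j\nearrow S$ a.e., and use that the exceptional set is pluripolar, so it contains no divisorial valuation.

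\emph{Step 2 (dual complexes).} Fix a smooth snc test configuration $\cX$. On $\Delta_\cX$ the restrictions of $W$ and $U$ are continuous (indeed convex on faces, \cite[\S 4]{BoJ18a}), and $A_X|_{\Delta_\cX}$ is continuous, being piecewise affine. Rational points of $\Delta_\cX$ are divisorial and dense. Hence Step 1 upgrades by continuity to
\[
W\leq U+\varepsilon A_X\quad\text{on }\Delta_\cX.
\]

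\emph{Step 3 (general points).} For arbitrary $x\in X^{an}$, I use three facts: psh functions satisfy $W\leq W\circ r_\cX$; $U\circ r_\cX\searrow U$ along the directed net of models; and $A_X\circ r_\cX\nearrow A_X$, since $A_X$ is the maximal lsc extension and equals $\sup_\cX A_X\circ r_\cX$ \cite[Appendix A]{BoJ18}. Applying Step 2 at $r_\cX(x)$ gives
\[
W(x)\leq W(r_\cX x)\leq U(r_\cX x)+\varepsilon A_X(r_\cX x)\leq U(r_\cX x)+\varepsilon A_X(x).
\]
Passing to the limit over $\cX$ yields $W(x)\leq U(x)+\varepsilon A_X(x)$. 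The case $A_X(x)=+\infty$ is trivial. This completes the plan.
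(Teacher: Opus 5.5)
Your argument is correct, but it takes a different and heavier route than the paper.

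You work directly with $W=\mathrm{usc}(S)$, so most of the work sits in Step 1: the claim that $\sup^*=\sup$ on $X^{div}$ for a family of $L$-psh functions bounded above. This is a genuine theorem of the trivially valued theory. It comes from the uniform control (equicontinuity) of normalized $L$-psh functions on dual complexes, or equivalently from a non-Archimedean negligible-set statement. On top of that you need continuity of psh functions on $\Delta_\cX$, and $U\circ r_\cX\searrow U$ for a general $U\in\mathrm{PSH}^{\mathrm{NA}}$. Two side remarks. If you have the equicontinuity, the same argument gives $\sup^*=\sup$ on all of $\Delta_\cX$ at once, so the density argument in Step 2 becomes superfluous. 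Your Choquet fallback is not more elementary, since "the exceptional set is pluripolar" is exactly the negligible-set theorem.

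The paper never confronts the usc regularization of a supremum bounded by a discontinuous obstacle. It approximates $U$ from above by continuous $U^k\in\cH^{\mathrm{NA}}$ coming from snc test configurations $\cX^k$. It then uses Xia's identity $P(U^k+\varepsilon A_X)=P(U^k+\varepsilon A_X\circ r_{\cX^k})$ \cite{X23} to replace the lsc obstacle by a continuous one. For a continuous obstacle the usc regularization automatically stays below it, so
$U_\varepsilon\le U^k_\varepsilon\le U^k+\varepsilon A_X\circ r_{\cX^k}\le U^k+\varepsilon A_X$, and one lets $k\to\infty$.

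Xia's identity can be derived from the same mechanism as your Step 3, namely $V\le V\circ r_{\cX^k}$ together with $U^k=U^k\circ r_{\cX^k}$. So the real difference is where that mechanism is applied. The paper applies it to candidates against a model-determined $U^k$, where nothing like your Step 1 is needed. You apply it to $W$ against a general $U$, which forces Step 1.

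The paper's route therefore uses lighter inputs. It also produces the intermediate bound \eqref{eq: Uke<U+eAr}, which is reused in Corollary \ref{cor:Ue_to_U} and in the concavity argument of Theorem \ref{thm: F'}. Your route has the merit of being intrinsic: it needs no approximating sequence from $\cH^{\mathrm{NA}}$ and no appeal to Xia's identity.
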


\begin{proof}
    Pick a decreasing net $\{U^k\}\subset\cH^{\mathrm{NA}}$ converging to $U$. Then each $U^k$ is determined by some smooth test configuration $(\cX^k,\cL^k)$ dominating $X\times\PP^1$. Now we use a crucial identity due to Xia \cite[(7.2)]{X23}:
    \begin{equation}
        \label{eq: xia}
         U^k_\varepsilon=P(U^k+\varepsilon A_X)=P(U^k+\varepsilon A_X\circ r_{\cX^k}).
    \end{equation}
    Recall from \S \ref{sub: NA_pre} that $r_{\cX^k}$ denotes the retraction map from $X^{an}$ to the dual complex $\Delta_{\cX^k}$ of $\cX^k$. Since both $U^k$ and $A_X\circ r_{\cX^k}$ are continuous on $X^{an}$, we deduce that
    \begin{equation}
        \label{eq: Uke<U+eAr}
        U_\varepsilon\leq U^k_\varepsilon\leq U^k+\varepsilon A_X\circ r_{\cX^k}\leq U^k+\varepsilon A_X,
    \end{equation}
    where we used the fact $A_X\circ r_{\cX^k}\leq A_X$ (see \cite[Appendix A]{BoJ18}). Passing $k$ to the limit, we obtain that $U_\varepsilon\leq U+\varepsilon A_X$, finishing the proof.
\end{proof}

The next result also follows easily from the above proof.
\begin{corollary}
\label{cor:Ue_to_U}
    For any $U\in \mathrm{PSH}^{\mathrm{NA}}$, one has $U_\varepsilon\searrow U$ as $\varepsilon\searrow 0$.
\end{corollary}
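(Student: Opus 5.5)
We need to prove Corollary \ref{cor:Ue_to_U}: $U_\varepsilon\searrow U$ as $\varepsilon\searrow 0$, for any $U\in\mathrm{PSH}^{\mathrm{NA}}$. The plan rests on two facts: monotonicity in $\varepsilon$, and the pointwise bound \eqref{eq: Uke<U+eAr} from the proof of Lemma \ref{lem: Ue<U+eA}, which we let $\varepsilon\to0$ before $k\to\infty$.

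\textbf{Monotonicity.} Since $A_X\geq 0$, for $\varepsilon\leq\varepsilon'$ every candidate for $U_\varepsilon$ (i.e.\ $V\leq U+\varepsilon A_X$) is also a candidate for $U_{\varepsilon'}$. Hence $U\leq U_\varepsilon\leq U_{\varepsilon'}$. In particular the family decreases as $\varepsilon\searrow0$, and the limit $W:=\lim_{\varepsilon\to0}U_\varepsilon$ exists pointwise and satisfies $W\geq U$.

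\textbf{The reverse inequality.} Fix a decreasing net $\{U^k\}\subset\cH^{\mathrm{NA}}$ converging to $U$, with each $U^k$ given by a smooth test configuration $(\cX^k,\cL^k)$ dominating $X\times\PP^1$, exactly as in the proof of Lemma \ref{lem: Ue<U+eA}. By \eqref{eq: Uke<U+eAr},
$$U_\varepsilon\leq U^k+\varepsilon A_X\circ r_{\cX^k}\quad\text{on }X^{an}.$$
Fix $k$ and a point $x\in X^{an}$. The key observation is that $A_X\circ r_{\cX^k}$ is a \emph{finite} continuous function on $X^{an}$. This is unlike $A_X$ itself, which may equal $+\infty$, so the cruder bound $U_\varepsilon\leq U+\varepsilon A_X$ from Lemma \ref{lem: Ue<U+eA} alone would not suffice at such points. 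Letting $\varepsilon\to0$ in the displayed inequality therefore gives $W(x)\leq U^k(x)$. Letting $k\to\infty$ along the net then gives $W(x)\leq U(x)$.

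Combining the two steps, $W=U$, so $U_\varepsilon\searrow U$ pointwise on $X^{an}$. Since all objects are $L$-psh and the convergence is monotone decreasing, this is the natural sense of convergence in $\mathrm{PSH}^{\mathrm{NA}}$. The only delicate point is the order of limits: first $\varepsilon\to0$ with $k$ fixed, then $k\to\infty$, so that we only ever multiply $\varepsilon$ by the finite quantity $A_X\circ r_{\cX^k}$.
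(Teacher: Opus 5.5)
Your proposal is correct and matches the paper's proof: monotonicity gives a decreasing limit $W\geq U$, and applying the bound $U_\varepsilon\leq U^k+\varepsilon A_X\circ r_{\cX^k}$ with $\varepsilon\to0$ first (using finiteness of $A_X\circ r_{\cX^k}$) and then $k\to\infty$ yields $W\leq U$. The only difference is that the paper also notes, via \cite[Proposition 5.6(vi)]{BoJ18a}, that the decreasing limit is $L$-psh; as you implicitly observe, this is not needed once $W=U$ is established pointwise.
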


\begin{proof}
It is clear that $U\leq U_\varepsilon$ for all $\varepsilon>0$. So we deduce from \cite[Proposition 5.6(vi)]{BoJ18a} that the decreasing limit $V:=\lim_{\varepsilon\searrow 0}U_\varepsilon$ belongs to $\mathrm{PSH}^{\mathrm{NA}}$. It is evident that $U\leq V$. As in \eqref{eq: Uke<U+eAr}, one can also write
$$
V\leq U^k+\varepsilon A_X\circ r_{\cX^k}\text{ for any }\varepsilon>0\text{ and }k.
$$
Sending $\varepsilon\to 0$ and then passing $k$ to the limit, we obtain $V\leq U$ and conclude.
\end{proof}

In what follows we are only interested in the case when $U\in\cE^{1,\mathrm{NA}}$ and we will argue that $U_\varepsilon$ is the NA analogue of the solution $u_\varepsilon$ of \eqref{eq: def_u_e}.
To fully appreciate this analogy,
we start with a NA version of Ko\l odziej's $L^\infty$ estimate \cite{Kol98} for $U_\varepsilon$.
\begin{proposition}
\label{prop:U_e_bd}
For any $U\in\cE^{1,\mathrm{NA}}$ and $\varepsilon>0$, $U_\varepsilon$ is a bounded $L$-psh function.
\end{proposition}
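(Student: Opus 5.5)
We need to show that $U_\varepsilon$ is bounded below, since $\sup U_\varepsilon=\sup U$ is already known. After normalizing $\sup U=0$, the goal is a uniform lower bound $U_\varepsilon\geq -C$ on $X^{an}$. Because $U_\varepsilon$ is the envelope $\mathrm{P}(U+\varepsilon A_X)$, it suffices to produce a single bounded candidate $V\in\mathrm{PSH}^{\mathrm{NA}}$, for instance $V=-C$ with $C$ a constant, satisfying $V\leq U+\varepsilon A_X$ on $X^{an}$. In other words, we need
\[
-U\leq \varepsilon A_X+C\quad\text{on }X^{an}.
\]
This is exactly a non-Archimedean Skoda-type statement: finite energy potentials satisfy $-U\leq \delta A_X+C_\delta$ for every $\delta>0$.

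\textbf{Step 1: finite Lelong-type threshold.} The first step is to use the valuative Skoda/Izumi estimate. For $U\in\mathrm{PSH}^{\mathrm{NA}}$ with $\sup U=0$, one has $-U(v)\leq C_U\,A_X(v)$ for some constant. This is the NA analogue of the fact that Lelong numbers are controlled by log discrepancies, as in \cite{BoJ18a}/\cite{BBJ18}. Concretely, $-U(v)$ corresponds to the generic Lelong number of the associated maximal ray along the divisorial valuation, and log canonical thresholds are positive. This gives the claim only for $\varepsilon\geq C_U$, which is not yet enough for all $\varepsilon>0$.

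\textbf{Step 2: exploit finite energy (the main obstacle).} For small $\varepsilon$ the constant $C_U$ must effectively be replaced by an arbitrary $\delta>0$. This is the key point, and the plan is to use $U\in\cE^{1,\mathrm{NA}}$ as follows. Let $\{u_t\}$ be the maximal geodesic ray associated to $U$. Since $\{u_t\}\in\cR^1$, its Demailly approximations, or equivalently multiplier ideals, show that $U$ has zero "non-Archimedean Lelong numbers" at every divisorial valuation in the sense of asymptotic slopes. More precisely, I would show that $\sup_{v\in X^{div},\,A_X(v)\leq 1}\bigl(-U(v)-\delta^{-1}\bigr)$ stays bounded, using the multiplier ideal approximation $\frac{1}{m}\log|\mathcal I(mU)|$ and the subadditivity bound
\[
-U\leq -\tfrac{1}{m}\log|\mathcal I(mU)|+\tfrac{1}{m}A_X.
\]
This inequality is valuative: $\ord_v\mathcal I(mU)\geq m\,(-U(v))-A_X(v)$. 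Taking $m=\lceil 1/\varepsilon\rceil$ gives
\[
-U\leq -\Phi_m+\varepsilon A_X,
\]
where $\Phi_m:=\frac1m\log|\mathcal I(mU)|$ belongs to $\cH^{\mathrm{NA}}$-type functions and is continuous. The real task is then to show that $\Phi_m$ is bounded, i.e., that $\mathcal I(mU)=\mathcal O_X$ on the trivial-valuation side. In the NA setting, multiplier ideals of $U$ are ideals on $X\times\mathbb C$, cosupported on $X\times\{0\}$ and containing a power of $t$. Consequently $\log|\mathcal I(mU)|\geq -N_m$, where $N_m$ is the $t$-exponent, and this yields $-\Phi_m\leq N_m/m$, a finite constant. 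I expect that finiteness here needs only $U\in\mathrm{PSH}^{\mathrm{NA}}$, since $t^N\in\mathcal I(mU)$ follows from $U\geq$ some linear minorant, namely $U(v)\geq -C\,v(t)$-type bounds for normalized $U$.

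\textbf{Step 3: conclude.} With $-U\leq \varepsilon A_X+C_\varepsilon$, the constant $-C_\varepsilon$ is a candidate in the envelope, so $-C_\varepsilon\leq U_\varepsilon\leq 0$ and $U_\varepsilon$ is bounded. If the multiplier ideal route proves cumbersome, the fallback is to realize the estimate through the Kähler side. By \cite{DZ25}, the radial envelope $\{P(u)^{1/\varepsilon}_t\}$ is a bounded geodesic ray by Theorem \ref{thm: maximizer_main}(i). I would then identify its NA potential with $U_\varepsilon$, as is done for test configurations in \cite[Remark 9.2]{DZ25}, and deduce boundedness of $U_\varepsilon$ from the linear bound $|P(u)^{1/\varepsilon}_t|\leq Ct$.
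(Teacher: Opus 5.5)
Your Step 2 follows the same route as the paper's proof. The paper takes the multiplier-ideal approximants $U^k\in\cH^{\mathrm{NA}}$ of \cite{BBJ18} and uses $(2^k+m_0)U^k\le 2^kU+A_X+1$ on $X^{div}$. This is your inequality $\Phi_m\le U+\frac1m A_X$, up to the global-generation twist $m_0$ and an extra $+1$ that comes from $A_{X\times\CC}(G(v))=A_X(v)+1$ for the Gauss extension. The paper then extends this inequality to $X^{an}$, using that $A_X$ is the largest lsc extension of its restriction to $X^{div}$. It concludes because $U^k$ is continuous, hence bounded, on the compact space $X^{an}$. Step 1 is not needed.

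The problem is the point you yourself call the real task, the lower bound on $\Phi_m$. You say this needs only $U\in\mathrm{PSH}^{\mathrm{NA}}$, via a bound $U(v)\ge -C\,v(t)$. Under the Gauss extension $v(t)=1$, so that bound is just $U\ge -C$. Both it and the proposition fail for general $U\in\mathrm{PSH}^{\mathrm{NA}}$. Take $0\neq s\in H^0(X,mL)$, a prime component $D$ of $\mathrm{div}(s)$, and $U=\frac1m\log|s|$. At $\lambda\,\mathrm{ord}_D$ one gets $U+\varepsilon A_X=\lambda(\varepsilon-\mathrm{ord}_D(s)/m)$, which tends to $-\infty$ as $\lambda\to\infty$ whenever $\varepsilon<\mathrm{ord}_D(s)/m$. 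By Lemma \ref{lem: Ue<U+eA}, $U_\varepsilon$ is then unbounded below. Consistently, the multiplier ideals of $\frac1m\log|s|^2$ on $X\times\CC$ contain no power of $t$. As written, your argument puts the use of finite energy in the wrong place and would prove a false statement.

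The correct justification is exactly where $U\in\cE^{1,\mathrm{NA}}$ enters. Let $\{u_t\}$ be the ray attached to $U$.
\begin{enumerate}
\item Each $u_t\in\cE^1$ has zero Lelong numbers, so $\Phi(x,z)=u_{-\log|z|^2}(x)$ has zero Lelong numbers on $X\times\Delta^*$, since Lelong numbers do not decrease under restriction to slices.
\item Skoda's theorem then gives $\mathcal I(m\Phi)=\mathcal O$ on $X\times\Delta^*$.
\item $\CC^*$-invariance forces $\mathcal I(m\Phi)$ to be cosupported on $X\times\{0\}$, so it contains some $t^{N_m}$, which yields $\Phi_m\ge -N_m/m$.
\end{enumerate}
This cosupport property is what makes the approximants of \cite{BBJ18} lie in $\cH^{\mathrm{NA}}$, and the paper uses it in that packaged form. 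With this inserted, your argument is complete, provided you also spell out the lsc extension from $X^{div}$ to $X^{an}$ (the envelope constraint is imposed on all of $X^{an}$).

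Your Step 3 fallback is not available here. The paper identifies the NA potential of $\{P(u)^{1/\varepsilon}_t\}$ with $U_\varepsilon$ for general $U$ only in the remark after Theorem \ref{thm: rad_E=sup}. That identification relies on Proposition \ref{prop: U_e_conti}, whose proof uses the present proposition through \eqref{eq:U+A>_C}.
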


\begin{proof}
    We can assume that $\sup U=0$. In view of \eqref{eq: sup_Ue=sup_U}, it suffices to show that $U_\varepsilon$ is bounded from below. 
    
    By \cite[Theorem 6.6]{BBJ18} we have a geodesic ray $\{u_t\}$ with $\sup u_t=0$, whose associated NA potential is $U$. Then applying the multiplier ideal sheaf regularization procedure as in the proof of \cite[Theorem 6.2]{BBJ18}, we obtain a decreasing sequence
    $\cH^{\mathrm{NA}}\ni U^k\searrow U$ and (by \cite[Lemma 5.7]{BBJ18})
    $$
    (2^k+m_0)U^k\leq 2^k U+A_X+1 \text{ on }X^{div},
    $$
    where $m_0>1$ is some large integer (only depending on $X,L$). Rearranging, we get
    $$
    (2^k+m_0)U^k-2^k U-1\leq A_X \text{ on }X^{div}.
    $$
    Recall that $U^{k}\in\cH^{\mathrm{NA}}$ is continuous on $X^{an}$. Thus, both sides of the above inequality are lsc functions on $X^{an}$. According to \cite[Appendix A]{BoJ18}, $A_X: X^{an}\to [0,+\infty]$ is the largest lsc extension of $A_X:X^{div}\to [0,+\infty)$. So we have
    $$
    (2^k+m_0)U^k-2^k U-1\leq A_X \text{ on }X^{an}.
    $$
    In other words,
    \begin{equation}
        \label{eq: U_k<U+A}
        (2^k+m_0)U^k\leq 2^k U+A_X+1 \text{ on }X^{an}.
    \end{equation}

  Now, for any $\varepsilon>0$, we choose some $k_\varepsilon\geq 1$ such that $1/2^{k_\varepsilon}<\varepsilon$. Then we have
  \begin{equation*}
      \label{eq:U_k<U+A+1}
       (1+\frac{m_0}{2^{k_\varepsilon}})U^{k_\varepsilon}-\frac{1}{2^{k_\varepsilon}}\leq U+\frac{1}{2^{k_\varepsilon}}A_X\leq U+\varepsilon A_X\text{ on }X^{an}.
  \end{equation*}
  Since $U^{k_\varepsilon}\in\cH^{\mathrm{NA}}$ is continuous on $X^{an}$, we can find some constant $C_\varepsilon>0$ such that
  \begin{equation}
      \label{eq:U+A>_C}
      U+\varepsilon A_X\geq -C_\varepsilon\text{ on }X^{an}.
  \end{equation}
  Consequently, $U_\varepsilon\geq-C_\varepsilon$, finishing the proof.
\end{proof}

We remark that \eqref{eq:U+A>_C} should be viewed as the NA version of the fact that any $u\in\cE^1$ has zero Lelong number \cite{GZ07}.

The next result further strengthens Proposition \ref{prop:U_e_bd}, which is a NA version of Ko\l odziej's continuity estimate \cite{Kol98} for $u_\varepsilon$.
\begin{proposition}
\label{prop: U_e_conti}
  For any $U\in\cE^{1,\mathrm{NA}}$ and $\varepsilon>0$, $U_\varepsilon$ is a continuous $L$-psh function on $X^{an}$.
\end{proposition}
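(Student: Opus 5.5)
Our approach is to realize $U_\varepsilon$ as a uniform limit of continuous $L$-psh functions; uniform limits of continuous functions on the compact space $X^{an}$ are continuous. We normalize $\sup U=0$ and take the decreasing sequence $\cH^{\mathrm{NA}}\ni U^k\searrow U$ from the proof of Proposition \ref{prop:U_e_bd}, which satisfies \eqref{eq: U_k<U+A}. Each $U^k_\varepsilon=P(U^k+\varepsilon A_X\circ r_{\cX^k})$ is, by Xia's identity \eqref{eq: xia}, the potential of a log discrepancy model. We will take as known that such potentials are continuous; this holds because they are envelopes of continuous functions and hence continuous by \cite[Theorem 5.20 or so]{BoJ18a}, or alternatively by the model description in \cite[Remark 9.2]{DZ25}. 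Moreover $U^k_\varepsilon\searrow$ in $k$, and $U^k_\varepsilon\geq U_\varepsilon$.

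The key step is a two-sided comparison showing $U^k_\varepsilon\leq U_{\varepsilon'}+\delta_k$ with $\delta_k\to0$, for $\varepsilon'$ slightly larger than $\varepsilon$. From \eqref{eq: U_k<U+A} we have
\[
(1+m_02^{-k})U^k\leq U+2^{-k}A_X+2^{-k}.
\]
For any candidate $V\leq U^k+\varepsilon A_X$, set $\lambda_k:=(1+m_02^{-k})^{-1}$. Since $V\leq 0$ and $V$ is $L$-psh, the function $\lambda_k V$ is $L$-psh. We then obtain
\[
\lambda_k V\leq \lambda_k U^k+\lambda_k\varepsilon A_X\leq \lambda_k^2\big(U+2^{-k}A_X+2^{-k}\big)+\varepsilon A_X ,
\]
using $U^k\leq0$ and a suitable adjustment of constants. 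Because $U\leq0$, we have $\lambda_k^2U\leq U\cdot\lambda_k^2$, but we need $U$ itself; here we use instead $\lambda_k U\geq U$, which has the wrong sign. So we rescale more carefully: apply the inequality to the candidate $\lambda_kV$ compared against $U+(\varepsilon+2^{-k})A_X+2^{-k}$. This yields
\[
\lambda_k U^k_\varepsilon-2^{-k}\leq U_{\varepsilon+2^{-k}}.
\]
Combined with the boundedness of $U^k_\varepsilon$ (uniform in $k$, by Proposition \ref{prop:U_e_bd} and monotonicity), we get
\[
U^k_\varepsilon\leq U_{\varepsilon+2^{-k}}+C2^{-k}.
\]

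It remains to pass from $U_{\varepsilon+2^{-k}}$ back to $U_\varepsilon$, and this is the main obstacle: continuity in the parameter. We handle it by convexity in $\varepsilon$. For $0<\varepsilon<\varepsilon'$, write $\varepsilon=t\varepsilon'+(1-t)\cdot 0$ with $t=\varepsilon/\varepsilon'$. Then $tU_{\varepsilon'}+(1-t)U$ is $L$-psh and lies below $U+\varepsilon A_X$, so
\[
U_\varepsilon\geq tU_{\varepsilon'}+(1-t)U.
\]
This bound fails near the poles of $U$, so we use the reverse interpolation instead: with $\varepsilon'$ between $\varepsilon$ and $2\varepsilon$, take $t=\varepsilon/\varepsilon'$ and compare $tU_{\varepsilon'}$ with $U_\varepsilon$ directly. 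Since $U\leq0$ gives $tU\geq U$, the sign again fails, so we instead use the boundedness $U_{2\varepsilon}\geq -C$. Writing $\varepsilon'$ as a convex combination of $\varepsilon$ and $2\varepsilon$ gives
\[
U_{\varepsilon'}\geq sU_{\varepsilon}+(1-s)U_{2\varepsilon},
\]
which bounds $U_{\varepsilon'}$ from below. For the upper bound we use $t=\varepsilon/\varepsilon'$: $tU_{\varepsilon'}-(1-t)\cdot 0$ lies below $tU+\varepsilon A_X$, and $tU\leq U+(1-t)|U|$ is unbounded. Our fix is to use $\varepsilon$-rescaling of the whole problem, namely $P(U+\varepsilon A_X)=\varepsilon P(U/\varepsilon+A_X)$ when $U$ is replaced suitably. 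Failing that, we avoid the parameter shift altogether by choosing the regularizing sequence adapted to $\varepsilon$: starting from \eqref{eq: U_k<U+A} with $A_X$ replaced by $\eta A_X$ (multiplier ideals of $mU/\eta$), we obtain $U^k_{\varepsilon-\eta}\leq U_\varepsilon+o(1)$ uniformly. Letting $k\to\infty$ and then $\eta\to0$, this gives uniform convergence of continuous functions to $U_\varepsilon$, and hence continuity.
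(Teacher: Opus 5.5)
Your overall plan matches the paper's: approximate $U_\varepsilon$ uniformly by the continuous envelopes $U^k_\varepsilon$, via a comparison $U^k_\varepsilon\le U_{\varepsilon+\delta_k}+\delta_k$ plus a uniform estimate for the parameter shift. However, neither estimate is actually proved. Both fail for the same reason: you never use the lower bound \eqref{eq:U+A>_C}, $U+\eta A_X\ge -C_\eta$, to absorb unbounded multiples of $U$.

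\textbf{Comparison step.} Your own computation shows the obstruction. For a candidate $V\le U^k+\varepsilon A_X$, \eqref{eq: U_k<U+A} only gives $V\le\lambda_kU+\dots$, and $\lambda_kU=U+(1-\lambda_k)|U|$, where $(1-\lambda_k)|U|$ is unbounded. Passing to $\lambda_kV$ just produces $\lambda_k^2U$ and the same problem. So the claimed $\lambda_kU^k_\varepsilon-2^{-k}\le U_{\varepsilon+2^{-k}}$ does not follow; ``a suitable adjustment of constants'' cannot work because the defect is not bounded. The fix is to write $-(1-\lambda_k)U=-(1-\lambda_k)(U+A_X)+(1-\lambda_k)A_X\le(1-\lambda_k)(C_1+A_X)$. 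This directly gives $V\le U+(\varepsilon+\delta_k)A_X+\delta_k$ with $\delta_k=\frac{C_1m_0+1}{2^k+m_0}$, i.e.\ $U^k_\varepsilon\le U_{\varepsilon+\delta_k}+\delta_k$, with no rescaling of $V$.

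\textbf{Parameter shift.} Your interpolations go the wrong way. Writing $\varepsilon'$ as a convex combination of $\varepsilon$ and $2\varepsilon$ bounds $U_{\varepsilon'}$ from below, which monotonicity already gives. The rescaling $P(U+\varepsilon A_X)=\varepsilon P(U/\varepsilon+A_X)$ is only valid if the right-hand envelope is taken over $\varepsilon^{-1}L$-psh functions, so it merely restates the problem. The ``adapted'' regularization by multiplier ideals of $mU/\eta$ is, up to reindexing $m\mapsto m/\eta$, the same family of approximants: it satisfies the same inequality with the same factor $\lambda<1$ in front of $U$, so you again need \eqref{eq:U+A>_C}. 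Moreover, even granting $U^k_{\varepsilon-\eta}\le U_\varepsilon+o(1)$, your only lower bound is $U^k_{\varepsilon-\eta}\ge U_{\varepsilon-\eta}$. Uniform convergence to $U_\varepsilon$ therefore still requires $U_{\varepsilon-\eta}\to U_\varepsilon$ uniformly, which is exactly the estimate you set out to avoid.

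The correct interpolation writes the \emph{smaller} parameter $\varepsilon$ as a convex combination of $\varepsilon/2$ and $\varepsilon+\delta$, with weight $\theta=\frac{2\delta}{\varepsilon+2\delta}$ on $\varepsilon/2$, and uses the constant $-C_{\varepsilon/2}\le U+\frac{\varepsilon}{2}A_X$ as the second candidate. For any $L$-psh $V_1\le U+(\varepsilon+\delta)A_X$, the function $(1-\theta)V_1-\theta C_{\varepsilon/2}$ is then a candidate for $U_\varepsilon$. Since $V_1\le\sup U=0$, this yields $U_{\varepsilon+\delta}\le U_\varepsilon+\theta C_{\varepsilon/2}$. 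Combined with the first step,
\[
U_\varepsilon\le U^k_\varepsilon\le U_\varepsilon+\frac{2\delta_kC_{\varepsilon/2}}{\varepsilon+2\delta_k}+\delta_k,
\]
which is the uniform convergence needed.

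A minor point: continuity of $U^k_\varepsilon$ for irrational $\varepsilon$ should come from \eqref{eq: xia} and continuity of envelopes of continuous functions, \cite[Lemma 6.31]{BoJ18a}. The model description in \cite[Remark 9.2]{DZ25} only covers rational $\varepsilon$.
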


\begin{remark}
\label{rem: cont_NApsh_induced_fromfilt}
    By \cite[Theorem B]{BoJ18}, any continuous $L$-psh function is lower regularizable and is induced by some filtration of $R(X,L)$. In fact, one can explicitly write down the filtration associated to $U_\varepsilon$ using the graded supremum norm construction in \cite[Corollary 4.11]{BoJ18}.
\end{remark}

\begin{proof}
We can assume that $\sup U=0$.
    Let $\{U^k\}\subset\cH^{\mathrm{NA}}$ be the regularization sequence as in the proof of Proposition \ref{prop:U_e_bd}. For any $\varepsilon\geq 0$, set
    $$
    U^k_\varepsilon:=P(U^k+\varepsilon A_X).
    $$
Recalling \eqref{eq: xia} and \cite[Lemma 6.31]{BoJ18a}, we know that $U^k_\varepsilon$ is continuous on $X^{an}$.
It is also clear that $U_\varepsilon^k$ is decreasing in $k$ and $U_\varepsilon^k\geq U_\varepsilon$.
So it remains to show that $U^k_\varepsilon\searrow U_\varepsilon$ uniformly.

To this end, we fix $\varepsilon>0$. For any $V\in \mathrm{PSH}^{\mathrm{NA}}$ such that $V\leq U^k+\varepsilon A_X$, we can write (using \eqref{eq: U_k<U+A})
\begin{equation*}
    \begin{split}
        V&\leq U^k+\varepsilon A_X\leq \frac{2^k}{2^k+m_0} U+\left(\varepsilon+\frac{1}{2^k+m_0}\right)A_X+\frac{1}{2^k+m_0}\\
        &=U+\left(\varepsilon+\frac{1+m_0}{2^k+m_0}\right)A_X-\frac{m_0}{2^k+m_0}(U+A_X)+\frac{1}{2^k+m_0}\\
        &\leq U+\left(\varepsilon+\frac{1+m_0}{2^k+m_0}\right)A_X+\frac{C_1m_0+1}{2^k+m_0}\leq U+\left(\varepsilon+\frac{C_1m_0+1}{2^k+m_0}\right)A_X+\frac{C_1m_0+1}{2^k+m_0},
    \end{split}
\end{equation*}
where in the last line  we used that $U+A_X\geq -C_1$ on $X^{an}$ for some $C_1>1$ (recall \eqref{eq:U+A>_C}). Let us put for simplicity that
$$
\delta_k:=\frac{C_1m_0+1}{2^k+m_0}.
$$
Then $\delta_k\to 0$ as $k\to\infty$, and the above analysis implies that
\begin{equation}
    \label{eq: U_k_e<U_e+d+d}
    U^k_\varepsilon\leq U_{\varepsilon+\delta_k}+\delta_k \text{ for all }k\geq 1.
\end{equation}

Next let us control the difference $|U_\varepsilon-U_{\varepsilon+\delta_k}|$. It is clear that $U_\varepsilon\leq U_{\varepsilon+\delta_k}$. On the other hand, for any $V_1\in \mathrm{PSH}^{\mathrm{NA}}$ with $V_1\leq U+(\varepsilon+\delta_k)A_X$ and $V_2\in \mathrm{PSH}^{\mathrm{NA}}$ with $V_2\leq U+\frac{\varepsilon}{2}A_X$,
one has that
\begin{equation*}
    \begin{split}
        \left(1-  \frac{2\delta_k}{\varepsilon+2\delta_k}\right)&V_1+\frac{2\delta_k}{\varepsilon+2\delta_k}V_2\\
        &\leq U+\left(\left(1-\frac{2\delta_k}{\varepsilon+2\delta_k}\right)(\varepsilon+\delta_k)+\frac{2\delta_k}{\varepsilon+2\delta_k}\frac{\varepsilon}{2}\right)A_X=U+\varepsilon A_X.
    \end{split}
\end{equation*}
So $ \left(1-\frac{2\delta_k}{\varepsilon+2\delta_k}\right)V_1+\frac{2\delta_k}{\varepsilon+2\delta_k}V_2\in \mathrm{PSH}^{\mathrm{NA}}$ (\cite[Proposition 5.6(v)]{BoJ18a}) is a candidate for $U_\varepsilon$, implying that
$$
\left(1-\frac{2\delta_k}{\varepsilon+2\delta_k}\right)V_1+\frac{2\delta_k}{\varepsilon+2\delta_k}V_2\leq U_\varepsilon.
$$
Using that $\sup U_{\varepsilon+\delta_k}=\sup U=0$ (recall \eqref{eq: sup_Ue=sup_U}) and $U+\frac{\varepsilon}{2}A_X\geq-C_{\frac{\varepsilon}{2}}$ (recall \eqref{eq:U+A>_C}), we have $V_1\leq 0$ and can take $V_2:=-C_{\frac{\varepsilon}{2}}.$

Then we arrive at
$$
V_1\leq U_\varepsilon+\frac{2\delta_k C_{\frac{\varepsilon}{2}}}{\varepsilon+2\delta_k},
$$
so that
$$
U_{\varepsilon+\delta_k}\leq U_\varepsilon+\frac{2\delta_k C_{\frac{\varepsilon}{2}}}{\varepsilon+2\delta_k}.
$$
Combining \eqref{eq: U_k_e<U_e+d+d}, we deduce that
$$
U_\varepsilon\leq U_\varepsilon^k\leq U_\varepsilon+\frac{2\delta_k C_{\frac{\varepsilon}{2}}}{\varepsilon+2\delta_k}+\delta_k.
$$
So we find that the convergence $U^k_\varepsilon\searrow U_\varepsilon$ is uniform in $k$, finishing the proof.
\end{proof}

The following uniform estimate can be extracted from the above proof.

\begin{corollary}
\label{cor: U_e_Lip_in_e}
    For any $U\in\cE^{1,\mathrm{NA}}$, $\varepsilon>0$ and $\delta>0$, we have that
    $$
    |U_{\varepsilon+\delta}-U_\varepsilon|\leq B_\varepsilon\delta
    $$
   for some constant $B_\varepsilon>0$ depending on $X,L,\varepsilon$ and $U$.
\end{corollary}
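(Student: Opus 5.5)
The plan is to repeat the convexity argument from the second half of the proof of Proposition \ref{prop: U_e_conti}, this time keeping track of the constants. Normalize $\sup U=0$; this is harmless, since adding a constant to $U$ adds the same constant to every $U_\varepsilon$. Write $U_\varepsilon\le U_{\varepsilon+\delta}$. The lower bound $U_\varepsilon-U_{\varepsilon+\delta}\le 0$ is trivial, because every candidate for $U_\varepsilon$ is also a candidate for $U_{\varepsilon+\delta}$ (as $A_X\ge0$). So only the upper bound $U_{\varepsilon+\delta}\le U_\varepsilon+B_\varepsilon\delta$ needs proof.

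For the upper bound, fix a candidate $V_1\in\mathrm{PSH}^{\mathrm{NA}}$ with $V_1\le U+(\varepsilon+\delta)A_X$. Set $V_2:=-C_{\varepsilon/2}$, which is a candidate for $U_{\varepsilon/2}$ by \eqref{eq:U+A>_C}. Choose $\lambda\in(0,1)$ so that the combination of the coefficients of $A_X$ is exact:
\[
(1-\lambda)(\varepsilon+\delta)+\lambda\frac{\varepsilon}{2}=\varepsilon,
\qquad\text{i.e.}\qquad
\lambda=\frac{2\delta}{\varepsilon+2\delta}.
\]
Then $(1-\lambda)V_1+\lambda V_2\le U+\varepsilon A_X$. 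Convex combinations of $L$-psh functions are $L$-psh by \cite[Proposition 5.6(v)]{BoJ18a}, so this combination is a candidate for $U_\varepsilon$, giving
\[
(1-\lambda)V_1-\lambda C_{\varepsilon/2}\le U_\varepsilon .
\]
We have $\sup V_1\le\sup U=0$ by \eqref{eq: sup_Ue=sup_U}, so $V_1\le(1-\lambda)V_1$. Hence
\[
V_1\le U_\varepsilon+\lambda C_{\varepsilon/2}\le U_\varepsilon+\frac{2C_{\varepsilon/2}}{\varepsilon}\,\delta .
\]
Taking the supremum over all candidates $V_1$ (no usc regularization is needed, by Lemma \ref{lem: Ue<U+eA}) yields $U_{\varepsilon+\delta}\le U_\varepsilon+B_\varepsilon\delta$ with $B_\varepsilon:=2C_{\varepsilon/2}/\varepsilon$.

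The only real point to check is that $C_{\varepsilon/2}$ from \eqref{eq:U+A>_C} depends only on $X$, $L$, $\varepsilon$ and $U$, and not on $\delta$. This holds because it comes from the fixed regularizing sequence $U^k$ of Proposition \ref{prop:U_e_bd}, evaluated at the index $k_{\varepsilon/2}$. The resulting bound is valid for every $\delta>0$, not just for small $\delta$, since $\lambda\le 2\delta/\varepsilon$ always holds. I expect no serious obstacle; the main care needed is in the bookkeeping of the normalization $\sup U=0$, which is what makes $V_1\le 0$ available.
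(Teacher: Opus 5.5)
Your proposal is correct and is essentially the paper's argument: the corollary is extracted from the second half of the proof of Proposition \ref{prop: U_e_conti}. There, the same convex combination with $V_2=-C_{\varepsilon/2}$ and weight $\lambda=2\delta/(\varepsilon+2\delta)$, together with $V_1\le 0$, gives $U_{\varepsilon+\delta}\le U_\varepsilon+\frac{2\delta C_{\varepsilon/2}}{\varepsilon+2\delta}$, which yields your constant $B_\varepsilon=2C_{\varepsilon/2}/\varepsilon$.
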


The next result shall be compared with its `Archimedean' cousin \eqref{eq: ent_ue_int}.

\begin{proposition}
\label{prop:Ent_U_e=int_Ue_U}
    For any $U\in\cE^{1,\mathrm{NA}}$ and $\varepsilon>0$, one has
    $$
    \mathrm{Ent}^{\mathrm{NA}}(U_\varepsilon)=\frac{1}{\varepsilon}\int_{X^{an}}(U_\varepsilon-U)\mathrm{MA}^{\mathrm{NA}}(U_\varepsilon).
    $$
\end{proposition}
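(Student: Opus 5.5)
The identity is an orthogonality relation for the envelope $U_\varepsilon=\mathrm{P}(U+\varepsilon A_X)$: the measure $\mathrm{MA}^{\mathrm{NA}}(U_\varepsilon)$ should be concentrated where $U_\varepsilon=U+\varepsilon A_X$. Granting this, the formula is immediate:
$$\int(U_\varepsilon-U)\,\mathrm{MA}^{\mathrm{NA}}(U_\varepsilon)=\varepsilon\int A_X\,\mathrm{MA}^{\mathrm{NA}}(U_\varepsilon)=\varepsilon\,\mathrm{Ent}^{\mathrm{NA}}(U_\varepsilon).$$
I will not prove orthogonality directly for general $U$. Instead I will establish the identity at the level of the regularizations $U^k\in\cH^{\mathrm{NA}}$ and pass to the limit.

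\textbf{Step 1: the identity for $U^k$.} Let $U^k\searrow U$ be the multiplier ideal regularization used in the proof of Proposition \ref{prop:U_e_bd}, with smooth test configurations $(\cX^k,\cL^k)$. By Xia's identity \eqref{eq: xia}, $U^k_\varepsilon=\mathrm{P}(U^k+\varepsilon A_X\circ r_{\cX^k})$ is the potential of a log discrepancy model. Its Monge--Amp\`ere measure is supported on $\Delta_{\cX^k}$, where $A_X\circ r_{\cX^k}=A_X$. The orthogonality relation of \cite{Li21} for this envelope of a continuous obstacle then gives
$$\int_{X^{an}}(U^k_\varepsilon-U^k)\,\mathrm{MA}^{\mathrm{NA}}(U^k_\varepsilon)=\varepsilon\int_{X^{an}} A_X\,\mathrm{MA}^{\mathrm{NA}}(U^k_\varepsilon).$$
This is the same identity already invoked in the proof of Lemma \ref{lem: Ent_P_beta_conv}.

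\textbf{Step 2: limits.} By the proof of Proposition \ref{prop: U_e_conti}, $U^k_\varepsilon\to U_\varepsilon$ uniformly, with all these functions continuous. Hence $\mathrm{MA}^{\mathrm{NA}}(U^k_\varepsilon)\to \mathrm{MA}^{\mathrm{NA}}(U_\varepsilon)$ weakly, and by \cite{BoJ18a} also in the strong (energy) sense.

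\begin{itemize}
\item \emph{Right side.} Since $A_X$ is lsc and nonnegative, weak convergence gives $\mathrm{Ent}^{\mathrm{NA}}(U_\varepsilon)\le\liminf_k \int A_X\,\mathrm{MA}^{\mathrm{NA}}(U^k_\varepsilon)$.
\item \emph{Left side, $U_\varepsilon$ part.} $\int U^k_\varepsilon\,\mathrm{MA}^{\mathrm{NA}}(U^k_\varepsilon)\to\int U_\varepsilon\,\mathrm{MA}^{\mathrm{NA}}(U_\varepsilon)$ by uniform convergence combined with weak convergence.
\item \emph{Left side, $U$ part.} We need $\int U^k\,\mathrm{MA}^{\mathrm{NA}}(U^k_\varepsilon)\to\int U\,\mathrm{MA}^{\mathrm{NA}}(U_\varepsilon)$. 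This follows from the continuity of mixed energy pairings $\int \phi\,\mathrm{MA}(\psi)$ along decreasing sequences in $\cE^{1,\mathrm{NA}}$, as in \cite[\S 6]{BoJ18a}. Alternatively, I would write it via integration by parts as a combination of energies of the pairs $(U^k,U^k_\varepsilon)$, which converge since $I^{\mathrm{NA}}$ is continuous along decreasing nets.
\end{itemize}

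Together these give the inequality $\mathrm{Ent}^{\mathrm{NA}}(U_\varepsilon)\le \frac1\varepsilon\int(U_\varepsilon-U)\,\mathrm{MA}^{\mathrm{NA}}(U_\varepsilon)$.

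\textbf{Step 3: reverse inequality.} This is easy. Lemma \ref{lem: Ue<U+eA} gives $U_\varepsilon-U\le\varepsilon A_X$ pointwise on $X^{an}$, where $U>-\infty$. Moreover $\mathrm{MA}^{\mathrm{NA}}(U_\varepsilon)$ puts no mass on $\{U=-\infty\}$, since $U\in\cE^{1,\mathrm{NA}}$ is integrable against Monge--Amp\`ere measures of bounded potentials. Integrating the pointwise bound gives $\int(U_\varepsilon-U)\,\mathrm{MA}^{\mathrm{NA}}(U_\varepsilon)\le\varepsilon\,\mathrm{Ent}^{\mathrm{NA}}(U_\varepsilon)$. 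Combined with Step 2, this proves the proposition.

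\textbf{Main obstacle.} The delicate point is the convergence of $\int U^k\,\mathrm{MA}^{\mathrm{NA}}(U^k_\varepsilon)$ in Step 2. Both the integrand and the measure vary, and $U$ is unbounded, so a bare weak-convergence argument does not suffice. I would handle it through the energy-pairing continuity in $\cE^{1,\mathrm{NA}}$ (strong topology), using that $U^k\searrow U$ and that $U^k_\varepsilon$ converge uniformly.
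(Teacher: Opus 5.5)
Your proposal follows the paper's proof essentially step for step. Step 1 uses orthogonality together with the support of $\mathrm{MA}^{\mathrm{NA}}(U^k_\varepsilon)$ on $\Delta_{\cX^k}$. Step 2 passes to the limit in $\int(U^k_\varepsilon-U^k)\,\mathrm{MA}^{\mathrm{NA}}(U^k_\varepsilon)$ via \cite[Theorem 6.9]{BoJ18a} and lower semicontinuity of entropy. Step 3 gets the reverse inequality from Lemma~\ref{lem: Ue<U+eA}. There is one gap, in Step 1 when $\varepsilon$ is irrational.

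Log discrepancy models $(\cX^k,\cL^k+\varepsilon K^{\log}_{\cX^k/X\times\PP^1})$ are only defined for $\varepsilon\in\QQ_{\ge 0}$. So the support statement from \cite{Li21}, and with it the equality $A_X=A_X\circ r_{\cX^k}$ on the support, is only available at rational $\varepsilon$. Orthogonality for the continuous obstacle $U^k+\varepsilon A_X\circ r_{\cX^k}$ holds for every real $\varepsilon$. By itself, however, it only gives $\frac{1}{\varepsilon}\int(U^k_\varepsilon-U^k)\,\mathrm{MA}^{\mathrm{NA}}(U^k_\varepsilon)=\int A_X\circ r_{\cX^k}\,\mathrm{MA}^{\mathrm{NA}}(U^k_\varepsilon)$. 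Since $A_X\circ r_{\cX^k}\le A_X$, this could a priori be strictly smaller than $\mathrm{Ent}^{\mathrm{NA}}(U^k_\varepsilon)$, and that is exactly the inequality direction Step 2 needs.

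The paper closes this by taking rational $\varepsilon_i\searrow\varepsilon$. Corollary~\ref{cor: U_e_Lip_in_e} gives $U^k_{\varepsilon_i}\to U^k_\varepsilon$ uniformly. Lower semicontinuity then yields $\mathrm{Ent}^{\mathrm{NA}}(U^k_\varepsilon)\le\liminf_i\int A_X\circ r_{\cX^k}\,\mathrm{MA}^{\mathrm{NA}}(U^k_{\varepsilon_i})=\int A_X\circ r_{\cX^k}\,\mathrm{MA}^{\mathrm{NA}}(U^k_\varepsilon)$, using continuity of $A_X\circ r_{\cX^k}$. The bound $A_X\ge A_X\circ r_{\cX^k}$ gives the reverse inequality.

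Equivalently, you could run your argument only for rational $\varepsilon$. You would then reach irrational $\varepsilon$ by applying Corollary~\ref{cor: U_e_Lip_in_e} to $U$ itself, with lower semicontinuity of $\mathrm{Ent}^{\mathrm{NA}}$ and convergence of $\int U\,\mathrm{MA}^{\mathrm{NA}}(U_{\varepsilon_i})$ under uniform convergence. With either patch, your proof is complete.
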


\begin{proof}
    Let us first treat the case when $U\in\cH^{\mathrm{NA}}$. Then $U$ is determined by some smooth test configuration $(\cX,\cL)$ dominating $X\times\PP^1$. So \eqref{eq: xia} gives
    $$
    U_\varepsilon=P(U+\varepsilon A_X\circ r_\cX).
    $$
    Since both $U$ and $A_X\circ r_\cX$ are continuous on $X^{an}$, by orthogonality \cite[Theorem 6.30(i)]{BoJ18a}, we can write
    \begin{equation*}
        \begin{split}
             \frac{1}{\varepsilon }\int_{X^{an}}(U_\varepsilon-U)\mathrm{MA}^{\mathrm{NA}}(U_\varepsilon)&=\frac{1}{\varepsilon}\int_{X^{an}}(P(U+\varepsilon A_X\circ r_\cX)-U)\mathrm{MA}^{\mathrm{NA}}(U_\varepsilon)\\
             &=\int_{X^{an}} A_X\circ r_\cX \mathrm{MA}^{\mathrm{NA}}(U_\varepsilon).
        \end{split}
    \end{equation*}
    We claim that for any $\varepsilon>0$ it holds that
    \begin{equation}
        \label{eq: orth}
        \mathrm{Ent}^{\mathrm{NA}}(U_\varepsilon)=\frac{1}{\varepsilon}\int_{X^{an}}(U_\varepsilon-U)\mathrm{MA}^{\mathrm{NA}}(U_\varepsilon)=\int_{X^{an}} A_X\circ r_\cX \mathrm{MA}^{\mathrm{NA}}(U_\varepsilon).
    \end{equation}

    Indeed, when $\varepsilon\in\QQ_{>0}$, note that $\mathrm{MA}^{\mathrm{NA}}(U_\varepsilon)$ is supported on $\Delta_\cX$, as $U_\varepsilon$ is the NA potential of the model $(\cX,\cL+\varepsilon K^{\log}_{\cX/X\times\PP^1})$ (see \cite[Remark 9.2]{DZ25} and \cite[(1)]{Li21}). This implies that
    \begin{equation}
    \label{eq: Ent_NA_Ue=int}
        \begin{split}
            \mathrm{Ent}^{\mathrm{NA}}(U_\varepsilon)&=\int_{X^{an}}A_X \mathrm{MA}^{\mathrm{NA}}(U_\varepsilon)=\int_{X^{an}}A_X\circ r_\cX \mathrm{MA}^{\mathrm{NA}}(U_\varepsilon)\\
            &=\frac{1}{\varepsilon }\int_{X^{an}}(U_\varepsilon-U)\mathrm{MA}^{\mathrm{NA}}(U_\varepsilon).\\
        \end{split}
    \end{equation}
    holds $\text{for }U\in\cH^{\mathrm{NA}}\text{ and }\varepsilon\in\QQ_{>0}.$

    Now for any $\varepsilon>0$, we pick a rational sequence $\varepsilon_i\searrow\varepsilon$, then Corollary \ref{cor: U_e_Lip_in_e} implies that $U_{\varepsilon_i}\searrow U_\varepsilon$, so in particular
    
    $$
    \mathrm{Ent}^{\mathrm{NA}}(U_\varepsilon)\leq\liminf_{i\to\infty}\mathrm{Ent}^{\mathrm{NA}}(U_{\varepsilon_i}).
    $$
    On the other hand, since $A_X\circ r_\cX$ is continuous and $\mathrm{MA}^{\mathrm{NA}}(U_{\varepsilon_i})\to \mathrm{MA}^{\mathrm{NA}}(U_\varepsilon)$ weakly,
    $$
    \mathrm{Ent}^{\mathrm{NA}}(U_\varepsilon)\geq\int_{X^{an}}A_X\circ r_\cX \mathrm{MA}^{\mathrm{NA}}(U_\varepsilon)=\lim_{i\to\infty}\int_{X^{an}}A_X\circ r_\cX \mathrm{MA}^{\mathrm{NA}}(U_{\varepsilon_i})=\lim_{i\to\infty}\mathrm{Ent}^{\mathrm{NA}}(U_{\varepsilon_i}),
    $$
    where we used $A_X\geq A_X\circ r_\cX$ (see \cite[Appendix A]{BoJ18}) and \eqref{eq: Ent_NA_Ue=int}.
    Consequently,
    \begin{equation*}
        \begin{split}
            \mathrm{Ent}^{\mathrm{NA}}(U_\varepsilon)&=\lim_{i\to\infty}\mathrm{Ent}^{\mathrm{NA}}(U_{\varepsilon_i})=\lim_{i\to\infty}\frac{1}{\varepsilon_i }\int_{X^{an}}(U_{\varepsilon_i}-U)\mathrm{MA}^{\mathrm{NA}}(U_{\varepsilon_i})\\
            &=\frac{1}{\varepsilon }\int_{X^{an}}(U_\varepsilon-U)\mathrm{MA}^{\mathrm{NA}}(U_\varepsilon)=\int_{X^{an}} A_X\circ r_\cX \mathrm{MA}^{\mathrm{NA}}(U_\varepsilon)
        \end{split}
    \end{equation*}
holds for any $U\in\cH^{\mathrm{NA}}$ and $\varepsilon>0$. Here we used that $U_{\varepsilon_i}\to U_\varepsilon$ uniformly (recall Corollary \ref{cor: U_e_Lip_in_e} and \cite[Corollary 6.14]{BoJ18a}).

Now we treat the general case.
For any $U\in\cE^{1,\mathrm{NA}}$ and $\varepsilon>0$ , Lemma \ref{lem: Ue<U+eA} implies that
$$
\frac{1}{\varepsilon }\int_{X^{an}}(U_\varepsilon-U)\mathrm{MA}^{\mathrm{NA}}(U_\varepsilon)\leq \int_{X^{an}}A_X \mathrm{MA}^{\mathrm{NA}}(U_\varepsilon)=\mathrm{Ent}^{\mathrm{NA}}(U_\varepsilon).
$$

On the other hand, using that $U^k\searrow U$ and $U_\varepsilon^k\searrow U_\varepsilon$ (recall the proof of Proposition \ref{prop: U_e_conti}), \cite[Theorem 6.9]{BoJ18a} ensures that
$$
\frac{1}{\varepsilon }\int_{X^{an}}(U_\varepsilon-U)\mathrm{MA}^{\mathrm{NA}}(U_\varepsilon)=\lim_{k\to\infty}\frac{1}{\varepsilon }\int_{X^{an}}(U^k_\varepsilon-U^k)\mathrm{MA}^{\mathrm{NA}}(U^k_\varepsilon)=\lim_{k\to\infty}\mathrm{Ent}^{\mathrm{NA}}(U^k_\varepsilon),
$$
where the last equality holds due to \eqref{eq: orth}. Using that $\mathrm{Ent}^{\mathrm{NA}}(\cdot)$ is lsc along the decreasing sequence $\{U^k_\varepsilon\}$, we obtain that
$$
\frac{1}{\varepsilon }\int_{X^{an}}(U_\varepsilon-U)\mathrm{MA}^{\mathrm{NA}}(U_\varepsilon)\geq \mathrm{Ent}^{\mathrm{NA}}(U_\varepsilon),
$$
thus finishing the proof.
\end{proof}

The following result can be extracted from the above proof.
\begin{corollary}
\label{cor: Ent=lim Ent}
    Let $\cH^{\mathrm{NA}}\ni U^k\searrow U$ be as in the previous proof. Then
    $$
    \mathrm{Ent}^{\mathrm{NA}}(U_\varepsilon)=\lim_{k\to\infty} \mathrm{Ent}^{\mathrm{NA}}(U^k_\varepsilon)\text{ for any }\varepsilon>0. 
    $$
\end{corollary}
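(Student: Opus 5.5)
The plan is to extract the corollary from the last step of the proof of Proposition \ref{prop:Ent_U_e=int_Ue_U}, which in fact proves a two-sided bound. Fix $\varepsilon>0$ and the regularizing sequence $U^k\searrow U$ from the proof of Proposition \ref{prop:U_e_bd}. I will show that $\liminf_k \mathrm{Ent}^{\mathrm{NA}}(U^k_\varepsilon)\geq \mathrm{Ent}^{\mathrm{NA}}(U_\varepsilon)$, and that $\lim_k \mathrm{Ent}^{\mathrm{NA}}(U^k_\varepsilon)$ exists and is at most $\mathrm{Ent}^{\mathrm{NA}}(U_\varepsilon)$.

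\emph{Existence of the limit and upper bound.} Since each $U^k\in\cH^{\mathrm{NA}}$, identity \eqref{eq: orth} applies and gives
$$\mathrm{Ent}^{\mathrm{NA}}(U^k_\varepsilon)=\frac{1}{\varepsilon}\int_{X^{an}}(U^k_\varepsilon-U^k)\,\mathrm{MA}^{\mathrm{NA}}(U^k_\varepsilon).$$
By the proof of Proposition \ref{prop: U_e_conti}, $U^k_\varepsilon\searrow U_\varepsilon$ (even uniformly), and $U^k\searrow U$. Both sequences are decreasing and lie in $\cE^{1,\mathrm{NA}}$, so the continuity of mixed integrals along decreasing sequences \cite[Theorem 6.9]{BoJ18a} shows that the right-hand side converges to $\frac{1}{\varepsilon}\int_{X^{an}}(U_\varepsilon-U)\,\mathrm{MA}^{\mathrm{NA}}(U_\varepsilon)$. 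Proposition \ref{prop:Ent_U_e=int_Ue_U} identifies this limit with $\mathrm{Ent}^{\mathrm{NA}}(U_\varepsilon)$. The limit therefore exists and equals the desired value.

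\emph{Consistency with lower semicontinuity.} As a check, $\mathrm{Ent}^{\mathrm{NA}}$ is lsc along the decreasing sequence $U^k_\varepsilon\searrow U_\varepsilon$, because $A_X$ is lsc and the measures $\mathrm{MA}^{\mathrm{NA}}(U^k_\varepsilon)$ converge weakly. This is consistent with the limit computed above.

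\emph{Main obstacle.} The delicate point is the passage to the limit in $\int (U^k_\varepsilon-U^k)\,\mathrm{MA}^{\mathrm{NA}}(U^k_\varepsilon)$, because $U$ is unbounded and the measures vary with $k$. This requires the energy-class continuity theorem for decreasing nets rather than plain weak convergence of measures. Since that argument was already invoked in the previous proof, here it only needs to be cited.
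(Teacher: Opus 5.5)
Your proposal is correct and follows the paper's own argument. Like the paper, you apply \eqref{eq: orth} to each $U^k\in\mathcal{H}^{\mathrm{NA}}$, pass to the limit in $\frac{1}{\varepsilon}\int_{X^{an}}(U^k_\varepsilon-U^k)\,\mathrm{MA}^{\mathrm{NA}}(U^k_\varepsilon)$ using the decreasing sequences $U^k\searrow U$, $U^k_\varepsilon\searrow U_\varepsilon$ and \cite[Theorem 6.9]{BoJ18a}, and then identify the limit with $\mathrm{Ent}^{\mathrm{NA}}(U_\varepsilon)$ by Proposition \ref{prop:Ent_U_e=int_Ue_U}; your lower-semicontinuity paragraph is redundant once the limit is computed directly.
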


The orthogonality identity in the next result should be compared with equation \eqref{eq: def_u_e}.
\begin{corollary}
    One has $\frac{1}{\varepsilon}(U_\varepsilon-U)=A_X$ a.e. with respect to $\mathrm{MA}^{\mathrm{NA}}(U_\varepsilon).$
\end{corollary}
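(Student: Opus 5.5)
The idea is to combine the pointwise inequality of Lemma \ref{lem: Ue<U+eA} with the integral identity of Proposition \ref{prop:Ent_U_e=int_Ue_U}. By Lemma \ref{lem: Ue<U+eA}, the function
$$
h:=A_X-\frac{1}{\varepsilon}(U_\varepsilon-U)
$$
is nonnegative on $X^{an}$ wherever it is defined. Indeed $U_\varepsilon\leq U+\varepsilon A_X$, and $U_\varepsilon$ is finite (even continuous, by Proposition \ref{prop: U_e_conti}). On the locus $\{U=-\infty\}$ we should interpret $h$ with the convention $h=+\infty$ wherever $A_X=+\infty$, or simply work with the nonnegative quantity $\varepsilon A_X+U-U_\varepsilon\in[0,+\infty]$.

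The plan is to integrate $h$ against the probability measure $\mu:=\mathrm{MA}^{\mathrm{NA}}(U_\varepsilon)$.

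First, we need finiteness. $U\in\cE^{1,\mathrm{NA}}$ is integrable against $\mu$ because $U_\varepsilon$ is bounded, so $\mu$ has finite energy pairing with $U$ in the sense of \cite{BoJ18a}. Since $U_\varepsilon$ is bounded, $\frac{1}{\varepsilon}\int(U_\varepsilon-U)\,d\mu<\infty$. Proposition \ref{prop:Ent_U_e=int_Ue_U} then shows that $\mathrm{Ent}^{\mathrm{NA}}(U_\varepsilon)=\int A_X\,d\mu$ is finite and equals $\frac{1}{\varepsilon}\int(U_\varepsilon-U)\,d\mu$. In particular $A_X<\infty$ and $U>-\infty$ hold $\mu$-a.e., so $h$ is a well-defined nonnegative $\mu$-integrable function.

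Second, subtracting the two finite integrals gives $\int_{X^{an}}h\,d\mu=0$. A nonnegative function with zero integral vanishes $\mu$-almost everywhere, which yields $\frac{1}{\varepsilon}(U_\varepsilon-U)=A_X$ $\mu$-a.e., as claimed.

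The only delicate point is the integrability of $U$ against $\mu$, which is needed to split the integral legitimately. If we want to avoid invoking the energy pairing, we can argue directly instead. Since $\varepsilon A_X+U-U_\varepsilon\geq 0$ is a measurable function with values in $[0,\infty]$ (as $U$ is usc and $A_X$ is lsc), we have $\int(\varepsilon A_X+U-U_\varepsilon)\,d\mu=\varepsilon\int A_X\,d\mu-\int(U_\varepsilon-U)\,d\mu$. This linearity step holds as long as one of the two integrals is finite. Finiteness of $\int(U_\varepsilon-U)\,d\mu$ comes from the limit expression in the proof of Proposition \ref{prop:Ent_U_e=int_Ue_U}: it equals $\lim_k\mathrm{Ent}^{\mathrm{NA}}(U^k_\varepsilon)$, and these are bounded by $\frac{1}{\varepsilon}\int(U^k_\varepsilon-U^k)\,\mathrm{MA}^{\mathrm{NA}}(U^k_\varepsilon)$, which is controlled by $\sup U-\inf U_\varepsilon$ plus finite-energy bounds.
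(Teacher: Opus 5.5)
Your proposal is correct and follows the paper's argument: Lemma \ref{lem: Ue<U+eA} gives the pointwise inequality $\frac{1}{\varepsilon}(U_\varepsilon-U)\le A_X$, and Proposition \ref{prop:Ent_U_e=int_Ue_U} shows that both sides have the same integral against $\mathrm{MA}^{\mathrm{NA}}(U_\varepsilon)$, so they agree almost everywhere. Your finiteness check, which the paper leaves implicit, is sound; it can be shortened by using $\int(U_\varepsilon-U)\,\mathrm{MA}^{\mathrm{NA}}(U_\varepsilon)\le I^{\mathrm{NA}}(U_\varepsilon)-I^{\mathrm{NA}}(U)<\infty$, as in the proof of Lemma \ref{lem: lim_Ent_Ue=Ent_U}.
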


\begin{proof}
    Proposition \ref{prop:Ent_U_e=int_Ue_U} shows that
    $$
    \mathrm{Ent}^{\mathrm{NA}}(U_\varepsilon)=\int_{X^{an}}\frac{1}{\varepsilon}(U_\varepsilon-U)\mathrm{MA}^{\mathrm{NA}}(U_\varepsilon)=\int_{X^{an}} A_X\,\mathrm{MA}^{\mathrm{NA}}(U_\varepsilon).
    $$
    Since $\frac{1}{\varepsilon}(U_\varepsilon-U)\leq A_X$ on $X^{an}$ (recall Lemma \ref{lem: Ue<U+eA}), we conclude.
\end{proof}

More importantly, we see that $U_\varepsilon$ satisfies the Aubin--Yau type equation in some variational sense.

\begin{corollary}
\label{cor:U_e_is_max}
    For any $U\in\cE^{1,\mathrm{NA}}$ and $\varepsilon>0$, the potential $U_\varepsilon$ is the unique maximizer $V$ for the Ding type functional
    $$
\sup_{V\in \mathrm{PSH}^{\mathrm{NA}}\cap C^0(X^{an})}\left(\mathrm{L}^{\mathrm{NA}}(\varepsilon^{-1}(U-V))+\varepsilon^{-1}(I^{\mathrm{NA}}(V)-I^{\mathrm{NA}}(U))\right)
$$
such that the normalization condition
$
\mathrm{L}^{\mathrm{NA}}(\varepsilon^{-1}(U-V))=0
$
holds.
\end{corollary}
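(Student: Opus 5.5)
The plan is to show two things: every normalized competitor $V$ satisfies $V\leq U_\varepsilon$, and $U_\varepsilon$ itself is a continuous competitor with $\mathrm{L}^{\mathrm{NA}}(\varepsilon^{-1}(U-U_\varepsilon))=0$. Both follow from unwinding the definition $\mathrm{L}^{\mathrm{NA}}(f)=\inf_{X^{div}}(A_X+f)$. Maximality and uniqueness then come from monotonicity of $I^{\mathrm{NA}}$.

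\emph{Normalization of $U_\varepsilon$.} By Proposition \ref{prop: U_e_conti}, $U_\varepsilon\in \mathrm{PSH}^{\mathrm{NA}}\cap C^0(X^{an})$. By Proposition \ref{prop:U_e_bd} it is bounded, hence lies in $\cE^{1,\mathrm{NA}}$. Lemma \ref{lem: Ue<U+eA} gives $A_X+\varepsilon^{-1}(U-U_\varepsilon)\geq 0$ on $X^{an}$, so $\mathrm{L}^{\mathrm{NA}}(\varepsilon^{-1}(U-U_\varepsilon))\geq 0$. At the trivial valuation we have $A_X(v_{triv})=0$, and $U(v_{triv})=\sup U=\sup U_\varepsilon=U_\varepsilon(v_{triv})$ by \eqref{eq: sup_Ue=sup_U} and \cite[(6.1)]{BBJ18}. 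Since $v_{triv}\in X^{div}$, the infimum is at most $0$, and therefore equals $0$.

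\emph{Comparison.} Let $V\in\mathrm{PSH}^{\mathrm{NA}}\cap C^0(X^{an})$ and put $c:=\mathrm{L}^{\mathrm{NA}}(\varepsilon^{-1}(U-V))$. Then $V\leq U+\varepsilon A_X-\varepsilon c$ on $X^{div}$. To extend this to $X^{an}$, I argue as in the proof of Proposition \ref{prop:U_e_bd}. The function $\varepsilon^{-1}(V-U)-c$ is lsc on $X^{an}$, because $V$ is continuous and $U$ is usc. It is dominated by $A_X$ on $X^{div}$, and $A_X$ is the largest lsc extension of its restriction to $X^{div}$ \cite[Appendix A]{BoJ18}. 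Hence the inequality holds on all of $X^{an}$. Note that $U$ may take the value $-\infty$; this is harmless, since the inequality is trivial where $U+\varepsilon A_X=+\infty$ and the lsc-extension argument still applies. Consequently $V+\varepsilon c$ is a candidate for the envelope \eqref{eq: def_U_e}, which gives $V+\varepsilon c\leq U_\varepsilon$.

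\emph{Maximality and uniqueness.} Write $F(V)$ for the functional. Using $I^{\mathrm{NA}}(V+a)=I^{\mathrm{NA}}(V)+a$, $\mathrm{L}^{\mathrm{NA}}(f+a)=\mathrm{L}^{\mathrm{NA}}(f)+a$, and monotonicity of $I^{\mathrm{NA}}$, I get
\[
F(V)=c+\varepsilon^{-1}\big(I^{\mathrm{NA}}(V)-I^{\mathrm{NA}}(U)\big)=\varepsilon^{-1}\big(I^{\mathrm{NA}}(V+\varepsilon c)-I^{\mathrm{NA}}(U)\big)\leq \varepsilon^{-1}\big(I^{\mathrm{NA}}(U_\varepsilon)-I^{\mathrm{NA}}(U)\big)=F(U_\varepsilon).
\]
So $U_\varepsilon$ is a maximizer. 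If $V$ is another maximizer with $c=0$, then $V\leq U_\varepsilon$ and $I^{\mathrm{NA}}(V)=I^{\mathrm{NA}}(U_\varepsilon)$, which should force $V=U_\varepsilon$.

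\emph{Main obstacle.} The hardest step is this last one: strict monotonicity of $I^{\mathrm{NA}}$ on ordered finite-energy potentials. I would obtain it from the identity $I^{\mathrm{NA}}(W)-I^{\mathrm{NA}}(V)\geq\int(W-V)\,\mathrm{MA}^{\mathrm{NA}}(W)$ applied with $W=U_\varepsilon\geq V$. If the two energies are equal, then $W=V$ holds $\mathrm{MA}^{\mathrm{NA}}(W)$-a.e., and the NA domination principle \cite{BoJ18a} (or the uniqueness of the NA Monge--Amp\`ere solution) then gives $V=W$ everywhere. A secondary point to check is the lsc extension step, since $U$ is only usc and may equal $-\infty$.
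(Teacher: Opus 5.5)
\textbf{Where the proposal matches the paper.} Your argument for normalization and maximality is the paper's. You normalize $U_\varepsilon$ via Lemma \ref{lem: Ue<U+eA} and evaluation at $v_{triv}$. You extend $\varepsilon^{-1}(V-U)-c\le A_X$ from $X^{div}$ to $X^{an}$ using that $A_X$ is the largest lsc extension. You then compare $V+\varepsilon c$ with the envelope. The paper first checks that $c>-\infty$, using continuity of $V$ and \eqref{eq:U+A>_C}. You should include this so that $V+\varepsilon c$ and $I^{\mathrm{NA}}(V+\varepsilon c)$ make sense. The case $c=-\infty$ is harmless anyway, since the functional is then $-\infty$.

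\textbf{The uniqueness step fails as written.} The paper leaves uniqueness implicit, as the standard consequence of $V\le U_\varepsilon$ and $I^{\mathrm{NA}}(V)=I^{\mathrm{NA}}(U_\varepsilon)$. Your inequality $I^{\mathrm{NA}}(W)-I^{\mathrm{NA}}(V)\ge\int(W-V)\,\mathrm{MA}^{\mathrm{NA}}(W)$ only yields $W=V$ a.e.\ for $\mathrm{MA}^{\mathrm{NA}}(W)$, and this is the wrong measure. The domination principle deduces $\varphi\le\psi$ from $\varphi\le\psi$ a.e.\ with respect to $\mathrm{MA}(\psi)$, the measure of the function that is supposed to be larger. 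With your information it therefore only returns $V\le W$, which you already know.

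The implication you want is false in general, already in the Archimedean setting. Take $\PP^1$ with $\omega_W$ a suitable multiple of arclength on the circle $|z|=1$ in an affine coordinate. Let $h=\delta(|z|^2-1)$ inside the circle and $h=0$ outside. For small $\delta>0$ the function $W+h$ is $\omega$-subharmonic: the negative jump of $\mathrm{dd^c}h$ along the circle is absorbed by $\omega_W$. It satisfies $W+h\le W$ and equals $W$ on $\mathrm{supp}\,\omega_W$, yet $W+h\neq W$.

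\textbf{How to repair it.} Use the measure of the smaller function. Expand
\[
I^{\mathrm{NA}}(W)-I^{\mathrm{NA}}(V)=\frac{1}{n+1}\sum_{j=0}^{n}\int_{X^{an}}(W-V)\,\mathrm{MA}^{\mathrm{NA}}(W^{(j)},V^{(n-j)}).
\]
Every term is nonnegative because $W\ge V$. Equal energies therefore force $\int_{X^{an}}(W-V)\,\mathrm{MA}^{\mathrm{NA}}(V)=0$, i.e.\ $W\le V$ a.e.\ for $\mathrm{MA}^{\mathrm{NA}}(V)$. The domination principle now gives $W\le V$, hence $V=W=U_\varepsilon$. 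Alternatively, simply quote strict monotonicity of $I^{\mathrm{NA}}$ on ordered pairs in $\cE^{1,\mathrm{NA}}$.
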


Recall that $\mathrm{L}^{\mathrm{NA}}(f)$ for some function $f$ on $X^{an}$ is defined by
$$
\mathrm{L}^{\mathrm{NA}}(f):=\inf_{X^{div}}(A_X+f).
$$

\begin{proof}
    For any $V\in \mathrm{PSH}^{\mathrm{NA}}\cap C^0(X^{an})$, we first note that $
\mathrm{L}^{\mathrm{NA}}(\varepsilon^{-1}(U-V))>-\infty.
$
Indeed, this follows from the continuity of $V$ and \eqref{eq:U+A>_C}.
So after adding some constant to $V$ we can assume that $
\mathrm{L}^{\mathrm{NA}}(\varepsilon^{-1}(U-V))=0.
$
Then we have that
$$
A_X+\varepsilon^{-1}(U-V)\geq 0\text{ on }X^{div}.
$$
Since $A_X$ is the largest lsc extension of $A_X$ on $X^{div}$ and $V-U$ is lsc on $X^{an}$, the above inequality holds on $X^{an}$ as well.
Thus, $V\leq U+\varepsilon A_X$ on $X^{an}$, implying that $V\leq U_\varepsilon$. Consequently, $I^{\mathrm{NA}}(V)\leq I^{\mathrm{NA}}(U_\varepsilon)$. 

It remains to show that $
\mathrm{L}^{\mathrm{NA}}(\varepsilon^{-1}(U-U_\varepsilon))=0.
$
Namely, we need to show that
$$
\inf_{X^{div}}(A_X+\varepsilon^{-1}(U-U_\varepsilon))=0.
$$
Indeed, on the one hand we have (by Lemma \ref{lem: Ue<U+eA}) that
$$
\inf_{X^{div}}(A_X+\varepsilon^{-1}(U-U_\varepsilon))\geq 0.
$$
On the other hand, evaluating at $v_{triv}$, we deduce from \cite[(6.1)]{BBJ18} and \eqref{eq: sup_Ue=sup_U} that
$$
A_X(v_{triv})+\varepsilon^{-1}(U(v_{triv})-U_\varepsilon(v_{triv}))=\varepsilon^{-1}(\sup U-\sup U_\varepsilon)=0.
$$
Thus we conclude.
\end{proof}

We also have the following.

\begin{lemma}
\label{lem: Ent_Ue_conti_in_e}
    For any $U\in\cE^{1,\mathrm{NA}}$, the map $(0,\infty)\ni\varepsilon\mapsto \mathrm{Ent}^{\mathrm{NA}}(U_\varepsilon)$ is continuous.
\end{lemma}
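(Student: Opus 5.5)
Fix $\varepsilon>0$ and $\delta\in\RR$ with $|\delta|$ small, say $|\delta|<\varepsilon/2$. The plan is to work with the representation from Proposition \ref{prop:Ent_U_e=int_Ue_U},
$$
\mathrm{Ent}^{\mathrm{NA}}(U_{\varepsilon+\delta})=\frac{1}{\varepsilon+\delta}\int_{X^{an}}(U_{\varepsilon+\delta}-U)\,\mathrm{MA}^{\mathrm{NA}}(U_{\varepsilon+\delta}),
$$
rather than with the definition through $A_X$. The reason is that $A_X$ is only lsc, so weak convergence of measures yields only semicontinuity. The integrand above, by contrast, involves the bounded continuous potentials $U_{\varepsilon+\delta}$ from Propositions \ref{prop:U_e_bd} and \ref{prop: U_e_conti}, together with $U\in\cE^{1,\mathrm{NA}}$.

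\textbf{Step 1: uniform convergence of the potentials.} Corollary \ref{cor: U_e_Lip_in_e} gives $|U_{\varepsilon+\delta}-U_\varepsilon|\le B_{\varepsilon'}|\delta|$, applied at base point $\varepsilon'=\min(\varepsilon,\varepsilon+\delta)$. For $\delta<0$ one can take $B_{\varepsilon/2}$: the constant $2C_{\varepsilon'/2}/\varepsilon'$ from the proof of Proposition \ref{prop: U_e_conti} stays bounded for $\varepsilon'\in[\varepsilon/2,\varepsilon]$, because $C_{\varepsilon'/2}\le C_{\varepsilon/4}$ by monotonicity of $U+\varepsilon A_X$ in $\varepsilon$. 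Hence $U_{\varepsilon+\delta}\to U_\varepsilon$ uniformly on $X^{an}$ as $\delta\to0$.

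\textbf{Step 2: split the difference.} Multiplying by $\varepsilon+\delta$ and using the representation above, I split the difference of the integrals as
$$
\int(U_{\varepsilon+\delta}-U_\varepsilon)\,\mathrm{MA}^{\mathrm{NA}}(U_{\varepsilon+\delta})+\int(U_\varepsilon-U)\bigl(\mathrm{MA}^{\mathrm{NA}}(U_{\varepsilon+\delta})-\mathrm{MA}^{\mathrm{NA}}(U_\varepsilon)\bigr).
$$
The first term is at most $B|\delta|$, since these are probability measures. The second term is the crux. The difference $U_\varepsilon-U$ is not continuous, only lsc and possibly unbounded above, since $U$ may be $-\infty$ somewhere. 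Two routes are available:
\begin{itemize}
\item Write it as $\int U_\varepsilon\,(\cdots)$ minus $\int U\,(\cdots)$. The first piece converges by weak convergence, because $U_\varepsilon$ is continuous and $\mathrm{MA}^{\mathrm{NA}}(U_{\varepsilon+\delta})\to\mathrm{MA}^{\mathrm{NA}}(U_\varepsilon)$ under uniform convergence (\cite[Corollary 6.14]{BoJ18a}). For the second piece I would invoke the NA continuity of mixed energies (\cite[Theorem 6.9]{BoJ18a}): $\int U\,\mathrm{MA}^{\mathrm{NA}}(V_j)\to\int U\,\mathrm{MA}^{\mathrm{NA}}(V)$ whenever $V_j\to V$ in the strong topology of $\cE^{1,\mathrm{NA}}$, with $U\in\cE^{1,\mathrm{NA}}$ fixed. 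Uniform convergence of bounded potentials implies strong convergence.
\item Alternatively, use the integration-by-parts estimate (the NA analogue of \cite[Lemma A.2]{BBJ18}),
$$
\Bigl|\int U\,(\mathrm{MA}^{\mathrm{NA}}(V_1)-\mathrm{MA}^{\mathrm{NA}}(V_2))\Bigr|\le C\,\|V_1-V_2\|_\infty^{a}\,(\cdot)^{1-a}.
$$
This is proved first for $U,V_i\in\cH^{\mathrm{NA}}$ and then extended by decreasing limits.
\end{itemize}

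\textbf{Step 3: conclude.} The prefactor $1/(\varepsilon+\delta)\to1/\varepsilon$, so continuity of $\varepsilon\mapsto\mathrm{Ent}^{\mathrm{NA}}(U_\varepsilon)$ follows.

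A fallback avoiding the mixed-energy continuity is a sandwich argument, mirroring Proposition \ref{prop:Ent_U_e=int_Ue_U}. Lower semicontinuity comes from $\mathrm{Ent}^{\mathrm{NA}}$ being lsc under uniform convergence. For upper semicontinuity, approximate by $U^k\in\cH^{\mathrm{NA}}$ via Corollary \ref{cor: Ent=lim Ent}. Here $\mathrm{Ent}^{\mathrm{NA}}(U^k_\varepsilon)=\int A_X\circ r_{\cX^k}\,\mathrm{MA}^{\mathrm{NA}}(U^k_\varepsilon)$ has a continuous integrand, so it is continuous in $\varepsilon$. Making this work requires uniformity in $k$, which I expect is the main obstacle; I would therefore try the mixed-energy route first.
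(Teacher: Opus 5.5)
Your argument is correct and is essentially the paper's proof. Both combine the uniform convergence $U_{\varepsilon_i}\to U_\varepsilon$ from Corollary \ref{cor: U_e_Lip_in_e}, the representation of Proposition \ref{prop:Ent_U_e=int_Ue_U}, and the continuity of $\int(V-U)\,\mathrm{MA}^{\mathrm{NA}}(V)$ under uniform convergence of $V$ (the paper cites \cite[Corollary 6.14]{BoJ18a} for this last step). Your splitting of that integral, and your check that the Lipschitz constant stays locally uniform for $\delta<0$, simply spell out details the paper folds into that single citation.
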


\begin{proof}
    For any sequence $\varepsilon_i\to \varepsilon$, we have $U_{\varepsilon_i}\to U_\varepsilon$ uniformly
    due to Corollary \ref{cor: U_e_Lip_in_e}, so we deduce from \cite[Corollary 6.14]{BoJ18a} and Proposition \ref{prop:Ent_U_e=int_Ue_U} that
    \begin{equation*}
        \begin{split}
            \mathrm{Ent}^{\mathrm{NA}}(U_\varepsilon)&=\frac{1}{\varepsilon}\int_{X^{an}}(U_\varepsilon-U)\mathrm{MA}^{\mathrm{NA}}(U_\varepsilon)\\
            &=\lim_{i\to\infty}\frac{1}{\varepsilon_i}\int_{X^{an}}(U_{\varepsilon_i}-U)\mathrm{MA}^{\mathrm{NA}}(U_{\varepsilon_i})=\lim_{i\to\infty}\mathrm{Ent}^{\mathrm{NA}}(U_{\varepsilon_i}),\\
        \end{split}
    \end{equation*}
  finishing the proof.
\end{proof}

\begin{lemma}
\label{lem: lim_Ent_Ue=Ent_U}
    For any $U\in\cE^{1,\mathrm{NA}}$, one has $\lim_{\varepsilon\to 0^+}\mathrm{Ent}^{\mathrm{NA}}(U_\varepsilon)=\mathrm{Ent}^{\mathrm{NA}}(U).$
\end{lemma}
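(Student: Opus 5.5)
We need $\lim_{\varepsilon\to0^+}\mathrm{Ent}^{\mathrm{NA}}(U_\varepsilon)=\mathrm{Ent}^{\mathrm{NA}}(U)$. We prove the two inequalities separately; the lower bound comes from lower semicontinuity and the upper bound from a convexity argument that mimics Lemma \ref{lem: Ent>Ent_e>Ent_e}.

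\emph{Lower bound.} By Corollary \ref{cor:Ue_to_U}, $U_\varepsilon\searrow U$ as $\varepsilon\searrow0$. Along decreasing sequences in $\cE^{1,\mathrm{NA}}$ the measures $\mathrm{MA}^{\mathrm{NA}}(U_\varepsilon)$ converge weakly to $\mathrm{MA}^{\mathrm{NA}}(U)$ by \cite[Theorem 6.9]{BoJ18a}. Since $A_X$ is lsc and nonnegative on $X^{an}$, we obtain
$$\mathrm{Ent}^{\mathrm{NA}}(U)\le\liminf_{\varepsilon\to0}\mathrm{Ent}^{\mathrm{NA}}(U_\varepsilon).$$
In particular, if $\mathrm{Ent}^{\mathrm{NA}}(U)=+\infty$ we are done, so we may assume it is finite.

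\emph{Upper bound.} By Proposition \ref{prop:Ent_U_e=int_Ue_U},
$$\mathrm{Ent}^{\mathrm{NA}}(U_\varepsilon)=\frac1\varepsilon\int(U_\varepsilon-U)\,\mathrm{MA}^{\mathrm{NA}}(U_\varepsilon).$$
The NA Monge--Amp\`ere energy $I^{\mathrm{NA}}$ is concave, with differential $\mathrm{MA}^{\mathrm{NA}}$. Applying this along the segment from $U_\varepsilon$ to $U$ gives
$$\int(U_\varepsilon-U)\,\mathrm{MA}^{\mathrm{NA}}(U_\varepsilon)\le I^{\mathrm{NA}}(U_\varepsilon)-I^{\mathrm{NA}}(U)\le\int(U_\varepsilon-U)\,\mathrm{MA}^{\mathrm{NA}}(U).$$
This holds for $U\in\cE^{1,\mathrm{NA}}$ with $U_\varepsilon$ bounded, using the standard estimates in \cite{BoJ18a}. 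Combining with Lemma \ref{lem: Ue<U+eA}, which gives $U_\varepsilon-U\le\varepsilon A_X$, we get
$$\mathrm{Ent}^{\mathrm{NA}}(U_\varepsilon)\le\frac1\varepsilon\bigl(I^{\mathrm{NA}}(U_\varepsilon)-I^{\mathrm{NA}}(U)\bigr)\le\int A_X\,\mathrm{MA}^{\mathrm{NA}}(U)=\mathrm{Ent}^{\mathrm{NA}}(U).$$
This is exactly the NA analogue of Lemma \ref{lem: Ent>Ent_e>Ent_e}. Taking $\limsup$ and combining with the lower bound proves the lemma.

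\emph{Main obstacle.} The key point is justifying the comparison inequality
$$I^{\mathrm{NA}}(V)-I^{\mathrm{NA}}(W)\le\int(V-W)\,\mathrm{MA}^{\mathrm{NA}}(W)\qquad\text{for }V\ge W,$$
with $W=U$ possibly unbounded and the integral possibly involving the value $+\infty$. The plan is:
\begin{enumerate}
\item First prove it for $U\in\cH^{\mathrm{NA}}$, where it is the standard concavity statement (cf.\ \cite{BoJ18a}).
\item Then approximate $U$ by the sequence $U^k\searrow U$ from the proof of Proposition \ref{prop: U_e_conti}, and set $U^k_\varepsilon=P(U^k+\varepsilon A_X)$.
\item For these, the inequality $I^{\mathrm{NA}}(U^k_\varepsilon)-I^{\mathrm{NA}}(U^k)\le\int(U^k_\varepsilon-U^k)\,\mathrm{MA}^{\mathrm{NA}}(U^k)\le\varepsilon\int A_X\,\mathrm{MA}^{\mathrm{NA}}(U^k)$ holds.
\item Pass to the limit $k\to\infty$ using continuity of $I^{\mathrm{NA}}$ along decreasing sequences.
\end{enumerate}
The difficulty in step 4 is that $\int A_X\,\mathrm{MA}^{\mathrm{NA}}(U^k)$ need not converge to $\mathrm{Ent}^{\mathrm{NA}}(U)$. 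To avoid this, we would instead integrate directly against $\mathrm{MA}^{\mathrm{NA}}(U)$: we use the finite-energy version of the concavity inequality, valid in $\cE^{1,\mathrm{NA}}$ by \cite[\S 7]{BoJ18a}, applied to $V=U_\varepsilon$ (which is continuous) and $W=U$. The integral $\int(U_\varepsilon-U)\,\mathrm{MA}^{\mathrm{NA}}(U)$ is then finite, since $U\in\cE^{1,\mathrm{NA}}$ and $U_\varepsilon$ is bounded.
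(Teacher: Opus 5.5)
Your proof is correct and is the paper's argument. You bound $\mathrm{Ent}^{\mathrm{NA}}(U_\varepsilon)$ from above by the chain $\frac1\varepsilon\int(U_\varepsilon-U)\,\mathrm{MA}^{\mathrm{NA}}(U_\varepsilon)\le\frac1\varepsilon(I^{\mathrm{NA}}(U_\varepsilon)-I^{\mathrm{NA}}(U))\le\frac1\varepsilon\int(U_\varepsilon-U)\,\mathrm{MA}^{\mathrm{NA}}(U)\le\mathrm{Ent}^{\mathrm{NA}}(U)$, and get the matching lower bound from lower semicontinuity along $U_\varepsilon\searrow U$. The paper simply cites the finite-energy concavity inequality \cite[(7.23)]{BJ22b}, so the approximation detour in your ``main obstacle'' paragraph is unnecessary, as you yourself conclude.
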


\begin{proof}
    For any $\varepsilon>0$, it holds that
    \begin{equation*}
        \begin{split}
             \mathrm{Ent}^{\mathrm{NA}}(U_\varepsilon)&=\frac{1}{\varepsilon}\int_{X^{an}}(U_\varepsilon-U)\mathrm{MA}^{\mathrm{NA}}(U_\varepsilon)\\
             &\leq \frac{1}{\varepsilon}(I^{\mathrm{NA}}(U_\varepsilon)-I^{\mathrm{NA}}(U))\leq \frac{1}{\varepsilon}\int_{X^{an}}(U_\varepsilon-U)\mathrm{MA}^{\mathrm{NA}}(U)\\
             &\leq \int_{X^{an}}A_X \mathrm{MA}^{\mathrm{NA}}(U)=\mathrm{Ent}^{\mathrm{NA}}(U),
        \end{split}
    \end{equation*}
    where we used Proposition \ref{prop:Ent_U_e=int_Ue_U} in the first line, \cite[(7.23)]{BJ22b} in the second line and Lemma \ref{lem: Ue<U+eA} in the last line.

    Since $U_\varepsilon\searrow U$ (Corollary \ref{cor:Ue_to_U}), we have
    $\mathrm{Ent}^{\mathrm{NA}}(U)\leq\liminf_{\varepsilon\to 0^+}\mathrm{Ent}^{\mathrm{NA}}(U_\varepsilon).$ Thus we conclude.
\end{proof}

\begin{remark}
       As we pointed out in Remark \ref{rem: cont_NApsh_induced_fromfilt}, each $U_\varepsilon$ is induced by some filtration of $R(X,L)$, so the entropy of any $U\in\cE^{1,\mathrm{NA}}$ can be well approximated using a sequence of filtrations, which in particular reproves a weaker version of Li's approximation result \cite[Proposition 6.3]{Li20-cscK}.
\end{remark}

In fact, combining Lemma \ref{lem: lim_Ent_Ue=Ent_U} with Corollary \ref{cor: Ent=lim Ent}, we immediately obtain the following more refined regularization result.

\begin{theorem}[=Theorem \ref{thm: main_reg_NA_ent}]
    For any $U\in\cE^{1,\mathrm{NA}}$, there exists a sequence $U^k\in\cH^{\mathrm{NA}}$ with $U^k\searrow U$ and a rational sequence $\varepsilon_k\searrow 0$ such that $U^k_{\varepsilon_k}\searrow U$ and
    $$
    \mathrm{Ent}^{\mathrm{NA}}(U)=\lim_{k\to\infty}\mathrm{Ent}^{\mathrm{NA}}(U^k_{\varepsilon_k}).
    $$
\end{theorem}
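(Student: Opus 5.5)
The plan is a diagonal argument that combines Lemma \ref{lem: lim_Ent_Ue=Ent_U} with Corollary \ref{cor: Ent=lim Ent}.

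First, fix the decreasing regularizing sequence $\cH^{\mathrm{NA}}\ni U^k\searrow U$ from the proof of Proposition \ref{prop:U_e_bd}, i.e.\ the multiplier-ideal regularization satisfying \eqref{eq: U_k<U+A}. Choose rationals $\eta_j\searrow 0$. By Lemma \ref{lem: lim_Ent_Ue=Ent_U},
\[
\mathrm{Ent}^{\mathrm{NA}}(U_{\eta_j})\to \mathrm{Ent}^{\mathrm{NA}}(U).
\]
For each fixed $j$, Corollary \ref{cor: Ent=lim Ent} gives $\mathrm{Ent}^{\mathrm{NA}}(U^k_{\eta_j})\to\mathrm{Ent}^{\mathrm{NA}}(U_{\eta_j})$ as $k\to\infty$. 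The proof of Proposition \ref{prop: U_e_conti} also shows that $U^k_{\eta_j}\to U_{\eta_j}$ uniformly, with explicit error $\frac{2\delta_kC_{\eta_j/2}}{\eta_j+2\delta_k}+\delta_k$.

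Next, pick a strictly increasing sequence $k_j$ such that
\[
|\mathrm{Ent}^{\mathrm{NA}}(U^{k_j}_{\eta_j})-\mathrm{Ent}^{\mathrm{NA}}(U_{\eta_j})|<1/j
\quad\text{and}\quad
\sup_{X^{an}}|U^{k_j}_{\eta_j}-U_{\eta_j}|<1/j.
\]
Then pass to the subsequence $\tilde U^j:=U^{k_j}$, which is still in $\cH^{\mathrm{NA}}$ and decreases to $U$, and set $\varepsilon_j:=\eta_j$. This yields $\mathrm{Ent}^{\mathrm{NA}}(\tilde U^j_{\varepsilon_j})\to\mathrm{Ent}^{\mathrm{NA}}(U)$.

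The remaining point is monotonicity: we need $\tilde U^j_{\varepsilon_j}\searrow U$. Since the envelope $P(\cdot+\varepsilon A_X)$ is monotone both in the potential and in $\varepsilon$ (because $A_X\ge0$), the facts that $\tilde U^j$ is decreasing and $\varepsilon_j$ is decreasing give that $\tilde U^j_{\varepsilon_j}$ is decreasing in $j$. It is also bounded below by $U_{\varepsilon_j}\ge U$. Its limit $W$ satisfies $W\le U^{k_i}_{\varepsilon_j}$ for $j\ge i$, because $U^{k_i}\ge U^{k_j}$, so $W\le \lim_j U^{k_i}_{\varepsilon_j}=U^{k_i}$ by Corollary \ref{cor:Ue_to_U} applied to $U^{k_i}$. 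Letting $i\to\infty$ gives $W\le U$, hence $W=U$.

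I expect no serious obstacle. The only care needed is to choose the diagonal subsequence so that the monotonicity in both indices is preserved, and to relabel the subsequence as $U^k$ so that the statement reads as in the theorem.
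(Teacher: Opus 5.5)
Your proposal is correct and follows the paper's route. The paper obtains the theorem ``immediately'' by combining Lemma \ref{lem: lim_Ent_Ue=Ent_U} with Corollary \ref{cor: Ent=lim Ent} along the multiplier-ideal sequence $U^k$, which is the same diagonal argument you give. Your explicit check that $U^{k_j}_{\varepsilon_j}\searrow U$ (monotonicity of the envelope in both arguments, plus Corollary \ref{cor:Ue_to_U} applied to each $U^{k_i}$) fills in a detail the paper leaves implicit. Your $1/j$ condition also needs $\mathrm{Ent}^{\mathrm{NA}}(U_{\eta_j})<\infty$, which you do not state; it follows from $\mathrm{Ent}^{\mathrm{NA}}(U_\varepsilon)\le\varepsilon^{-1}\bigl(I^{\mathrm{NA}}(U_\varepsilon)-I^{\mathrm{NA}}(U)\bigr)$.
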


Finally, we prove the NA analogue of Theorem \ref{thm:main_f'}.
\begin{theorem}[=Theorem \ref{thm: main_F'}]
\label{thm: F'}
    For any $U\in\cE^{1,\mathrm{NA}}$, the function
   $F(\varepsilon):=I^{\mathrm{NA}}(U_\varepsilon)$ is a non-decreasing $C^1$ concave function on $[0,+\infty).$ More precisely,
   $$
   F'_+(0)=\mathrm{Ent}^{\mathrm{NA}}(U)\text{ and }F'(\varepsilon)=\mathrm{Ent}^{\mathrm{NA}}(U_\varepsilon)\text{ for any }\varepsilon>0.
   $$
\end{theorem}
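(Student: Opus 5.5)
The plan is to mirror the proof of Theorem \ref{thm: f'}: first treat $U\in\cH^{\mathrm{NA}}$ using intersection theory, then pass to general $U\in\cE^{1,\mathrm{NA}}$ through the regularizing sequence $U^k\searrow U$ from the proof of Proposition \ref{prop:U_e_bd}.

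\emph{Step 1: the case $U\in\cH^{\mathrm{NA}}$.} Let $U$ be induced by a smooth test configuration $(\cX,\cL)$ dominating $X\times\PP^1$. By \eqref{eq: xia} and \cite[Remark 9.2]{DZ25}, for rational $\varepsilon$ the potential $U_\varepsilon$ is the NA potential of the log discrepancy model $(\cX,\cL+\varepsilon K^{\log}_{\cX/X\times\PP^1})$. Hence, exactly as in the radial test configuration lemma,
\[
F(\varepsilon)=\frac{\vol(\cL+c\cX_0+\varepsilon K^{\log})}{(n+1)V}-c .
\]
By differentiability of the volume, this is $C^1$ in $\varepsilon>0$, with derivative $V^{-1}\langle(\cL+c\cX_0+\varepsilon K^{\log})^n\rangle\cdot K^{\log}$. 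By Li's formula this derivative equals $\int A_X\circ r_\cX\,\mathrm{MA}^{\mathrm{NA}}(U_\varepsilon)$, which is $\mathrm{Ent}^{\mathrm{NA}}(U_\varepsilon)$ by \eqref{eq: orth}.

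To extend the formula from rational to all $\varepsilon>0$, I would use Corollary \ref{cor: U_e_Lip_in_e}: it shows that $F$ is Lipschitz on compact subsets of $(0,\infty)$ (since $I^{\mathrm{NA}}$ is $1$-Lipschitz for the sup norm). Lemma \ref{lem: Ent_Ue_conti_in_e} shows that $\varepsilon\mapsto\mathrm{Ent}^{\mathrm{NA}}(U_\varepsilon)$ is continuous. Together these give $F(b)-F(a)=\int_a^b\mathrm{Ent}^{\mathrm{NA}}(U_s)\,ds$, so $F\in C^1(0,\infty)$ with $F'=\mathrm{Ent}^{\mathrm{NA}}(U_\varepsilon)$.

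Concavity of $F$ I would obtain directly, without appealing to the volume function. The map $\varepsilon\mapsto U_\varepsilon$ is concave: a convex combination of candidates for $U_{\varepsilon_1}$ and $U_{\varepsilon_2}$ is a candidate for $U_{t\varepsilon_1+(1-t)\varepsilon_2}$, as in the proof of Proposition \ref{prop: U_e_conti}. Combined with monotonicity and concavity of $I^{\mathrm{NA}}$ along convex combinations, this gives concavity of $F$. This argument in fact holds for every $U\in\mathrm{PSH}^{\mathrm{NA}}$. Monotonicity of $F$ follows because $U_\varepsilon$ is increasing in $\varepsilon$.

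\emph{Step 2: general $U$.} Concavity and monotonicity already hold by the argument above. Continuity of $F$ at $0$ follows from $U_\varepsilon\searrow U$ (Corollary \ref{cor:Ue_to_U}) together with continuity of $I^{\mathrm{NA}}$ along decreasing sequences. For $\varepsilon>0$, take $U^k\searrow U$ as in Proposition \ref{prop: U_e_conti}. Then $U^k_\varepsilon\to U_\varepsilon$ uniformly, so $F_k\to F$ pointwise. Concavity of the $F_k$ then gives, exactly as in the proof of Theorem \ref{thm: f'},
\[
\mathrm{Ent}^{\mathrm{NA}}(U_{\varepsilon+s})=\lim_k F_k'(\varepsilon+s)\le F'_+(\varepsilon)\le F'_-(\varepsilon)\le \lim_k F_k'(\varepsilon-s)=\mathrm{Ent}^{\mathrm{NA}}(U_{\varepsilon-s}),
\]
using Corollary \ref{cor: Ent=lim Ent} for the outer equalities. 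Letting $s\to0$ and invoking Lemma \ref{lem: Ent_Ue_conti_in_e} yields $F'(\varepsilon)=\mathrm{Ent}^{\mathrm{NA}}(U_\varepsilon)$.

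\emph{Step 3: the endpoint.} The chain of inequalities in the proof of Lemma \ref{lem: lim_Ent_Ue=Ent_U} gives
\[
\mathrm{Ent}^{\mathrm{NA}}(U_\varepsilon)\le \frac{F(\varepsilon)-F(0)}{\varepsilon}\le \mathrm{Ent}^{\mathrm{NA}}(U).
\]
The left side converges to $\mathrm{Ent}^{\mathrm{NA}}(U)$ by that lemma, so $F'_+(0)=\mathrm{Ent}^{\mathrm{NA}}(U)=\lim_{\varepsilon\to0}F'(\varepsilon)$, which is the required $C^1$ property at $0$.

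The main obstacle is Step 1, namely justifying the volume-derivative identity for models that are only relatively big, and matching it with $\int A_X\circ r_\cX\,\mathrm{MA}^{\mathrm{NA}}(U_\varepsilon)$. If that route is troublesome, I would bypass it by differentiating $F$ directly. Take the one-sided difference quotients of $I^{\mathrm{NA}}(U_{\varepsilon+s})-I^{\mathrm{NA}}(U_\varepsilon)$ and bound them between $\int(U_{\varepsilon+s}-U_\varepsilon)\,\mathrm{MA}^{\mathrm{NA}}(U_{\varepsilon+s})$ and the same integral against $\mathrm{MA}^{\mathrm{NA}}(U_\varepsilon)$, as in \cite[(7.23)]{BJ22b}. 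The orthogonality $U_\varepsilon-U=\varepsilon A_X$ a.e.\ with respect to $\mathrm{MA}^{\mathrm{NA}}(U_\varepsilon)$, together with $U_{\varepsilon+s}\le U+(\varepsilon+s)A_X$, then bounds both quotients by $\mathrm{Ent}^{\mathrm{NA}}$ of $U_\varepsilon$ and $U_{\varepsilon+s}$. This alternative route works for general $U$ directly, without Step 2.
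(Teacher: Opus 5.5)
Your proposal is correct. Its main route follows the paper's proof: the test-configuration case via the volume formula, \cite[Theorem A]{BFJ09} and Li's identities, then the same two-sided sandwich over $k$ using Corollary \ref{cor: Ent=lim Ent} and Lemma \ref{lem: Ent_Ue_conti_in_e}, and the endpoint from the chain in Lemma \ref{lem: lim_Ent_Ue=Ent_U}.

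\textbf{Concavity.} Here you genuinely differ from the paper. The paper proves concavity only for each $F_k$, combining \cite[(7.23)]{BJ22b} with the orthogonality $U^k_\delta=U^k+\delta A_X\circ r_{\cX^k}$ on the support of $\mathrm{MA}^{\mathrm{NA}}(U^k_\delta)$, and then lets $k\to\infty$. You instead use that $\varepsilon\mapsto U_\varepsilon$ is pointwise concave (Lemma \ref{lem: Ue<U+eA} plus convexity of $\mathrm{PSH}^{\mathrm{NA}}$), together with monotonicity and concavity of $I^{\mathrm{NA}}$. This needs neither approximation nor orthogonality. Concavity of $I^{\mathrm{NA}}$ along $\varphi_t=(1-t)\varphi_0+t\varphi_1$ follows from $I^{\mathrm{NA}}(\varphi)-I^{\mathrm{NA}}(\psi)\le\int(\varphi-\psi)\,\mathrm{MA}^{\mathrm{NA}}(\psi)$ \cite[(7.23)]{BJ22b}, applied with $\psi=\varphi_t$ and $\varphi=\varphi_0,\varphi_1$.

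\textbf{The fallback route.} This is shorter still. It is exactly the paper's concavity computation for $F_k$, run directly on $U$ with $A_X$ in place of $A_X\circ r_{\cX^k}$. That is legitimate because the corollary following Proposition \ref{prop:Ent_U_e=int_Ue_U} gives $U_\varepsilon-U=\varepsilon A_X$ $\mathrm{MA}^{\mathrm{NA}}(U_\varepsilon)$-a.e.\ for every $U\in\cE^{1,\mathrm{NA}}$. For $\varepsilon>0$, $\mathrm{Ent}^{\mathrm{NA}}(U_\varepsilon)<\infty$ by that proposition, so the subtractions make sense; for $\varepsilon=0$ it is just the chain of Lemma \ref{lem: lim_Ent_Ue=Ent_U}. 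It yields
\[
\mathrm{Ent}^{\mathrm{NA}}(U_{\varepsilon+s})\le\frac{F(\varepsilon+s)-F(\varepsilon)}{s}\le\mathrm{Ent}^{\mathrm{NA}}(U_\varepsilon),\qquad \varepsilon\ge 0,\ s>0.
\]
Together with Lemmas \ref{lem: Ent_Ue_conti_in_e} and \ref{lem: lim_Ent_Ue=Ent_U}, this gives the whole theorem at once: the derivative formula, monotonicity, concavity (since $F'$ is non-increasing) and the endpoint. It uses neither \cite{BFJ09}, Li's volume and intersection formulas, nor the limit $F_k\to F$. What the paper's route buys in return is the explicit intersection-theoretic description of $F_k$ and $F_k'$ for log discrepancy models, parallel to the radial test-configuration lemma in Section \ref{sec: rad}.

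\textbf{A small repair in Step 1.} Lipschitz continuity of $F$ plus continuity of $\varepsilon\mapsto\mathrm{Ent}^{\mathrm{NA}}(U_\varepsilon)$ does not by itself give $F(b)-F(a)=\int_a^b\mathrm{Ent}^{\mathrm{NA}}(U_s)\,ds$, because you only identify the derivative at rational points. Argue instead as follows. $F$ agrees on $\QQ$ with the continuous volume expression, hence everywhere. The continuous derivative of that expression agrees with $\mathrm{Ent}^{\mathrm{NA}}(U_\varepsilon)$ on $\QQ$, hence everywhere.
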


\begin{proof}
    We can assume that $\sup U=0$, and pick $\cH^{\mathrm{NA}}\ni U^k\searrow U$ as in the proof of Proposition \ref{prop: U_e_conti}. Define
    $$
    F_k(\varepsilon):=I^{\mathrm{NA}}(U^k_\varepsilon)\text{ for }\varepsilon\geq 0.
    $$
    We will show that the result holds for each $F_k$ and then let $k\to\infty$.

    Each $U^k$ is determined by some smooth test configuration $(\cX^k,\cL^k)$ dominating $X\times\PP^1$. Then for any $\varepsilon\in\QQ_{\geq 0}$, $U^k_\varepsilon$ is determined by the log discrepancy model $(\cX^k,\cL^k+\varepsilon K^{\log}_{\cX^k/X\times\PP^1})$ (see \cite[Remark 9.2]{DZ25} and \eqref{eq: xia}). So from \cite[(27)]{Li21} we obtain that
    $$
    I^{\mathrm{NA}}(U^k_\varepsilon)=\frac{\vol(\cL^k+c\cX^k_0+\varepsilon K^{\log}_{\cX^k/X\times\PP^1})}{(n+1)V}-c,
    $$
    Fix $\varepsilon_0\geq0$. On a neighborhood of $\varepsilon_0$, choose one sufficiently large constant $c$ so that $\cL^k+c\cX^k_0+\varepsilon K^{\log}_{\cX^k/X\times\PP^1}$ is big throughout that neighborhood. Both sides of the above identity are continuous with respect to $\varepsilon$, so the identity extends from rational to real $\varepsilon$ there. The continuity of the left-hand side follows from Corollary \ref{cor: U_e_Lip_in_e}, while that of the right-hand side is standard (see, e.g., \cite{Laz-bookI}). Since $\varepsilon_0$ was arbitrary, \cite[Theorem A]{BFJ09} yields
    $
    F_k(\varepsilon)\in C^1([0,+\infty)),
    $
    and (for all $\varepsilon\geq 0$)
    \begin{equation*}
        \begin{split}
            F'_k(\varepsilon)&=\frac{1}{V}\langle(\cL^k+c\cX^k_0+\varepsilon K^{\log}_{\cX^k/X\times\PP^1})^n\rangle\cdot K^{\log}_{\cX^k/X\times\PP^1}\\
            &=\mathrm{Ent}^{\mathrm{NA}}(U_\varepsilon^k+c)=\mathrm{Ent}^{\mathrm{NA}}(U_\varepsilon^k),
        \end{split}
    \end{equation*}
    where we used \cite[(25)]{Li21} in the second line.
    This also implies that $F_k$ is non-decreasing, as $F'_k(\varepsilon)=\mathrm{Ent}^{\mathrm{NA}}(U^k_\varepsilon)\geq 0.$

    Now we show that $F_k$ is concave on $[0,+\infty)$. Picking any $0\leq \alpha<\delta<\gamma$, we want to show that
    $$
    \frac{F_k(\delta)-F_k(\alpha)}{\delta-\alpha}\geq\frac{F_k(\gamma)-F_k(\delta)}{\gamma-\delta}.
    $$
    Indeed, we have
    \begin{equation*}
        \begin{split}
            \frac{F_k(\delta)-F_k(\alpha)}{\delta-\alpha}&=\frac{1}{\delta-\alpha}(I^{\mathrm{NA}}(U^k_\delta)-I^{\mathrm{NA}}(U^k_\alpha))\\
            &\geq\frac{1}{(\delta-\alpha)}\int_{X^{an}}(U^k_\delta-U^k_\alpha)\mathrm{MA}^{\mathrm{NA}}(U^k_\delta)\\
            &=\frac{1}{(\delta-\alpha)}\int_{X^{an}}(U^k+\delta A_X\circ r_{\cX^k}-U^k_\alpha)\mathrm{MA}^{\mathrm{NA}}(U^k_\delta)\\
            &\geq \int_{X^{an}}A_X\circ r_{\cX^k}\mathrm{MA}^{\mathrm{NA}}(U^k_\delta)=\mathrm{Ent}^{\mathrm{NA}}(U^k_\delta).\\
        \end{split}
    \end{equation*}
    The second line is due to \cite[(7.23)]{BJ22b}. The third line used \eqref{eq: orth}. The last line follows from \eqref{eq: Uke<U+eAr} and \eqref{eq: orth}.
    For similar reasons, we also have
    \begin{equation*}
        \begin{split}
            \frac{F_k(\gamma)-F_k(\delta)}{\gamma-\delta}&=\frac{1}{\gamma-\delta}(I^{\mathrm{NA}}(U^k_\gamma)-I^{\mathrm{NA}}(U^k_\delta))\\
            &\leq\frac{1}{(\gamma-\delta)}\int_{X^{an}}(U^k_\gamma-U^k_\delta)\mathrm{MA}^{\mathrm{NA}}(U^k_\delta)\\
            &=\frac{1}{(\gamma-\delta)}\int_{X^{an}}(U^k_\gamma-U^k-\delta A_X\circ r_{\cX^k})\mathrm{MA}^{\mathrm{NA}}(U^k_\delta)\\
            &\leq \int_{X^{an}}A_X\circ r_{\cX^k}\mathrm{MA}^{\mathrm{NA}}(U^k_\delta)=\mathrm{Ent}^{\mathrm{NA}}(U^k_\delta).\\
        \end{split}
    \end{equation*}
    So we deduce that $F_k$ is concave on $[0,+\infty)$. Therefore, the result we aim to show holds for $F_k$.

    Since $U^k_\varepsilon\searrow U_\varepsilon$ for any $\varepsilon\geq 0$, by \cite[Proposition 6.2]{BoJ18a} we get that $F_k\searrow F$ pointwise as $k\nearrow\infty$. Thus $F$ is non-decreasing and concave on $[0,+\infty)$. Moreover, it is continuous at $\varepsilon=0$, by Corollary \ref{cor:Ue_to_U}. It remains to show that $F$ belongs to $C^1([0,+\infty))$.

For any $\varepsilon>0$ and $s\in(0,\varepsilon)$, we can write
\begin{equation*}
    \begin{split}
        F'_+(\varepsilon)&\geq \frac{F(\varepsilon+s)-F(\varepsilon)}{s}=\lim_{k\to\infty}\frac{F_k(\varepsilon+s)-F_k(\varepsilon)}{s}\\
        &\geq \lim_{k\to\infty}F'_k(\varepsilon+s)=\lim_{k\to\infty} \mathrm{Ent}^{\mathrm{NA}}(U^k_{\varepsilon+s})=\mathrm{Ent}^{\mathrm{NA}}(U_{\varepsilon+s}),\\
    \end{split}
\end{equation*}
where we used Corollary \ref{cor: Ent=lim Ent} in the last line.
Similarly,
\begin{equation*}
    \begin{split}
        F'_-(\varepsilon)&\leq \frac{F(\varepsilon)-F(\varepsilon-s)}{s}=\lim_{k\to\infty}\frac{F_k(\varepsilon)-F_k(\varepsilon-s)}{s}\\
        &\leq \lim_{k\to\infty}F'_k(\varepsilon-s)=\lim_{k\to\infty} \mathrm{Ent}^{\mathrm{NA}}(U^k_{\varepsilon-s})=\mathrm{Ent}^{\mathrm{NA}}(U_{\varepsilon-s}).\\
    \end{split}
\end{equation*}
Now using Lemma \ref{lem: Ent_Ue_conti_in_e}, we obtain that
$$
F'(\varepsilon)=\mathrm{Ent}^{\mathrm{NA}}(U_\varepsilon)\text{ for any }\varepsilon>0.
$$

Finally, we show that
$$
F'_+(0)=\mathrm{Ent}^{\mathrm{NA}}(U).
$$
By concavity, we have
$$
\frac{1}{\varepsilon }(F(\varepsilon)-F(0))\geq F'(\varepsilon)=\mathrm{Ent}^{\mathrm{NA}}(U_\varepsilon).
$$
Sending $\varepsilon\searrow 0$ and using Lemma \ref{lem: lim_Ent_Ue=Ent_U} we get
$
F'_+(0)\geq \mathrm{Ent}^{\mathrm{NA}}(U).
$
On the other hand, for any $\varepsilon>0$ we have (using Lemma \ref{lem: Ue<U+eA} again)
\begin{equation*}
    \begin{split}
        \frac{1}{\varepsilon }(F(\varepsilon)-F(0))&=\frac{1}{\varepsilon}(I^{\mathrm{NA}}(U_\varepsilon)-I^{\mathrm{NA}}(U))
        \leq\frac{1}{\varepsilon }\int_{X^{an}}(U_\varepsilon-U)\mathrm{MA}^{\mathrm{NA}}(U)\\
        &\leq \int_{X^{an}}A_X\mathrm{MA}^{\mathrm{NA}}(U)=\mathrm{Ent}^{\mathrm{NA}}(U).
    \end{split}
\end{equation*}
    So we obtain
    $
F'_+(0)\leq \mathrm{Ent}^{\mathrm{NA}}(U).
$
Thus $F'_+(0)=\mathrm{Ent}^{\mathrm{NA}}(U)$ holds, and we also have $F'(\varepsilon)\to F'_+(0)$ as $\varepsilon\to 0$ by Lemma \ref{lem: lim_Ent_Ue=Ent_U}.
This completes the proof.
\end{proof}

\begin{corollary}
    For any $U\in\cE^{1,\mathrm{NA}}$, one has $\mathrm{Ent}^{\mathrm{NA}}(U_\varepsilon)\nearrow \mathrm{Ent}^{\mathrm{NA}}(U)$ as $\varepsilon\searrow 0$.
\end{corollary}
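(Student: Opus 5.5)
This is a direct consequence of Theorem \ref{thm: F'}, so the plan is to read off monotonicity and convergence from the properties of $F(\varepsilon)=I^{\mathrm{NA}}(U_\varepsilon)$ established there, mirroring the Kähler corollary following Theorem \ref{thm: f'}.

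\textbf{Step 1: monotonicity.} By Theorem \ref{thm: F'}, $F$ is concave on $[0,+\infty)$ and $C^1$ on $(0,\infty)$. Hence $F'$ is non-increasing on $(0,\infty)$. Since $F'(\varepsilon)=\mathrm{Ent}^{\mathrm{NA}}(U_\varepsilon)$ for $\varepsilon>0$, the map $\varepsilon\mapsto \mathrm{Ent}^{\mathrm{NA}}(U_\varepsilon)$ is non-increasing in $\varepsilon$. Equivalently, it is non-decreasing as $\varepsilon\searrow 0$.

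\textbf{Step 2: identification of the limit.} Lemma \ref{lem: lim_Ent_Ue=Ent_U} gives $\lim_{\varepsilon\to0^+}\mathrm{Ent}^{\mathrm{NA}}(U_\varepsilon)=\mathrm{Ent}^{\mathrm{NA}}(U)$. Alternatively, concavity gives
\[
\lim_{\varepsilon\to0^+}F'(\varepsilon)=F'_+(0)=\mathrm{Ent}^{\mathrm{NA}}(U),
\]
where the first equality is the $C^1([0,\infty))$ statement of the theorem.

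\textbf{Step 3: the infinite-entropy case.} The only point requiring a moment's care is when $\mathrm{Ent}^{\mathrm{NA}}(U)=+\infty$. In that case the monotone limit is still $+\infty$, by the lower semicontinuity half of the proof of Lemma \ref{lem: lim_Ent_Ue=Ent_U}, which uses $U_\varepsilon\searrow U$. Each $\mathrm{Ent}^{\mathrm{NA}}(U_\varepsilon)$ is finite, being bounded by $\varepsilon^{-1}\int (U_\varepsilon-U)\,\mathrm{MA}^{\mathrm{NA}}(U_\varepsilon)$ with $U_\varepsilon$ bounded. So the statement holds in $[0,+\infty]$.

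I expect no real obstacle, since all ingredients are already proved.
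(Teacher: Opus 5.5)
Your proposal is correct and follows the paper's proof: monotonicity comes from the concavity of $F$ together with $F'(\varepsilon)=\mathrm{Ent}^{\mathrm{NA}}(U_\varepsilon)$, and the limit comes from Lemma \ref{lem: lim_Ent_Ue=Ent_U}. Your Step 3 is harmless but redundant, since the proof of that lemma (the upper bound plus lower semicontinuity along $U_\varepsilon\searrow U$) already works with values in $[0,+\infty]$.
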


\begin{proof}
    That $\mathrm{Ent}^{\mathrm{NA}}(U_\varepsilon)\to \mathrm{Ent}^{\mathrm{NA}}(U)$ is simply Lemma \ref{lem: lim_Ent_Ue=Ent_U}. The monotonicity follows from the concavity of $F$ and that $F'(\varepsilon)=\mathrm{Ent}^{\mathrm{NA}}(U_\varepsilon)$.
\end{proof}

The discussions of this section essentially only used some formal properties of NA geometry.
Thus, given the recent developments in \cite{Pic24,MPWN}, we believe it is possible to extend this section to the transcendental setting as well.

\section{Uniting the three `kingdoms'}

In this section we bring together the radial, non-Archimedean and potential theoretic viewpoints on interpreting entropy as a derivative of Aubin-Yau deformations.

Let $(X,\omega)$ be a projective manifold, with $\omega\in c_1(L)$ for some ample line bundle $L$ on $X$. 
In this part we fix a geodesic ray $\{u_t\}\in\cR^1$. In previous sections we have investigated three types of Aubin--Yau deformations, which we briefly recap.
For any $\varepsilon>0$ let $\{u_{t,\varepsilon}\}$ be the solutions of \eqref{eq: def_u_e} along $\{u_t\}$. According to \cite[\S 6]{DZ25} we have the following radial energy
$$
I\{u_{t,\varepsilon}\}:=\lim_{t\to\infty}\frac{I(u_{t,\varepsilon})}{t}.
$$

On the other hand, we also have that
$$
I\{P_\varepsilon(u)_t\}:=\lim_{t\to\infty}\frac{I(P_\varepsilon(u)_t)}{t},
$$
where $\{P_\varepsilon(u)_t\}$ is the radial Aubin--Yau deformation of $\{u_t\}$ introduced in \S \ref{sec: rad}.
Finally, on the non-Archimedean side, let $U\in\cE^{1,\mathrm{NA}}$ be the NA potential associated to the ray $\{u_t\}$ (in the sense of \cite[Theorem 6.4]{BBJ18}). Let $U_\varepsilon$ be the envelope defined in \eqref{eq: def_U_e}. We then have the NA Monge--Amp\`ere energy
$$
I^{\mathrm{NA}}(U_\varepsilon).
$$

The goal of this part is to prove the following slope identity that unifies the Aubin--Yau deformations in all three settings.

\begin{theorem}
\label{thm: 3kingdoms}
    For any $\{u_t\}\in\cR^1$ and $\varepsilon>0$ we have
    $$
    I\{u_{t,\varepsilon}\}=I\{P_\varepsilon(u)_t\}=I^{\mathrm{NA}}(U_\varepsilon).
    $$
\end{theorem}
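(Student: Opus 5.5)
The first identity $I\{u_{t,\varepsilon}\}=I\{P_\varepsilon(u)_t\}$ is already available: it is the content of Proposition \ref{prop: thm12cor}, which gives $I(u_t^{1/\varepsilon})/t\to I\{P(u)^{1/\varepsilon}_t\}$. That result relies on \cite[Remark 6.9, 6.10]{DZ25} and the identification of the maximizer in Theorem \ref{thm: maximizer_main}(ii). The real content of Theorem \ref{thm: 3kingdoms} is therefore the equality $I\{P_\varepsilon(u)_t\}=I^{\mathrm{NA}}(U_\varepsilon)$. I would prove it first for rays coming from test configurations, then for maximal rays by approximation, and finally for arbitrary rays in $\cR^1$.

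\emph{Test configuration case.} Let $\{u_t\}$ be induced by a smooth test configuration $(\cX,\cL)$ dominating $X\times\PP^1$, and let $\varepsilon\in\QQ_{>0}$. By \cite[Remark 9.2]{DZ25}, the ray $\{P_\varepsilon(u)_t\}$ is induced by the log discrepancy model $(\cX,\cL+\varepsilon K^{\log}_{\cX/X\times\PP^1})$. By \eqref{eq: xia}, the NA potential of this same model is $U_\varepsilon=P(U+\varepsilon A_X\circ r_\cX)$. Both energies are then given by the same volume formula $\vol(\cL+c\cX_0+\varepsilon K^{\log})/((n+1)V)-c$: on the radial side by Li's slope formula, on the NA side by \cite[(27)]{Li21}. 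For irrational $\varepsilon$ I would argue by continuity in $\varepsilon$. The map $\varepsilon\mapsto I\{P_\varepsilon(u)_t\}=g_\infty(\varepsilon)$ is $C^1$ by Proposition \ref{prop: thm12cor}. The map $\varepsilon\mapsto I^{\mathrm{NA}}(U_\varepsilon)$ is continuous, since $U_\varepsilon$ is Lipschitz in $\varepsilon$ by Corollary \ref{cor: U_e_Lip_in_e}.

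\emph{Maximal rays.} Let $U\in\cE^{1,\mathrm{NA}}$ and let $\{u_t\}$ be its maximal ray. Take the Demailly/multiplier ideal approximations $\{u^k_t\}$. These are test configuration rays, and their NA potentials $U^k\in\cH^{\mathrm{NA}}$ decrease to $U$, as in the proof of Proposition \ref{prop:U_e_bd}. On the NA side, the proof of Proposition \ref{prop: U_e_conti} gives $U^k_\varepsilon\searrow U_\varepsilon$, so $I^{\mathrm{NA}}(U^k_\varepsilon)\to I^{\mathrm{NA}}(U_\varepsilon)$ by \cite[Proposition 6.2]{BoJ18a}. On the radial side, $d_1^c(\{u^k_t\},\{u_t\})\to0$ holds for maximal rays by \cite{BBJ18}. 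Lemma \ref{lem: P_beta_rad_conv} then gives $d_1^c$-convergence of $\{P(u^k)^{1/\varepsilon}_t\}$ to $\{P(u)^{1/\varepsilon}_t\}$, hence convergence of $I\{\cdot\}$. Passing to the limit in the test configuration identity proves the theorem for maximal rays.

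\emph{General rays (main obstacle).} For an arbitrary $\{u_t\}\in\cR^1$ with NA potential $U$, let $\{\hat u_t\}\ge\{u_t\}$ be the maximal ray of $U$. The NA side depends only on $U$, so it suffices to show $I\{P_\varepsilon(u)_t\}=I\{P_\varepsilon(\hat u)_t\}$. I would aim to show that the two radial Aubin--Yau deformations coincide.
\begin{itemize}
\item[(a)] The inequality $\le$ is immediate: $\hat u_t\ge u_t$ gives $L^\beta(\{v_t\},\{\hat u_t\})\ge L^\beta(\{v_t\},\{u_t\})$, so candidates for $P_\varepsilon(u)$ are candidates for $P_\varepsilon(\hat u)$.
\item[(b)] For the reverse inequality, by Theorem \ref{thm: maximizer_main}(iii) it suffices to test with Demailly-approximated, that is algebraic, candidates $\{v_t\}$. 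For such rays, $\tilde L^\beta(\{v_t\},\{u_t\})$ should be computable from the valuative data, as $\inf_{X^{div}}(A_X+\beta(U-V))$ up to normalization. This is the analogue of $\mathrm{L}^{\mathrm{NA}}$, and it depends on $\{u_t\}$ only through the multiplier ideals $\mathcal I(m\,U_z)$. Those ideals are determined by the Lelong numbers, hence by $U$, and so agree for $\{u_t\}$ and $\{\hat u_t\}$.
\end{itemize}
I expect step (b) to be the hard part, namely justifying that $\tilde L^\beta(\{v_t\},\{u_t\})$ is a valuative quantity. I would try to establish it via the Demailly approximation estimate of Lemma \ref{lem: Dem_approx_exponent}: apply it with the roles adapted so that $\{u_t\}$ is replaced by its approximations, with an $O(1/m)$ error. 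Combined with strong openness, as in Lemma \ref{lem: increasing_lemma}, this should show that $\tilde L^\beta$ depends only on the multiplier ideals of $\{u_t\}$.
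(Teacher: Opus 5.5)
Your proposal is correct. For test-configuration and maximal rays it is essentially Proposition~\ref{prop: rad_E=NA_E}, except that you work on the $\{P_\varepsilon(u)_t\}$ side via Lemma~\ref{lem: P_beta_rad_conv}, while the paper works on the $\{u_{t,\varepsilon}\}$ side via \cite[Proposition 9.4, Theorem 5.2]{DZ25}; by the first identity these are interchangeable.

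In the non-maximal case you take a genuinely different route. The paper does not compare $\{P_\varepsilon(u)_t\}$ with $\{P_\varepsilon(\hat u)_t\}$. Instead it sandwiches $I\{u_{t,\varepsilon}\}$:
\begin{itemize}
\item from above by $I\{\hat u_{t,\varepsilon}\}=I^{\mathrm{NA}}(U_\varepsilon)$, using the comparison $\hat u_{t,\varepsilon}\ge u_{t,\varepsilon}$ and the maximal case;
\item from below by the Ding-type bound $\sup_{\{v_t\}}(\varepsilon\mathrm{L}_\varepsilon(\{v_t\},\{u_t\})+I\{v_t\})$ of \cite[(28), Proposition 3.6]{DZ25}. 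This is evaluated on the rays of $V^k\in\cH^{\mathrm{NA}}$ with $U_\varepsilon\le V^k\le U_\varepsilon+1/k$, which exist by Proposition~\ref{prop: U_e_conti} and Dini, and $\mathrm{L}_\varepsilon$ is computed by Lemma~\ref{lem:remove_bd}.
\end{itemize}
Lemma~\ref{lem:remove_bd} is exactly the valuative formula you conjecture in (b), so you could simply cite it for your Demailly candidates.

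Your own suggested justification also works. Fix $m>m_0+2\beta$ and set $\theta:=1-\frac{m_0+2\beta}{m}$, $\tilde L_m:=\tilde L^\beta(\{v_t\},\{u^m_t\})$, and $c:=c_{2\beta,u}$ from \cite[Lemma 6.5]{DZ25}. For $\tau=\theta(\tilde L_m-\delta)-O(1/m)$, write $e^{\beta(v_t-u_t)+\tau t}$ as the product of
\begin{itemize}
\item $\big(e^{(m+m_0)u^m_t-mu_t-t}\big)^{\beta/m}$,
\item $\big(e^{\beta(v_t-u^m_t)+(\tilde L_m-\delta)t}\big)^{\theta}$,
\item $\big(e^{-2\beta u^m_t-ct}\big)^{(m_0+\beta)/m}e^{(1-\theta)\beta v_t}$.
\end{itemize}
H\"older's inequality, together with $u^m_t\ge u_t$ and $v_t\le 0$, then gives
\[
\tilde L_m\ \ge\ \tilde L^\beta(\{v_t\},\{u_t\})\ \ge\ \tilde L_m-C/m .
\]
Since $\mathcal I(mU_z)$ depends only on $U$ (the fact underlying the construction of maximal rays in \cite{BBJ18}), $\{u_t\}$ and $\{\hat u_t\}$ have the same Demailly approximations. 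Hence $\tilde L^\beta(\cdot,\{u_t\})=\tilde L^\beta(\cdot,\{\hat u_t\})$ on \emph{all} sublinear candidates. Then $P_\varepsilon(u)=P_\varepsilon(\hat u)$ follows straight from the definition, and Theorem~\ref{thm: maximizer_main}(iii) is not needed.

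Each route buys something different:
\begin{itemize}
\item Yours stays entirely radial and needs neither the continuity of $U_\varepsilon$ nor the Ding-type lower bound. It also yields the ray identity $\{P_\varepsilon(u)_t\}=\{P_\varepsilon(\hat u)_t\}$, which the paper only obtains a posteriori in the remark after Theorem~\ref{thm: rad_E=sup}.
\item The paper's route additionally produces the variational formula \eqref{eq: rad_E=sup}, with an explicit maximizing sequence of test-configuration rays.
\end{itemize}
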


That $I\{u_{t,\varepsilon}\}=I\{P_\varepsilon(u)_t\}$ holds follows from Theorem \ref{thm: intro_slope}. So it suffices to argue that
$$
I\{u_{t,\varepsilon}\}=I^{\mathrm{NA}}(U_\varepsilon).
$$

Let us first treat the case where $\{u_t\}$ a maximal geodesic ray (in the sense of \cite[\S 6.4]{BBJ18}). This means that there exists a decreasing sequence of geodesic rays $\{u^k_t\}$ arising from test configurations such that $\{u^k_t\}\rightarrow_{d_1^c}\{u_t\}$. 

\begin{proposition}
\label{prop: rad_E=NA_E}
    For any maximal geodesic ray $\{u_t\}$ and $\varepsilon\geq 0$, one has
    $$
   I\{u_{t,\varepsilon}\}=I^{\mathrm{NA}}(U_\varepsilon).
    $$
\end{proposition}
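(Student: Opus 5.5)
The plan is to approximate $\{u_t\}$ by test configuration rays, prove the identity there, and pass to the limit on both sides. When $\varepsilon=0$ the identity $I\{u_t\}=I^{\mathrm{NA}}(U)$ is exactly the slope formula for maximal rays from \cite[\S 6]{BBJ18}, so we may assume $\varepsilon>0$. By Theorem \ref{thm: intro_slope} (via Proposition \ref{prop: thm12cor}), the left-hand side can be written as $I\{u_{t,\varepsilon}\}=g_\infty(\varepsilon)=I\{P_\varepsilon(u)_t\}$, so it suffices to prove
$$
I\{P_\varepsilon(u)_t\}=I^{\mathrm{NA}}(U_\varepsilon).
$$

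\emph{Step 1: test configuration case.} Let $\{u^k_t\}$ be the decreasing sequence of test configuration rays with $\{u^k_t\}\to_{d_1^c}\{u_t\}$. I take it to be the Demailly approximation, which matches the sequence $U^k\searrow U$ used in Proposition \ref{prop:U_e_bd}. For each $k$ the identity $I\{u^k_{t,\varepsilon}\}=I^{\mathrm{NA}}(U^k_\varepsilon)$ holds. For rational $\varepsilon$, \cite[\S 9]{DZ25} and \cite[Remark 9.2]{DZ25} identify $\{P_\varepsilon(u^k)_t\}$ with the ray of the log discrepancy model $(\cX^k,\cL^k+\varepsilon K^{\log})$. \cite[(27)]{Li21} expresses $I^{\mathrm{NA}}(U^k_\varepsilon)$ as the same volume quantity $\vol(\cL^k+c\cX^k_0+\varepsilon K^{\log})/((n+1)V)-c$ that computes $g^k_\infty(\varepsilon)$ in the test configuration lemma of \S\ref{sec: rad}. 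Both sides are continuous in $\varepsilon$: the left-hand side by Proposition \ref{prop: thm12cor}, the right-hand side by Corollary \ref{cor: U_e_Lip_in_e}. So the identity extends from rational to all real $\varepsilon>0$.

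\emph{Step 2: passing to the limit.} On the radial side, Lemma \ref{lem: P_beta_rad_conv} gives $d_1^c(\{P_\varepsilon(u^k)_t\},\{P_\varepsilon(u)_t\})\to0$. Since $I\{\cdot\}$ is $d_1^c$-Lipschitz, $I\{P_\varepsilon(u^k)_t\}\to I\{P_\varepsilon(u)_t\}$. On the NA side, the proof of Proposition \ref{prop: U_e_conti} shows $U^k_\varepsilon\searrow U_\varepsilon$, even uniformly. By \cite[Proposition 6.2]{BoJ18a}, $I^{\mathrm{NA}}(U^k_\varepsilon)\to I^{\mathrm{NA}}(U_\varepsilon)$, as already used in the proof of Theorem \ref{thm: F'}. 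Combining the two limits with Step 1 proves the proposition.

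\emph{Main obstacle.} The delicate point is the compatibility of the approximations. The sequence $U^k$ must be exactly the NA potentials of the approximating rays $\{u^k_t\}$, and $d_1^c$-convergence of these rays must hold; this is where maximality of $\{u_t\}$ is used. I would verify that the Demailly approximation of the ray induces, on the NA side, the multiplier-ideal regularization of \cite[Theorem 6.2]{BBJ18}, and that for a maximal ray it converges in $d_1^c$ because $I\{u^k_t\}=I^{\mathrm{NA}}(U^k)\to I^{\mathrm{NA}}(U)=I\{u_t\}$ along an ordered sequence. A secondary issue is that Step 1 needs the model $(\cX^k,\cL^k)$ to be smooth and to dominate $X\times\PP^1$; I would handle this by pulling back, as in the proof of Proposition \ref{prop:Ent_U_e=int_Ue_U}.
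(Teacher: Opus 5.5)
Your proof is correct and follows essentially the same route as the paper. You approximate the maximal ray by the multiplier-ideal test configuration rays, establish the identity there for rational $\varepsilon$ via the log discrepancy models and extend it to all real $\varepsilon$, then pass to the limit using $U^k_\varepsilon\searrow U_\varepsilon$ on the NA side. The differences are cosmetic: you detour through $\{P_\varepsilon(u)_t\}$ and Lemma \ref{lem: P_beta_rad_conv} on the radial side and extend by continuity rather than concavity in $\varepsilon$, whereas the paper works directly with $\{u_{t,\varepsilon}\}$, quoting \cite[Proposition 9.4]{DZ25} and (the proof of) \cite[Theorem 5.2]{DZ25}.
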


\begin{proof}
    We can assume that $\sup u_t=0$. Let $\{u^k_t\}$ be the decreasing sequence arising from test configurations with $\{u^k_t\}\rightarrow_{d_1^c}\{u_t\}$, which is constructed using multiplier ideal approximation as in the proof of Proposition \ref{prop:U_e_bd}. Let $U^k\in\cH^{\mathrm{NA}}$ be the NA potential associated to $\{u_t^k\}$, so we also have $U^k\searrow U$.
    For each $k$ we can assume that $U^k$ is determined by some smooth test configuration $(\cX^k,\cL^k)$ over $X\times\PP^1$. For any $\varepsilon\in\QQ_{\geq 0}$, it is known from \cite[(7.2)]{X23} and \cite[Remark 9.2]{DZ25} that $U^k_\varepsilon$ is the NA potential of the log discrepancy model $(\cX^k,\cL^k+\varepsilon K^{\log}_{\cX^k/X\times\PP^1})$. In fact \cite{DZ25} explicitly treats the case when $\varepsilon\in[0,1)\cap\QQ$, which is enough for applications anyway, but a slight modification of the argument therein shows that the same result holds for all $\varepsilon\in\QQ_{\geq 0}$.

    In particular, we deduce from the slope formula \cite[Proposition 9.4]{DZ25} that
    $$
    I\{u_{t,\varepsilon}^k\}=I^{\mathrm{NA}}(U^k_\varepsilon),\text{ for any }\varepsilon\in\QQ_{\geq 0}.
    $$

    We claim that the above identity extends to all $\varepsilon\geq 0$. For this, it suffices to observe that both sides are concave in $\varepsilon$, by Theorems \ref{thm: f'} and \ref{thm: F'}. So the claim holds.
Then (the proof of) \cite[Theorem 5.2]{DZ25} yields that 
    $$
    I\{u_{t,\varepsilon}\}=\lim_{k\to\infty}I\{u_{t,\varepsilon}^k\}=\lim_{k\to\infty}I^{\mathrm{NA}}(U^k_\varepsilon) \text{ for any }\varepsilon\geq 0.
    $$
Recall from the proof of Proposition \ref{prop: U_e_conti} that $U^k_\varepsilon\searrow U_\varepsilon$. Thus we have $\lim_{k\to\infty}I^{\mathrm{NA}}(U^k_\varepsilon)=I^{\mathrm{NA}}(U_\varepsilon)$, as the Monge--Amp\`ere energy converges along decreasing sequences, finishing the proof.
\end{proof}

Now we remove the maximality assumption for $\{u_t\}$. 

\begin{theorem}
\label{thm: rad_E=sup}
    For any geodesic ray $\{u_t\}\in\cR^1$ and any $\varepsilon>0$, we have
    \begin{equation}
        \label{eq: rad_E=sup}
        I\{u_{t,\varepsilon}\}=I^{\mathrm{NA}}(U_\varepsilon)=\sup_{\{v_t\}}\left(\varepsilon\cdot\mathrm{L}_\varepsilon(\{v_t\},\{u_t\})+I\{v_t\}\right),
    \end{equation}
    where $\{v_t\}$ runs through all geodesic rays arising from test configurations of $(X,L)$, and
    $$
    \mathrm{L}_\varepsilon(\{v_t\},\{u_t\}):=\liminf_{t\to\infty}\left(-\frac{1}{t}\log\int_Xe^{\frac{1}{\varepsilon}(v_t-u_t)}\frac{\omega^n}{V} \right).
    $$
\end{theorem}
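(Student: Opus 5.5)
The plan is to prove the two equalities in \eqref{eq: rad_E=sup} separately. The radial identity $I\{u_{t,\varepsilon}\}=\sup_{\{v_t\}}(\varepsilon\mathrm{L}_\varepsilon+I\{v_t\})$ should come directly from Theorem \ref{thm: rad_Ent_beta=sup}. The non-Archimedean identity will be handled by reduction to the maximal case, Proposition \ref{prop: rad_E=NA_E}.

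For the first identity, set $\beta=1/\varepsilon$ and use the definition \eqref{eq: beta_entropy_def} together with the slope compatibility of Proposition \ref{prop: thm12cor} and Theorem \ref{thm: maximizer_main}(ii). This gives
\[
\varepsilon\,\mathrm{Ent}^{1/\varepsilon}\{u_t\}=I\{u_{t,\varepsilon}\}-I\{u_t\}.
\]
Theorem \ref{thm: rad_Ent_beta=sup} writes the left-hand side as $\varepsilon\sup_{\{v_t\}}\big(\mathrm{L}_\varepsilon(\{v_t\},\{u_t\})+\varepsilon^{-1}(I\{v_t\}-I\{u_t\})\big)$, where the sup runs over test configuration rays. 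Adding $I\{u_t\}$ to both sides then yields $I\{u_{t,\varepsilon}\}=\sup_{\{v_t\}}(\varepsilon\mathrm{L}_\varepsilon+I\{v_t\})$.

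For the second identity, I would first show that the supremum only sees the NA potential $U$. Let $\{\hat u_t\}$ be the maximal ray associated to $U$, so that $\hat u_t\geq u_t$ and both rays share the same Lelong-number data. The key claim is
\[
\mathrm{L}_\varepsilon(\{v_t\},\{u_t\})=\mathrm{L}_\varepsilon(\{v_t\},\{\hat u_t\})=\mathrm{L}^{\mathrm{NA}}\big(\varepsilon^{-1}(U-V)\big)
\]
for every test configuration ray $\{v_t\}$ with NA potential $V$. The exponent $\tilde L^\beta$ is governed by integrability of $e^{\beta(V_z-U_z)}$ near $z=0$ on $X\times\CC$, and for $V$ coming from a test configuration this is a valuative (multiplier-ideal/log canonical threshold type) condition. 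By Corollary \ref{cor: liminf} one may work with $\tilde L^\beta$. The expected approach is: write the integrability as a condition on $\inf_{X^{div}}$ of $A_X+\beta(U-V)$ plus the $t$-weight, via a log resolution of $(\cX,\cL)$ and a valuative criterion for integrability. Because the valuative criterion needs only the Lelong numbers of $U_z$ along divisorial valuations, which agree for $u$ and $\hat u$, the claim follows. For general $U$ one needs a Demailly approximation argument, in the spirit of Lemma \ref{lem: Dem_approx_exponent} combined with strong openness, to pass from algebraic to general singularities. This step is the main obstacle.

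Granting the claim, the supremum equals
\[
\sup_{V\in\cH^{\mathrm{NA}}}\Big(\varepsilon\mathrm{L}^{\mathrm{NA}}(\varepsilon^{-1}(U-V))+I^{\mathrm{NA}}(V)\Big),
\]
using that $I\{v_t\}=I^{\mathrm{NA}}(V)$ for test configuration rays. Corollary \ref{cor:U_e_is_max} identifies its value as $I^{\mathrm{NA}}(U_\varepsilon)$. There is one caveat: that corollary maximizes over all continuous $V$, while $\cH^{\mathrm{NA}}$ is only a dense subset. Since $U_\varepsilon$ is continuous by Proposition \ref{prop: U_e_conti}, it is a uniform limit of elements of $\cH^{\mathrm{NA}}$. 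The functional $\mathrm{L}^{\mathrm{NA}}$ is $1$-Lipschitz for sup-norms, and $I^{\mathrm{NA}}$ is continuous under uniform convergence, so the supremum over $\cH^{\mathrm{NA}}$ equals the one over continuous $V$. Alternatively, the same chain applied to $\{\hat u_t\}$ together with Proposition \ref{prop: rad_E=NA_E} gives $I\{u_{t,\varepsilon}\}=I\{\hat u_{t,\varepsilon}\}=I^{\mathrm{NA}}(U_\varepsilon)$ directly, which avoids the density argument.
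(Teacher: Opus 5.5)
Your argument is correct in outline but organized differently from the paper's. The claim you single out as the main obstacle is precisely the paper's Lemma~\ref{lem:remove_bd}.

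\textbf{First equality.} You obtain it at once by specializing Theorem~\ref{thm: rad_Ent_beta=sup} to $\beta=1/\varepsilon$, which is legitimate. The paper instead uses only the inequality $I\{u_{t,\varepsilon}\}\geq\sup_{\{v_t\}}(\varepsilon\mathrm{L}_\varepsilon(\{v_t\},\{u_t\})+I\{v_t\})$ from \cite[(28), Proposition 3.6]{DZ25}, and closes the chain on the NA side:
\begin{itemize}
\item The upper bound $I\{u_{t,\varepsilon}\}\leq I\{w_{t,\varepsilon}\}=I^{\mathrm{NA}}(U_\varepsilon)$ comes from $w_{t,\varepsilon}\geq u_{t,\varepsilon}$ for the maximal ray $w_t\geq u_t$ \cite[Lemma 3.1]{DZ25}, together with Proposition~\ref{prop: rad_E=NA_E}.
\item The lower bound comes from a single sequence $\cH^{\mathrm{NA}}\ni V^k\searrow U_\varepsilon$ converging uniformly (Proposition~\ref{prop: U_e_conti} and Dini). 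For it, Lemmas~\ref{lem:remove_bd} and \ref{lem: Ue<U+eA} give $\mathrm{L}_\varepsilon(\{v^k_t\},\{u_t\})\geq-1/(\varepsilon k)$.
\end{itemize}
So the paper uses only the inequality ``$\geq$'' of your key claim, along one sequence. It recovers the equality case of Theorem~\ref{thm: rad_Ent_beta=sup} as a by-product rather than assuming it.

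\textbf{Second equality.} Your primary route needs both inequalities, the upper one for every test configuration ray. In exchange it matches the radial supremum term by term with the NA variational problem of Corollary~\ref{cor:U_e_is_max}. Your density step there is fine, since $\mathrm{L}^{\mathrm{NA}}$ and $I^{\mathrm{NA}}$ are $1$-Lipschitz in the sup norm. Your alternative route through $\{\hat u_t\}$ and Proposition~\ref{prop: rad_E=NA_E} is the one closest to the paper.

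\textbf{The key claim.} The paper does not redo the valuative analysis. \cite[Proposition 8.1]{DZ25} already proves $\mathrm{L}_\varepsilon(\{v_t\},\{u_t\})=\inf_{X^{div}}(A_X+\varepsilon^{-1}(U-V))$ for linearly bounded $\{u_t\}$. Boundedness enters there only to guarantee two things:
\begin{itemize}
\item that $\mathrm{L}_\varepsilon$ is finite, which holds in general by \cite[Lemma 6.5]{DZ25}, cf.\ \cite{DZ22};
\item that $e^{\frac{1}{\varepsilon}(v_{-\log|z|^2}-u_{-\log|z|^2})}$ is integrable away from $X\times\{0\}$, which holds by \cite{Ze01}.
\end{itemize}
If you prefer to run your own valuative sketch instead, you must supply these two points. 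You must also supply the passage from arbitrary divisorial valuations on $X\times\CC$ to $\CC^*$-invariant ones, i.e.\ to $X^{div}$. As written, the proposal leaves all three open.
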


\begin{proof}
    It follows from \cite[(28) and Proposition 3.6]{DZ25} that
    \begin{equation}
        \label{eq: rad_E>sup}
        I\{u_{t,\varepsilon}\}\geq \sup_{\{v_t\}}\left(\varepsilon\cdot\mathrm{L}_\varepsilon(\{v_t\},\{u_t\})+I\{v_t\}\right).
    \end{equation}
    In what follows, we argue that this is actually an equality.

    Recall that (by \cite[Theorem 6.4]{BBJ18}) $U\in\cE^{1,\mathrm{NA}}$ is the NA potential associated to $\{u_t\}$ and
    $
    I^{\mathrm{NA}}(U)\geq I\{u_t\}.
    $
    Let $\{w_t\}$ be the maximal geodesic ray attached to $U$, so that $w_t\geq u_t$ (see \cite[Theorem 6.6]{BBJ18}). Moreover, we have that (by \cite[Corollary 6.7]{BBJ18})
    $$
    I^{\mathrm{NA}}(U)=I\{w_t\}\geq I\{u_t\},
    $$
    where the second inequality can be strict. 
    
    Given $\varepsilon>0$, by \cite[Lemma 3.1]{DZ25} we have $w_{t,\varepsilon}\geq u_{t,\varepsilon}$. So Proposition \ref{prop: rad_E=NA_E} gives
    $$
     I^{\mathrm{NA}}(U_\varepsilon)=I\{w_{t,\varepsilon}\}\geq I\{u_{t,\varepsilon}\}.
    $$
    By Proposition \ref{prop: U_e_conti} and Dini's lemma, we can find a sequence $\cH^{\mathrm{NA}}\ni V^k\searrow U_\varepsilon$  converging uniformly, say
    $$
    U_\varepsilon\leq V^k\leq U_\varepsilon+\frac{1}{k},\ k\geq 1.
    $$
    Let $\{v_t^k\}$ be the geodesic rays associated with $V^k$. 
    Then by Lemma \ref{lem:remove_bd} below,
    \begin{equation*}
        \begin{split}
             \mathrm{L}_\varepsilon(\{v^k_t\},\{u_t\})&=\inf_{w\in X^{div}}\left(A_X(w)+\varepsilon^{-1}(U(w)-V^k(w))\right)\\
             &\geq \inf_{w\in X^{div}}\left(A_X(w)+\varepsilon^{-1}(U(w)-U_\varepsilon(w))\right)-\frac{1}{\varepsilon k}\geq -\frac{1}{\varepsilon k},
        \end{split}
    \end{equation*}
    where we used Lemma \ref{lem: Ue<U+eA} in the last inequality.
   Moreover, we have
    $$
    I\{v^k_t\}=I^{\mathrm{NA}}(V^k)\geq I^{\mathrm{NA}}(U_\varepsilon)\geq I\{u_{t,\varepsilon}\}.
    $$
    So we obtain that (recall \eqref{eq: rad_E>sup})
    $$
    I\{u_{t,\varepsilon}\}\geq\varepsilon\mathrm{L}_\varepsilon(\{v^k_t\},\{u_t\})+I\{v^k_t\}\geq I^{\mathrm{NA}}(U_\varepsilon)-1/k\geq I\{u_{t,\varepsilon}\}-1/k.
    $$
   Sending $k\to\infty$ we obtain the desired equality
   \begin{equation*}
       I\{u_{t,\varepsilon}\}=I^{\mathrm{NA}}(U_\varepsilon)=\sup_{\{v_t\}}\left(\varepsilon\cdot\mathrm{L}_\varepsilon(\{v_t\},\{u_t\})+I\{v_t\}\right).
   \end{equation*}
\end{proof}

\begin{remark}
    The rays $\{v_t^k\}$ in the above proof form a maximizing sequence for \eqref{eq: rad_E=sup}. If the admissible class is enlarged from test-configuration rays to all maximal finite energy rays, then the maximal geodesic ray $\{v_t\}$ associated with $U_\varepsilon$ is a maximizer. Indeed, $v_t\geq u_t$ gives $\mathrm{L}_\varepsilon(\{v_t\},\{u_t\})\leq0$, while $v_t\leq v_t^k$ gives $\mathrm{L}_\varepsilon(\{v_t\},\{u_t\})\geq-1/k$. Thus $\mathrm{L}_\varepsilon(\{v_t\},\{u_t\})=0$. By Theorem \ref{thm: maximizer_main}, this ray coincides with $\{P_\varepsilon(u)_t\}$.
\end{remark}

The next result is used in the proof of Theorem \ref{thm: rad_E=sup}.
\begin{lemma}
\label{lem:remove_bd}
    Let $\{u_t\}\in\cR^1$, let $U$ be its associated NA potential, and let $\varepsilon>0$. For any geodesic ray $\{v_t\}$ induced by some $V\in\cH^{\mathrm{NA}}$, we have
    $$
    \mathrm{L}_\varepsilon(\{v_t\},\{u_t\})=\inf_{w\in X^{div}}\left(A_X(w)+\varepsilon^{-1}(U(w)-V(w))\right).
    $$
\end{lemma}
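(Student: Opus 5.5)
The plan is to reduce the statement to the known valuative description of log canonical thresholds of $\{v_t\}$ relative to $\{u_t\}$, via a resolution on which $\{v_t\}$ is modelled.

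\textbf{Step 1: reduce to a pole-order question.} Since $V\in\cH^{\mathrm{NA}}$, the ray $\{v_t\}$ is induced by a smooth test configuration $(\cX,\cL)$ dominating $X\times\PP^1$ with snc central fibre. The $S^1$-invariant function $V_z:=v_{-\log|z|^2}$ is bounded on $X\times\{0<|z|<1\}$ up to the term $\log|s|$ for local equations of the divisor $\cL-\rho^*p_1^*L$. Equivalently, $v_t-\phi_V(t)$ is bounded, where $\phi_V$ is the piecewise-linear model potential attached to $V$.

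Apply Corollary \ref{cor: liminf} to replace $\mathrm{L}_\varepsilon$ by $\tilde L^{1/\varepsilon}$. After the change of variables $z=e^{-t/2}$, the condition
\[
\int_0^\infty e^{\tau t}\int_X e^{\frac1\varepsilon(v_t-u_t)}\omega^n\,dt<\infty
\]
becomes integrability of
\[
|z|^{-2\tau-2}\,e^{\frac1\varepsilon(V_z-U_z)}
\]
near $X\times\{0\}$. Therefore $\tilde L^{1/\varepsilon}$ equals the complex singularity exponent (log canonical threshold type quantity) of the $S^1$-invariant psh weight $\frac1\varepsilon(U_z-V_z)$ along $\cX_0$, computed relative to the measure $|z|^{-2}$.

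\textbf{Step 2: valuative formula.} Use the valuative characterisation of integrability (Boucksom--Favre--Jonsson, together with Guan--Zhou openness to handle the threshold itself). The function $|z|^{-2\tau-2}e^{-\varphi}$ is locally integrable iff
\[
A_{X\times\CC}(w)-\tau\, w(z)-w(z)>w(\varphi)
\]
for all divisorial $w$ over $X\times\CC$, with the appropriate strictness. Restrict to $\CC^*$-invariant divisorial valuations with $w(z)=1$. By Gauss extension these correspond to $w\in X^{div}$, satisfying $A_{X\times\CC}(G(w))=A_X(w)+1$ and $G(w)(U_z)=-U(w)$, the latter being the definition of $U$ in \cite[Definition 4.2]{BBJ18}. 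The same identity holds for $V$.

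The general (non-invariant) valuations reduce to invariant ones by $S^1$-averaging, since the weight is invariant. Valuations with $w(z)=0$ impose no condition: the weight is locally integrable on $X\times\CC^*$ by zero Lelong numbers of finite-energy potentials, using \eqref{eq:U+A>_C} and the boundedness of $v_t$. Rescaling $w(z)=c$ is homogeneous. The threshold is then
\[
\tau^*=\inf_{w\in X^{div}}\left(A_X(w)+\varepsilon^{-1}(U(w)-V(w))\right),
\]
which is the claimed formula.

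\textbf{Main obstacle.} The hard step is justifying the valuative lct formula for the non-algebraic, merely finite-energy weight $U_z/\varepsilon$. The equality $\inf_w(A+w(\varphi))=\mathrm{lct}$ is proved for psh $\varphi$ via multiplier ideals, but here we need it in a mixed form with the $z$-direction singled out.

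I would first try approximating $U_z$ by the Demailly approximations $\frac{1}{m+m_0}\log|\mathcal I(mU_z)|$, using Lemma \ref{lem: Dem_approx_exponent} to control the error as $C/m$ on the $\tilde L$ side. On the valuative side the error is bounded by $(A_X+1)/m$ via \cite[Lemma 5.7]{BBJ18}, as in \eqref{eq: U_k<U+A}. For the algebraic approximants, the formula is a log-resolution computation on a model dominating both $\cX$ and the blow-up of $\mathcal I(mU_z)$. There the infimum over divisorial valuations is attained at the finitely many components of the central fibre, which matches Li's/BHJ-type formula $\mathrm{L}^{\mathrm{NA}}$ for test configurations. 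Letting $m\to\infty$ should then give the general case; the remaining delicacy is that the error term $(A_X+1)/m$ is not uniformly small where $A_X$ is large.
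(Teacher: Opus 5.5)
Your Step~2 is essentially the right mechanism. As written, however, the proposal does not close, and the route you suggest for closing it fails at a concrete point.

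Lemma~\ref{lem: Dem_approx_exponent} approximates the \emph{first} slot $\{v_t\}$, which enters $e^{\beta(v_t-u_t)}$ with a plus sign. Its H\"older argument rests on the multiplier-ideal integrability $\int_0^\infty e^{-t}\int_X e^{(m+m_0)v^m_t-mv_t}\omega^n\,dt<\infty$. If you instead replace the second slot $u_t$ by its less singular Demailly approximants $u^m_t\geq u_t$, you only get the free inequality $\tilde L^{1/\varepsilon}(\{v_t\},\{u^m_t\})\geq \tilde L^{1/\varepsilon}(\{v_t\},\{u_t\})$. The reverse bound needs a separate H\"older argument, now using integrability of $e^{(m+m_0)u^m_t-mu_t}$. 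That argument shifts the exponent $\beta=1/\varepsilon$ to some $\beta_m\to\beta$, and you then need continuity in $\beta$.

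Likewise, the valuative error $(A_X+1)/m$ that you leave open can only be absorbed by rescaling $\varepsilon$. This uses $U+A_X\geq -C_1$ (cf.\ \eqref{eq:U+A>_C}) and the concavity of $\varepsilon\mapsto\inf_{X^{div}}(\varepsilon A_X+U-V)$, as in the proof of Proposition~\ref{prop: U_e_conti}. Finally, $\frac1\varepsilon(U_z-V_z)$ is a difference of psh functions, not a psh weight, so no integrability criterion applies to it as it stands.

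More to the point, the obstacle you identify is not there: no approximation of $U_z$ is needed. The valuative criterion of integrability (Boucksom--Favre--Jonsson together with Guan--Zhou strong openness \cite{GZ15}) holds for arbitrary psh weights, algebraic or not. To use it:
\begin{enumerate}
\item Pull back to a smooth model $\rho\colon\cX\to X\times\PP^1$ of $V$ with snc central fibre. There $V_z\circ\rho$ differs from $\log|s_D|^2$ by a bounded function, for some $\QQ$-divisor $D$ supported on $\cX_0$.
\item The factors $e^{V_z/\varepsilon}$, the Jacobian of $\rho$ and $|z|^{-2\tau-2}$ then combine into a real snc twist of the log discrepancy.
\item Apply the criterion to the genuinely quasi-psh function $\frac1\varepsilon U_z\circ\rho$.
\end{enumerate}
The reduction to $\CC^*$-invariant valuations also needs a real argument, e.g.\ via $\CC^*$-invariance of the relevant multiplier ideals and equivariant resolutions, rather than `$S^1$-averaging'.

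Nothing in this needs $U_z$ to be bounded or algebraic. That is why the paper can simply quote the linearly bounded case \cite[Proposition 8.1]{DZ25}. It observes that boundedness of $\{u_t\}$ is used there only to know two things: that $\mathrm{L}_\varepsilon(\{v_t\},\{u_t\})$ is finite, and that $e^{\frac1\varepsilon(V_z-U_z)}$ is integrable away from $X\times\{0\}$. Since $-Ct\leq v_t\leq Ct$, one may take $v_t\equiv 0$. Finiteness then follows from (the proof of) \cite[Theorem 4.1]{DZ22} (cf.\ \cite[Lemma 6.5]{DZ25}), and integrability from Zeriahi's uniform Skoda estimate \cite{Ze01}. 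Your zero-Lelong-number argument only gives local integrability on $X\times\CC^*$. The uniform version is what also controls $t\to 0^+$, which is needed for the $\int_0^\infty$ in $\tilde L^{1/\varepsilon}$.
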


\begin{proof}
    When $\{u_t\}$ is linearly bounded, this is nothing but \cite[Proposition 8.1]{DZ25}. So here we are claiming that the same assertion holds without the boundedness assumption. Indeed, in the proof of \cite[Proposition 8.1]{DZ25} the boundedness condition is only used at two places. First, it implies that $\mathrm{L}_\varepsilon(\{v_t\},\{u_t\})$ is a finite number; second, it implies that $e^{\frac{1}{\varepsilon}(v_{-\log|z|^2}-u_{-\log|z|^2})}$ is integrable away from $X\times\{0\}$. We claim that both hold true for possibly unbounded $\{u_t\}\subset\cE^1$ as well.

    Indeed, since $\{v_t\}$ is always bounded: $-Ct\leq v_t\leq Ct$, to show both properties we can simply assume that $v_t\equiv 0$. Then $\mathrm{L}_\varepsilon(\{v_t\},\{u_t\})$ being a finite number follows from (the proof of) \cite[Theorem 4.1]{DZ22} (see also \cite[Lemma 6.5]{DZ25} in the latest version). Moreover, $e^{\frac{1}{\varepsilon}(v_{-\log|z|^2}-u_{-\log|z|^2})}$ being integrable away from $X\times\{0\}$ follows from \cite{Ze01}.
\end{proof}

%Finally, combining the above with our main Theorems \ref{thm:main_f'}, \ref{thm:intro_radial} and \ref{thm: main_F'}, we immediately get the following result.

% This corollary reads a bit redundant, as it can be inferred from many different results in this paper and also other ones.

%\begin{corollary}
%For any geodesic ray $\{u_t\}$ with $\mathrm{Ent}\{u_t\}<\infty$, the function $[0,+\infty)\ni\varepsilon\mapsto I\{u_{t,\varepsilon}\}$ is $C^1$, non-decreasing and concave. Moreover,
%    $$
%    \frac{d}{d\varepsilon}\bigg|_{\varepsilon=0^+} I\{u_{t,\varepsilon}\}=\frac{d}{d\varepsilon}\bigg|_{\varepsilon=0^+} I\{P_\varepsilon(u)_t\}=\frac{d}{d\varepsilon}\bigg|_{\varepsilon=0^+} I^{\mathrm{NA}}(U_\varepsilon)=\mathrm{Ent}\{u_{t}\}=\mathrm{Ent}^{\mathrm{NA}}(U)
%    $$
%    and for any $\varepsilon>0$ one has that
%    $$
%    \frac{d}{d\varepsilon} I\{u_{t,\varepsilon}\}=\frac{d}{d\varepsilon}I\{P_\varepsilon(u)_t\}=\frac{d}{d\varepsilon}I^{\mathrm{NA}}(U_\varepsilon)=\mathrm{Ent}\{u_{t,\varepsilon}\}=\mathrm{Ent}\{P_\varepsilon(u)_t\}=\mathrm{Ent}^{\mathrm{NA}}(U_\varepsilon).
 %   $$
%\end{corollary}

\section{Applications to the cscK problem}
\label{sec: cscK}

Let $(X,L)$ be a projective manifold, with $L$ being an ample line bundle on $X$. Let $V:=L^n$ and $\bar S:=\frac{-n K_X\cdot L^{n-1}}{L^{n}}.$ In \cite{DZ25} we introduced a new notion of stability, called uniform $K^\beta$-stability, which characterizes the existence of constant scalar curvature K\"ahler (cscK) metrics in $c_1(L)$. Let us briefly recall its definition.

\begin{definition}
[\cite{DZ25}]
\label{def: K_beta_st}
Let $\beta>0$ be a constant.
    Given any smooth test configuration $(\cX,\cL)$ dominating $X\times\PP^1$, define $\cL_\beta:=\cL+\frac{1}{\beta}K^{\log}_{\cX/X\times\PP^1}$,
    $$
    \mathrm{K}^\beta(\cX,\cL):=\frac{\beta(\langle\cL_\beta^{n+1}\rangle-\cL^{n+1})}{(n+1)V}+\frac{\bar S\cL^{n+1}}{(n+1)V}+\frac{K_X\cdot\cL^n}{V},
    $$
    $$
    \mathrm{J}(\cX,\cL):=\frac{\cL\cdot L^n}{V}-\frac{\cL^{n+1}}{(n+1)V}.
    $$
    Here $\langle\cdot\rangle$ denotes the movable intersection product (see \cite[(62)]{DZ25}).
    We say
    \begin{enumerate}
        \item $(X,L)$ is $K^\beta$-semistable if $\mathrm{K}^\beta(\cX,\cL)\geq 0$ holds for all $(\cX,\cL)$.
        \item $(X,L)$ is uniformly $K^\beta$-stable if there exists some $\varepsilon>0$ such that $\mathrm{K}^\beta(\cX,\cL)\geq \varepsilon \mathrm{J}(\cX,\cL)$ for all $(\cX,\cL)$.
    \end{enumerate}
    
\end{definition}

The above stability notions are motivated by the $K^\beta$ energy introduced in \cite{DZ25}, which we now turn to describe. Fix a reference metric $\omega\in c_1(L)$. For any $\beta>0$ and $u\in\cE^1$ let $u_{1/\beta}$ be the solution of \eqref{eq: def_u_e} with $\varepsilon:=1/\beta$. Recall from \eqref{eq: beta_entropy_def} that the $\beta$-entropy is defined as
$
\mathrm{Ent}^\beta(u):=\beta(I(u_{1/\beta})-I(u)).
$ Also recall the $\cJ$ energy:
$$
\cJ(u):=\frac{1}{V}\int_X u\Ric(\omega)\wedge\sum_{i=0}^{n-1}\omega^i\wedge\omega^{n-1-i}_u-\bar S I(u).
$$
Now we can define $K$ and $K^\beta$ energies as
$$
\mathrm{K}(u):=\mathrm{Ent}(u)-\cJ(u)  \ \ \textup{ and } \ \ 
\mathrm{K}^\beta(u):=\mathrm{Ent}^\beta(u)-\cJ(u).
$$

\begin{definition}
    We say $K$ energy is proper if for some $\varepsilon>0$ and $C>0$ we have
    $$
    \mathrm{K}(u)\geq\varepsilon (\sup u-I(u))-C,\ u\in\cE^1.
    $$
     Similarly, we say $K^\beta$ energy is proper if for some $\varepsilon>0$ and $C>0$ we have
    $$
    \mathrm{K}^\beta(u)\geq\varepsilon (\sup u-I(u))-C,\ u\in\cE^1.
    $$
\end{definition}

As shown in \cite[\S3]{DZ25}, the $K^\beta$ energy behaves like the Ding energy, and also regularizes the $K$ energy. For any $\{u_t\}\in\cR^1$, set
\[
\mathrm{K}^\beta\{u_t\}:=\lim_{t\to\infty}\frac{\mathrm{K}^\beta(u_t)}{t}
=\mathrm{Ent}^\beta\{u_t\}-\cJ\{u_t\},
\]
where the existence of the limit follows from the corresponding radial slope formulas. If $\{u_t\}$ is attached to a test configuration $(\cX,\cL)$, then \cite[Theorem 1.2]{DZ25} gives
\begin{equation}
    \label{eq: DZ_slope_formula}
    \mathrm{K}^\beta\{u_t\}=\mathrm{K}^\beta(\cX,\cL).
\end{equation}
So the stability notions appearing in Definition \ref{def: K_beta_st} can be alternatively thought of as slope stability of $K^\beta$ along geodesic rays induced from test configurations.

Recall the following known characterization for cscK metrics.

\begin{theorem}
    {(\cite{CC2} and \cite[Theorem 1.1 and Theorem 4.4]{DZ25})}
    \label{thm: DZ_main}The following statements are equivalent.
    \begin{enumerate}
        \item There exists a unique cscK metric in $c_1(L)$.
        \item $K$ energy is proper.
        \item $K^\beta$ energy is proper for some $\beta>0$.
        \item $(X,L)$ is uniformly $K^\beta$-stable for some $\beta>0$.
    \end{enumerate}
\end{theorem}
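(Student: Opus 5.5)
This statement is a compilation of known results rather than something new, so the plan is to assemble the equivalences from the cited works and check only that the notions used here match theirs.

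The equivalence $(1)\Leftrightarrow(2)$ is the theorem of Chen--Cheng \cite{CC2}. In the setting at hand, $(X,L)$ polarized, "unique cscK metric" implies that $\mathrm{Aut}_0(X,L)$ is trivial. In that case existence of a cscK metric is equivalent to properness of $\mathrm{K}$ with respect to $d_1$, i.e.\ to $\mathrm{K}(u)\geq\varepsilon(\sup u-I(u))-C$. The only check is that the normalization used in our definition of properness agrees with theirs. It does: $\sup u-I(u)$ is comparable to $d_1(0,u)$ after normalizing $\sup u=0$.

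For $(2)\Leftrightarrow(3)$ I would invoke \cite[Theorem 1.1]{DZ25}. The direction $(3)\Rightarrow(2)$ is elementary given what we already have. Lemma \ref{lem: Ent>Ent_e>Ent_e} (with $\varepsilon=1/\beta$) gives $\mathrm{Ent}(u)\geq\beta(I(u_{1/\beta})-I(u))=\mathrm{Ent}^\beta(u)$. Hence $\mathrm{K}\geq\mathrm{K}^\beta$ pointwise on $\cE^1$, and properness of $\mathrm{K}^\beta$ passes to $\mathrm{K}$. The converse $(2)\Rightarrow(3)$ is the nontrivial input from \cite{DZ25}. It uses the existence of the cscK metric from $(1)$ and the convergence $\mathrm{K}^\beta\to\mathrm{K}$ as $\beta\to\infty$, with a compactness/contradiction argument along $d_1$-geodesic rays. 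I would cite it rather than reprove it.

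Finally, $(3)\Leftrightarrow(4)$ is \cite[Theorem 4.4]{DZ25}. For $(3)\Rightarrow(4)$, restrict properness to rays induced by test configurations and use the slope formula \eqref{eq: DZ_slope_formula}, $\mathrm{K}^\beta\{u_t\}=\mathrm{K}^\beta(\cX,\cL)$, together with the fact that the slope of $\sup u_t-I(u_t)$ equals $\mathrm{J}(\cX,\cL)$. For $(4)\Rightarrow(3)$ the argument is the main obstacle. One argues by contradiction, producing a finite energy geodesic ray along which the slope of $\mathrm{K}^\beta$ is at most a small multiple of the slope of the $\mathrm{J}$-functional. One then approximates it by test configuration rays via Demailly/multiplier-ideal approximation, controlling the $\beta$-entropy slope through the radial formula of Theorem \ref{thm: rad_Ent_beta=sup}. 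Since this step is already carried out in \cite{DZ25}, I would simply cite it.
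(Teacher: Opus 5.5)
Your proposal is correct and matches the paper, which gives no independent argument and simply attributes $(1)\Leftrightarrow(2)$ to \cite{CC2} and the remaining equivalences to \cite[Theorem 1.1 and Theorem 4.4]{DZ25}. Your two self-contained checks are also accurate: $(3)\Rightarrow(2)$ follows from $\mathrm{Ent}\geq\mathrm{Ent}^\beta$ (Lemma \ref{lem: Ent>Ent_e>Ent_e}), and $(3)\Rightarrow(4)$ follows from the slope formula \eqref{eq: DZ_slope_formula}. One small addition: for the uniqueness part of $(2)\Rightarrow(1)$ you should say that properness forces $\mathrm{Aut}_0(X,L)$ to be trivial, which is standard.
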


Now we present the main theorem of this section, which further sharpens the above result.

\begin{theorem}[$\Rightarrow$ Theorem \ref{thm:cscK}]
\label{thm:cscK'}
The following statements are equivalent.
    \begin{enumerate}
        \item There exists a unique cscK metric in $c_1(L)$.
        \item $K$ energy is proper.
        \item $K^\beta$ energy is proper for some $\beta>0$.
        \item $K^\beta$ energy is bounded from below for some $\beta>0$, i.e.,
        $$
        \inf_{u\in\cE^1}\mathrm{K}^\beta(u)>-\infty.
        $$
        \item  $(X,L)$ is uniformly $K^\beta$-stable for some $\beta>0$.
        \item $(X,L)$ is $K^\beta$-semistable for some $\beta>0$.
        \item For any geodesic ray $\{u_t\}\in\cR^1$ there exists $\beta>0$ such that $\mathrm{K}^\beta\{u_t\}\geq0$.
    \end{enumerate}
\end{theorem}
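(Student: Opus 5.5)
The equivalences (1)$\Leftrightarrow$(2)$\Leftrightarrow$(3)$\Leftrightarrow$(5) are Theorem \ref{thm: DZ_main}. The implication (3)$\Rightarrow$(4) is trivial, since $\sup u - I(u)\geq 0$. For (4)$\Rightarrow$(6) we take $(\cX,\cL)$, pass to its geodesic ray $\{u_t\}$, and note that lower boundedness of $\mathrm{K}^\beta$ gives $\mathrm{K}^\beta\{u_t\}=\lim_t \mathrm{K}^\beta(u_t)/t\geq 0$. By \eqref{eq: DZ_slope_formula} this slope equals $\mathrm{K}^\beta(\cX,\cL)$. The remaining work is therefore (6)$\Rightarrow$(7) and (7)$\Rightarrow$(5), or more economically (6)$\Rightarrow$(7)$\Rightarrow$(1).

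\emph{(6)$\Rightarrow$(7).} Fix $\beta$ with $K^\beta$-semistability and an arbitrary $\{u_t\}\in\cR^1$. First, by Theorem \ref{thm: rad_Ent_beta=sup}, $\mathrm{Ent}^\beta\{u_t\}$ is a supremum over test configuration rays. I would rather approximate $\{u_t\}$ in $d_1^c$ by test configuration rays $\{u^j_t\}$: Demailly approximations of the maximal ray $\{w_t\}\geq\{u_t\}$ attached to $U$, with the non-maximal case handled by the NA/radial comparison of Theorem \ref{thm: rad_E=sup}. Semistability gives $\mathrm{K}^\beta\{u^j_t\}\geq0$. Next, $\mathrm{Ent}^\beta\{\cdot\}=\beta(I\{P(u)^\beta_t\}-I\{u_t\})$ is $d_1^c$-continuous by Lemma \ref{lem: P_beta_rad_conv}, and $\cJ\{\cdot\}$ is $d_1^c$-continuous. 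Hence $\mathrm{K}^\beta\{u_t\}\geq 0$ for maximal rays. For non-maximal rays I would use $\mathrm{Ent}^\beta\{u_t\}\geq \mathrm{Ent}^{\beta'}\{u_t\}$-type monotonicity together with the fact that $\cJ$-slopes only depend on $U$ up to the $I$ term; I expect this reduction to be a technical point.

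\emph{(7)$\Rightarrow$(1), the main step.} Suppose no cscK metric exists. By Theorem \ref{thm: DZ_main} the $K$ energy is not proper, and the standard construction of \cite{DZ25} (as in \cite{BDL17}) produces a nontrivial $\{u_t\}\in\cR^1$ with $\sup u_t=0$, $d_1^c(\{0\},\{u_t\})=1$, $\mathrm{Ent}\{u_t\}<\infty$ and $\mathrm{K}\{u_t\}\leq 0$, i.e.\ $\mathrm{Ent}\{u_t\}\leq \cJ\{u_t\}$. By (7) there is $\beta$ with $\mathrm{Ent}^\beta\{u_t\}\geq \cJ\{u_t\}\geq\mathrm{Ent}\{u_t\}$. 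Writing $\varepsilon=1/\beta$ and using $H_u$ from Theorem \ref{thm:intro_radial}, we have $\mathrm{Ent}^\beta\{u_t\}=(H_u(\varepsilon)-H_u(0))/\varepsilon\leq H'_{u,+}(0)=\mathrm{Ent}\{u_t\}$ by concavity. Equality therefore holds, so $H_u$ is affine on $[0,\varepsilon]$, and $H_u'(\varepsilon)=\mathrm{Ent}\{P_\varepsilon(u)_t\}=\mathrm{Ent}\{u_t\}$. Corollary \ref{cor: Ent_id} then gives $\{P_\varepsilon(u)_t\}=\{u_t\}$, hence $H_u(\varepsilon)=H_u(0)$ and $\mathrm{Ent}\{u_t\}=0$.

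It remains to derive the contradiction. Because $\mathrm{Ent}\{u_t\}=0$, the condition $\mathrm{K}\{u_t\}\leq0$ reads $\cJ\{u_t\}\geq 0$. I expect the hard part is showing that a nontrivial ray cannot have zero entropy slope, or else using the strict inequality obtained by shifting to $\beta'>\beta$. I would first try to exploit $\{P_\varepsilon(u)_t\}=\{u_t\}$ directly: $L^\beta(\{u_t\},\{u_t\})=0$ trivially, so $\{u_t\}$ itself maximizes $F^\beta$. By Theorem \ref{thm: maximizer_main}(i) the ray is then bounded, and I would aim to argue it must be trivial, for instance via the NA picture, where $U_\varepsilon=U$ forces $U\leq U+\varepsilon A_X$ to be attained, which in turn gives $\mathrm{MA}^{\mathrm{NA}}(U)$ concentrated at $v_{triv}$, so $U$ is constant. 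That would contradict $d_1^c(\{0\},\{u_t\})=1$.
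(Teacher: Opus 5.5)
\textbf{There is a genuine gap}, in two places.

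\textbf{The final contradiction is missing.} Your rigidity argument in the main step is correct and essentially the paper's. Equality $\mathrm{Ent}^\beta\{u_t\}=\mathrm{Ent}\{u_t\}$ forces $H_u$ to be affine on $[0,\varepsilon]$. Hence $\mathrm{Ent}\{P_\varepsilon(u)_t\}=\mathrm{Ent}\{u_t\}<\infty$, Corollary~\ref{cor: Ent_id} gives $\{P_\varepsilon(u)_t\}=\{u_t\}$, and so $\mathrm{Ent}\{u_t\}=0$. But the step you call the hard part is where your proof stops, and the missing ingredient is elementary: the entropy is coercive. There are $\delta,C>0$ with $\mathrm{Ent}(u)\geq\delta(\sup_X u-I(u))-C$ on $\cE^1$, an $\alpha$-invariant estimate from \cite[Proposition 4.11]{BBEGZ19}. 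Dividing by $t$ gives $\mathrm{Ent}\{u_t\}\geq\delta\,\mathrm{J}\{u_t\}>0$ for every nontrivial ray, which contradicts $\mathrm{Ent}\{u_t\}=0$ at once.

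Your non-Archimedean substitute is not a proof as written. Passing from $\{P_\varepsilon(u)_t\}=\{u_t\}$ to $U_\varepsilon=U$ requires knowing that $\{P_\varepsilon(u)_t\}$ is the maximal ray of $U_\varepsilon$. The orthogonality relation then only gives $A_X=0$ $\mathrm{MA}^{\mathrm{NA}}(U)$-a.e. You need $A_X>0$ away from $v_{triv}$ to get $\mathrm{MA}^{\mathrm{NA}}(U)=\delta_{v_{triv}}$. Finally, ``hence $U$ is constant'' is the injectivity of the non-Archimedean Monge--Amp\`ere operator modulo constants, a deep theorem you neither state nor cite. The boundedness from Theorem~\ref{thm: maximizer_main}(i) plays no role.

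\textbf{The chain of implications does not close.} Your (6)$\Rightarrow$(7) must give $\mathrm{K}^\beta\{u_t\}\geq0$ for every $\{u_t\}\in\cR^1$. Your approximation by test configuration rays (Demailly approximation of a maximal ray) only covers maximal rays, and you leave the non-maximal case open. The claim that the $\cJ$-slope depends on $U$ only up to an $I$ term is unjustified. In your scheme both (6)$\Rightarrow$(1) and (1)$\Rightarrow$(7) pass through this step, so the cycle is incomplete.

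The step is also unnecessary:
\begin{enumerate}
\item Your argument for (4)$\Rightarrow$(6) works verbatim along an arbitrary ray, so (1)$\Rightarrow$(3)$\Rightarrow$(4)$\Rightarrow$(7) is immediate.
\item For (6)$\Rightarrow$(1) you only need $\mathrm{K}^\beta\geq0$ along your destabilizing ray. That ray has finite $\mathrm{K}$-slope, so it is a $d_1^c$-limit of test configuration rays by \cite[Theorem 1.2]{Li20-cscK}. Semistability plus continuity of $\mathrm{K}^\beta$-slopes \cite[Theorem 5.2]{DZ25} then gives $\mathrm{K}^\beta\{u_t\}\geq0$.
\item From there, your main step together with the coercivity estimate finishes the proof.
\end{enumerate}
This is exactly how the paper proceeds. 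It then notes that the same argument shows $\mathrm{K}\{u_t\}>\mathrm{K}^\beta\{u_t\}$ for every $\beta>0$ along nontrivial rays of finite $\mathrm{K}$-slope, which yields (7)$\Rightarrow$(1).
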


\begin{proof}
    Given Theorem \ref{thm: DZ_main}, Definition \ref{def: K_beta_st} and \eqref{eq: DZ_slope_formula}, it is enough to show that (6)$\Rightarrow$(1) and (1)$\Leftrightarrow$(7). 

    Let us first argue (6)$\Rightarrow$(1).
    By \cite[Theorem 7.1]{CC2} and \cite[Theorem 1.3]{BerBern17}, to show that there exists a unique cscK metric in $c_1(L)$, it amounts to proving that \[\mathrm{K}\{u_t\}:=\lim_{t\to\infty}\frac{\mathrm{K}(u_t)}{t}>0\] holds for any nontrivial geodesic ray $\{u_t\}\subset \cE^1$. Here by nontrivial we mean that $$
    \mathrm{J}\{u_t\}:=\lim_{t\to\infty}\left(\frac{\sup u_t}{t}-\frac{I(u_t)}{t}\right)>0.
    $$
    Equivalently, $u_t\neq ct$ for any $c\in\RR$.

    To this end, first note that the estimates in \cite[Proposition 4.11]{BBEGZ19} (see also \cite[\S3]{Zha20-conti}) imply the existence of $\delta>0$ and $C>0$ such that
    $$
    \mathrm{Ent}(u)\geq\delta (\sup u-I(u))-C,\ u\in\cE^1.
    $$
    So for any nontrivial geodesic ray $\{u_t\}$ we have that
    \begin{equation}
        \label{eq: rad_Ent>0}
        \mathrm{Ent}\{u_t\}:=\lim_{t\to\infty}\frac{\mathrm{Ent}(u_t)}{t}\geq\delta \mathrm{J}\{u_t\}>0.
    \end{equation}

In addition, we might as well assume that $\mathrm{K}\{u_t\}<\infty$ (otherwise $\mathrm{K}\{u_t\}>0$ trivially holds). Then by \cite[Theorem 1.2]{Li20-cscK} we can find a sequence of geodesic rays $\{u_t^i\}$ arising from test configurations such that $\{u_t^i\}\rightarrow_{d_1^c}\{u_t\}$. By \eqref{eq: DZ_slope_formula} and the assumption that $(X,L)$ is $K^\beta$-semistable for some $\beta>0$, we obtain that
$$
\mathrm{K}^\beta\{u^i_t\}\geq0\text{ for all }i.
$$
Sending $i\to\infty$ and using \cite[Theorem 5.2]{DZ25}, we deduce that
$$
\mathrm{K}^\beta\{u_t\}\geq 0.
$$

Note that our goal is to argue that
$$
\mathrm{K}\{u_t\}>0.
$$
Comparing $K$ and $K^\beta$ energies, it suffices to show that
\begin{equation}
    \label{eq: Ent>Ent_beta}
    \mathrm{Ent}\{u_t\}>\mathrm{Ent}^\beta\{u_t\}=\beta(I\{u_{t,1/\beta}\}-I\{u_t\}).
\end{equation}

To achieve this, one can either use the radial analysis in \S\ref{sec: rad} or the NA theory developed in \S\ref{sec: NA}. We will present a proof using the former. For $\varepsilon\in(0,1/\beta]$ put
$$
h(\varepsilon):=\frac{1}{\varepsilon}(I\{u_{t,\varepsilon}\}-I\{u_t\}).
$$
By Theorems \ref{thm:intro_radial} and \ref{thm: intro_slope}, $\varepsilon\mapsto h(\varepsilon)$ is $C^1$, non-increasing and $\lim_{\varepsilon\to 0^+}h(\varepsilon)=\mathrm{Ent}\{u_t\}.$ If it happens that $\mathrm{Ent}\{u_t\}=\mathrm{Ent}^\beta\{u_t\}$, then $\varepsilon\mapsto h(\varepsilon)$ is forced to be constant, so that $\varepsilon\mapsto I\{u_{t,\varepsilon}\}$ is affine. Applying Theorems \ref{thm:intro_radial} and \ref{thm: intro_slope} once again we deduce that
$$
\mathrm{Ent}\{u_t\}=h(\varepsilon)=\frac{d}{d\varepsilon}I\{u_{t,\varepsilon}\}=\mathrm{Ent}\{P(u)_t^{1/\varepsilon}\}\text{ for any }\varepsilon\in(0,1/\beta].
$$
Then Corollary \ref{cor: Ent_id} gives
$
\{u_t\}=\{P(u)_t^{1/\varepsilon}\}\text{ for any }\varepsilon\in(0,1/\beta],
$
which implies (by Theorem \ref{thm:intro_radial}) that
$$
\mathrm{Ent}\{u_t\}=\lim_{\varepsilon\to 0^+}\frac{d}{d\varepsilon}I\{P(u)_t^{1/\varepsilon}\}=0,
$$
contradicting our assumption \eqref{eq: rad_Ent>0}. So we obtain that (6)$\Rightarrow$(1).

Finally, we argue (1)$\Leftrightarrow$(7). In fact, $(1)$ implies $(4)$, which clearly implies (7). Conversely, for a nontrivial ray with $\mathrm{K}\{u_t\}=+\infty$, positivity is immediate; when $\mathrm{K}\{u_t\}<\infty$, the proof of (6)$\Rightarrow$(1) gives $\mathrm{K}\{u_t\}>\mathrm{K}^\beta\{u_t\}$ for every $\beta>0$. Hence (7) implies that $\mathrm{K}\{u_t\}>0$ for every nontrivial $\{u_t\}\in\cR^1$. We conclude by \cite[Theorem 7.1]{CC2} and \cite[Theorem 1.3]{BerBern17}.
\end{proof}

\let\oldthebibliography\thebibliography
\let\endoldthebibliography\endthebibliography
\renewenvironment{thebibliography}[1]{
  \begin{oldthebibliography}{#1}
    \setlength{\itemsep}{0pt}
    \setlength{\parskip}{0pt}
    \setlength{\parsep}{0pt}
}{\end{oldthebibliography}}

\small{
\bibliography{ref.bib}

@article {BFJ15a,
    AUTHOR = {Boucksom, S\'{e}bastien and Favre, Charles and Jonsson, Mattias},
     TITLE = {Solution to a non-{A}rchimedean {M}onge-{A}mp\`ere equation},
   JOURNAL = {J. Amer. Math. Soc.},
  FJOURNAL = {Journal of the American Mathematical Society},
    VOLUME = {28},
      YEAR = {2015},
    NUMBER = {3},
     PAGES = {617--667}
}

@misc{DZ25,
      title={A {YTD} correspondence for constant scalar curvature
metrics}, 
      author={Darvas, Tamas and Zhang, Kewei},
      year={2025},
      note={arXiv:2509.15173},
      archivePrefix={arXiv},
      primaryClass={math.DG}
}

@Article{GLZ18,
 Author = {Guedj, Vincent and Lu, Chinh H. and Zeriahi, Ahmed},
 Title = {Stability of solutions to complex {Monge}-{Amp{\`e}re} flows},
 FJournal = {Annales de l'Institut Fourier},
 Journal = {Ann. Inst. Fourier},
 ISSN = {0373-0956},
 Volume = {68},
 Number = {7},
 Pages = {2819--2836},
 Year = {2018}
}

@article {GZ07,
    AUTHOR = {Guedj, Vincent and Zeriahi, Ahmed},
     TITLE = {The weighted {M}onge-{A}mp\`ere energy of quasiplurisubharmonic
              functions},
   JOURNAL = {J. Funct. Anal.},
  FJOURNAL = {Journal of Functional Analysis},
    VOLUME = {250},
      YEAR = {2007},
    NUMBER = {2},
     PAGES = {442--482}
}

@article {DZ22,
    AUTHOR = {Darvas, Tam\'as and Zhang, Kewei},
     TITLE = {Twisted {K}\"ahler-{E}instein metrics in big classes},
   JOURNAL = {Comm. Pure Appl. Math.},
  FJOURNAL = {Communications on Pure and Applied Mathematics},
    VOLUME = {77},
      YEAR = {2024},
    NUMBER = {12},
     PAGES = {4289--4327},
      ISSN = {0010-3640,1097-0312},
   MRCLASS = {32Q20 (53C55)},
  MRNUMBER = {4814920},
       DOI = {10.1002/cpa.22206},
       URL = {https://doi.org/10.1002/cpa.22206},
}

@misc{BJ25preprint,
      title={{On the Yau-Tian-Donaldson conjecture for weighted cscK metrics}}, 
      author={Sebastien Boucksom and Mattias Jonsson},
      year={2025},
      note={arXiv:2509.15016},
      archivePrefix={arXiv},
      primaryClass={math.DG}
}

@article {BerBern17,
    AUTHOR = {Berman, Robert J. and Berndtsson, Bo},
     TITLE = {Convexity of the {$K$}-energy on the space of {K}\"{a}hler metrics
              and uniqueness of extremal metrics},
   JOURNAL = {J. Amer. Math. Soc.},
  FJOURNAL = {Journal of the American Mathematical Society},
    VOLUME = {30},
      YEAR = {2017},
    NUMBER = {4},
     PAGES = {1165--1196},
      ISSN = {0894-0347},
   MRCLASS = {32Q15 (53C55)},
  MRNUMBER = {3671939},
MRREVIEWER = {Ruadha\'{\i} Dervan},
       DOI = {10.1090/jams/880},
       URL = {https://doi.org/10.1090/jams/880},
}

@article {Li20-cscK,
    AUTHOR = {Li, Chi},
     TITLE = {Geodesic rays and stability in the csc{K} problem},
   JOURNAL = {Ann. Sci. \'{E}c. Norm. Sup\'{e}r. (4)},
  FJOURNAL = {Annales Scientifiques de l'\'{E}cole Normale Sup\'{e}rieure.
              Quatri\`eme S\'{e}rie},
    VOLUME = {55},
      YEAR = {2022},
    NUMBER = {6},
     PAGES = {1529--1574},
      ISSN = {0012-9593,1873-2151},
   MRCLASS = {32Q26 (14E05 32Q15)},
  MRNUMBER = {4517682},
MRREVIEWER = {Ahmed\ Lesfari},
}

@incollection {Li21,
    AUTHOR = {Li, Chi},
     TITLE = {K-stability and {F}ujita approximation},
 BOOKTITLE = {Birational geometry, {K}\"ahler-{E}instein metrics and
              degenerations},
    SERIES = {Springer Proc. Math. Stat.},
    VOLUME = {409},
     PAGES = {545--566},
 PUBLISHER = {Springer, Cham},
      YEAR = {2023},
      ISBN = {978-3-031-17858-0; 978-3-031-17859-7},
   MRCLASS = {32Q15 (14E30 32P05 32Q26)},
  MRNUMBER = {4606653},
MRREVIEWER = {R\'emi\ Reboulet},
       DOI = {10.1007/978-3-031-17859-7\_26},
       URL = {https://doi.org/10.1007/978-3-031-17859-7_26},
}

@article {Dar15,
    AUTHOR = {Darvas, Tam\'{a}s},
     TITLE = {The {M}abuchi geometry of finite energy classes},
   JOURNAL = {Adv. Math.},
  FJOURNAL = {Advances in Mathematics},
    VOLUME = {285},
      YEAR = {2015},
     PAGES = {182--219}
}

@article {CC2,
    AUTHOR = {Chen, Xiuxiong and Cheng, Jingrui},
     TITLE = {On the constant scalar curvature {K}\"{a}hler metrics
              ({II})---{E}xistence results},
   JOURNAL = {J. Amer. Math. Soc.},
  FJOURNAL = {Journal of the American Mathematical Society},
    VOLUME = {34},
      YEAR = {2021},
    NUMBER = {4},
     PAGES = {937--1009},
}

@article {SD18,
    AUTHOR = {Sj\"ostr\"om Dyrefelt, Zakarias},
     TITLE = {K-semistability of csc{K} manifolds with transcendental
              cohomology class},
   JOURNAL = {J. Geom. Anal.},
  FJOURNAL = {Journal of Geometric Analysis},
    VOLUME = {28},
      YEAR = {2018},
    NUMBER = {4},
     PAGES = {2927--2960},
      ISSN = {1050-6926,1559-002X},
   MRCLASS = {32Q26 (14D06 14F43 32Q15 53C56)},
  MRNUMBER = {3881961},
MRREVIEWER = {Jian\ Xiao},
       DOI = {10.1007/s12220-017-9942-9},
       URL = {https://doi.org/10.1007/s12220-017-9942-9},
}

@article {BFJ09,
    AUTHOR = {Boucksom, S\'{e}bastien and Favre, Charles and Jonsson, Mattias},
     TITLE = {Differentiability of volumes of divisors and a problem of
              {T}eissier},
   JOURNAL = {J. Algebraic Geom.},
  FJOURNAL = {Journal of Algebraic Geometry},
    VOLUME = {18},
      YEAR = {2009},
    NUMBER = {2},
     PAGES = {279--308},
      ISSN = {1056-3911}
}

@article {DL19,
    AUTHOR = {Darvas, Tam\'{a}s and Lu, Chinh H.},
     TITLE = {Geodesic stability, the space of rays and uniform convexity in
              {M}abuchi geometry},
   JOURNAL = {Geom. Topol.},
  FJOURNAL = {Geometry \& Topology},
    VOLUME = {24},
      YEAR = {2020},
    NUMBER = {4},
     PAGES = {1907--1967},
      ISSN = {1465-3060},
   MRCLASS = {32Q26 (32U05 53C55)},
  MRNUMBER = {4173924},
       DOI = {10.2140/gt.2020.24.1907},
       URL = {https://doi.org/10.2140/gt.2020.24.1907},
}

@misc{BoJ18a,
      title={Singular semipositive metrics on line bundles on varieties over trivially valued fields}, 
      author={SÃ©bastien Boucksom and Mattias Jonsson},
      year={2018},
      note={arXiv:1801.08229},
      primaryClass={math.AG}
}

@misc{LZ26,
      title={Sharp parameter gaps for quantized Mabuchi energies and cscK metrics}, 
      author={Kefeng Liu and Langdi Zhang},
      year={2026},
      note={arXiv:2610.16697},
      archivePrefix={arXiv},
      primaryClass={math.DG}
}

@misc{Pap26,
      title={An algebraic comparison of K-stability and $K^\beta$ stability}, 
      author={T. S. Papazachariou},
      year={2026},
      note={arXiv:2609.15333v1},
      archivePrefix={arXiv},
      primaryClass={math.DG}
}

@incollection {DarvasSurvey,
    AUTHOR = {Darvas, Tam\'{a}s},
     TITLE = {Geometric pluripotential theory on {K}\"{a}hler manifolds},
 BOOKTITLE = {Advances in complex geometry},
    SERIES = {Contemp. Math.},
    VOLUME = {735},
     PAGES = {1--104},
 PUBLISHER = {Amer. Math. Soc.},
      YEAR = {2019}
}

@article {CTW18,
    AUTHOR = {Chu, Jianchun and Tosatti, Valentino and Weinkove, Ben},
     TITLE = {{$C^{1,1}$} regularity for degenerate complex {M}onge-{A}mp\`ere
              equations and geodesic rays},
   JOURNAL = {Comm. Partial Differential Equations},
  FJOURNAL = {Communications in Partial Differential Equations},
    VOLUME = {43},
      YEAR = {2018},
    NUMBER = {2},
     PAGES = {292--312},
      ISSN = {0360-5302},
   MRCLASS = {32Q15 (32W20 35J96 53C55)},
  MRNUMBER = {3777876},
MRREVIEWER = {Vamsi Pritham Pingali},
       DOI = {10.1080/03605302.2018.1446167},
       URL = {https://doi.org/10.1080/03605302.2018.1446167},
}

@article {RWN14,
    AUTHOR = {Ross, Julius and Witt Nystr\"{o}m, David},
     TITLE = {Analytic test configurations and geodesic rays},
   JOURNAL = {J. Symplectic Geom.},
  FJOURNAL = {The Journal of Symplectic Geometry},
    VOLUME = {12},
      YEAR = {2014},
    NUMBER = {1},
     PAGES = {125--169},
      ISSN = {1527-5256},
   MRCLASS = {32L05 (32U05)},
  MRNUMBER = {3194078},
MRREVIEWER = {Bianca Santoro},
       DOI = {10.4310/JSG.2014.v12.n1.a5},
       URL = {https://doi.org/10.4310/JSG.2014.v12.n1.a5},
}

@book {Xubook,
    AUTHOR = {Xu, Chenyang},
     TITLE = {K-stability of {F}ano varieties},
    SERIES = {New Mathematical Monographs},
    VOLUME = {50},
 PUBLISHER = {Cambridge University Press, Cambridge},
      YEAR = {2025},
     PAGES = {xi+411},
      ISBN = {978-1-009-53877-0; [9781009538763]},
   MRCLASS = {14J45 (32Q26)},
  MRNUMBER = {4893062},
}

@article {BBEGZ19,
    AUTHOR = {Berman, Robert J. and Boucksom, Sebastien and Eyssidieux,
              Philippe and Guedj, Vincent and Zeriahi, Ahmed},
     TITLE = {K\"{a}hler-{E}instein metrics and the {K}\"{a}hler-{R}icci flow on log
              {F}ano varieties},
   JOURNAL = {J. Reine Angew. Math.},
  FJOURNAL = {Journal f\"{u}r die Reine und Angewandte Mathematik. [Crelle's
              Journal]},
    VOLUME = {751},
      YEAR = {2019},
     PAGES = {27--89},
      ISSN = {0075-4102},
   MRCLASS = {53C55 (53C25 53C44)},
  MRNUMBER = {3956691},
MRREVIEWER = {Kai Zheng},
       DOI = {10.1515/crelle-2016-0033},
       URL = {https://doi.org/10.1515/crelle-2016-0033},
}

@article {BBGZ,
    AUTHOR = {Berman, Robert J. and Boucksom, S\'{e}bastien and Guedj, Vincent
              and Zeriahi, Ahmed},
     TITLE = {A variational approach to complex {M}onge-{A}mp\`ere equations},
   JOURNAL = {Publ. Math. Inst. Hautes \'{E}tudes Sci.},
  FJOURNAL = {Publications Math\'{e}matiques. Institut de Hautes \'{E}tudes
              Scientifiques},
    VOLUME = {117},
      YEAR = {2013},
     PAGES = {179--245},
      ISSN = {0073-8301},
   MRCLASS = {32W20 (32Q15 32U05 35J93 53C55)},
  MRNUMBER = {3090260},
MRREVIEWER = {Rafa\l  Czy\.{z}},
       DOI = {10.1007/s10240-012-0046-6},
       URL = {https://doi.org/10.1007/s10240-012-0046-6},
}

@article {Ze01,
    AUTHOR = {Zeriahi, Ahmed},
     TITLE = {Volume and capacity of sublevel sets of a {L}elong class of
              plurisubharmonic functions},
   JOURNAL = {Indiana Univ. Math. J.},
  FJOURNAL = {Indiana University Mathematics Journal},
    VOLUME = {50},
      YEAR = {2001},
    NUMBER = {1},
}

@phdthesis{Sz06,
  author       = {Székelyhidi, Gábor},
  title        = {Extremal metrics and K-stability},
  school       = {Imperial College London},
  year         = {2006},
  type         = {PhD Thesis},
  url          = {https://arxiv.org/abs/math/0611002},
  eprint       = {math/0611002},
  eprinttype   = {arXiv},
}

@article {BHJ17,
    AUTHOR = {Boucksom, S\'{e}bastien and Hisamoto, Tomoyuki and Jonsson,
              Mattias},
     TITLE = {Uniform {K}-stability, {D}uistermaat-{H}eckman measures and
              singularities of pairs},
   JOURNAL = {Ann. Inst. Fourier (Grenoble)},
  FJOURNAL = {Universit\'{e} de Grenoble. Annales de l'Institut Fourier},
    VOLUME = {67},
      YEAR = {2017},
    NUMBER = {2},
     PAGES = {743--841},
      ISSN = {0373-0956},
   MRCLASS = {32Q26 (14E30 14G22)},
  MRNUMBER = {3669511},
MRREVIEWER = {Yuji Odaka},
       URL = {http://aif.cedram.org/item?id=AIF_2017__67_2_743_0},
}

@article {Der14,
    AUTHOR = {Dervan, Ruadha\'i},
     TITLE = {Uniform stability of twisted constant scalar curvature
              {K}\"ahler metrics},
   JOURNAL = {Int. Math. Res. Not. IMRN},     
    YEAR = {2016},
    NUMBER = {15},
     PAGES = {4728--4783},
      ISSN = {1073-7928,1687-0247},
   MRCLASS = {32Q26 (32Q15 53C55)},
  MRNUMBER = {3564626},
MRREVIEWER = {Eleonora\ Di Nezza},
       DOI = {10.1093/imrn/rnv291},
       URL = {https://doi.org/10.1093/imrn/rnv291},
}

@misc{T26,
      title={A solution to the {Yau-Tian-Donaldson conjecture through special F}ujita Approximations}, 
      author={Antonio Trusiani},
      year={2026},
      note={arXiv:2605.30063},
      archivePrefix={arXiv},
      primaryClass={math.AG},
      url={https://arxiv.org/abs/2605.30063}, 
}

@inbook{Oh14, place={Cambridge}, series={London Mathematical Society Lecture Note Series}, title={Ricci curvature, entropy, and optimal transport}, booktitle={Optimal Transport: Theory and Applications}, publisher={Cambridge University Press}, author={Ohta, Shin-Ichi}, editor={Ollivier, Yann and Pajot, Hervé and Villani, CedricEditors}, year={2014}, pages={145–200}, collection={London Mathematical Society Lecture Note Series}}

@article{GT25,
 author = {Guedj, Vincent and Tat Dat T{\^o}},
 title = {K{\"a}hler families of {Green}'s functions},
 fjournal = {Journal de l'{\'E}cole Polytechnique -- Math{\'e}matiques},
 journal = {J. {\'E}c. Polytech., Math.},
 issn = {2429-7100},
 volume = {12},
 pages = {319--339},
 year = {2025},
 language = {English},
 doi = {10.5802/jep.291},
 zbMATH = {7991840},
 Zbl = {1571.32064}
}

@article{GPSS24,
 author = {Guo, Bin and Phong, Duong H. and Song, Jian and Sturm, Jacob},
 title = {Diameter estimates in {K{\"a}hler} geometry},
 fjournal = {Communications on Pure and Applied Mathematics},
 journal = {Commun. Pure Appl. Math.},
 issn = {0010-3640},
 volume = {77},
 number = {8},
 pages = {3520--3556},
 year = {2024},
 language = {English},
 doi = {10.1002/cpa.22196},
 zbMATH = {7896925},
 Zbl = {1547.53083}
}

@misc{Pe02,
      title={The entropy formula for the Ricci flow and its geometric applications}, 
      author={Grisha Perelman},
      year={2002},
      eprint={math/0211159},
      archivePrefix={arXiv},
      primaryClass={math.DG},
      url={https://arxiv.org/abs/math/0211159}, 
}

@article {GZ15,
    AUTHOR = {Guan, Qi'an and Zhou, Xiangyu},
     TITLE = {A proof of {D}emailly's strong openness conjecture},
   JOURNAL = {Ann. of Math. (2)},
  FJOURNAL = {Annals of Mathematics. Second Series},
    VOLUME = {182},
      YEAR = {2015},
    NUMBER = {2},
     PAGES = {605--616},
      ISSN = {0003-486X},
   MRCLASS = {32U05},
  MRNUMBER = {3418526},
MRREVIEWER = {\.{Z}ywomir Dinew},
       DOI = {10.4007/annals.2015.182.2.5},
       URL = {https://doi.org/10.4007/annals.2015.182.2.5},
}

@article {BDL17,
    AUTHOR = {Berman, Robert J. and Darvas, Tam\'as and Lu, Chinh H.},
     TITLE = {Convexity of the extended {K}-energy and the large time
              behavior of the weak {C}alabi flow},
   JOURNAL = {Geom. Topol.},
  FJOURNAL = {Geometry \& Topology},
    VOLUME = {21},
      YEAR = {2017},
    NUMBER = {5},
     PAGES = {2945--2988},
      ISSN = {1465-3060,1364-0380},
   MRCLASS = {53C55 (32U05 32W20)},
  MRNUMBER = {3687111},
}

@article{X23,
 author = {Xia, Mingchen},
 title = {Pluripotential-theoretic stability thresholds},
 fjournal = {IMRN. International Mathematics Research Notices},
 journal = {Int. Math. Res. Not.},
 issn = {1073-7928},
 volume = {2023},
 number = {14},
 pages = {12324--12382},
 year = {2023},
 language = {English},
 doi = {10.1093/imrn/rnac186},
 zbMATH = {7726867},
 Zbl = {1523.32030}
}

@article {Tian97,
    AUTHOR = {Tian, Gang},
     TITLE = {K\"{a}hler-{E}instein metrics with positive scalar curvature},
   JOURNAL = {Invent. Math.},
  FJOURNAL = {Inventiones Mathematicae},
    VOLUME = {130},
      YEAR = {1997},
    NUMBER = {1},
     PAGES = {1--37},
      ISSN = {0020-9910},
   MRCLASS = {53C25 (32L07 53C55)},
  MRNUMBER = {1471884},
MRREVIEWER = {Thalia D. Jeffres},
       DOI = {10.1007/s002220050176},
       URL = {https://doi.org/10.1007/s002220050176},
}

@article {Don02,
    AUTHOR = {Donaldson, S. K.},
     TITLE = {Scalar curvature and stability of toric varieties},
   JOURNAL = {J. Differential Geom.},
  FJOURNAL = {Journal of Differential Geometry},
    VOLUME = {62},
      YEAR = {2002},
    NUMBER = {2},
     PAGES = {289--349},
      ISSN = {0022-040X},
   MRCLASS = {32Q15 (14M25 53C21 53C55)},
  MRNUMBER = {1988506},
       URL = {http://projecteuclid.org/euclid.jdg/1090950195},
}

@article {PS10,
    AUTHOR = {Phong, D. H. and Sturm, Jacob},
     TITLE = {The {D}irichlet problem for degenerate complex
              {M}onge-{A}mp\`{e}re equations},
   JOURNAL = {Comm. Anal. Geom.},
  FJOURNAL = {Communications in Analysis and Geometry},
    VOLUME = {18},
      YEAR = {2010},
    NUMBER = {1},
     PAGES = {145--170},
      ISSN = {1019-8385},
   MRCLASS = {32W20 (35J96 35R01)},
  MRNUMBER = {2660461},
MRREVIEWER = {Yanir A. Rubinstein},
       DOI = {10.4310/CAG.2010.v18.n1.a6},
       URL = {https://doi.org/10.4310/CAG.2010.v18.n1.a6},
}

@article {BBN,
    AUTHOR = {Berman, Robert and Boucksom, S\'{e}bastien and Witt
              Nystr\"{o}m, David},
     TITLE = {Fekete points and convergence towards equilibrium measures on
              complex manifolds},
   JOURNAL = {Acta Math.},
  FJOURNAL = {Acta Mathematica},
    VOLUME = {207},
      YEAR = {2011},
    NUMBER = {1},
     PAGES = {1--27}
}

@article {PS07a,
    AUTHOR = {Phong, Duong H. and Sturm, Jacob},
     TITLE = {Test configurations for {K}-stability and geodesic rays},
   JOURNAL = {J. Symplectic Geom.},
  FJOURNAL = {The Journal of Symplectic Geometry},
    VOLUME = {5},
      YEAR = {2007},
    NUMBER = {2},
     PAGES = {221--247},
      ISSN = {1527-5256},
   MRCLASS = {32Q15 (32L10 32W20 58E11)},
  MRNUMBER = {2377252},
MRREVIEWER = {Julien Keller},
       DOI = {10.4310/jsg.2007.v5.n2.a3},
       URL = {https://doi.org/10.4310/jsg.2007.v5.n2.a3},
}

@book {Laz-bookI,
    AUTHOR = {Lazarsfeld, Robert},
     TITLE = {Positivity in algebraic geometry. {II}},
    SERIES = {Ergebnisse der Mathematik und ihrer Grenzgebiete. 3. Folge. A
              Series of Modern Surveys in Mathematics [Results in
              Mathematics and Related Areas. 3rd Series. A Series of Modern
              Surveys in Mathematics]},
    VOLUME = {49},
      NOTE = {Positivity for vector bundles, and multiplier ideals},
 PUBLISHER = {Springer-Verlag, Berlin},
      YEAR = {2004},
     PAGES = {xviii+385},
      ISBN = {3-540-22534-X},
   MRCLASS = {14-02 (14C20 14F05 14F17)},
  MRNUMBER = {2095472},
MRREVIEWER = {Mihnea Popa},
       DOI = {10.1007/978-3-642-18808-4},
       URL = {https://doi.org/10.1007/978-3-642-18808-4},
}

@misc{MPWN,
      title={{A transcendental non-Archimedean Calabi-Yau theorem with applications to the cscK problem}}, 
      author={Pietro Mesquita-Piccione and David {Witt Nystr\"om}},
      year={2025},
      note={arXiv:2509.09442},
      archivePrefix={arXiv},
      primaryClass={math.DG}
}

@article{Paul12,
 author = {Paul, Sean Timothy},
 title = {Hyperdiscriminant polytopes, {Chow} polytopes, and {Mabuchi} energy asymptotics},
 fjournal = {Annals of Mathematics. Second Series},
 journal = {Ann. Math. (2)},
 issn = {0003-486X},
 volume = {175},
 number = {1},
 pages = {255--296},
 year = {2012},
 language = {English},
 doi = {10.4007/annals.2012.175.1.7},
 zbMATH = {6014076},
 Zbl = {1243.14038}
}

@article{Kol98,
 author = {Ko{\l}odziej, S{\l}awomir},
 title = {The complex {Monge}-{Amp{\`e}re} equation},
 fjournal = {Acta Mathematica},
 journal = {Acta Math.},
 issn = {0001-5962},
 volume = {180},
 number = {1},
 pages = {69--117},
 year = {1998}
}

@article{BJ22b,
 author = {Boucksom, S{\'e}bastien and Jonsson, Mattias},
 title = {Global pluripotential theory over a trivially valued field},
 fjournal = {Annales de la Facult{\'e} des Sciences de Toulouse. Math{\'e}matiques. S{\'e}rie VI},
 journal = {Ann. Fac. Sci. Toulouse, Math. (6)},
 issn = {0240-2963},
 volume = {31},
 number = {3},
 pages = {647--836},
 year = {2022},
 language = {English}
}

@article {BBJ18,
    AUTHOR = {Berman, Robert J. and Boucksom, S\'{e}bastien and Jonsson,
              Mattias},
     TITLE = {A variational approach to the {Y}au-{T}ian-{D}onaldson
              conjecture},
   JOURNAL = {J. Amer. Math. Soc.},
  FJOURNAL = {Journal of the American Mathematical Society},
    VOLUME = {34},
      YEAR = {2021},
    NUMBER = {3},
     PAGES = {605--652},
      ISSN = {0894-0347},
   MRCLASS = {32Q20 (14E30 32P05 32Q26 32U35)},
  MRNUMBER = {4334189},
       DOI = {10.1090/jams/964},
       URL = {https://doi.org/10.1090/jams/964},
}

@Article{Ber19,
 Author = {Berman, Robert J.},
 Title = {From {Monge}-{Amp{\`e}re} equations to envelopes and geodesic rays in the zero temperature limit},
 FJournal = {Mathematische Zeitschrift},
 Journal = {Math. Z.},
 ISSN = {0025-5874},
 Volume = {291},
 Number = {1-2},
 Pages = {365--394},
 Year = {2019},
 Language = {English},
}

@article {Ber16,
    AUTHOR = {Berman, Robert J.},
     TITLE = {K-polystability of {${\Bbb Q}$}-{F}ano varieties admitting
              {K}\"{a}hler-{E}instein metrics},
   JOURNAL = {Invent. Math.},
  FJOURNAL = {Inventiones Mathematicae},
    VOLUME = {203},
      YEAR = {2016},
    NUMBER = {3},
     PAGES = {973--1025},
      ISSN = {0020-9910},
   MRCLASS = {53C55 (14J45 53C25)},
  MRNUMBER = {3461370},
MRREVIEWER = {Cristiano Spotti},
       DOI = {10.1007/s00222-015-0607-7},
       URL = {https://doi.org/10.1007/s00222-015-0607-7},
}

@article {BoJ18,
    AUTHOR = {Boucksom, S\'ebastien and Jonsson, Mattias},
     TITLE = "{A non-Archimedean approach to K-stability}",
      YEAR = {2018},
      note = {arXiv:1805.11160},
      label = {BoJ18},
}

@article {StSz,
    AUTHOR = {Stoppa, Jacopo and Sz\'{e}kelyhidi, G\'{a}bor},
     TITLE = {Relative {K}-stability of extremal metrics},
   JOURNAL = {J. Eur. Math. Soc. (JEMS)},
  FJOURNAL = {Journal of the European Mathematical Society (JEMS)},
    VOLUME = {13},
      YEAR = {2011},
    NUMBER = {4},
     PAGES = {899--909}
}

@misc{Pic24,
    title={{A non-Archimedean theory of complex spaces and the cscK problem}},
    author={P. {Mesquita-Piccione}},
    year={2024},
    note={arXiv:2409.06221 },
    archivePrefix={arXiv},
    primaryClass={math.AG}
}

@article {Zha20-conti,
    AUTHOR = {Zhang, Kewei},
     TITLE = {Continuity of delta invariants and twisted {K}\"{a}hler-{E}instein
              metrics},
   JOURNAL = {Adv. Math.},
  FJOURNAL = {Advances in Mathematics},
    VOLUME = {388},
      YEAR = {2021},
     PAGES = {Paper No. 107888, 25},
      ISSN = {0001-8708},
   MRCLASS = {53C55 (14C20 53C25)},
  MRNUMBER = {4288212},
       DOI = {10.1016/j.aim.2021.107888},
       URL = {https://doi.org/10.1016/j.aim.2021.107888},
}
\bibliographystyle{abbrv}

\noindent {\sc University of Maryland, College Park, USA}\\
{\tt tdarvas@umd.edu}\vspace{0.1in}\\
\noindent {\sc Beijing Normal University, Beijing, China}\\
{\tt kwzhang@bnu.edu.cn}\vspace{0.1in}

\end{document}